\theoremstyle{plain}
\newtheorem{theorem}{Theorem}[section]
\newtheorem{prop}[theorem]{Proposition}
\newtheorem{lemma}[theorem]{Lemma}
\newtheorem{remark}[theorem]{Remark}
\newtheorem{cor}[theorem]{Corollary}
\newtheorem{definition}[theorem]{Definition}
\newtheorem{example}[theorem]{Example}
\newcommand{\bfs}{\mathbf{s}}
\newcommand{\At}{\mathcal{A}_{t}}
\newcommand{\Aprin}{\mathcal{A}_{\textrm{prin}}}
\newcommand{\Ascat}{\mathcal{A}_{\textrm{scat}}}
\newcommand{\lN}{{{}^\mathrm{L}N}}
\newcommand{\lM}{{{}^\mathrm{L}M}}
\newcommand{\ld}{{{}^\mathrm{L}d}}
\newcommand{\lle}{{{}^\mathrm{L}e}}
\newcommand{\lf}{{{}^\mathrm{L}f}}
\newcommand{\lX}{{{}^\mathrm{L}y}}
\newcommand{\lv}{{{}^\mathrm{L}v}}
\newcommand{\lL}{{}^\mathrm{L}}
\newcommand{\rN}{{{}^\mathrm{R}N}}
\newcommand{\rM}{{{}^\mathrm{R}M}}
\newcommand{\rd}{{{}^\mathrm{R}d}}
\newcommand{\re}{{{}^\mathrm{R}e}}
\newcommand{\rf}{{{}^\mathrm{R}f}}
\newcommand{\rX}{{{}^\mathrm{R}y}}
\newcommand{\rv}{{{}^\mathrm{R}v}}
\newcommand{\rR}{{}^\mathrm{R}}
\newcommand{\upC}{{{}^{\text{\tiny{C}}}}}
\newcommand{\seed}{\mathbf{s}}
\newcommand{\init}{\mathrm{in}}
\newcommand{\cA}{\mathcal{A}}
\newcommand{\cX}{\mathcal{X}}
\newcommand{\fD}{\mathfrak{D}}
\newcommand{\fd}{\mathfrak{d}}
\newcommand{\prin}{\mathrm{prin}}
\newcommand{\A}{\mathcal{A}}
\title{Cluster scattering diagrams and theta functions for reciprocal generalized cluster algebras}
\author{Man-Wai Cheung\thanks{Department of Mathematics, Kavli IPMU (WPI), UTIAS, The University of Tokyo, Kashiwa, Chiba 277-8583, Japan. manwai.cheung@ipmu.jp}, Elizabeth Kelley\thanks{Department of Mathematics, University of Illinois Urbana Champaign, Champaign, IL, USA. kelleye@illinois.edu}, Gregg Musiker\thanks{Department of Mathematics, University of Minnesota, Minneapolis, MN, USA. musiker@math.umn.edu}}
\date{\today}
\begin{document}

\maketitle

\begin{abstract}
We give a construction of generalized cluster varieties and generalized cluster scattering diagrams for reciprocal generalized cluster algebras, the latter of which were defined  by  Chekhov  and  Shapiro. These constructions are analogous to the structures given for ordinary cluster algebras in the work of Gross, Hacking, Keel, and Kontsevich.  As a consequence of these constructions, we are also able to construct theta functions for generalized cluster algebras, again in the reciprocal case, and demonstrate a number of their structural properties.
\end{abstract}

\vfill
\noindent \textbf{Keywords:} Cluster Algebras, Generalized Cluster Algebras, Scattering Diagrams, Theta Functions

\vspace{5mm}
\noindent \textbf{Mathematics Subject Classification (2020):} 13F60

\vspace{5mm}
\noindent \textbf{Data availability statement:} Data sharing is not applicable to this article as no datasets were generated or analysed during the current study.

\vspace{5mm}
\noindent \textbf{Corresponding Author:} Elizabeth Kelley

\pagebreak

\begingroup
\hypersetup{linkcolor=black}
\tableofcontents
\endgroup

\section{Introduction}
\label{sec:introduction}

The theory of cluster algebras was originally introduced by Fomin and Zelevinsky in 2000 as a tool for studying total positivity \cite{FZ-I}. 
Since their initial appearance, cluster algebra structures have also arisen in a diverse array of mathematical settings, including Poisson geometry \cite{poissonGeometry_book}, higher Teichm\"{u}ller theory \cite{Le, Wienhard}, category theory \cite{Keller-derived, Reiten}, discrete dynamical systems \cite{Kedem, NAK}, Donaldson-Thomas theory \cite{Kontsevich-Soibelman, Nagao}, representation theory of quivers and finite-dimensional algebras \cite{Keller}, enumerative properties of associahedra \cite{chapoton-associahedra,chapoton-fomin-zelevinsky}, and various areas of mathematical physics. 
Two particularly celebrated structural features of cluster algebras are the \emph{Laurent phenomenon} and \emph{positivity}. Although the Laurent phenomenon was proved in the original work of Fomin and Zelevinsky, a proof of positivity for skew-symmetric (resp. arbitrary) cluster algebras did not appear until the work of Lee and Schiffler in 2015 \cite{Lee-Schiffler} (resp. Gross, Hacking, Keel, and Kontsevich in 2018 \cite{GHKK}).

\vspace{1em}

Generalized cluster algebras were introduced by Chekhov and Shapiro in order to study the Teichm\"{u}ller spaces of Riemann surfaces with holes and orbifold points of arbitrary order \cite{CS, Chekhov-Mazzocco-Yangians}.
In this generalization, the hallmark binomial exchange relations of ordinary cluster algebras are replaced with polynomials with arbitrarily many terms. Consequently, to define the exchange polynomials, we need an additional finite set of coefficients, which we denote as $\{a_{i,j}\}$ and typically treat as formal variables.  Generalized cluster algebra structures have since been discovered in the representation theory of quantum affine algebras \cite{Gleitz-II, Gleitz}, the representation theory of quantum loop algebras \cite{Gleitz-III}, the study of exact WKB analysis \cite{Iwaki-Nakanishi}, the cyclic symmetry of Grassmannians \cite{Fraser}, the study of the Drinfeld double of $GL_n$ \cite{GSV-I,GSV-IV, GSV-II, GSV-III}, and in certain Caldero-Chapoton algebras of quivers with relations \cite{L-F-Velasco}.

Generalized cluster algebras exhibit many of the same structural properties as ordinary cluster algebras.  For instance, in \cite{CS}, Chekhov and Shapiro show that generalized cluster algebras exhibit the celebrated Laurent phenomenon and admit the same finite-type classification as ordinary cluster algebras.  Further, Chekhov and Shapiro prove that generalized cluster algebras exhibit positivity when the initial cluster has size two and conjecture that positivity holds in general. This was followed by work of Nakanishi which gives structural results for the subclass of reciprocal generalized cluster algebras where the exchange polynomials are required to be palindromic and monic \cite{Nakanishi}.  Follow-up work by Nakanishi and Rupel \cite{Nakanishi-Rupel} further related reciprocal generalized cluster algebras to pairs of skew-symmetrizable cluster algebras which they refer to as \emph{companion algebras}. 

\vspace{1em}

Inspired by their proof of positivity for ordinary cluster algebras, in this paper, we describe how the methods of Gross, Hacking, Keel, and Kontsevich can be extended to the case of reciprocal generalized cluster algebras.  To this end, we show that the definitions of \emph{cluster varieties} and \emph{cluster ensembles}, as defined by Fock and Goncharov \cite{FG}, may be generalized to obtain \emph{generalized cluster varieties} associated to reciprocal generalized cluster algebras.

Because generalized cluster algebras exhibit many of the same structural properties as ordinary cluster algebras, another natural question is whether bases defined for ordinary cluster algebras have natural extensions in this generalized setting.  Many subclasses of cluster algebras have known bases, including the cluster monomial basis for finite type, the generic basis for affine type \cite{caldero-keller}, the generic basis for acyclic type \cite{GLS2, GLS1}, and the bangle and band bases for cluster algebras of surface type \cite{MSW-2}.
In addition to their proof of positivity for cluster variables, Gross, Hacking, Keel, and Kontsevich also proved the existence of the \emph{theta basis} for arbitrary cluster algebras \cite{GHKK}.  

Their proofs, both of positivity and the existence of the theta basis, used \emph{scattering diagrams}, a tool from algebraic geometry. Scattering diagrams were first introduced in two dimensions by Kontsevich and Soibelman in \cite{KS-chapter} and then in arbitrary dimension by Gross and Siebert in \cite{GS} as a tool for constructing mirror spaces in mirror symmetry.

The main results of this paper parallel the work of Gross, Hacking, Keel, and Kontsevich and construct \emph{generalized cluster scattering diagrams} and their theta functions in the context of reciprocal generalized cluster algebras.  More precisely, we construct generalized cluster varieties whose rings of regular functions are generalized cluster algebras.  Simultaneously, we develop scattering diagrams which lie in the tropicalization of the Fock-Goncharov dual of said varieties.  This allows us to establish theta functions for such algebras. 

\subsubsection*{Structure of the article}
We begin, in Section \ref{sec:background}, by providing background that compares and contrasts the cases of ordinary and generalized cluster algebras.  This section concludes with an in-depth description of cluster varieties, cluster scattering diagrams, and theta functions for the case of ordinary cluster algebras. 

In Section \ref{sec:genScatDiag}, the definitions of generalized cluster varieties and generalized cluster scattering diagrams are given, with an emphasis on the changes that need to be made to extend to 
the setting of generalized cluster algebras. 
These definitions are given in full detail so that the reader already familiar with the ordinary case can skip Section \ref{sec:background} and begin here instead.
Many of our results are given for the subclass of \emph{reciprocal generalized cluster algebras}. This restriction is necessary in the proof of Theorem \ref{theorem:mutDiagConsistent}, which establishes the mutation invariance of generalized cluster scattering diagrams. We explain in Remarks \ref{rk:toclarify} and \ref{rk:however} the motivation behind some of the definitions chosen in this paper.
A generalized cluster scattering diagram appears in Example \ref{ex:gen_T_k}, and one in the presence of principal coefficients appears in Example \ref{ex:genG2_principalCoeff}.

After constructing generalized cluster scattering diagrams, we give a construction of the scheme $\Ascat$ from these diagrams and show in Theorem \ref{theorem:AscatCommuteMutation} that $\Ascat$ is isomorphic to the generalized $\cA$ cluster varieties in the case with principal coefficients.
Note that while one may follow \cite{GHK_log} to construct schemes from generalized cluster scattering diagrams, it does not directly follow that the algebra of the scheme is the associated generalized cluster algebra until one shows that the spaces $\Ascat$ and $\cA$ are isomorphic. 

Section \ref{subsec:genBrokenLines} introduces \emph{broken lines} and \emph{theta functions} in the context of generalized cluster scattering diagrams. One major result of this section is:
\begin{theorem}[Theorem \ref{theorem:almost_positive}]
\label{thm:main4}
The generalized cluster monomials can be expressed in terms of theta functions.
\end{theorem}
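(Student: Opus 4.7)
The approach is to parallel the argument of Gross, Hacking, Keel, and Kontsevich in the ordinary case, modified to handle the non-binomial wall functions present in the generalized setting. Let $\seed$ be a seed with cluster variables $A_1^{\seed}, \ldots, A_n^{\seed}$ and $g$-vectors $g_1^{\seed}, \ldots, g_n^{\seed}$. The plan is to first verify that the positive span $\sigma_{\seed} := \sum_{i} \R_{\geq 0}\, g_i^{\seed}$ is a top-dimensional chamber of the generalized cluster scattering diagram $\fD_{\seed}$. This should follow from the mutation equivariance of $\Ascat$ established in Theorem \ref{theorem:AscatCommuteMutation}: the positive orthant is a chamber for the initial seed, and mutation transports it to $\sigma_{\seed}$.

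The central step is the following lemma. For any seed $\seed$, any generic basepoint $Q$ in the interior of $\sigma_{\seed}$, and any lattice point $p = \sum_{i} n_i g_i^{\seed}$ with $n_i \in \mathbb{Z}_{\geq 0}$, the theta function $\vartheta_{Q,p}$ equals the monomial $z^p$ expressed in the coordinates attached to $\seed$. The argument is that any broken line with initial exponent $p$ ending at $Q$ must either be the straight line contributing $z^p$ or must bend at some wall $\fd$; however, the scattering function on any wall bounding $\sigma_{\seed}$ consists of monomials whose exponents are positive multiples of the wall's primitive normal, oriented outward from $\sigma_{\seed}$, so any bend forces the broken line's preceding segment to lie outside $\sigma_{\seed}$. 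Combined with the convexity of the cluster chamber, this eliminates all nontrivial broken lines and yields $\vartheta_{Q,p} = z^p$.

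Via the isomorphism $\Ascat \cong \Aprin$ of Theorem \ref{theorem:AscatCommuteMutation}, the monomial $z^p$ corresponds to the cluster monomial $\prod_i (A_i^{\seed})^{n_i}$. Since every generalized cluster monomial is, by definition, such a product in some seed, varying $\seed$ then exhibits every cluster monomial as a theta function, which proves the theorem.

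The principal obstacle is the central lemma, specifically controlling the richer family of broken lines produced by palindromic (rather than binomial) wall functions. In the binomial case handled by Gross--Hacking--Keel--Kontsevich, a bend at a wall shifts the exponent by a single multiple of the wall's normal direction; in the generalized case, a scattering function can have many monomials, so a single bend may shift the exponent by any of several integer multiples of that normal. Nevertheless, all of these shifts have the same sign relative to the outward normal of the cluster chamber, so the convexity argument that prevents broken lines from re-entering $\sigma_{\seed}$ in the ordinary setting should persist. Carefully verifying that the generalized wall monomials constructed in Section \ref{sec:genScatDiag} all have outward-pointing exponents relative to each cluster chamber is therefore the key technical point.
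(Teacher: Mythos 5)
Your proposal is correct and follows essentially the same route as the paper. Your ``central lemma'' is precisely Proposition~\ref{prop:clusterMonomialPosChamber} combined with Corollary~\ref{prop:pathinsidechamber}, and your final step invoking $\Ascat \cong \cA$ via Theorem~\ref{theorem:AscatCommuteMutation} matches the structure of the paper's proof (which itself cites the argument of Theorem~4.9 of \cite{GHKK}, dropping only the positivity claim). You also correctly isolate the one technical point that must be re-checked in the generalized setting --- that the richer, multi-term wall functions still have all their exponent shifts pointing outward from the cluster chamber, so the GHKK convexity/no-reentry argument survives --- which is exactly the modification the paper flags in its proof of Proposition~\ref{prop:clusterMonomialPosChamber}.
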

Another major result is the sign-coherence of $g$-vectors:
\begin{theorem}[Theorem \ref{thm:g-coherence}]
Given fixed data as defined in Definition \ref{def:gen fixed}, 
consider an initial generalized torus seed $\bfs = \{ (e_i,(a_{i,j})) \}$ as defined in Definition \ref{def:genTorusSeed}. The torus seed $\bfs$  defines the usual set of dual vectors $\{ f_i = d_i^{-1}e_i^* \}$. If $\bfs'$ is a mutation equivalent generalized torus seed, then the $i$-th coordinates of the $g$-vectors for the cluster variables in $\bfs'$ are either all non-negative or all non-positive when expressed in the basis $\{ f_1, \dots, f_n \}$.
\end{theorem}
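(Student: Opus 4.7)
My plan is to deduce sign-coherence from the cluster complex structure inside the generalized cluster scattering diagram $\fD^\prin_\bfs$, using Theorems \ref{theorem:AscatCommuteMutation} and \ref{thm:main4} as the bridge between cluster variables and walls. By Theorem \ref{thm:main4}, each cluster variable $x'_j$ of $\bfs'$ equals a theta function $\vartheta_{g_j}$ for some $g_j$ in the tropical space $M_\R^\circ$ carrying $\fD^\prin_\bfs$. Comparing leading monomials of $\vartheta_{g_j}$ under a broken-line expansion with basepoint in the initial chamber $\mathcal{C}_\bfs$ recovers the standard definition of the $g$-vector, so that, when written in the basis $\{f_1, \dots, f_n\}$, the point $g_j$ has coordinates equal to the $g$-vector of $x'_j$.

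Following the template from the ordinary case, I would then show that the collection $\{g_1, \dots, g_n\}$ spans the extremal rays of a top-dimensional simplicial \emph{cluster chamber} $\mathcal{C}_{\bfs'}$ in the complement of $\mathrm{supp}(\fD^\prin_\bfs)$, and that this assignment defines a bijection between seeds mutation-equivalent to $\bfs$ and such chambers. Under this identification, sign-coherence of the $i$-th coordinates of $g_1, \dots, g_n$ becomes equivalent to the geometric statement that $\mathcal{C}_{\bfs'}$ lies in a single closed half-space cut out by $f_i^\perp$. I would prove this half-space statement by induction on the length of a mutation path from $\bfs$ to $\bfs'$. The base case $\bfs' = \bfs$ is immediate since $\mathcal{C}_\bfs$ is the positive orthant spanned by $\{f_1, \dots, f_n\}$. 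For the inductive step, crossing the wall separating $\mathcal{C}_{\bfs'}$ from $\mathcal{C}_{\mu_k(\bfs')}$ applies a tropicalized mutation $g_k \mapsto -g_k$ and $g_j \mapsto g_j + \lambda_{kj}\, g_k$ for $j \neq k$, where $\lambda_{kj}$ is a piecewise-linear coefficient read off from the wall function.

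The main obstacle is verifying that this tropicalized wall-crossing preserves sign-coherence in every coordinate $i \neq k$. I expect to adapt the GHKK argument: every wall in $\fD^\prin_\bfs$ should have support contained in a sign-coherent half-space, a property forced on the initial walls by the palindromic structure of the generalized exchange polynomials and inherited by all subsequent walls produced in the consistency completion. The key new technical point, compared to the ordinary case, is checking that palindromic monic wall functions combine under the commutator-style scattering relations without introducing any monomial that would violate the cone condition. Once this invariant is established, the tropical wall-crossing can only shift the $i$-th coordinate of $g_j$ (for $j \neq k$) by a nonnegative multiple of the $i$-th coordinate of $g_k$, so the inductive step closes and sign-coherence holds for every seed $\bfs'$ mutation-equivalent to $\bfs$.
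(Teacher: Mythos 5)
Your overall plan---locate the $g$-vectors of $\bfs'$ as the generators of the cluster chamber $\mathcal{C}^+_{\bfs'}$ inside the scattering diagram, and then show that each cluster chamber lies in a single closed half-space---matches the argument the paper invokes from \cite{GHKK}. But the route you propose for the half-space step is both unnecessary and, as written, does not close. The paper's proof is not inductive: once Lemma \ref{lemma:FG_clusterChamber} and Theorem \ref{theorem:GHKK_2-13} establish that the chambers $\mathcal{C}^+_{\bfs'}$ are maximal cones of a simplicial fan, each such chamber is by construction the closure of a connected component of $M^\circ_{\mathbb{R}} \setminus \mathrm{Supp}(\mathfrak{D}_\bfs)$. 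Since for $i \in I_{\textrm{uf}}$ the hyperplane $e_i^\perp$ supports the incoming wall $\mathfrak{d}_i$, no chamber can cross it, so $\mathcal{C}^+_{\bfs'}$ sits entirely in $\{m : \langle d_i e_i, m\rangle \geq 0\}$ or $\{m : \langle d_i e_i, m\rangle \leq 0\}$. Since the $i$-th coordinate of $m$ in the basis $\{f_j\}$ is $\langle d_i e_i, m\rangle$, sign-coherence is immediate from convexity. (Note also that the relevant hyperplane is $e_i^\perp = (d_i e_i)^\perp$, not $f_i^\perp$ as you wrote.)

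Your induction has two concrete gaps. First, the step ``the tropical wall-crossing can only shift the $i$-th coordinate of $g_j$ by a nonnegative multiple of the $i$-th coordinate of $g_k$'' is precisely as hard as what you are trying to prove: the sign of that multiple depends on which side of $e_i^\perp$ the crossed wall and the chamber $\mathcal{C}^+_{\bfs'}$ sit, which is the content of the half-space statement itself, so the inductive step risks circularity unless you import the chamber structure---at which point the induction is superfluous. Second, the ``key new technical point'' you flag---that palindromic wall functions scatter without producing monomials that violate the cone condition---is not a new technical point. Definition \ref{def:walls} requires every wall's normal $n_0$ to lie in $N^+$, and Theorem \ref{thm:gen_consistent} produces the consistent diagram with exactly this structure; these facts are part of the general Kontsevich--Soibelman/Gross--Siebert scattering machinery, which (as the paper notes) does not require binomial wall functions. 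Reciprocity of the $a_{i,j}$ enters the paper's proofs of mutation invariance (Theorem \ref{theorem:mutDiagConsistent}) and the broken line bijection (Proposition \ref{prop:brokenLineBijection}), not in the sign-coherence argument itself.
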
 

In Section \ref{sec:genThetaBasis}, we define theta functions on $\mathcal{A}$ and $\mathcal{X}$ and describe the product structures of particular collections of theta functions on $\Aprin$, $\cA$, and $\cX$. As a consequence, we show that theta functions on $\Aprin$ form a (topological) basis for a topological $R$-algebra completion of of $\textrm{up}(\Aprin)$ in Corollary \ref{cor:6-11}.

Finally, we conclude with Section \ref{sec:companionDiagrams} where we illustrate the relationship between generalized cluster algebras, their associated generalized cluster scattering diagrams, and the aforementioned companion algebras.  These companion algebras were introduced by Nakanishi and Rupel \cite{Nakanishi-Rupel} and encode much of the same structural data as the associated generalized cluster algebra.

\begin{remark}
As we were completing our paper, the preprint \cite{langmou} by Mou was posted.  Our work in this paper is independent of and contemporaneous to his work, but comparing our distinct approaches may be useful in future work. In particular, we note that we work with a different subclass of generalized cluster algebras. Our work concerns generalized cluster algebras in the sense of \cite{CS} that satisfy the reciprocity condition used by \cite{Nakanishi, Nakanishi-Rupel}. In the subclass that we consider, the exchange polynomials and wall-crossing automorphisms cannot necessarily be written as products of binomials with positive coefficients.
We explain the relevance of this hypothesis on the format of wall-crossing automorphisms in Remark \ref{rk:wecaretoomuch}.
\end{remark}

\begin{remark}
Preliminary versions of the present paper appeared in \cite{FPSAC} and \cite{Kelley}.
\end{remark}

\section{Background}
\label{sec:background}

Ordinary cluster algebras were introduced by Fomin and Zelevinsky in 2002 in order to study total positivity and dual canonical bases in semisimple groups \cite{FZ-I}. An \emph{ordinary cluster algebra $\mathcal{A}$} with clusters of size $n$ is a commutative subring of an ambient field $\mathcal{F}$ of rational functions in $n$ variables. One of the hallmark structural properties of an ordinary cluster algebra is that it can be presented without enumerating its entire set of generators and relations. Instead, an ordinary cluster algebra can be presented by specifying a set of \emph{cluster seed} data: a collection of $n$ distinguished generators $\{x_1, \dots, x_n \}$, where the $x_i$ are known as \emph{cluster variables} and the entire subset as a \emph{cluster}; a collection of \emph{coefficients} $\{ y_1, \dots, y_n \}$;  and an \emph{exchange matrix} $B$ which encodes the \emph{exchange relations} between cluster variables. 
From the seed data, one can generate the remainder of the cluster variables and coefficients via an involutive process called \emph{mutation}. The full set of cluster variables generates $\mathcal{A}$ as a subring of $\mathcal{F}$.

Another hallmark property of ordinary cluster algebras is that the exchange relations encoded by $B$ are binomial - i.e., they have the form
\[ x_kx_k' = \textrm{monomial} ~+~ \textrm{monomial}. \]
One natural question is to ask what happens when these binomial relations are replaced by other types of polynomials.
In this paper, we consider a generalization of this type, introduced by Chekhov and Shapiro, which is described in more detail in the following subsection.

Computing the full set of cluster variables requires multiple iterations of mutation. For any choice of initial cluster, a sequence of mutations can be used to express every cluster variable in terms of that initial cluster. Remarkably, in a phenomenon known as the \emph{Laurent phenomenon}, these expressions turn out to be Laurent polynomials. Perhaps even more remarkably, these Laurent polynomials have strictly non-negative coefficients. This property is referred to as \emph{positivity}. Although the Laurent phenomenon was proved in Fomin and Zelevinsky's original paper, positivity for arbitrary ordinary cluster algebras remained conjectural until the work of Gross, Hacking, Keel, and Kontsevich in 2018 \cite{GHKK}. 

\subsection{Generalized cluster algebras}
\label{subsec:genCA}

One natural generalization of a cluster algebra, introduced by Chekhov and Shapiro \cite{CS}, is to allow the characteristic binomial exchange relations to instead contain arbitrarily many terms. The resulting algebras, referred to as \emph{generalized cluster algebras}, have already been the object of significant study.

The introduction of generalized cluster algebras was originally  motivated by the study of Teichm\"{u}ller spaces of Riemann surfaces with holes and orbifold points of arbitrary order \cite{Chekhov, Chekhov-Mazzocco-Yangians}.  In particular, drawing on the work of Felikson, Shapiro, and Tumarkin \cite{FST-orbTriang, Felikson-Shapiro-Tumarkin} which defines ordinary cluster algebras from orbifolds, in \cite{CS}, Chekhov and Shapiro show that triangulations of orbifolds provide a geometric model for a certain subclass of generalized cluster algebras and demonstrate positivity for such cases.   

For the somewhat broader subclass of \emph{reciprocal generalized cluster algebras}, which we will define later in this section, Nakanishi shows in \cite{Nakanishi} that much of the structural theory of cluster seeds for ordinary cluster algebras still holds. In particular, Nakanishi extends the notions of $c$-vectors, $g$-vectors, and $F$-polynomials to the generalized setting and then used these notions to write formulas for the generalized cluster variables and coefficients. In \cite{Nakanishi-Rupel}, Nakanishi and Rupel subsequently define \emph{companion algebras} of reciprocal generalized cluster algebras and showed that these companion algebras, which are themselves ordinary cluster algebras, encode much of the structural data of the original generalized cluster algebra. We discuss companion algebras in much greater detail in Section~\ref{sec:companionDiagrams}.

In this section, we review some of the basic definitions and properties of generalized cluster algebras, largely following the structure of definitions in \cite{Nakanishi}. Two important differences in our presentation, however, are that we use the wide convention rather than the tall convention (i.e., our $B$ matrix would be the matrix $B^T$ in \cite{Nakanishi}) and that we use $r_i$ in the place of $d_i$. We make this latter change in notation to avoid a conflict with the usage of $d_i$ that arises in our discussion of `fixed data' in the context of scattering diagrams.

 Let $(\mathbb{P}, \oplus)$ be a semifield and $\mathcal{F}$ be isomorphic to the field of rational functions in $n$ independent variables with coefficients in $\mathbb{P}$.

\begin{definition}
\label{def:gen_clusterSeed}
A \emph{labeled generalized cluster seed} is a quintuple ${\Sigma = (\mathbf{x},
\mathbf{y},B,[r_{ij}],\mathbf{a})}$ such that
\begin{itemize}
    \item $\mathbf{x} = (x_1,\dots,x_n)$ is a free generating set for $\mathcal{F}$,
    \item $\mathbf{y}$ is an $n$-tuple with elements in $\mathbb{P}$,
    \item $B = [b_{ij}]$ is an $n \times n$ skew-symmetrizable matrix with entries in $\mathbb{Z}$,
    \item $[r_{ij}]$ is an $n \times n$ diagonal matrix with positive integer entries whose $i$-th diagonal entry is denoted $r_i$,
    \item and $\mathbf{a} = (a_{i,j})_{i \in [n], s \in [r_i-1]}$ is a collection of elements in $\mathbb{P}$. We later will consider these $a_{i,j}$'s to be formal variables in a ground ring $R = \Bbbk [a_{i,j}]$.
\end{itemize}
We refer to $\mathbf{x} = (x_1, \dots, x_n)$ as the \emph{cluster} of $\Sigma$, $\mathbf{y} = (y_1, \dots, y_n)$ as the \emph{coefficient tuple}, $B$ as the \emph{generalized exchange matrix}, $[r_{ij}]$ as the exchange degree matrix, and $\mathbf{a}$ as the exchange coefficient collection. The elements $x_1, \dots, x_n$ are the \emph{cluster variables} of $\Sigma$ and $y_1, \dots, y_n$ are its \emph{coefficient variables}.
\end{definition}

Together, the exchange degrees $\{ r_i \}$ and the exchange coefficient collection $\mathbf{a}$ determine a set of \emph{exchange polynomials} $1 + a_{i,1}u + \cdots + a_{i,r_i-1}u^{r_i-1} + u^{r_i} \in \mathbb{ZP}[u].$ The structure of the exchange relations for mutation in direction $k$ are determined by the $k$-th exchange polynomial. We will work in the same specialized setting as \cite{Nakanishi} and impose the additional requirement that \[{a_{i,j} = a_{i,r_i - s}}, \] i.e., all exchange polynomials are \emph{reciprocal polynomials}. Although this condition does not hold for all generalized cluster algebras, restricting our attention to this subset allows us to focus on a more tractable subclass of generalized cluster algebras.

\begin{definition}
\label{def:genMutation}
For a generalized cluster seed $\Sigma = (\mathbf{x},\mathbf{y},B,[r_{ij}],\mathbf{a})$, generalized mutation $\mu_k^{(r)}$ in direction $k$ is defined by the following exchange relations:
\begin{align*}
    b_{ij}' &= \begin{cases}
                    -b_{ij} & i = k \textrm{ or } j = k \\
                    b_{ij} + r_k\left( [-b_{ik}]_+b_{kj} + b_{ik}[b_{kj}]_+ \right) & i,j \neq k
                \end{cases} \\
    y_i' &= \begin{cases}
                y_k^{-1} & i = k \\
                y_i \left( y_k^{[ b_{ik}]_+} \right)^{r_k}\left( \bigoplus_{s=0}^{r_k} a_{k,s}y_k^{ s} \right)^{-b_{ik}} & i \neq k
            \end{cases} \\
    x_i' &= \begin{cases}
                x_k^{-1}\left(\prod_{j=1}^n x_j^{[- b_{kj}]_+} \right)^{r_k} \frac{ \sum_{s=0}^{r_k} a_{k,s} \widehat{y}_k^{ s}}{\oplus_{s=0}^{r_k} a_{k,s}y_k^{ s}} & i = k \\
                x_i & i \neq k
            \end{cases} \\
    a_{k,s}' &= a_{k,r_k - s}
\end{align*}
where $[\ \boldsymbol{\cdot}\ ]_+ = \textrm{max}(\ \boldsymbol{\cdot}\ ,0)$ and
\begin{align} \label{eq:changeofvar}
    \widehat{y}_i := y_i \prod_{j=1}^n x_j^{b_{ij}} 
\end{align}
\end{definition}

\begin{remark}
The mutation relation for $b_{ij}'$ in Definition~\ref{def:genMutation} is for the generalized exchange matrix, $B$. This is equivalent to writing that the matrix formed by the product $B[r_{ij}]$, whose entries we abbreviate as $(br)_{ij}$, mutates according to the relation
\[ (br)_{ij}' = (br)_{ij} + ([-(br)_{ik}]_+(br)_{kj} + (br)_{ik}[(br)_{kj}]_+). \]
On the matrix level, this reflects the fact that mutation commutes with right multiplication by $[r_{ij}]$: that is, $\mu_k(B[r_{ij}]) = \mu^{(r)}_k(B)[r_{ij}]$, where $\mu_k$ denotes ordinary matrix mutation and $\mu_k^{(r)}$ denotes the generalized matrix mutation given in Definition~\ref{def:genMutation}.
\end{remark}

\begin{remark}
In their original paper, Chekhov and Shapiro use matrices $B$ and $\beta$ in their exchange relations \cite{CS}. Their $B$ matrix is our $B[r_{ij}]$ matrix and their $\beta$ matrix is our $B$ matrix. Note that if the matrix $B$ is skew-symmetrizable, then the matrix $B[r_{ij}]$ is also skew-symmetrizable.
\end{remark}

The generalized cluster algebra associated to a particular generalized cluster seed can then be defined as:

\begin{definition}
The \emph{generalized cluster algebra} $\mathcal{A} = \mathcal{A}(\mathbf{x},\mathbf{y},B,[r_{ij}],\mathbf{a})$ associated to a generalized cluster seed $\Sigma$ is the $\mathbb{ZP}$-subalgebra of $\mathcal{F}$ generated by the cluster variables $\mathbf{x} = \{ x_i \}_{i \in [n]}$ of $\Sigma$.
\end{definition}

It is important to note that there is some potential for notational confusion because it is also common to simply write $\mathcal{A}$ when referring to the $\mathcal{A}$-variety. Typically, it is clear from context whether $\mathcal{A}$ refers to a (generalized) cluster algebra or (generalized) cluster variety and therefore we use this notation in order to be consistent with the literature.

Finally, we can give a statement of the Laurent phenomenon for generalized cluster algebras.

\begin{theorem}[Theorem 2.5 of \cite{CS}]
\label{theorem:genLaurentPhenom}
Let $\mathcal{A} = \mathcal{A}(\mathbf{x},\mathbf{y},B,[r_{ij}],\mathbf{a})$ be an arbitrary generalized cluster algebra. Its cluster variables can be expressed in terms of any cluster of $\mathcal{A}$ as Laurent polynomials with coefficients in $\mathbb{ZP}.$
\end{theorem}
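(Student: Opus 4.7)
The plan is to adapt Fomin and Zelevinsky's original proof of the Laurent phenomenon for ordinary cluster algebras, whose heart is the \emph{Caterpillar Lemma}, to the generalized setting of Definition~\ref{def:genMutation}. Fix an initial seed $\Sigma_0 = (\mathbf{x}_0,\mathbf{y}_0,B_0,[r_{ij}]_0,\mathbf{a}_0)$ with cluster $\mathbf{x}_0 = (x_1,\ldots,x_n)$, and for each seed $\Sigma$ obtained from $\Sigma_0$ by a mutation sequence $\mu_{k_N}\cdots\mu_{k_1}$ let $\mathcal{L}(\Sigma)$ denote the assertion that every cluster variable of $\Sigma$ lies in $\mathbb{ZP}[\mathbf{x}_0^{\pm 1}]$. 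We induct on the length $N$ of the mutation sequence. The base case $N=0$ is trivial, and for $N=1$ the claim is immediate from the exchange relation in Definition~\ref{def:genMutation}, since $x_k \cdot \mu_k(x_k)$ is a polynomial in $\{x_j\}_{j\neq k}$ with coefficients in $\mathbb{ZP}$.

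For the inductive step, the strategy is to reduce global Laurentness to a purely local statement on a four-vertex subgraph of the exchange tree: if $\Sigma_0 \leftrightarrow \Sigma' \leftrightarrow \Sigma'' \leftrightarrow \Sigma'''$ are four consecutive seeds obtained by alternately mutating in two directions $k$ and $j$, then every cluster variable of $\Sigma'''$ is already a Laurent polynomial in $\mathbf{x}_0$. The standard tree-combinatorial argument of Fomin--Zelevinsky then lifts this local Laurentness to all mutation sequences: given any $\Sigma$ at distance $N$ from $\Sigma_0$, one chooses a nearby seed $\widetilde{\Sigma}$ where the inductive hypothesis applies, writes each cluster variable of $\Sigma$ as a Laurent polynomial in $\widetilde{\Sigma}$'s cluster using the length-three case, and substitutes the inductively-known Laurent expressions for $\widetilde{\Sigma}$'s variables, checking that denominators remain monomial in $\mathbf{x}_0$.

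The core local computation is to analyze $\mu_j \mu_k(x_j)$. After $\mu_k$ the new $k$-th cluster variable is
\[
x_k' \;=\; x_k^{-1}\left(\prod_{\ell} x_\ell^{[-b_{k\ell}]_+}\right)^{r_k} \frac{\sum_{s=0}^{r_k} a_{k,s}\widehat{y}_k^{\,s}}{\bigoplus_{s=0}^{r_k} a_{k,s} y_k^{\,s}},
\]
and the new matrix entries transform as $(br)_{ij}' = (br)_{ij} + [-(br)_{ik}]_+(br)_{kj} + (br)_{ik}[(br)_{kj}]_+$. Applying $\mu_j$ and substituting back in terms of $\mathbf{x}_0$, one obtains a rational expression whose only possible "spurious" denominators are powers of $x_k$. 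To show these cancel, one groups terms according to the exponent $s$ in the sum $\sum_{s=0}^{r_k} a_{k,s}\widehat{y}_k^{\,s}$ and uses the matrix mutation rule to equate the $x_k$-exponent produced by $(b'_{ji})^{r_j}$ with the $x_k$-exponent absorbed by each summand. The key algebraic identity that makes this work is the generalization to arbitrary $r_k$ of Fomin--Zelevinsky's exchange identity, which can be checked term-by-term in $s$ because the exchange polynomial $1 + a_{k,1}u + \cdots + a_{k,r_k-1}u^{r_k-1} + u^{r_k}$ contributes $x_k^{-s}$ uniformly inside each monomial in $\widehat{y}_k^{\,s}$.

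The main obstacle, as compared to the classical binomial setting, is precisely this cancellation in the local step: with $r_k+1$ terms in the exchange polynomial rather than two, one must verify that no new poles are created when rewriting the length-two mutation in terms of $\mathbf{x}_0$, even though each individual summand $a_{k,s}\widehat{y}_k^{\,s}/(\bigoplus_{s} a_{k,s} y_k^{\,s})$ is a genuine rational function. Everything else in the argument — the reduction from global to local Laurentness, the tree-combinatorial bookkeeping, and the induction on $N$ — is formally identical to the ordinary cluster algebra case, since it only depends on the fact that $\mu_k$ alters a single cluster variable.
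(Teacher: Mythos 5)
The paper does not prove this statement; it is cited verbatim from Chekhov and Shapiro [CS], Theorem 2.5. Your plan is nonetheless a reasonable outline of how that proof goes, since [CS] do indeed adapt the Fomin--Zelevinsky Caterpillar argument to the generalized setting. The scaffolding you lay out --- reduce global Laurentness to a local statement on short mutation paths, then induct on distance in the exchange tree --- is the correct one, and your $N \le 1$ base case checks: from Definition~\ref{def:genMutation} one has $x_k x_k' = \bigl(\prod_j x_j^{[-b_{kj}]_+}\bigr)^{r_k}\,\frac{\sum_s a_{k,s}\widehat{y}_k^{\,s}}{\oplus_s a_{k,s} y_k^{\,s}}$, and the $x_j$-exponent in the $s$-th summand is $r_k[-b_{kj}]_+ + s b_{kj} \ge 0$, so $x_k x_k'$ is indeed a polynomial in the $x_j$ over $\mathbb{ZP}$.

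The gap lies in the local step, which is precisely where the generalized case could a priori diverge from the ordinary one. The Caterpillar Lemma carries explicit coprimality hypotheses on the exchange polynomials, viewed in the Laurent ring of the remaining cluster variables, and you never formulate or verify these for the degree-$r_k$ polynomials $1 + a_{k,1}u + \cdots + a_{k,r_k-1}u^{r_k-1} + u^{r_k}$; checking these conditions in the non-binomial setting is the genuine content of the [CS] proof. Likewise, the ``key algebraic identity'' that is supposed to cancel spurious $x_k$-poles after two mutations is named but never exhibited; asserting that it ``can be checked term-by-term in $s$'' when the presence of $r_k+1$ terms instead of two is the entire novelty of the generalized setting leaves the load-bearing computation as a promissory note. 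The outline is sound, but the two steps that carry the proof are currently assertions rather than arguments.
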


Note that in the above theorem, the Laurent polynomial coefficients are in $\mathbb{ZP}$. Although these coefficients are in fact strictly non-negative for certain subclasses of generalized cluster algebras \cite{Banaian-Kelley, CS}, there is currently no proof of positivity for arbitrary generalized cluster algebras. Because ordinary cluster scattering diagrams were used to prove positivity for arbitrary ordinary cluster algebras, the conjectural positivity of arbitrary generalized cluster algebras is a powerful motivator for defining generalized cluster scattering diagrams.  

\begin{remark} 
For the remainder of the paper, we use the geometric language of (generalized) cluster varieties rather than the combinatorial language of (generalized) cluster algebras.  These two viewpoints are two sides of the same coin but the geometric language will be more conducive for defining (generalized) cluster scattering diagrams.
\end{remark}

\subsection{Ordinary cluster varieties}
\label{subsec:ordVarieties}

Before we can embark on any discussion of scattering diagrams, we must first establish some basic definitions. In this section, we will largely follow the exposition in \cite{FG} and \cite{GHK} and will develop these ideas in the context of ordinary cluster algebras. In Section \ref{sec:genScatDiag}, we modify these definitions for the new context of generalized cluster algebras and explain how the ordinary definitions in this section can be recovered as specializations.

We begin with definitions of \emph{fixed data} and \emph{torus seed data}, from \cite{GHK}, which together encode the information of a cluster seed.
We will work over characteristic zero field $\Bbbk$.

\begin{definition}
    The following data is referred to as \emph{fixed data}, denoted by $\Gamma$:
    \begin{itemize}
        \item The \emph{cocharacter lattice} $N$ with skew-symmetric bilinear form ${\{ \cdot, \cdot \}: N \times N \rightarrow \mathbb{Q}}$.
        \item A saturated sublattice $N_{\textrm{uf}} \subseteq N$ called the \emph{unfrozen sublattice}.
        \item An index set $I$ with $| I | = \textrm{rank}(N)$ and subset $I_{\textrm{uf}} \subseteq I$ such that ${| I_{\textrm{uf}}| = \textrm{rank}(N_{\textrm{uf}})}$
        \item A set of positive integers $\{ d_i \}_{i \in I}$ such that $\mathrm{gcd}(d_i) = 1$
        \item A sublattice $N^{\circ} \subseteq N$ of finite index such that ${\{ N_{\textrm{uf}}, N^{\circ} \} \subseteq \mathbb{Z}}$ and ${\{ N, N_{\textrm{uf}} \cap N^{\circ} \} \subseteq \mathbb{Z}}$
        \item A lattice $M = \textrm{Hom}(N,\mathbb{Z})$ called the \emph{character lattice} and sublattice ${M^{\circ} = \textrm{Hom}(N^{\circ},\mathbb{Z})}$.
    \end{itemize}
    The name `fixed data' refers to the fact that this data is fixed under mutation.
\end{definition}

\begin{definition} \label{def:ordseed}
    Given a set of fixed data, the associated \emph{torus seed data} is a collection $\bfs = \{e_i \}_{i \in I}$ such that $\{ e_i \}_{i \in I}$ is a basis for $N$, $\{e_i \}_i \in I_{\textrm{uf}}$ is a basis for $N_{\textrm{uf}}$, and $\{d_ie_i \}_{i \in I}$ is a basis for $N^{\circ}$. The torus seed data defines a new bilinear form
    \begin{align*}
        &[\cdot,\cdot]_{\bfs}: N \times N \rightarrow \mathbb{Q} \\
        &[e_i,e_j]_{\bfs} = \epsilon_{ij} = \{e_i,e_j \}d_j
    \end{align*}
    which gives the skew-symmetrizable matrix as in the cluster literature.
\end{definition}

\begin{remark}
Because we use the ``wide" convention for the exchange matrices of cluster algebras, our $B$ and $\epsilon$ matrices coincide. In the ``tall" convention, used by Fomin and Zelevinsky \cite{FZ-I}, the matrices are instead related by a transpose, i.e. then $\epsilon = B^T$.
\end{remark}

A choice of torus seed $\bfs = \{e_i \}_{i \in I}$ defines a dual basis $\{e_i^* \}_{i \in I}$ for $M$ and a basis $\{f_i = d_i^{-1}e_i^* \}_{i \in I}$ for $M^{\circ}$. It also defines two associated algebraic tori:
    \begin{align*}
        \mathbf{\mathcal{X}_s} &= T_M = \textrm{Spec}~\Bbbk[N], \\
        \mathbf{\mathcal{A}_s} &= T_{N^\circ} = \textrm{Spec}~\Bbbk[M^\circ].
    \end{align*}
The torus $\mathcal{X}_{\bfs}$ has coordinates $y_1, \dots, y_n$, where $y_i= z^{e_i}$ and the torus $\mathcal{A}_{\bfs}$ has coordinates $x_1, \dots, x_n$, where $x_i= z^{f_i}$. Although it is common in some portions of the literature to use the notation $A_1,\dots,A_n$ for the coordinates of $\mathcal{A}_{\bfs}$ and $X_1, \dots, X_n$ for the coordinates of $\mathcal{X}_{\bfs}$, our choice of notational convention is consistent with a large portion of the cluster algebra literature (c.f.: \cite{FZ-I, Nakanishi, Nakanishi-Rupel}) and, in particular, is consistent with the original definition of generalized cluster algebras \cite{CS}.

The bilinear form $\{\cdot, \cdot \}: N \times N \rightarrow \mathbb{Q}$ naturally defines maps ${p_1^*: N_{\textrm{uf}} \rightarrow M^{\circ}}$ and ${p_2^*: N \rightarrow M^{\circ} / N_{\textrm{uf}}^{\perp}}$ as
    \begin{align*}
        p_1^*\left(n \in N_{\textrm{uf}}\right) &= \left(n' \in N^{\circ} \mapsto \{n,n' \} \right), \\
        p_2^*\left(n \in N \right) &= \left( n' \in N_{\textrm{uf}} \cap N^{\circ} \mapsto \{n,n' \} \right).
    \end{align*}
Based on these maps, we can then choose a map $p^*: N \rightarrow M^{\circ}$ such that we have the restriction $\left.p^*\right|_{N_{\textrm{uf}}} = p_1^*$ and the composition of $p^*$ with the quotient map $M^{\circ} \rightarrow M^{\circ} / N_{\textrm{uf}}^{\perp}$ agrees with $p_2^*$. It is important to note that the choice of $p^*$ is not  unique because there is more than one possible choice of map $N / N_{\textrm{uf}} \rightarrow N_{\textrm{uf}}^{\perp}$. It is also important to note that for an arbitrary choice of fixed data, the map $p_1^*: N_{\textrm{uf}} \rightarrow M^{\circ}$ is not necessarily injective. It is, however, always injective for the principal coefficient case discussed in Section~\ref{subsec:ordVarPrincipal}. The assumption that $p_1^*$ is injective is sometimes referred to as the \emph{injectivity assumption}.

The injectivity assumption is, in fact, a  crucial ingredient in many of the arguments given by Gross, Hacking, Keel, and Kontsevich for arbitrary ordinary cluster algebras and therefore many of their results are proved via the principal coefficient case. For the same reason, we will also work via the principal coefficient case in our generalized setting.

Because the fixed data and torus seed data encode information from a cluster seed, there should also be a notion of mutation.

\begin{definition}
\label{def:basisMut}
    Given torus seed data $\bfs$ and some $k \in I_{\textrm{uf}}$, a \emph{mutation in direction $k$} of the torus seed data is defined by the following transformations of basis vectors:
    \begin{align*}
        e_i' &:= \begin{cases}
                    e_i + [\epsilon_{ik}]_+e_k & i \neq k \\
                    -e_k & i = k
                \end{cases} \\
        f_i' &:= \begin{cases}
                    -f_k + \sum_{j \in I_{\textrm{uf}}} [-\epsilon_{kj}]_{+}f_j & i = k \\
                    f_i & i \neq k
                \end{cases}
    \end{align*}
    The basis mutation induces the following mutation of the matrix $[\epsilon_{ij}]$:
    \begin{align*}
        \epsilon_{ij}' &:= \{e_i',e_j' \}d_j
                        = \begin{cases}
                            -\epsilon_{ij} & k = i \textrm{ or } k = j \\
                            \epsilon_{ij} & k \neq i,j \textrm{ and } \epsilon_{ik}\epsilon_{kj} \leq 0 \\
                            \epsilon_{ij} + |\epsilon_{ik}|\epsilon_{kj} & k \neq i,j \textrm{ and } \epsilon_{ik}\epsilon_{kj} \geq 0
                        \end{cases} 
    \end{align*}
\end{definition}
Note that mutation of torus seeds is not an involution, so $\mu_k(\mu_k(\bfs)) \neq \bfs$.
Mutation of torus seed data $\bfs$ in direction $k$ defines birational maps ${\mu_k : \mathbf{\mathcal{X}_s} \rightarrow \mathbf{\mathcal{X}_{\mu_k(s)}}}$ and ${\mu_k : \mathbf{\mathcal{A}_s} \rightarrow \mathbf{\mathcal{A}_{\mu_k(s)}}}$ via the pull-backs
\begin{align}
    \label{eq:A_pullback}
    \mu_k^* z^m &= z^m (1+z^{v_k})^{-\langle d_ke_k,m \rangle}  \textrm{ for } m \in M^{\circ}, \\
    \label{eq:X_pullback}
    \mu_k^* z^n &= z^n (1+z^{e_k})^{-[n,e_k]}  \textrm{ for } n \in N,
\end{align}
where $v_k := p_1^*(e_k)$. Explictly, using dual bases, one can compute 
$$v_k = e_k\left[ \epsilon_{ij} \right] = \sum_{j \in I_{uf}} \epsilon_{kj} f_j.$$

Some of the most iconic equations in the study of cluster algebras are the mutation relations for the cluster variables and coefficients. We can explicitly see the familiar forms of those mutation relations by applying $\mu_k^*$ to the cluster variables $x_i = z^{f_i}$ and $y_i = z^{e_i}$:
\begin{align}
    \label{eq:x_mutation}
    \mu_k^* x_i' &= \begin{cases}
                    x_k^{-1}\left( \prod\limits_{\epsilon_{kj} > 0} x_j^{\epsilon_{kj}} + \prod\limits_{\epsilon_{kj} < 0} x_j^{-\epsilon_{kj}} \right) & i = k \\
                    x_i & i \neq k
                    \end{cases} \\
    \label{eq:y_mutation}
    \mu_k^* y_i' &= \begin{cases}
                        y_i \left( 1 + y_k^{-\textrm{sgn}(\epsilon_{ik})} \right)^{-\epsilon_{ik}} & i \neq k \\
                        y_k^{-1} & i = k
                    \end{cases}
\end{align}
\begin{remark}
Equations (\ref{eq:x_mutation}) and (\ref{eq:y_mutation}) can be obtained from equations (\ref{eq:A_pullback}) and (\ref{eq:X_pullback}) by setting $n = e_i$ and $m = f_i$. For example, consider the mutation of $x_i = z^{f_i}$ and $y_i = z^{e_i}$ in direction $k$. If $i = k$, then
\begin{align*} 
    \mu_k^*(y_k') = \mu_k^*\left(z^{e_k'} \right) &= \mu_k^*\left( z^{-e_k}\right) = z^{-e_k}\left(1+z^{e_k} \right)^{-[-e_k,e_k]} = z^{-e_k} = y_k^{-1}
\end{align*}
and
\begin{align*}
    \mu_k^*(x_k') = \mu_k^*\left( z^{f_k'} \right) &= \mu_k^*\left( z^{-f_k + \sum_{j \in I_{\textrm{uf}}} [-\epsilon_{kj}]_+f_j}\right) \\
    &= z^{-f_k + \sum_{j \in I_{\textrm{uf}}} [-\epsilon_{kj}]_+f_j} \left(1 + z^{v_k} \right)^{-\langle d_ke_k, -f_k + \sum_{j \in I_{\textrm{uf}}} [-\epsilon_{kj}]_+f_j \rangle} \\
    &= z^{-f_k} \left( \prod_{j \in I_{\textrm{uf}}} z^{[-\epsilon_{kj}]_{+}f_j} \right)\left( 1 + z^{v_k} \right)^{\langle d_ke_k,f_k \rangle} \\
    &= z^{-f_k}\left( \prod_{j \in I_{\textrm{uf}}} z^{[-\epsilon_{kj}]_{+}f_j} \right) \left( 1 + \prod_{j \in I_{\textrm{uf}}} z^{\epsilon_{kj}f_j} \right)^1 \\
    &= z^{-f_k}\left( \prod_{j \in I_{\textrm{uf}},~\epsilon_{kj} < 0} z^{-\epsilon_{kj}f_j} + \prod_{j \in I_{\textrm{uf}},~\epsilon_{kj} > 0} z^{\epsilon_{kj}f_j} \right) \\
    &= x_k^{-1} \left( \prod_{\epsilon_{kj} < 0} x_j^{-\epsilon_{kj}} + \prod_{\epsilon_{kj} > 0} x_j^{\epsilon_{kj}} \right)
\end{align*}
If $i \neq k$, then
    \begin{align*}
        \mu_k^*(y_i') = \mu_k^*\left(z^{e_i'} \right) &= \mu_k^*\left( z^{e_i + [\epsilon_{ik}]_+e_k} \right) \\
        &= z^{e_i + [\epsilon_{ik}]_+e_k}\left( 1 + z^{e_k} \right)^{-[e_i + [\epsilon_{ik}]_+e_k,e_k]} \\
        &= z^{e_i}z^{[\epsilon_{ik}]_+e_k}\left( 1+z^{e_k} \right)^{-[e_i,e_k]} \\
        &= z^{e_i}z^{[\epsilon_{ik}]_+e_k}\left( 1+z^{e_k} \right)^{-\epsilon_{ik}}
    \end{align*}
If $\epsilon_{ik} > 0$, then
\begin{align*}
    z^{e_i}z^{[\epsilon_{ik}]_{+}e_k}\left( 1 + z^{e_k} \right)^{-\epsilon_{ik}} = z^{e_i}\left(1+z^{-e_k} \right)^{-\epsilon_{ik}} = z^{e_i}\left( 1 + z^{-\textrm{sgn}(\epsilon_{ik})e_k}\right)^{-\epsilon_{ik}}.
\end{align*}
If $\epsilon_{ik} < 0$, then
\begin{align*}
    z^{e_i}z^{[\epsilon_{ik}]_+e_k}\left( 1+z^{e_k} \right)^{-\epsilon_{ik}} = z^{e_i}\left( 1 + z^{-\textrm{sgn}(\epsilon_{ik})e_k} \right)^{-\epsilon_{ik}}.
\end{align*}
Hence, in both cases we have
\begin{align*}
    \mu_k^*\left(y_{i}' \right) = y_i\left( 1 + y_k^{-\textrm{sgn}(\epsilon_{ik})} \right)^{-\epsilon_{ik}}.
\end{align*}
Finally, $\mu_k^*(x_i') = \mu_k^*\left(z^{f_i'} \right) = \mu_k^*\left(z^{f_i} \right) = z^{f_i}(1+z^{v_k})^{-\langle d_ke_k,f_i \rangle} = z^{f_i} = x_i$.
\end{remark}

Let $\mathfrak{T}$ be a directed infinite rooted tree where each vertex has $|I_{\textrm{uf}}|$ outgoing edges, labeled by the elements of $I_{\textrm{uf}}$. Let $v$ be the root of the tree and associate some initial torus seed $\bfs$ with mutation class $[\bfs]$ to $v$. To indicate this choice of initial seed, we write $\mathfrak{T}_{v}$. {In some contexts, we write $\mathfrak{T}_{\mathbf{s}}$ to indicate both the rooted infinite tree and the torus seed associated to the root vertex.} An edge with label $k \in I_{\textrm{uf}}$ corresponds to mutation in direction $k$. Hence, any simple path beginning at vertex $v$ determines a sequence of mutations according to the sequence of attached edge labels. These mutation sequences determine an associated torus seed $\bfs_{w}$ for each vertex $w$ of $\mathfrak{T}_v$. We can also attach copies of $\mathcal{X}_{\bfs_w}$ and $\mathcal{A}_{\bfs_w}$ to each vertex $w$.

Proposition 2.4 of \cite{GHK} then allows the collection $\{ \mathcal{A}_{\bfs_w} \}$, where $w$ ranges over all vertices of $\mathfrak{T}$, to be glued along the open pieces where the $\mu_k$ given in Equation \eqref{eq:A_pullback} are defined. This produces a scheme $\mathcal{A}$, known as the \emph{$\mathcal{A}$ cluster variety}. Similarly, the collection $\{ \mathcal{X}_{\bfs_w} \}$ can be glued using the $\mu_k$ given in Equation~\eqref{eq:X_pullback} to obtain a scheme $\mathcal{X}$, known as the \emph{$\mathcal{X}$ cluster variety}.

The \emph{upper cluster algebra} associated to a cluster variety $V$ is defined as $\textrm{up}(V):= \Gamma(V,\mathcal{O}_V)$ \cite{CA-III}. Let $L$ be an arbitrary lattice and $T_L := \textrm{Spec}~\Bbbk[L^*]$. A \emph{global monomial} on $V = \bigcup_{\bfs} T_{L,\bfs}$ is a regular function on $V$ that restricts to a character on some torus $T_{L,\bfs}$ in the atlas for $V$. For $\mathcal{A}$-type cluster varieties, the set of global monomials is exactly the set of cluster monomials. The \emph{ordinary cluster algebra} $\textrm{ord}(V)$ is defined as the subalgebra of $\textrm{up}(V)$ generated by the set of global monomials on $V$.

\subsubsection{Principal coefficients}
\label{subsec:ordVarPrincipal}

As mentioned in the previous section, many of the important results of \cite{GHKK} were obtained via the principal coefficient case. Recall that an ordinary cluster algebra with principal coefficients, using the wide convention for exchange matrices, is an ordinary cluster algebra where the $n \times n$ exchange matrix has been extended to a $2n \times 2n$ skew-symmetrizable block matrix whose upper right block is the $n \times n$ identity matrix and whose lower right block is the $n \times n$ zero matrix \cite{FZ-IV}.  Including this additional information requires the following modifications to the fixed and torus seed data:
\begin{definition} \cite[Construction 2.11]{GHK}
\label{def:principalFixedData}
Given fixed data $\Gamma$, the fixed data for the cluster variety with principal coefficients, $\Gamma_{\textrm{prin}}$, is defined by:
\begin{itemize}
    \item The \emph{double} of the lattice $N$, $\widetilde{N} := N \oplus M^{\circ}$, with skew-symmetric bilinear form given by
    \[ \{ (n_1,m_1),(n_2,m_2) \} = \{ n_1, n_2 \} + \langle n_1, m_2 \rangle - \langle n_2, m_1 \rangle. \]
    Here, $\langle \cdot, \cdot \rangle: N \times M^{\circ} \rightarrow \mathbb{Q}$ denotes the canonical pairing given by evaluation, $\langle n, m \rangle \mapsto m(n)$.
    \item The unfrozen sublattice $\widetilde{N}_{\textrm{uf}} := N_{\textrm{uf}} \oplus 0 \cong N_{\textrm{uf}}$. 
    \item The sublattice $\widetilde{N}^{\circ} := N^{\circ} \oplus M$ of $\widetilde{N}$.
    \item The lattice $\widetilde{M} = \textrm{Hom}(\widetilde{N},\mathbb{Z}) = M \oplus N^{\circ}$.
    \item The lattice $\widetilde{M}^{\circ} = M^{\circ} \oplus N$, which has sublattice $\widetilde{M}$.
    \item The index set $\widetilde{I}$ given by the disjoint union of two copies of $I$.
    \item The unfrozen index set, $\widetilde{I}_{\textrm{uf}}$ given by thinking of the original $I_{\textrm{uf}}$ as a subset of the first copy of $I$.
    \item A collection of integers $\{ d_i \}_{i \in \widetilde{I}}$ taken such that
    within each disjoint copy of $I$, the $d_i$ agree with the original torus seed $\bfs$.
\end{itemize}
\end{definition}

For a cluster algebra with principal coefficients, there is an implicit choice of the tropical semifield $\mathbb{P} = \textrm{Trop}(y_1, \dots, y_n)$ in Definition~\ref{def:gen_clusterSeed}.

\begin{definition}[Construction 2.11 of \cite{GHK}]
\label{def:principalTorusSeed}
Given a torus seed $\bfs$, the torus seed with principal coefficients $\bfs_{\textrm{prin}}$ is defined as
\[ \bfs_{\textrm{prin}} := \{ (e_i,0), (0,f_i) \}_{i \in \widetilde{I}} \]
\end{definition}

For ease of notation, we will use $i$ and $j$ to denote indices corresponding to basis elements of the form $(e_i,0)$ and $\alpha$ and $\beta$ to denote indices corresponding to basis elements of the form $(0,f_\alpha)$. Because of the way the collection $\{ d_i \}$ is chosen, the entries of the matrix $\widetilde{\epsilon}$ defined by the principal fixed data are determined by the following relationships:
\begin{align*}
    \widetilde{\epsilon}_{ij}  = \epsilon_{ij}, \qquad \widetilde{\epsilon}_{i\beta} = \delta_{i\beta}, ~~~ \textrm{and}~~~ \widetilde{\epsilon}_{\alpha j} = -\delta_{\alpha j}.
\end{align*}
That is, $\widetilde{\epsilon}$ is a block matrix of the form
\begin{align*}
    \widetilde{\epsilon} &= \left[ \begin{array}{c|c}
        \epsilon & Id \\ \hline
        -Id & 0
    \end{array} \right]
\end{align*}
where $Id$ denotes the $|I| \times |I|$ identity matrix.

As before, the choice of $\bfs_{\textrm{prin}}$ defines dual bases for $\widetilde{M}$ and $\widetilde{M}^{\circ}$. The previous choice of a map $p^*: N \rightarrow M^{\circ}$ allows us to define a map $p^*: \widetilde{N} \rightarrow \widetilde{M}^{\circ}$ as
\begin{align*}
    p^*(e_i,0) &= (p^*(e_i),e_i), \\
    p^*(0,f_{\alpha}) &= (-f_{\alpha},0).
\end{align*}
The new map $p^*: \widetilde{N} \rightarrow \widetilde{M}^{\circ}$ is now necessarily injective. In fact, $p^*: \widetilde{N} \rightarrow \widetilde{M}^{\circ}$ is actually an isomorphism.

The choice of $\mathbf{s}_{\textrm{prin}}$ also defines the associated algebraic tori
\begin{align*}
     \mathcal{X}_{\bfs_{\textrm{prin}}} := T_{\widetilde{M}} = \textrm{Spec}~\Bbbk[\widetilde{N}], \\
     \mathcal{A}_{\bfs_{\textrm{prin}}} := T_{\widetilde{N}^{\circ}} = \textrm{Spec}~\Bbbk[\widetilde{M}^{\circ}].
\end{align*}
The principal cluster varieties $\mathcal{X}_{\textrm{prin}}$ and $\mathcal{A}_{\textrm{prin}}$ are then obtained by gluing along the birational mutation maps $\mu_k : \mathcal{X}_{\bfs_{\textrm{prin}}} \rightarrow \mathcal{X}_{\mu_k(\bfs_{\textrm{prin}})}$ and ${\mu_k : \mathcal{A}_{\bfs_{\textrm{prin}}} \rightarrow \mathcal{A}_{\mu_k(\bfs_{\textrm{prin}})}}$, as previously.

There are several important observations to make about the principal cluster varieties. First, the ring of global functions on $\mathcal{A}_{\textrm{prin}}$ is the \emph{upper cluster algebra with principal coefficients at the seed $\bfs$}. Second, $\mathcal{A}_{\textrm{prin}}$ has useful relationships with the cluster varieties $\mathcal{X}$ and $\mathcal{A}$ which arise from the natural inclusions
\begin{align*}
    \widetilde{p}^*: N &\rightarrow \widetilde{M}^{\circ}, \\
    n &\mapsto (p^*(n),n)
\end{align*}
and
\begin{align} \label{eq:pimap}
    \widetilde{\pi} : N &\rightarrow \widetilde{M}^{\circ}, \\
    n &\mapsto (0,n).
\end{align}

For any torus seed $\bfs$, the map $\widetilde{p}^*$ induces the exact sequence of algebraic tori:
\[ 1 \rightarrow T_{N^{\circ}} \rightarrow \mathcal{A}_{\bfs_{\textrm{prin}}} \xrightarrow{\widetilde{p}} \mathcal{X}_{\bfs} \rightarrow 1 \]
The map $\widetilde{p}: \mathcal{A}_{\bfs_{\textrm{prin}}} \rightarrow \mathcal{X}_{\bfs}$ defined by this exact sequence commutes with the mutations $\mu_k$ on $\mathcal{A}_{\bfs_{\textrm{prin}}}$ and $\mathcal{X}_{\bfs}$, yielding a morphism $\widetilde{p} : \Aprin \rightarrow \mathcal{X}$. Similarly, the $T_{N^{\circ}}$ action on $\mathcal{A}_{\bfs_{\textrm{prin}}}$ yields a $T_{N^{\circ}}$ action on $\Aprin$. 
The quotient $\Aprin/T_{N^{\circ}}$ is the $\mathcal{X}$-variety.

The map $\pi^*$ induces the projection $\pi: \Aprin \rightarrow T_{M}$. Let ${\mathcal{A}_{t} := \pi^{-1}(t)}$. Then the fiber $\mathcal{A}_{e}$, where $e \in T_{M}$ is the identity element, is the $\mathcal{A}$-variety.

\subsection{Cluster scattering diagrams}
\label{subsec:ordDiagrams}

Scattering diagrams first appeared in the literature in two dimensions, in work by Kontsevich and Soibelman \cite{KS-chapter}, and then in arbitrary dimension in the work of Gross and Siebert \cite{GS}. Our discussion of cluster scattering diagrams will loosely follow the structure of the exposition in Section 1 of \cite{GHKK}.

To construct a cluster scattering diagram, we begin with a choice of fixed data $\Gamma$ and initial seed data $\bfs$ and let $\Bbbk$ be a field of characteristic zero. Let $\sigma \subseteq M_{\mathbb{R}}$ be a strictly convex top-dimensional cone and define the associated monoid $P:=\sigma \cap M^{\circ}$ such that $p_1^*(e_i) \in J := P \backslash P^{\times}$ for all $i \in I_{\textrm{uf}}$. Here, $P^{\times} = \{ 0 \}$ is the group of units of $P$ and $J$ is a monomial ideal in the polynomial ring $\Bbbk[P]$. Let $\widehat{\Bbbk[P]}$ denote the completion of $\Bbbk[P]$ with respect to $J$.

The construction also requires the assumption that $p_1^*: N_{\textrm{uf}} \rightarrow M^\circ$ is an injective map. It is important to note that this assumption does not hold for all choices of fixed data, but does hold for fixed data corresponding to the principal coefficient case. Because arbitrary cluster algebras can be considered as specializations of the principal coefficient case, it is sufficient for the injectivity assumption to hold for that case.

Set
\[N^+ := N_{\bfs}^+ := \left\{ \left. \sum_{i \in I_{\textrm{uf}}} a_ie_i \right| a_i \geq 0, \sum a_i > 0 \right\} \]
and choose a linear function $d: N \rightarrow \mathbb{Z}$ such that $d(n) > 0$ for $n \in N^+$.
\begin{definition} \cite[Definition 1.4]{GHKK}
\label{def:walls1}
    A \emph{wall} in $M_\mathbb{R}$ is a pair $(\mathfrak{d},f_\mathfrak{d}) \in (N^+,\widehat{\Bbbk[P]})$ such that for some primitive $n_0 \in N^{+}$,
    \begin{enumerate}
        \item $f_\mathfrak{d} \in \widehat{\Bbbk[P]}$ has the form $1 + \sum_{j=1}^{\infty} c_j z^{jp_1^*(n_0)}$ with $c_j \in \Bbbk$
        \item $\mathfrak{d} \subset n_0^{\perp} \subset M_\mathbb{R}$ is a $({\mathrm{rank}(M) -1})$-dimensional convex rational polyhedral cone.
    \end{enumerate}
    We refer to $\mathfrak{d} \subset M_\mathbb{R}$ as the \emph{support} of the wall $(\mathfrak{d},f_\mathfrak{d})$.
\end{definition}

Let $\mathfrak{m}$ denote the ideal in $\widehat{\Bbbk[P]}$ which consists of formal power series with constant term zero.

\begin{definition} \cite[Definition 1.6]{GHKK}
\label{def:1.6-GHKK}
A \emph{scattering diagram} $\mathfrak{D}$ for $N^+$ and $\bfs$ is a set of walls $\{ (\mathfrak{d},f_{\mathfrak{d}}) \}$ such that for every degree $k > 0$, there are a finite number of walls $(\mathfrak{d},f_{\mathfrak{d}}) \in \mathfrak{D}$ with $f_{\mathfrak{d}} \neq 1 \mod \mathfrak{m}^{k+1}$.
\end{definition}

For a scattering diagram $\mathfrak{D}$,
\begin{align*}
    \textrm{Supp}(\mathfrak{D}) &:= \bigcup_{\mathfrak{d} \in \mathfrak{D}} \mathfrak{d}, \\
    \textrm{Sing}(\mathfrak{D}) &:= \left(\bigcup_{\mathfrak{d} \in \mathfrak{D}} \partial\mathfrak{d} \right) \cup \left( \bigcup_{\stackrel{\mathfrak{d}_1, \mathfrak{d}_2 \in \mathfrak{D}}{\textrm{dim}(\mathfrak{d}_1 \cap \mathfrak{d}_2) = n-2}} \mathfrak{d}_1 \cap \mathfrak{d}_2 \right)
\end{align*}
are defined as the support and singular locus of the scattering diagram. When $\mathfrak{D}$ is finite, its support is a finite polyhedral cone complex. A $(n-2)$-dimensional cell of this complex is referred to as a \emph{joint}. In this case, $\textrm{Sing}(\mathfrak{D})$ is simply the union of the set of all joints of $\mathfrak{D}$. A wall $\mathfrak{d} \subset n_0^\perp$ is called \emph{incoming} if $p_1^*(n_0) \in \mathfrak{d}$. Otherwise, $\mathfrak{d}$ is called \emph{outgoing}.

Each wall $\mathfrak{d} \in \mathfrak{D}$ has an associated wall-crossing automorphism.

\begin{definition} \cite[Definition 1.2]{GHKK}
\label{def:wallcrossing}
For $n_0 \in N^{+}$, let $m_0 := p_1^*(n_0)$ and $f = 1 + \sum_{k = 1}^{\infty} c_kz^{km_0} \in \widehat{\Bbbk[P]}$. Then $\mathfrak{p}_{f} \in \widehat{\Bbbk[P]}$ denotes the automorphism
\[ \mathfrak{p}_f(z^m) = z^mf^{\langle n_0',m \rangle} \]
where $n_0'$ generates the monoid $\mathbb{R}_{\geq 0}n_0 \cap N^{\circ}$.
\end{definition}

These wall-crossing automorphisms can be composed in order to compute automorphisms associated to paths on the scattering diagram that pass through multiple walls. Such compositions are called \emph{path-ordered products}.

\begin{definition}
Let $\gamma: [0,1] \rightarrow M_\mathbb{R} \backslash \textrm{Sing}(\mathfrak{D})$ be a smooth immersion which crosses walls transversely and whose endpoints are not in the support of $\mathfrak{D}$. Let $0 < t_1 \leq t_2 \leq \cdots \leq t_s < 1$ be a sequence such that at time $t_i$ the path $\gamma$ crosses the wall $\mathfrak{d}_i$ such that $f_i \neq 1~\mod \mathfrak{m}^{k+1}$. Definition~\ref{def:1.6-GHKK} ensures that this is a finite sequence. For each $i \in \{ 1, \dots, s \}$, set $\epsilon_i := -\textrm{sgn}(\langle n_i, \gamma'(t_i) \rangle)$ where $n_i \in N^{+}$ is the primitive vector normal to $\mathfrak{d}_i$. For each degree $k > 0$, define
\[ \mathfrak{p}_{\gamma,\mathfrak{D}}^k := \mathfrak{p}_{f_{\mathfrak{d}_{t_s}}}^{\epsilon_s} \circ \cdots \circ \mathfrak{p}_{f_{\mathfrak{d}_{t_1}}}^{\epsilon_1},  \]
where $\mathfrak{p}_{f_{\mathfrak{d}_{t_i}}}$ is defined as in Definition \ref{def:wallcrossing}. Then,
\[ \mathfrak{p}_{\gamma,\mathfrak{D}} := \lim_{k \rightarrow \infty} \mathfrak{p}_{\gamma,\mathfrak{D}}^k. \]
We refer to $ \mathfrak{p}_{\gamma,\mathfrak{D}}$ as a \emph{path-ordered product}.
\end{definition}

A  scattering diagram $\mathfrak{D}$ is \emph{consistent} if $\mathfrak{p}_{\gamma,\mathfrak{D}}$ depends only on the endpoints of $\gamma$. Two scattering diagrams, $\mathfrak{D}$ and $\mathfrak{D}'$, are considered \emph{equivalent} if  $\mathfrak{p}_{\gamma,\mathfrak{D}} = \mathfrak{p}_{\gamma,\mathfrak{D}'}$ for all paths $\gamma$ for which both path-ordered products are defined.

Gross, Hacking, Keel, and Kontsevich consider a particular scattering diagram, which we refer to as the \emph{cluster scattering diagram}. This diagram is defined by the fixed and torus seed data as follows.

\begin{definition}
Given a set of fixed data $\Gamma$ and torus seed $\bfs$, let $v_i = p_1^*(e_i)$ for all $i \in I_{\textrm{uf}}$. The \emph{initial scattering diagram} $\mathfrak{D}_{\textrm{in},\bfs}$ is defined as
\[ \mathfrak{D}_{\textrm{in},\bfs} := \{ (e_i^{\perp},1+z^{v_i}) : i \in I_{\textrm{uf}} \} \]
\end{definition}

The following theorems are two of the main results of \cite{GHKK}.

\begin{theorem} \cite[Theorem 1.12, 1.13]{GHKK} There is a scattering diagram $\mathfrak{D}_{\bfs}$ such that
\begin{enumerate}
    \item $\mathfrak{D}_{\bfs}$ is consistent
    \item $\mathfrak{D}_{\bfs} \supset \mathfrak{D}_{in,\bfs}$
    \item $\mathfrak{D}_{\bfs} \backslash \mathfrak{D}_{in,\bfs}$ consists of only outgoing walls
\end{enumerate}
The scattering diagram $\mathfrak{D}_{\bfs}$ is equivalent to a scattering diagram whose walls $(\mathfrak{d},f_{\mathfrak{d}})$ all have wall-crossing automorphisms of the form $f_{\mathfrak{d}} = (1 + z^m)^c$ for some $m = p^*(n)$, $n \in N^{+}$, and positive integer $c$. In particular, all the nonzero coefficients of $f_{\mathfrak{d}}$ are positive integers. The diagram is unique up to equivalence. 
\end{theorem}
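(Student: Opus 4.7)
My plan is to follow the standard Kontsevich--Soibelman order-by-order construction, using the grading $d: N \to \mathbb{Z}$ with $d(n) > 0$ for $n \in N^+$ to build $\mathfrak{D}_{\bfs}$ as an inverse limit of finite-order approximations. More precisely, I would construct inductively a sequence of scattering diagrams $\mathfrak{D}_{\bfs}^{(k)}$, each consistent modulo $\mathfrak{m}^{k+1}$, starting from $\mathfrak{D}_{\bfs}^{(1)} = \mathfrak{D}_{\text{in},\bfs}$, so that each $\mathfrak{D}_{\bfs}^{(k+1)}$ extends $\mathfrak{D}_{\bfs}^{(k)}$ by new walls whose attached functions are $\equiv 1 \pmod{\mathfrak{m}^{k+1}}$. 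Taking the inverse limit yields a scattering diagram in the sense of Definition \ref{def:1.6-GHKK}, and consistency of each approximation passes to the limit.

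The inductive step is local: around each joint $\mathfrak{j} \in \textrm{Sing}(\mathfrak{D}_{\bfs}^{(k)})$, the path-ordered product along a small loop encircling $\mathfrak{j}$ is trivial modulo $\mathfrak{m}^{k+1}$ but in general has a nontrivial obstruction modulo $\mathfrak{m}^{k+2}$. This obstruction lies in a finite-dimensional piece of the nilpotent Lie algebra of derivations of $\widehat{\Bbbk[P]}$, and by the Baker--Campbell--Hausdorff formalism it can be cancelled by adding finitely many walls through $\mathfrak{j}$ of degree exactly $k+1$. I would then verify that all the new walls are outgoing: the only walls whose primitive normal $n_0$ satisfies $p_1^*(n_0) \in \mathfrak{d}$ are precisely those in $\mathfrak{D}_{\text{in},\bfs}$, while the Lie brackets that produce the obstructions generate genuinely new primitive normals $n$ whose corresponding codimension-one cones may be chosen to lie away from $p_1^*(n)$. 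Uniqueness up to equivalence follows from the same induction: the outgoing-wall constraint pins down the order-$(k+1)$ correction as the unique element of the relevant Lie algebra quotient cancelling the obstruction.

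The main obstacle is the final positivity statement: each $f_{\mathfrak{d}}$ can be taken to be of the form $(1+z^m)^c$ with $c$ a positive integer and $m=p^*(n)$ for $n \in N^+$. The approach I would take, following the strategy of \cite{GHKK}, is first to perturb $\mathfrak{D}_{\text{in},\bfs}$ to a generic scattering diagram in which all walls are affine (rather than conical), all joints are simple, and no three walls meet along a joint of excess dimension. For such a generic diagram, the consistency condition at every joint reduces to a two-dimensional problem in the plane transverse to that joint, so the positivity question reduces to the rank-two case. In the rank-two case, one checks the factored form directly: finite type rank-two diagrams $A_1 \times A_1$, $A_2$, $B_2$, $G_2$ can be computed explicitly, and the remaining rank-two scattering diagrams of affine or wild type admit factored-form wall functions by a direct inductive analysis of the two outgoing walls at $(1,0)^\perp$ and $(0,1)^\perp$ using the skew-symmetric pairing. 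Having established the factored form generically, I would then take a limit as the perturbation tends to zero: walls may collect along lower-dimensional cones, but products of factors $(1+z^m)^c$ with consistent $m$ remain products of such factors with nonnegative integer exponents that are in fact positive by nondegeneracy. This perturbation-and-limit step is the technical heart of the argument.
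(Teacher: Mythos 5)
Before comparing, note that the paper does not supply a proof of this statement: it is quoted as \cite[Theorems 1.12, 1.13]{GHKK} in Section~\ref{subsec:ordDiagrams} as part of the background recollection of ordinary cluster scattering diagrams, so there is no paper's own proof to measure you against. The relevant benchmark is the argument in \cite{GHKK} itself, and it is also worth knowing that the paper's generalized analogue, Theorem~\ref{thm:gen_consistent}, deliberately quotes only \cite[Theorem 1.12]{GHKK} and \emph{omits} the factored-form statement $f_{\mathfrak{d}} = (1+z^m)^c$; Remark~\ref{rk:wecaretoomuch} explains that the authors have no generalized analogue of Theorem 1.13 and therefore cannot carry over the positivity argument. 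That should already signal that the last clause of the theorem is where the real difficulty lives.

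Your outline of the existence and uniqueness parts is indeed the standard Kontsevich--Soibelman construction that \cite{GHKK} use: order-by-order correction at joints, BCH to show the obstruction lies in a graded Lie-algebra slice, outgoing walls and uniqueness pinned down by that same grading. That part of the sketch is fine. The gap is in the step you call the technical heart. After perturbing to an affine, generic diagram and reducing consistency to a two-dimensional problem transverse to each joint, you assert that finite rank-two types can be ``checked directly'' and that ``the remaining rank-two scattering diagrams of affine or wild type admit factored-form wall functions by a direct inductive analysis of the two outgoing walls.'' No such direct inductive analysis is available; positivity of wild rank-two scattering diagrams is a substantive theorem, not a routine induction. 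In \cite{GHKK} the reduction is pushed further by a change-of-lattice trick that puts the two incoming walls into a normal form, and then the positivity of that normalized two-wall scattering is the content of \cite[Theorem C.13 and Sections C.3--C.4]{GHKK}, whose proof is several pages of nontrivial combinatorics in the tropical vertex group. Your proposal reduces the problem to exactly the place where the difficulty sits and then waves at it; to complete the argument you would need to either carry out GHKK's rank-two positivity argument or cite it explicitly as the base case. (The subsequent perturbation-to-zero limit is fine: walls can only coalesce if they share a direction $m$, so products of $(1+z^m)^c$ factors with a common $m$ remain of that form and the exponents stay positive.)
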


\begin{example}
Consider the ordinary cluster algebra $\mathcal{A}\left(\mathbf{x},\mathbf{y},\begin{bmatrix} 0 & 1 \\ -1 & 0 \end{bmatrix} \right)$ with seed data $\bfs = ((1,0),(0,1))$. This algebra has initial scattering diagram
\[ \mathfrak{D}_{\textrm{in},\bfs} = \left\{ \left( (0,1)^{\perp}, 1 + z^{(-1,0)} \right), \left( (1,0)^{\perp}, 1 + z^{(0,1)} \right) \right\}, \]
which can be drawn as
\begin{center}
    \begin{minipage}{0.3\textwidth}
    \begin{tikzpicture}
        %draws walls
        \draw (-1.5,0) to (1.5,0);
        \draw (0,-1.5) to (0,1.5);
        
        %labels walls
        \node at (1.75,0) {$\mathfrak{d}_1$};
        \node at (0,1.7) {$\mathfrak{d}_2$};
        
        %draws paths \gamma
        \draw[dashed,thick,blue,out=45,in=45,looseness=1.5] (0.75,-0.75) to (-0.75,0.75);
        \draw[dashed,thick,blue,->,looseness=1.5,out=225,in=225] (-0.75,0.75) to (0.75,-0.75);
        
        %labels path
        \node[blue,scale=1.25] at (-0.45,0.5) {$\gamma$};
        
        %draws point
        \filldraw[blue] (0.75,-0.75) circle (1.5pt);
    \end{tikzpicture}
    \end{minipage}
    \begin{minipage}{0.3\textwidth}
        \vspace{20mm}
        \begin{align*}
            f_{\mathfrak{d}_1} &= 1 + z^{(-1,0)} \\
            f_{\mathfrak{d}_2} &= 1 + z^{(0,1)}
        \end{align*}
        \vspace{15mm}
    \end{minipage}
\end{center}
We can see that $\mathfrak{D}_{\textrm{in},\bfs}$ is not consistent by computing $\mathfrak{p}_{\gamma,\mathfrak{D}_{\textrm{in},\bfs}}(z^{(0,1)})$ as:
\begin{align*}
    z^{(0,1)} &\xmapsto{\mathfrak{d}_1} z^{(0,1)}\left(1 + z^{(-1,0)} \right)^{\langle (0,1),(0,-1) \rangle} \\
    &= \frac{z^{(0,1)}}{1 + z^{(-1,0)}} \\
    &\xmapsto{\mathfrak{d}_2} \frac{z^{(0,1)}\left(1 + z^{(0,1)} \right)^{\langle (0,1),(1,0) \rangle}}{1 + z^{(-1,0)}\left( 1 + z^{(0,1)} \right)^{\langle (-1,0),(1,0) \rangle}} \\
    &= \frac{z^{(0,1)}\left(1 + z^{(0,1)} \right)}{1 + z^{(0,1)} + z^{(-1,0)}} \\
    &\xmapsto{\mathfrak{d}_1}  \frac{z^{(0,1)}\left(1 + z^{(-1,0)} \right)^{\langle (0,1),(0,1) \rangle}\left(1 + z^{(0,1)}\left(1 + z^{(-1,0)} \right)^{\langle (0,1),(0,1) \rangle} \right)}{1 + z^{(0,1)}\left(1 + z^{(-1,0)} \right)^{\langle (0,1),(0,1) \rangle} + z^{(-1,0)}\left(1 + z^{(-1,0)} \right)^{\langle (-1,0),(0,1) \rangle}} \\
    &=\frac{z^{(0,1)}\left(1 + z^{(0,1)} + z^{(-1,1)}\right)}{1 + z^{(0,1)}} \\
    &\xmapsto{\mathfrak{d}_2} \frac{z^{(0,1)}\left(1 + z^{(0,1)} \right)^{\langle (0,1),(-1,0) \rangle}\left(1 + z^{(0,1)}\left(1 + z^{(0,1)} \right)^{\langle (0,1),(-1,0) \rangle} + z^{(-1,1)}\left(1 + z^{(0,1)} \right)^{\langle (-1,1),(-1,0) \rangle}\right)}{1 + z^{(0,1)}\left(1 + z^{(0,1)} \right)^{\langle (0,1),(-1,0) \rangle}} \\
    &= z^{(0,1)}\left(1 + z^{(-1,1)} \right)
\end{align*}
In a consistent diagram, for any loop $\gamma$, we should get $\mathfrak{p}_{\gamma,\mathfrak{D}_{\textrm{in},\bfs}}\left( z^{m} \right) = z^{m}$. Whereas, here $\mathfrak{p}_{\gamma,\mathfrak{D}_{\textrm{in},\bfs}}\left( z^{(0,1)} \right) \neq z^{(0,1)}$. Making the diagram $\mathfrak{D}_{\textrm{in},\bfs}$ consistent requires adding the single wall $\mathfrak{d}_3 = \left(\mathbb{R}_{\geq 0}(1,-1), 1 + z^{(-1,1)} \right)$. Following the same loop $\gamma$ as above, our calculation now has the additional step
\begin{align*}
    z^{(0,1)}\left(1 + z^{(-1,1)} \right) &\xmapsto{\mathfrak{d}_3} z^{(0,1)}\left(1 + z^{(-1,1)} \right)^{\langle (0,1),(-1,-1) \rangle} \cdot \\
    & \qquad \qquad \left(1 + z^{(-1,1)}\left(1 + z^{(-1,1)} \right)^{\langle (-1,1),(-1,-1) \rangle} \right) \\
    &= \frac{z^{(0,1)}}{1 + z^{(-1,1)}}\left( 1 + z^{(-1,1)} \right) \\
    &= z^{(0,1)}
\end{align*}
and now $\mathfrak{p}_{\gamma,\mathfrak{D}_{\bfs}}\left( z^{(0,1)} \right) = z^{(0,1)}$. In this example, the necessary wall-crossing automorphism can be seen by inspection of $\mathfrak{p}_{\gamma,\mathfrak{D}_{\textrm{in},\bfs}}\left( z^{(0,1)} \right)$. In cluster scattering diagrams with more than one outgoing wall, it is difficult to determine the support and associated wall-crossing automorphisms of those walls by inspection of a similar calculation. Instead, there is a simple algorithm to produce $\mathfrak{D}_{\bfs}$ from $\mathfrak{D}_{\textrm{in},\bfs}$ which was introduced in two dimensions by Kontsevich and Soibelman \cite{KS-chapter} and then extended for arbitrary dimension by Gross and Siebert \cite{GS}.
\end{example}

\subsubsection*{Mutation invariance of ordinary cluster scattering diagrams}
\label{subsec:ordMutInvar}

An ordinary cluster algebra can be equivalently specified by any possible choice of initial cluster - there is no particular canonical choice. In the language of scattering diagrams, this means there should be no special choice of torus seed data. If two torus seeds, $\bfs$ and $\bfs'$ are mutation equivalent, we should therefore expect that the corresponding cluster scattering diagrams $\mathfrak{D}_{\bfs}$ and $\mathfrak{D}_{\bfs'}$ are also equivalent. This expectation reflects the fact that $\mathfrak{D}_{\bfs}$ and $\mathfrak{D}_{\bfs'}$  encode information about the same ordinary cluster algebra.

In order to make the notion of mutation invariance precise, we must define the following half-spaces and piecewise linear transformation:

\begin{definition}[Definition 1.22 of \cite{GHKK}]
\label{def:ordT_k}
For $k \in I_{\textrm{uf}}$, define
\begin{align*}
 \mathcal{H}_{k,+} &:= \{ m \in M_\mathbb{R} : \langle e_k,m \rangle \geq 0 \}, \\
 \mathcal{H}_{k,-} &:= \{ m \in M_\mathbb{R} : \langle e_k,m \rangle \leq 0 \}.
\end{align*}
The piecewise linear transformation $T_k: M^{\circ} \rightarrow M^{\circ}$ is defined as
\[ T_k(m) := \begin{cases}
                m + v_k\langle d_ke_k,m \rangle & m \in \mathcal{H}_{k,+} \\
                m & m \in \mathcal{H}_{k,-}
            \end{cases}\]
The shorthand notation $T_{k,-}$ and $T_{k,+}$ is sometimes used to refer to $T_k$ in the respective regions $\mathcal{H}_{k,-}$ and $\mathcal{H}_{k,+}$.
\end{definition}

Intuitively, the map $T_k$ gives us a way to ``mutate" cluster scattering diagrams. By applying the map $T_k$ to $\mathfrak{D}_{\bfs}$, we obtain a new cluster scattering diagram $T_k(\mathfrak{D}_{\bfs})$ via the following algorithm (Definition 1.22 of \cite{GHKK}):
\begin{enumerate}
    \item The wall $\mathfrak{d}_k = (e_k^{\perp}, 1 + z^{v_k})$ is replaced by $\mathfrak{d}_k' := (e_k^{\perp}, 1 + z^{-v_k})$,
    \item For each wall $(\mathfrak{d},f_{\mathfrak{d}}) \in \mathfrak{D}_{\bfs} \backslash \{ \mathfrak{d}_k \}$, there are either one or two walls in $T_k(\mathfrak{D}_{\bfs})$. The potential walls are
    \[ (T_k(\mathfrak{d} \cap \mathcal{H}_{k,-}),T_{k,-}(f_{\mathfrak{d}})) ~~ \textrm{and} ~~ (T_k(\mathfrak{d} \cap \mathcal{H}_{k,+}),T_{k,+}(f_{\mathfrak{d}})), \]
    where $T_{k,\pm}(f_{\mathfrak{d}})$ denotes the formal power series obtained by applying $T_{k,\pm}$ to each exponent in $f_{\mathfrak{d}}$. The first wall is dropped if ${\textrm{dim}(\mathfrak{d}) \cap \mathcal{H}_{k,-} < \textrm{rank}(M)-1}$ and the second wall is dropped if $\textrm{dim}(\mathfrak{d}) \cap \mathcal{H}_{k,+} < \textrm{rank}(M)-1$.
\end{enumerate}
The following theorem justifies why we should think of the action of $T_k$ as mutation of the cluster scattering diagram.

\begin{theorem}\cite[Theorem 1.24]{GHKK}
\label{theorem:mutationInvariance}
If the injectivity assumption holds, then $T_k(\mathfrak{D}_{\bfs})$ is a consistent scattering diagram and the diagrams $\mathfrak{D}_{\mu_k(\bfs)}$ and $T_k(\mathfrak{D}_{\bfs})$ are equivalent.
\end{theorem}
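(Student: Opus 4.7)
The plan is to invoke the uniqueness part of the cluster scattering diagram existence theorem just cited: the consistent scattering diagram containing $\mathfrak{D}_{\textrm{in},\bfs}$ whose additional walls are all outgoing is unique up to equivalence. Applied to the seed $\mu_k(\bfs)$, it suffices to establish three things about $T_k(\mathfrak{D}_{\bfs})$: (i) consistency; (ii) equivalence of its incoming walls with $\mathfrak{D}_{\textrm{in},\mu_k(\bfs)}$; and (iii) all further walls are outgoing relative to the mutated basis. Uniqueness then forces $T_k(\mathfrak{D}_{\bfs}) \equiv \mathfrak{D}_{\mu_k(\bfs)}$.

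For (ii), I would first record that $T_k$ is a piecewise linear bijection of $M^{\circ}_{\mathbb{R}}$: the identity on $\mathcal{H}_{k,-}$, linear on $\mathcal{H}_{k,+}$, fixing $e_k^{\perp}$ pointwise, and preserving each half-space setwise. The wall $(e_k^{\perp}, 1 + z^{v_k})$ is explicitly replaced by $(e_k^{\perp}, 1 + z^{-v_k})$, matching the initial wall in direction $k$ for $\mu_k(\bfs)$ since $e_k' = -e_k$. For $i \neq k$, the mutation $e_i' = e_i + [\epsilon_{ik}]_+ e_k$ together with the identity $\langle d_k e_k, v_i \rangle = \epsilon_{ik}$ shows that the image of $(e_i^{\perp}, 1 + z^{v_i})$ under $T_{k,+}$ has support $(e_i + \epsilon_{ik} e_k)^{\perp} \cap \mathcal{H}_{k,+}$ and wall-crossing function $1 + z^{v_i + \epsilon_{ik} v_k}$; this, combined with the unchanged piece on $\mathcal{H}_{k,-}$, reassembles (modulo outgoing corrections absorbed elsewhere in $T_k(\mathfrak{D}_{\bfs})$) into the new initial wall $((e_i')^{\perp}, 1 + z^{v_i'})$.

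The main obstacle is (i). My approach is to compare path-ordered products: given a loop $\gamma$ missing $\textrm{Sing}(T_k(\mathfrak{D}_{\bfs}))$, pull back to $\widetilde{\gamma} := T_k^{-1}(\gamma)$, which then misses $\textrm{Sing}(\mathfrak{D}_{\bfs})$. Away from $e_k^{\perp}$, there is a bijection between the walls crossed, and the wall-crossing automorphisms are related by conjugation with the appropriate linear piece $T_{k,\pm}$ --- a routine unwinding of the definitions. The delicate step is at $e_k^{\perp}$: one must verify that crossing the single wall $(e_k^{\perp}, 1 + z^{-v_k})$ in $T_k(\mathfrak{D}_{\bfs})$ precisely absorbs the discrepancy between the two linear pieces $T_{k,+}$ and $T_{k,-} = \textrm{id}$ when acting on monomials. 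Schematically, the identity required is that this wall-crossing corresponds, in exponents, to the transition between $T_{k,-}$ and $T_{k,+}$, with signs dictated by the direction of crossing. Granting this absorption identity, $\mathfrak{p}_{\gamma, T_k(\mathfrak{D}_{\bfs})}$ becomes conjugate to $\mathfrak{p}_{\widetilde{\gamma}, \mathfrak{D}_{\bfs}} = \textrm{id}$, establishing consistency. The injectivity hypothesis on $p_1^*$ enters precisely here, ensuring that the exponent $v_k$ unambiguously identifies the normal $e_k$ so that the wall-crossing formula is well-defined.

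Finally, (iii) is a bookkeeping check: a wall $\mathfrak{d} \subset n_0^{\perp}$ of $\mathfrak{D}_{\bfs}$ with $p_1^*(n_0) \notin \mathfrak{d}$ produces images under $T_{k,\pm}$ which remain outgoing in the mutated sense, because $T_{k,\pm}$ act linearly on each half-space and preserve containment relations between $p_1^*(n_0)$ and $\mathfrak{d}$. Combining (i), (ii), and (iii), the uniqueness result completes the proof.
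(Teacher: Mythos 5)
Your strategy matches the paper's (for the generalized analogue, Theorem~\ref{theorem:mutDiagConsistent}): reduce to uniqueness of the consistent completion, verify consistency of $T_k(\mathfrak{D}_{\bfs})$ by conjugating loops through $T_k$, and check that the incoming data matches. But you leave the central computation, the ``absorption identity,'' unverified, and that is the theorem's non-trivial content. Concretely it is the equality $\alpha^{-1}\circ\mathfrak{p}_{\mathfrak{d}_k'}=\mathfrak{p}_{\mathfrak{d}_k}$ where $\alpha(z^m)=z^{m+v_k\langle d_ke_k,m\rangle}$ is the automorphism induced by $T_{k,+}$ on exponents; checking it requires writing
\[
\alpha^{-1}\bigl(\mathfrak{p}_{\mathfrak{d}_k'}(z^m)\bigr)
= z^{m-v_k\langle d_ke_k,m\rangle}(1+z^{-v_k})^{-\langle d_ke_k,m\rangle}
= z^m\Bigl(\tfrac{z^{-v_k}}{1+z^{-v_k}}\Bigr)^{\langle d_ke_k,m\rangle}
= z^m(1+z^{v_k})^{-\langle d_ke_k,m\rangle},
\]
and in the generalized case the same manipulation succeeds if and only if $a_{k,j}=a_{k,r_k-j}$ --- the reciprocity hypothesis, which is invisible in the binomial case but is exactly what makes or breaks the generalization. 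Taking the identity on faith hides the step where that structural hypothesis enters.

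Your step (ii) also conflates the two halves. You compute that $T_{k,+}$ sends $e_i^\perp\cap\mathcal{H}_{k,+}$ into $(e_i+\epsilon_{ik}e_k)^\perp$, and then say the transformed halves ``reassemble into'' $(e_i')^\perp$; but $e_i'=e_i+[\epsilon_{ik}]_+e_k$, not $e_i+\epsilon_{ik}e_k$. When $\epsilon_{ik}<0$ the $T_{k,+}$-image lies on a hyperplane different from $(e_i')^\perp=e_i^\perp$ and is outgoing, since $p_1^*(e_i+\epsilon_{ik}e_k)=v_i+\epsilon_{ik}v_k$ lies in $\mathcal{H}_{k,-}$ while $T_{k,+}(e_i^\perp\cap\mathcal{H}_{k,+})\subset\mathcal{H}_{k,+}$; the incoming half of $(e_i')^\perp$ is then supplied by the unchanged piece in $\mathcal{H}_{k,-}$. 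In general the two halves of the bent image never reassemble into a single hyperplane (unless $\epsilon_{ik}=0$): one is incoming and the other is outgoing, and which is which depends on the sign of $\langle e_k,v_i\rangle$, which is why the paper runs a three-way case analysis on that sign. As a minor point, the injectivity assumption is not used where you place it; it is what underwrites the uniqueness theorem you invoke, by ensuring that $p_1^*(N_{\textrm{uf}})$ spans a full-dimensional cone so that $P$ and its completion behave as required.
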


Applying Theorem~\ref{theorem:mutationInvariance} multiple times gives the equivalence of ${T_{k_\ell} \circ \cdots \circ T_{k_1} (\mathfrak{D}_{\bfs})}$ and $\mathfrak{D}_{\bfs'}$ where $\bfs$ and $\bfs'$ are related by an arbitrary  sequence of mutations $\mu_{k_1}, \dots, \mu_{k_\ell}$, i.e.
$\bfs' = \mu_{k_\ell} \circ \cdots \circ \mu_{k_1}(\bfs)$.

One frequently used semifield structure on $\mathbb{Z}$ is the max-plus structure, where addition of elements is taken to be the maximum and multiplication to be ordinary addition. We denote this semifield structure as $\mathbb{Z}^T$ and refer to it as the \emph{Fock-Goncharov tropicalization}. The map $T_k$ given in Definition~\ref{def:ordT_k} is in fact the Fock-Goncharov tropicalization of the birational mutation map $\mu_k: \mathcal{A}_{\bfs} \rightarrow \mathcal{A}_{\bfs}$ defined by the pullback $\mu_k^*z^m = z^m(1+z^{v_k})^{- \langle d_ke_k, m \rangle}$ as in Equation~\eqref{eq:A_pullback}. 

\subsubsection*{Chamber Structure}

The map $T_k$ also gives rise to a chamber structure on the cluster scattering diagram $\mathfrak{D}_{\bfs}$. Within $\mathfrak{D}_{\bfs}$, there are two important named chambers.

\begin{definition}
Given a torus seed $\bfs$, we define $\mathcal{C}_{\bfs}^{\pm} \subseteq M_\mathbb{R}$ as
\begin{align*}
    \mathcal{C}^+_\bfs &:= \{ \left. m \in M_\mathbb{R} \right| \langle e_i,m \rangle \geq 0 \textrm{ for } i \in I_\textrm{uf} \}, \\
    \mathcal{C}^{-}_\bfs &:= \{ \left. m \in M_\mathbb{R} \right| \langle e_i,m \rangle \leq 0 \textrm{ for } i \in I_\textrm{uf} \}.
\end{align*}
When $\bfs$ is clear from context, we omit the subscript and simply write $\mathcal{C}^\pm$. We refer to $\mathcal{C}^+$ as the \emph{positive chamber} and $\mathcal{C}^{-}$ as the \emph{negative chamber}.
\end{definition}

The chambers $\mathcal{C}_{\bfs}^{\pm}$ are closures of connected components of $M_{\mathbb{R}} \backslash \textrm{Supp}(\mathfrak{D}_{\bfs})$. Similarly, the chambers  $\mathcal{C}_{\mu_k(\bfs)}^{\pm}$ are closures of connected components of $M_{\mathbb{R}} \backslash \textrm{Supp}( \mathfrak{D}_{\mu_k(\bfs)})$. We can observe that this means the chambers $T_k^{-1}( \mathcal{C}_{\mu_k(\bfs)}^{\pm} )$ are closures of connected components of $M_{\mathbb{R}} \backslash \textrm{Supp}(\mathfrak{D}_{\bfs})$. Further, $\mathcal{C}_{\bfs}^{\pm}$ and $T_k^{-1}( \mathcal{C}_{\mu_k(\bfs)}^{\pm})$ share a codimension one face with support $e_k^{\perp}$. This creates the following chamber structure on a subset of $M_{\mathbb{R}} \backslash \textrm{Supp}(\mathfrak{D}_{\bfs})$.

Let $\mathfrak{T}_v$ be the infinite tree with root $v$ defined in Section~\ref{subsec:ordVarieties}. Let $w$ be an arbitrary distinct vertex of $\mathfrak{T}_{\bfs}$. Then the sequence of edges between $v$ and $w$ determines a map $T_w: T_{k_{\ell}} \circ \cdots \circ T_{k_{1}}: M_{\mathbb{R}} \rightarrow \mathbb{R}$. By Theorem~\ref{theorem:mutationInvariance}, we know that $T_w(\mathfrak{D}_{\bfs}) = \mathfrak{D}_{\bfs_w}$ and so the chambers $\mathcal{C}_{w}^{\pm} := T_w^{-1}(\mathcal{C}_{\bfs_w}^{\pm})$ are closures of connected components of $M_{\mathbb{R}} \backslash \textrm{Supp}(\mathfrak{D}_{\bfs})$.

\begin{definition}[Definition 1.32 of \cite{GHKK}]
Let $\Delta^{+}_{\bfs}$ denote the set of chambers $\{ \mathcal{C}_{w}^{\pm} \}$ where $w$ runs over the vertices of $\mathfrak{T}_{\bfs}$. The elements of $\Delta^{+}_{\bfs}$ are referred to as \emph{cluster chambers}.
\end{definition}

In fact, this chamber structure coincides with the \emph{Fock-Goncharov cluster complex} \cite{FG}. For further details, see Construction 1.30 and Section 2 of \cite{GHKK}.

The chamber structure of a cluster scattering diagram is determined by the structural properties of the associated cluster algebra. The Laurent expansion of a cluster variable with respect to a particular initial cluster can be specified via two statistics: its $F$-polynomial and its $g$-vector. In \cite{FZ-IV}, Fomin and Zelevinsky give a definition of $g$-vectors in terms of a particular $\mathbb{Z}^n$-grading of the ring of Laurent polynomials in $\mathbf{x}$ whose coefficients are integer polynomials in $\mathbf{y}$. Gross, Hacking, Keel, and Kontsevich \cite{GHKK} give a description of $g$-vectors as the tropical points of theta functions. In the context of cluster scattering diagrams, there is a correspondence between the $g$-vectors and the support of the walls of the diagram. There is then also a correspondence between the cluster chambers and initial seeds of the cluster algebra.

\subsection{Theta basis for ordinary cluster algebras}
\label{subsec:ordThetaBasis}

One of the major results of the work of Gross, Hacking, Keel, and Kontsevich \cite{GHKK} is the existence of the \emph{theta basis}, a canonical basis for ordinary cluster algebras. The collections of the \emph{theta functions} which form this basis can be defined on scattering diagrams via combinatorial objects called \emph{broken lines}.

\begin{definition}\cite[Definition 3.1]{GHKK}
\label{def:brokenline}
Let $\mathfrak{D}$ be a scattering diagram, $m_0$ be a point in $M^\circ \backslash \{ 0 \}$, and $Q$ be a point in $M_\mathbb{R} \backslash \textrm{Supp}(\mathfrak{D})$. A \emph{broken line} with endpoint $Q$ and initial slope $m_0$ is a piecewise linear path $\gamma: (-\infty,0] \rightarrow M_\mathbb{R} \backslash \textrm{Sing}(\mathfrak{D})$ with finitely many domains of linearity. Each domain of linearity, $L$, has an associated monomial $c_Lz^{m_L} \in \Bbbk[M^\circ]$ such that the following conditions are satisfied:
\begin{enumerate}
    \item $\gamma(0) = Q$
    \item If $L$ is the first domain of linearity of $\gamma$, then $c_Lz^{m_L} = z^{m_0}$.
    \item Within the domain of linearity $L$, the broken line has slope $-m_L$ - in other words, $\gamma'(t) = -m_L$ on $L$.
    \item Let $t$ be a point at which $\gamma$ is non-linear and is passing from one domain of linearity, $L$, to another domain of linearity, $L'$, and define
    \[ \mathfrak{D}_t = \{ (\mathfrak{d},f_\mathfrak{d}) \in \mathfrak{D} : \gamma(t) \in \mathfrak{d}. \} \]
    Then the formal power series $\mathfrak{p}_{\left.\gamma\right|_{(t-\epsilon,t+\epsilon)},\mathfrak{D}_t}(c_Lz^{m_L})$ contains the term $c_{L'}z^{m_{L'}}$.
\end{enumerate}
\end{definition}

Broken lines allow for a beautifully concrete and combinatorial definition of a theta function:

\begin{definition}\cite[Definition 3.3]{GHKK}
\label{def:thetaFunction}
Suppose $\mathfrak{D}$ is a scattering diagram and consider points $m_0 \in M^\circ \backslash \{ 0 \}$ and $Q \in M_\mathbb{R} \backslash \textrm{Supp}(\mathfrak{D})$. For a broken line $\gamma$ with initial exponent $m_0$ and endpoint $Q$, we define $I(\gamma) = m_0$, $b(\gamma) = Q$, and $\textrm{Mono}(\gamma) = c(\gamma)z^{F(\gamma)}$ where $\textrm{Mono}(\gamma)$ is the monomial attached to the final domain of linearity of $\gamma$. We then define
\[ \vartheta_{Q,m_0} := \sum_\gamma \textrm{Mono}(\gamma) \]
where the summation ranges over all broken lines $\gamma$ with initial exponent $m_0$ and endpoint $Q$. When $m_0 = 0$, then for any endpoint $Q$ we define $\vartheta_{Q,0} = 1$.
\end{definition}

One of the key steps in proving that the theta functions form a basis is to show that the cluster variables and \emph{cluster monomials}, i.e. products of cluster variables from a particular cluster, are themselves theta functions. Although we will not reproduce the full proof, we will highlight several important intermediate properties and results.
For the full set of definitions and technical details of the proof, we refer the reader to \cite[Sections 3, 4, 6, and 7]{GHKK}.

One important property is that theta functions with the same initial slope $m_0$ but with distinct endpoints $Q$ and $Q'$ are related by a path-ordered product.

\begin{theorem}\cite[Theorem 3.5]{GHKK}
\label{theorem:thetafuncomposition}
Let $\mathfrak{D}$ be a consistent scattering diagram, $m_0$ be a point in $M \backslash \{ 0 \}$, and consider a pair of points $Q$ and $Q'$ in $M_\mathbb{R} \backslash \textrm{Supp}(\mathfrak{D})$ such that $Q$ and $Q'$ are linearly independent over $\mathbb{Q}$. Then for any path $\gamma$ with endpoints $Q$ and $Q'$ for which $\mathfrak{p}_{\gamma,\mathfrak{D}}$ is defined, we have
\[ \vartheta_{Q',m_0} = \mathfrak{p}_{\gamma,\mathfrak{D}}(\vartheta_{Q,m_0}) \]
\end{theorem}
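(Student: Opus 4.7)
The plan is to establish a monomial-by-monomial correspondence between the broken lines defining $\vartheta_{Q',m_0}$ and the terms obtained by expanding $\mathfrak{p}_{\gamma,\mathfrak{D}}(\vartheta_{Q,m_0})$, achieved by extending broken lines ending at $Q$ along the path $\gamma$ and tracking the choices made at each wall crossing. Because $\vartheta_{Q,m_0}$ and $\vartheta_{Q',m_0}$ are a priori only formal power series, I would work modulo $\mathfrak{m}^{k+1}$ for each $k \geq 1$ and then pass to the limit. At each such truncation, only finitely many walls $(\mathfrak{d},f_{\mathfrak{d}}) \in \mathfrak{D}$ contribute nontrivially, so only finitely many broken lines contribute to either theta function modulo $\mathfrak{m}^{k+1}$. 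Using the hypothesis that $\mathfrak{D}$ is consistent, $\mathfrak{p}_{\gamma,\mathfrak{D}}$ depends only on the endpoints $Q,Q'$, so I may replace $\gamma$ by a deformation that is transverse to every relevant wall, avoids $\mathrm{Sing}(\mathfrak{D})$, and decomposes as a concatenation $\gamma = \gamma_s \cdots \gamma_1$ where each $\gamma_i$ crosses exactly one relevant wall $\mathfrak{d}_i$ between intermediate points $Q = Q_0, Q_1, \ldots, Q_s = Q'$.

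I would then induct on $s$. The base case $s = 0$ asserts that if $Q$ and $Q'$ lie in the same chamber (modulo level $k$), then every broken line ending at $Q$ extends uniquely to one ending at $Q'$ by translating its final linear segment, with the final monomial unchanged; hence $\vartheta_{Q,m_0} \equiv \vartheta_{Q',m_0}$ mod $\mathfrak{m}^{k+1}$. For the inductive step, it suffices to treat a single wall crossing, say along $\gamma_i$ passing through $\mathfrak{d}_i$ with wall-crossing automorphism $\mathfrak{p}_{f_{\mathfrak{d}_i}}^{\pm}$. Given a broken line $\gamma_0$ ending at $Q_{i-1}$ whose terminal monomial is $c\, z^{m_L}$, I extend $\gamma_0$ by a linear segment past $\mathfrak{d}_i$; at the crossing, the next monomial attached to the broken line can be any term appearing in the expansion of $c\, z^{m_L}\cdot f_{\mathfrak{d}_i}^{\pm\langle n_0', m_L\rangle}$, including the ``identity'' term $c\,z^{m_L}$ itself (the broken line goes straight through). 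The resulting collection of extended broken lines ending at $Q_i$, together with their terminal monomials, realizes precisely the expansion $\mathfrak{p}_{f_{\mathfrak{d}_i}}^{\pm}(c\,z^{m_L})$. Summing over $\gamma_0$ yields $\vartheta_{Q_i,m_0} \equiv \mathfrak{p}_{f_{\mathfrak{d}_i}}^{\pm}(\vartheta_{Q_{i-1},m_0})$ mod $\mathfrak{m}^{k+1}$, and iterating through $i=1,\ldots,s$ gives the required identity at level $k$. Taking the inverse limit in $k$ then yields the theorem.

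The main obstacle is verifying that this construction of extended broken lines is a \emph{bijection} onto broken lines ending at $Q_i$, rather than merely a surjection or an injection. Injectivity is straightforward because each extended broken line records the monomial chosen at $\mathfrak{d}_i$, hence determines both its truncation at $Q_{i-1}$ and the choice made. Surjectivity is the subtle point: one must show that every broken line $\gamma_0'$ ending at $Q_i$ with initial slope $m_0$ arises as an extension. The argument is to trace $\gamma_0'$ backwards along $\gamma_i$ from $Q_i$; its terminal domain of linearity either crosses $\mathfrak{d}_i$ transversely (in which case truncating at that crossing yields a broken line ending at $Q_{i-1}$, up to straightening the last segment as in the base case), or $\gamma_0'$ is entirely contained in a chamber on the $Q_i$ side of $\mathfrak{d}_i$ (in which case it corresponds to the ``straight'' term and matches a broken line ending at $Q_{i-1}$ with no bending at $\mathfrak{d}_i$). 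This dichotomy uses transversality of $\gamma_i$ to $\mathfrak{d}_i$, the separation of wall crossings in the decomposition of $\gamma$, and the genericity hypothesis that $Q$ and $Q'$ are linearly independent over $\mathbb{Q}$, which prevents broken lines from having anomalous coincidences with the walls of $\mathfrak{D}$.
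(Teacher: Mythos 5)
The paper itself offers no proof of this statement: it is cited verbatim from \cite{GHKK}, where it is itself deduced as a special case of Theorem 4.5 of \cite{CPS} (Carl--Pumperla--Siebert); the generalized analogue, Theorem~\ref{theorem:genthetafuncomposition}, is likewise handled with a one-line reduction to \cite{CPS}. So you are reconstructing a proof the paper delegates entirely to a reference. Your outline -- working modulo $\mathfrak{m}^{k+1}$, deforming $\gamma$ using consistency, decomposing into single wall crossings, and relating broken lines at consecutive intermediate points -- is the correct high-level strategy and matches the shape of the CPS argument.

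However, the central step of your induction has a genuine gap. Fix a single wall crossing $\gamma_i$ from $Q_{i-1}$ to $Q_i$ through $\mathfrak{d}_i\subset n_0^\perp$, and consider a broken line ending at $Q_{i-1}$ with terminal monomial $cz^{m_L}$. The only direction in which that broken line may continue from $Q_{i-1}$ without bending is $-m_L$, because the slope within a domain of linearity is dictated by its monomial. Whether that continuation meets $\mathfrak{d}_i$ at all depends on the sign of $\langle n_0,m_L\rangle$ relative to the side $Q_{i-1}$ lies on. For roughly half the broken lines the continuation moves \emph{away} from $\mathfrak{d}_i$, so the phrase ``extend $\gamma_0$ by a linear segment past $\mathfrak{d}_i$'' has no meaning for them; your construction simply does not produce a broken line ending at $Q_i$. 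These are exactly the broken lines on which $\mathfrak{p}_{\gamma_i,\mathfrak{D}}=\mathfrak{p}_{f_{\mathfrak{d}_i}}^{\epsilon_i}$ acts by a \emph{negative} power of $f_{\mathfrak{d}_i}$, i.e.\ by an infinite series in $\widehat{R[P]}$, and the correct bookkeeping in this regime runs the other way: one broken line ending at $Q_i$ continues across $\mathfrak{d}_i$ to several broken lines ending at $Q_{i-1}$, and the inverse automorphism resums those contributions back into a single term. Your ``dichotomy'' argument in the last paragraph does not repair this: a broken line ending at $Q_i$ whose final segment lies entirely on the $Q_i$-side cannot be ``matched to a broken line ending at $Q_{i-1}$ with no bending at $\mathfrak{d}_i$'' by translating its last segment, because $Q_{i-1}$ is in a different chamber and the translation would itself cross $\mathfrak{d}_i$.

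To close the gap you need to split broken lines at each endpoint by the sign of $\langle n_0,m_L\rangle$ and run the extension construction in the direction compatible with that sign (forward for one class, backward for the other), then verify that the partial sums agree with the two signed halves of $\mathfrak{p}_{f_{\mathfrak{d}_i}}^{\epsilon_i}$. Equivalently, you can adopt the deformation-of-endpoint viewpoint of \cite{CPS}: as the endpoint moves continuously along $\gamma_i$, the family of broken lines varies continuously except at the moment the endpoint crosses $\mathfrak{d}_i$, where some broken lines acquire (resp.\ lose) a crossing of $\mathfrak{d}_i$ in their final segment, and the wall-crossing automorphism is precisely the generating function of those discrete transitions. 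Either way, the single-direction extension map you describe is not a bijection, and establishing the identity requires tracking both classes of broken lines and their cancellations.
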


Another important property is the existence of a bijection between broken lines in the  diagram $\mathfrak{D}_{\bfs}$ and in the diagram $\mathfrak{D}_{\mu_k(\bfs)}$. Because the diagrams $\mathfrak{D}_{\bfs}$ and $\mathfrak{D}_{\mu_k(\bfs)}$ correspond to the same cluster algebra, this is a clearly desirable property if the theta functions are going to form a canonical basis. That is, any choice of initial cluster (i.e., cluster corresponding to the positive chamber) should produce the same canonical basis, up to isomorphism.
For $V=\mathcal{A}_t$,
\begin{prop}\cite[Proposition 3.6]{GHKK}
\label{prop:brokenLineBijectionOrd}
The transformation $T_k$ gives a bijection between broken lines with endpoint $Q$ and initial slope $m_0$ in $\mathfrak{D}_\bfs$ and broken lines with endpoint $T_k(Q)$ and initial slope $T_k(m_0)$ in $\mathfrak{D}_{\mu_k(\bfs)}$. In particular,
\begin{align*}
    \vartheta^{\mu_k(\bfs)}_{T_k(Q),T_k(m_0)} &= \begin{cases}
            T_{k,+}\left( \vartheta^\bfs_{Q,m_0} \right) & Q \in \mathcal{H}_{k,+} \\
            T_{k,-}\left( \vartheta^\bfs_{Q,m_0} \right) & Q \in \mathcal{H}_{k,-}
        \end{cases}
\end{align*}
where the superscript indicates which scattering diagram is used to define the theta function and  $T_{k,\pm}$ acts linearly on the exponents in $\vartheta^\bfs_{Q,m_0}$.
\end{prop}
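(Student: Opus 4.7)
My plan is to leverage the mutation invariance theorem (Theorem 1.24) together with a careful analysis of how the piecewise linear map $T_k$ transforms broken lines. Since Theorem 1.24 already tells us that $T_k(\mathfrak{D}_\bfs)$ and $\mathfrak{D}_{\mu_k(\bfs)}$ are equivalent scattering diagrams, the task reduces to showing that $T_k$ sends the set of broken lines in $\mathfrak{D}_\bfs$ with endpoint $Q$ and initial slope $m_0$ bijectively to the set of broken lines in $T_k(\mathfrak{D}_\bfs)$ with endpoint $T_k(Q)$ and initial slope $T_k(m_0)$.

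The construction goes as follows. Given a broken line $\gamma$ in $\mathfrak{D}_\bfs$ with domains of linearity $L_1,\dots,L_N$ and attached monomials $c_iz^{m_i}$, define $\gamma' := T_k \circ \gamma$. Because $T_k$ is a homeomorphism of $M_\mathbb{R}$ that is linear on each closed half-space $\mathcal{H}_{k,\pm}$, the path $\gamma'$ is a piecewise linear path whose potential domains of linearity are obtained from those of $\gamma$ by further subdividing at crossings of $e_k^\perp$. I attach to each such domain the monomial $T_{k,\pm}(c_iz^{m_i})$ using whichever branch of $T_k$ applies on that domain; in particular, if $Q \in \mathcal{H}_{k,\pm}$ then the final monomial of $\gamma'$ is $T_{k,\pm}(\textrm{Mono}(\gamma))$, which will yield the desired formula for the theta function after summing.

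Next I must check that $\gamma'$ is actually a broken line in $T_k(\mathfrak{D}_\bfs)$. For a bending point of $\gamma$ at a wall $(\mathfrak{d},f_\mathfrak{d})$ with $\mathfrak{d}\neq\mathfrak{d}_k$, the corresponding wall in $T_k(\mathfrak{D}_\bfs)$ is $(T_k(\mathfrak{d}\cap\mathcal{H}_{k,\pm}), T_{k,\pm}(f_\mathfrak{d}))$, and since $T_{k,\pm}$ is a lattice homomorphism on its domain of linearity, it commutes with both the formal power series operations and the pairing $\langle n, \cdot\rangle$; the broken line condition transforms cleanly to the required condition at the new wall. The delicate case is crossings of $e_k^\perp$. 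There are two sub-cases to analyze carefully, and this is the main obstacle: (i) $\gamma$ crosses $e_k^\perp$ without bending, in which case $\gamma'$ acquires a new kink purely from the piecewise linearity of $T_k$, and I must verify that the new monomial $T_{k,\mp}(c z^m)$ on the other side arises as a term of $\mathfrak{p}_{f_{\mathfrak{d}_k'}}^{\pm 1}(T_{k,\pm}(cz^m))$ for the mutated initial wall $\mathfrak{d}_k' = (e_k^\perp, 1+z^{-v_k})$; (ii) $\gamma$ already bends at $\mathfrak{d}_k$, producing a monomial output from $\mathfrak{p}_{f_{\mathfrak{d}_k}}^{\pm 1}$, and I must check that composing this with the $T_k$-induced kink yields a consistent term of $\mathfrak{p}_{f_{\mathfrak{d}_k'}}^{\pm 1}$ applied to the transformed monomial. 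Both reduce to the identity $T_{k,+}(m) - T_{k,-}(m) = \langle d_k e_k, m\rangle v_k$, combined with the definition of $\mathfrak{p}_f$ and the fact that $p_1^*(e_k)=v_k$; this bookkeeping is the core computational content.

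Finally, bijectivity follows by running the same construction with the piecewise linear inverse $T_k^{-1}$, which is the mutation-invariance transformation corresponding to mutating $\mu_k(\bfs)$ back to $\bfs$; the composition in either order acts as the identity on broken lines because $T_k$ and $T_k^{-1}$ are mutual inverses on $M_\mathbb{R}$ and on attached monomials. Once the bijection on broken lines is established, summing the relation $\textrm{Mono}(\gamma') = T_{k,\pm}(\textrm{Mono}(\gamma))$ over all broken lines $\gamma$ with endpoint $Q \in \mathcal{H}_{k,\pm}$ and initial slope $m_0$ yields the theta function identity $\vartheta^{\mu_k(\bfs)}_{T_k(Q),T_k(m_0)} = T_{k,\pm}(\vartheta^\bfs_{Q,m_0})$, completing the proof.
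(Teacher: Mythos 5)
Your proposal is correct and follows essentially the same route as the paper's proof of the generalized analogue (Proposition~\ref{prop:brokenLineBijection}), which is modeled on GHKK's Proposition 3.6. You compose with $T_k$, subdivide domains of linearity at $e_k^\perp$, attach the $T_{k,\pm}$-transformed monomials, verify the bending rule transforms correctly (automatically away from $e_k^\perp$, and via the wall-crossing computation at $e_k^\perp$), and obtain bijectivity from the piecewise-linear inverse. The only cosmetic difference is that you split the $e_k^\perp$ analysis by whether $\gamma$ bends at $\mathfrak{d}_k$, whereas the paper subdivides the straddling domain first and then splits by which half-space comes first; both roads lead to the same computation, namely that $T_{k,\pm}$ conjugates $\mathfrak{p}_{f_{\mathfrak{d}_k}}$ to $\mathfrak{p}_{f_{\mathfrak{d}_k'}}$, driven by the identity $T_{k,+}(m)-T_{k,-}(m)=\langle d_k e_k,m\rangle v_k$ you flag as the core step. (In the generalized version this conjugation additionally requires the palindromicity $a_{k,i}=a_{k,r_k-i}$; in the ordinary case the wall function $1+z^{v_k}$ is automatically palindromic, so you are not missing anything.)
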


Although Proposition~\ref{prop:brokenLineBijectionOrd} gives a bijection between cluster scattering diagrams generated by seeds that are related by a single mutation, repeated applications of the proposition yield such bijections for any pair of cluster scattering diagrams which correspond to the same cluster algebra.

Finally, the theta functions have structure constants with an elegant combinatorial definition in terms of pairs of broken lines. Let $p_1, p_2,$ and $q$ be points in $\widetilde{M}_{\bfs}^{\circ}$ and $z$ be a generic point in $\widetilde{M}_{\mathbb{R},\bfs}^{\circ}$. There are finitely many pairs of broken lines $\gamma_1, \gamma_2$ such that $\gamma_i$ has initial slope $p_i$, both $\gamma_1$ and $\gamma_2$ have endpoint $z$, and the sum of the slopes of their final domains of linearity is $q$. Define
\[ \alpha_{z}(p_1,p_2,q) := \sum_{\substack{ (\gamma_1,\gamma_2) \\ I(\gamma_i) = p_i, b(\gamma_i) = z \\ F(\gamma_1) + F(\gamma_2) = q}} c(\gamma_1)c(\gamma_2). \]
For points $z$ sufficiently close to $q$, the constant $\alpha_{z}(p_1,p_2,q)$ is independent of the choice of $z$ and we write $\alpha(p_1,p_2,q) := \alpha_{z}(p_1,p_2,q)$ \cite[Proposition 6.4]{GHKK}. Products of theta functions can then be written as
\[ \vartheta_{p_1} \cdot \vartheta_{p_2} = \sum_{q \in \widetilde{M}_{\bfs}} \alpha(p_1,p_2,q) \vartheta_{q} ~\in~ \widehat{\textrm{up}(\overline{\mathcal{A}}_{\textrm{prin}}^{\bfs})} \otimes_{\Bbbk[N_{\bfs}^{+}]} \Bbbk[N]. \]
Hence, the theta functions form a legitimate vector space basis.

Moving from proving results about theta functions on scattering diagrams to proving results about the ordinary cluster algebras requires formalizing the connection between cluster scattering diagrams and cluster varieties. To do so, Gross, Hacking, Keel, and Kontsevich construct a space $\mathcal{A}_{\textrm{scat}}$ from the cluster scattering diagram $\mathfrak{D}_{\bfs}$ by attaching a copy of the torus $T_{N^{\circ}}$ to each cluster chamber of $\mathfrak{D}_{\bfs}$ and then gluing these copies according to the birational maps given by the wall-crossing automorphisms. Up to isomorphism, this space is independent of the choice of torus seed $\bfs$ within a given mutation class. Gross, Hacking, Keel, and Kontsevich then show that the space $\Ascat$ is isomorphic to the cluster variety $\A_{\bfs}$.

Once this identification is made, it is then possible to formalize the relationship between the theta functions and cluster monomials. Consider a set of fixed data $\Gamma$ and torus seeds $\bfs$,  $\bfs_{w} = (e_1', \dots, e_n')$. In this geometric context, a \emph{cluster monomial on $\bfs_{w}$} is defined as a monomial on $T_{N^{\circ},w} \subset \A$ of the form $z^m$ where $m = \sum_{i=1}^n a_if_i'$ with all $a_i$ non-negative. Such monomials extend to regular functions on $\mathcal{A}$. A \emph{cluster monomial on $\mathcal{A}$} is then a regular function which is a cluster monomial on some torus seed of $\mathcal{A}$. The following theorem identifies the cluster monomials on $\mathcal{A}$ with theta functions.

\begin{theorem}\cite[Theorem 4.9]{GHKK}
\label{theorem:positivityPhenomenon}
Let $\Gamma$ be a set of fixed data which satisfies the Injectivity Assumption and $\bfs$ be a choice of torus seed. Consider a point $Q \in \mathcal{C}_{\bfs}^{+}$ and $m \in \sigma \cap M^{\circ}$ for some cluster chamber $\sigma \in \Delta_{\bfs}^{+}$. Then $\vartheta_{Q,m}$ is a positive Laurent polynomial which expresses a cluster monomial of $\mathcal{A}$ in terms of the initial torus seed $\bfs$. Further, all cluster monomials can be expressed in this way.
\end{theorem}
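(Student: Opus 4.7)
The plan is to reduce the theorem to the case where the endpoint $Q$ and the initial slope $m$ both lie in the same cluster chamber $\sigma$, and then to transport the resulting theta function back to the positive chamber via a path-ordered product. For the base case, I would pick an auxiliary point $Q' \in \mathrm{int}(\sigma)$ such that no wall of $\mathfrak{D}_{\bfs}$ meets the ray $\{Q' + tm : t \geq 0\}$; this is possible because $\sigma$ is a convex polyhedral cone containing both $m$ and $Q'$, so the ray stays inside $\sigma$ and avoids all walls except possibly boundary ones of $\sigma$, which a generic choice of $Q'$ circumvents. Then the only broken line with initial slope $m$ and endpoint $Q'$ is the straight, unbent line carrying the monomial $z^m$, so $\vartheta_{Q',m} = z^m$ follows immediately from Definition~\ref{def:thetaFunction}.

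Next, I would apply Theorem~\ref{theorem:thetafuncomposition} to any path $\gamma$ from $Q'$ to $Q$ avoiding $\mathrm{Sing}(\mathfrak{D}_{\bfs})$ to obtain $\vartheta_{Q,m} = \mathfrak{p}_{\gamma, \mathfrak{D}_{\bfs}}(z^m)$. The cluster chamber $\sigma$ corresponds under $\Delta_\bfs^+$ to a torus seed $\bfs_w = \mu_{k_\ell} \circ \cdots \circ \mu_{k_1}(\bfs)$ with $\sigma = \mathcal{C}_w^+$, and on the torus $T_{N^\circ,w}$ attached to $w$ in the construction of $\Ascat$, the regular function $z^m$ is the cluster monomial $\prod_i (x_i^w)^{a_i}$, where $m = \sum_i a_i f_i^w$ with $a_i \geq 0$. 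Under the identification $\Ascat \cong \cA$, the composite of wall-crossings encoded by $\mathfrak{p}_{\gamma, \mathfrak{D}_{\bfs}}$ realizes the pullback by the mutation sequence relating $\bfs_w$ back to $\bfs$, so $\vartheta_{Q,m}$ is precisely the Laurent expansion of this cluster monomial in the initial cluster $\mathbf{x}$.

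For positivity, I would invoke the consistency result cited from \cite{GHKK} in the excerpt, which allows the walls of $\mathfrak{D}_{\bfs}$ to be chosen with $f_{\mathfrak{d}} = (1+z^{p^*(n)})^c$ for $c \in \mathbb{Z}_{>0}$; every wall-crossing automorphism then has non-negative integer coefficients, so the path-ordered product preserves positivity. Finiteness of $\vartheta_{Q,m}$ as a genuine Laurent polynomial (rather than a formal series) follows because $\gamma$ can be chosen to cross only finitely many walls in passing between the two chambers. For the converse ``further'' statement, any cluster monomial lives in some seed $\bfs_w$ in the mutation class of $\bfs$, corresponding to the cluster chamber $\mathcal{C}_w^+ \in \Delta_\bfs^+$, so taking $m$ to be its exponent vector in the $\{f_i^w\}$ basis realizes it as $\vartheta_{Q,m}$.

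The principal obstacle I expect is establishing that the wall-crossing automorphisms of $\mathfrak{D}_{\bfs}$ genuinely implement the birational cluster mutation maps on the scheme $\Ascat$ obtained by gluing tori to cluster chambers. This is where the injectivity assumption on $p_1^*$ is essential: it ensures that $p^*$ faithfully encodes the exchange data, so that the path-ordered product produces honest Laurent expansions rather than formal manipulations. Concretely, one needs the scheme-theoretic identification $\Ascat \cong \cA$ of Gross--Hacking--Keel--Kontsevich, together with the verification that the wall-crossings across $e_k^\perp$ match the pullback formulas of Equation~\eqref{eq:A_pullback}; all subsequent wall-crossings then follow by iterating mutation invariance via Proposition~\ref{prop:brokenLineBijectionOrd}.
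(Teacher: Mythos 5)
Your outline tracks the GHKK argument that the paper cites (and that it adapts later for the generalized analogue, Theorem~\ref{theorem:almost_positive}): reduce to $\vartheta_{Q',m}=z^m$ for $Q'\in\mathrm{int}(\sigma)$, transport by the path-ordered product of Theorem~\ref{theorem:thetafuncomposition}, identify the result with a cluster monomial via the $\Ascat\cong\cA$ isomorphism, and invoke \cite[Theorem 1.13]{GHKK} for positivity. The overall decomposition and the key cited ingredients are right, and the closing paragraph correctly isolates the real technical content (the identification $\Ascat\cong\cA$ and the matching of wall-crossings with Equation~\eqref{eq:A_pullback}).

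There is, however, a genuine gap in the positivity step. The claim that ``every wall-crossing automorphism then has non-negative integer coefficients, so the path-ordered product preserves positivity'' is not true as stated. A single wall-crossing is $\mathfrak{p}_f^{\epsilon}(z^m)=z^m f^{\epsilon\langle n_0',m\rangle}$, and the exponent $\epsilon\langle n_0',m\rangle$ can be \emph{negative} for an arbitrary path, in which case one is expanding $(1+z^{p^*(n_0)})^{-k}$ with $k>0$, whose power series has signs. So positive wall functions do \emph{not} make the path-ordered product a positivity-preserving operator on $\widehat{\Bbbk[P]}$. The correct mechanism is the sign convention built into the broken-line bending rule: because $\gamma'(t)=-m_L$ on the incoming segment, one has $\epsilon=\mathrm{sgn}\langle n_0,m_L\rangle$, so the power of $f$ picked at each bend is $|\langle n_0',m_L\rangle|\geq 0$. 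It is this, together with positivity of $f_{\mathfrak{d}}$, that makes each broken-line contribution $c(\gamma)z^{F(\gamma)}$ manifestly positive, and hence $\vartheta_{Q,m}=\sum_\gamma\mathrm{Mono}(\gamma)$ positive. (Your identity $\vartheta_{Q,m}=\mathfrak{p}_\gamma(z^m)$ is what guarantees the latter is a positive Laurent polynomial, not the other way around.) Relatedly, ``finitely many wall crossings'' alone does not give Laurentness, since an intermediate crossing can still produce an infinite series; Laurentness comes from either the broken-line finiteness for $m\in\Delta^+(\mathbb{Z})$ or from the identification with a cluster monomial and the Laurent phenomenon. The base-case assertion that ``no wall meets the ray'' forces $\vartheta_{Q',m}=z^m$ is also incomplete on its own (it produces the unbent line but does not exclude lines that wander out of $\sigma$ and return), though here you correctly lean on Proposition 3.8/Corollary 3.9 of GHKK, which carry the real trapping argument.
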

 Gross, Hacking, Keel, and Kontsevich  define the \emph{middle cluster algebra} in order to prove that the theta functions form a basis for the ordinary cluster algebra. Their proof works primarily with $\mathcal{A}_{\textrm{prin}}$ and is then extended to $\mathcal{A}$ and $\mathcal{X}$ by using the fact that these varieties appear as a fiber and quotient, respectively, of $\mathcal{A}_{\textrm{prin}}$. Let $\mathcal{A}_{\textrm{prin}}^{\vee}$ denote the \emph{Langlands dual} of $\mathcal{A}_{\textrm{prin}}$, defined in Section~\ref{sec:duality}.  The \emph{middle cluster algebra} associated to $\Aprin$ is then defined as
\[ \textrm{mid}\left(\mathcal{A}_{\textrm{prin}} \right) := \bigoplus_{q \in \Theta} \Bbbk \cdot \vartheta_q, \]
where $\Theta \subset \Aprin^{\vee}(\mathbb{Z}^T)$ is the collection of $m_0$ such that for any generic point ${Q \in \sigma \in \Delta^{+}}$, there are only finitely many broken lines with initial slope $m_0$ and endpoint $Q$. The structure constants $\alpha_{z}(p_1,p_2,q)$ make $\textrm{mid}\left( \Aprin \right)$ into an associative and commutative $\Bbbk[N]$-algebra.

Gross, Hacking, Keel, and Kontsevich show there are canonical inclusions
\[ \textrm{ord}\left(\Aprin \right) \subset \textrm{mid}\left( \Aprin \right) \subset \textrm{up}\left(\Aprin \right), \]
and that therefore the theta functions form a basis for the ordinary cluster algebra when the ordinary cluster algebra and upper cluster algebra coincide. For the definitions of $\textrm{ord}(\mathcal{A}_{\textrm{prin}})$ and $\textrm{up}(\mathcal{A}_{\textrm{prin}})$, see Section~\ref{subsec:ordVarieties}.

\section{Generalized cluster scattering diagrams}
\label{sec:genScatDiag}

In this section, we describe the construction of cluster scattering diagrams for reciprocal generalized cluster algebras and prove important properties of such diagrams. Definitions of generalized fixed data, generalized torus seed data, generalized cluster varieties, and other fundamental objects are given in Section~\ref{subsec:genBasicDefinitions}. The construction of generalized cluster scattering diagrams is then given in Section~\ref{subsec:genScatDiagConstruction}. The restriction to reciprocal generalized cluster algebras will be necessary in the proof of Theorem~\ref{theorem:mutDiagConsistent}, which establishes the mutation invariance of generalized cluster scattering diagrams. Subsequently, Section~\ref{subsec:genMutInvariance} defines mutation at the diagram level and verifies that our generalized cluster scattering diagrams are mutation invariant. Section~\ref{subsec:genChamberStructure} extends the description of the chamber structure to generalized cluster scattering diagrams, thereby showing that the Fock-Goncharov cluster chambers are the maximal cones of a simplicial fan.
In Section~\ref{subsec:genPrincipalCoef}, we extend the definitions from Section~\ref{subsec:genBasicDefinitions} for the principal coefficient case. Finally, in Section~\ref{subsec:genAScat}, we describe how to construct a space $\mathcal{A}_{\textrm{scat}}$ from a generalized cluster scattering diagram and then how to identify $\mathcal{A}_{\textrm{scat}}$ with the $\mathcal{A}$-variety.

\subsection{Fixed data, seed data, and generalized cluster varieties}
\label{subsec:genBasicDefinitions}

We begin by updating some definitions for the generalized setting. First, we update the definition of fixed data to include data from the exchange degree matrix $[r_{ij}]$.

\begin{definition} \label{def:gen fixed}
    The following data is referred to as \emph{generalized fixed data}, denoted $\Gamma$:
    \begin{itemize}
        \item A lattice $N$ called the \emph{cocharacter lattice} with skew-symmetric bilinear form ${\{ \cdot, \cdot \}: N \times N \rightarrow \mathbb{Q}}$.
        \item A saturated sublattice $N_{\textrm{uf}} \subseteq N$ called the \emph{unfrozen sublattice}.
        \item An index set $I$ with $| I | = \textrm{rank}(N)$ and subset $I_{\textrm{uf}} \subseteq I$ such that ${| I_{\textrm{uf}}| = \textrm{rank}(N_{\textrm{uf}})}$.
        \item A set of positive integers $\{ d_i \}_{i \in I}$ such that $\textrm{gcd}(d_i) = 1$.
        \item A sublattice $N^{\circ} \subseteq N$ of finite index such that ${\{ N_{\textrm{uf}}, N^{\circ} \} \subseteq \mathbb{Z}}$ and ${\{ N, N_{\textrm{uf}} \cap N^{\circ} \} \subseteq \mathbb{Z}}$.
        \item A lattice $M = \textrm{Hom}(N,\mathbb{Z})$ called the \emph{character lattice} and sublattice ${M^{\circ} = \textrm{Hom}(N^{\circ},\mathbb{Z})}$.
        \item A set of positive integers $\{r_i \}_{i \in I}$.
        \item A collection $\{a_{i,j} \}_{i \in I_{\textrm{uf}},j \in [r_i-1]}$ of formal variables.
    \end{itemize}
    The adjective `fixed' refers to the fact that this data is fixed under mutation.
\end{definition}

Note that the exchange polynomial coefficients $\{ a_{i,j} \}_{i \in I_{\textrm{uf}}, j \in [r_i-1]}$ are formal variables, rather than elements of $\Bbbk$. As such, we must work over the ground ring $R = \Bbbk [a_{i,j}]$, where $\Bbbk$ is a field of characteristic zero, rather than over $\Bbbk$ as in the ordinary case. In doing so, we follow the work of \cite{B-FM-M-NC} on cluster varieties with coefficients.  In particular, this will be necessary in the proof of Proposition \ref{prop:universalLaurentPolynomial}, but is not necessary to construct cluster scattering diagrams and theta functions or to express generalized cluster monomials as theta functions.

We also establish the notion of a \emph{generalized torus seed}, also denoted $\bfs$.

\begin{definition}
\label{def:genTorusSeed}
Given a set of generalized fixed data, we can define associated \emph{generalized torus seed data} $\bfs = \left\{ (e_i,(a_{i,j})) \right\}_{i \in I_{\textrm{uf}}, j \in [r_i-1]}$ such that the collection $\{ e_i\}_{i \in I_{\textrm{uf}}}$ satisfies the conditions for ordinary torus seed data and each $(a_{i,j})$ is a tuple of formal variables taken from the collection specified in the fixed data.
\end{definition}

Analogous to the ordinary case, this defines a dual basis $\{ e_i^* \}_{i \in I}$ for $M$ and $\{f_i = d_i^{-1}e_i^* \}_{i \in I}$ for $M^{\circ}$. Note that when $r_i = 1$ for all $i$, our definitions reduce to the definitions for an ordinary torus seed.

We will confine our attention to the subclass of \emph{reciprocal} generalized cluster algebras:

\begin{definition}
A generalized torus seed $\bfs$ is called \emph{reciprocal} if its scalar tuples $(a_{i,j})$ satisfy the reciprocity condition $a_{i,j} = a_{i,r_i-j}$. We refer to the associated algebra as a \emph{reciprocal} generalized cluster algebra.
\end{definition}

Note that in the definition \cite{CS} of a generalized cluster algebra, the mutations of the exchange polynomial coefficients $a_{k,s}$ are of the form
\[a_{k,s}' = a_{k, r_k-s}, \]
and therefore mutation does not introduce new exchange polynomial coefficients. 
This allows us to include the collection of all exchange polynomial coefficients $\{ a_{i,j} \}$ in the fixed data and work over the ground ring $R = \Bbbk [a_{i,j}]$ when working with generalized cluster algebras.
Because we work only with reciprocal generalized cluster algebras, whose exchange polynomials are fixed under mutation, we do not necessarily need to include the exchange polynomial coefficients $a_{i,j}$ in the seed data. However, we include the (ordered) exchange polynomial coefficients as part of that data to allow for future extension to arbitrary generalized cluster algebras.

\begin{example}
\label{ex:genFixedData}
The generalized cluster algebra
\[ \mathcal{A}\left(\mathbf{x},\mathbf{y},\begin{bmatrix} 0 & 1 \\ -1 & 0 \end{bmatrix},\begin{bmatrix} 3 & 0 \\ 0 & 1 \end{bmatrix}, ((1,a,a,1),(1,1)) \right) \]
has generalized fixed data $\Gamma$ with a rank 2 lattice $N=N^{\circ} $, the dual lattice $M = M^{\circ}$, $d=(1,1)$, $r=(3,1)$, ${I = I_{\textrm{uf}} = \{ 1, 2 \}}$, and skew-symmetric bilinear form $\{ \cdot, \cdot \} : N \times N\rightarrow \mathbb{Z}$ specified by the exchange matrix. One possible choice of generalized torus seed data is 
\[ {\bfs = \{ (e_1 = (1,0),(1,a,a,1)),(e_2 = (0,1),(1,1)) \}}. \]
\end{example}

\begin{definition}
\label{def:basisMutGen}
    Given generalized torus seed data $\bfs$ and some $k \in I_{\textrm{uf}}$, a \emph{mutation in direction $k$} of the generalized torus seed data is defined by the following transformations of basis vectors and exchange polynomial coefficients:
    \begin{align*}
        e_i' &:= \begin{cases}
                    e_i + r_k[\epsilon_{ik}]_+e_k & i \neq k \\
                    -e_k & i = k
                \end{cases} \\
        f_i' &:= \begin{cases}
                    -f_k + r_k\sum_{j \in I_{\textrm{uf}}} [-\epsilon_{kj}]_{+}f_j & i = k \\
                    f_i & i \neq k
                \end{cases} \\
        a_{k,s}' &:= a_{k,r_k-s}
    \end{align*}
    The basis mutation induces the following mutation of the matrix $[\epsilon_{ij}]$:
    \begin{align*}
        \epsilon_{ij}' &:= \{e_i',e_j' \}d_j
                        = \begin{cases}
                            -\epsilon_{ij} & k = i \textrm{ or } k = j \\
                            \epsilon_{ij} & k \neq i,j \textrm{ and } \epsilon_{ik}\epsilon_{kj} \leq 0 \\
                            \epsilon_{ij} + r_k|\epsilon_{ik}|\epsilon_{kj} & k \neq i,j \textrm{ and } \epsilon_{ik}\epsilon_{kj} \geq 0
                        \end{cases}
    \end{align*}
\end{definition}

Note that when all $r_k = 1$, the formulas in Definition \ref{def:basisMutGen} coincide with those in Definition \ref{def:basisMut}.  Given generalized torus seed data $\bfs$, we can then define associated algebraic tori $\mathcal{A}_{\bfs}$ and $\mathcal{X}_{\bfs}$. 
As we are working over the the ground ring $R = \Bbbk[a_{i,j}]$, we will consider the tori over the ring $R = \Bbbk[a_{i,j}]$.
More precisely, for a lattice $L$, let
\[ T_L(R) := \textrm{Spec}\left(R[L^{*}] \otimes_{\Bbbk} R \right) = T_L \times_{\Bbbk} \textrm{Spec}(R). \]

This notation then allows us to state the following definition:

\begin{definition}
\label{def:seedTori}
A choice of generalized torus seed data $\bfs$ defines the tori:
\begin{align*}
    \mathcal{X}_{\bfs} &= T_{M}(R) = T_M \times_{\Bbbk} \textrm{Spec}(R) = \textrm{Spec}\left(\Bbbk[N] \right) \times_{\Bbbk} \textrm{Spec}(R) \\
    \mathcal{A}_{\bfs} &= T_{N^{\circ}}(R) = T_{N^{\circ}} \times_{\Bbbk} \textrm{Spec}(R) = \textrm{Spec}\left(\Bbbk[M^{\circ}] \right) \times_{\Bbbk} \textrm{Spec}(R)
\end{align*}
\end{definition}

Just as in the ordinary case, there are several common notational conventions for the coordinates of these algebraic tori. We will use $y_1, \dots, y_n$ for the coordinates of $\mathcal{X}_{\bfs}$ and $x_1, \dots, x_n$ for the coordinates of $\mathcal{A}_{\bfs}$ in order to be consistent with the prevailing notation for ordinary and generalized cluster algebras.

\begin{definition}
\label{def:genBiratMutMaps}
We define birational maps ${\mu_k: \mathcal{X}_\bfs \rightarrow \mathcal{X}_{\mu_k(\bfs)}}$ and ${\mu_k: \mathcal{A}_\bfs \rightarrow \mathcal{A}_{\mu_k(\bfs)}}$ via the pull-back of functions
\begin{align}
    \label{eq:A_pullback_gen}
    \mu_k^*z^m &= z^m \left( 1 + a_{k,1}z^{v_k} + \cdots + a_{k,r_k-1}z^{(r_k-1)v_k} + z^{r_kv_k} \right)^{-\langle d_ke_k,m \rangle} \\
    \label{eq:X_pullback_gen}
    \mu_k^*z^n &= z^n\left(1 + a_{k,1}z^{e_k} + \cdots + a_{k,r_k-1}z^{(r_k-1)e_k} + z^{r_ke_k} \right)^{-[n,e_k]}
\end{align}
for $n \in N$ and $m \in M^\circ$.
\end{definition}

\begin{remark}
The exchange relations given in Definition~\ref{def:genMutation} can be recovered from equations~\eqref{eq:A_pullback_gen} and \eqref{eq:X_pullback_gen} by setting $m=f_i$ and $n = e_i$. Consider the mutation of $x_i = z^{f_i}$ and $y_i = z^{e_i}$ in direction $k$. If $i = k$, then
\begin{align*}
    \mu_k^*(y_k') = \mu_k^*\left(z^{e_k'} \right) = \mu_k^*\left( z^{-e_k} \right) = z^{-e_k} \left(1 + \cdots + z^{r_ke_k} \right)^{-[-e_k,e_k]} = z^{-e_k} = y_k^{-1}
\end{align*}
and
\begin{align*}
    \mu_k^*(x_k') &= \mu_k^*\left( z^{f_k'} \right) \\
    &= \mu_k^*\left( z^{-f_k + r_k\sum_{j \in I_{\textrm{uf}}} [-\epsilon_{kj}]_{+}f_j} \right) \\
    &= z^{-f_k + r_k\sum_{j \in I_{\textrm{uf}}} [-\epsilon_{kj}]_{+}f_j}\left(1 + a_{k,1}z^{v_k} + \cdots + a_{k,r_k-1}z^{(r_k-1)v_k} + z^{r_kv_k} \right)^{-\langle d_ke_k, -f_k + r_k\sum_{j \in I_{\textrm{uf}}} [-\epsilon_{kj}]_{+}f_j \rangle} \\
    &= z^{-f_k}\left( \prod_{j \in I_{\textrm{uf}}} z^{[-\epsilon_{kj}]_{+}f_j} \right)^{r_k}\left(1 + a_{k,1}z^{v_k} + \cdots + a_{k,r_k-1}z^{(r_k-1)v_k} + z^{r_kv_k} \right)^{\langle d_ke_k,f_k \rangle} \\
    & \qquad \cdot \left( \prod_{j \in I_{\textrm{uf}}}  \left(1 + a_{k,1}z^{v_k} + \cdots + a_{k,r_k-1}z^{(r_k-1)v_k} + z^{r_kv_k} \right)^{-\langle d_ke_k,[\epsilon_{kj}]_{+}f_j \rangle}\right) \\
    &= z^{-f_k}\left( \prod_{j \in I_{\textrm{uf}}} z^{[-\epsilon_{kj}]_{+}f_j} \right)^{r_k}\left(1 + a_{k,1}z^{v_k} + \cdots + a_{k,r_k-1}z^{(r_k-1)v_k} + z^{r_kv_k} \right) \\
    &= z^{-f_k}\left( \prod_{j \in I_{\textrm{uf}}} z^{[-\epsilon_{kj}]_{+}f_j} \right)^{r_k} \left(1 + a_{k,1}\left(\prod_{j \in I_{\textrm{uf}}} z^{\epsilon_{kj}f_j} \right) + \cdots + \left(\prod_{j \in I_{\textrm{uf}}} z^{\epsilon_{kj}f_j} \right)^{r_k} \right) \\
    &= x_k^{-1} \left( \prod_{j \in I_{\textrm{uf}}} x_j^{[-b_{kj}]_{+}} \right)^{r_k} \left(1 + a_{k,1} \left( \prod_{j \in I_{\textrm{uf}}} x_j^{b_{kj}} \right) + \cdots + \left( \prod_{j \in I_{\textrm{uf}}} x_j^{b_{kj}} \right)^{r_k} \right)
\end{align*}
If $i \neq k$, then
\begin{align*}
    \mu_k^*(y_i') &= \mu_k^*\left(z^{e_i'} \right) \\
    &= \mu_k^*\left( z^{e_ i+ r_k[\epsilon_{ik}]_{+}e_k} \right) \\
    &= z^{e_ i+ r_k[\epsilon_{ik}]_{+}e_k}\left( 1 + a_{k,1}z^{e_k} + \cdots + a_{k,r_k-1}z^{(r_k-1)e_k} + z^{r_ke_k}\right)^{-[e_ i+ r_k[\epsilon_{ik}]_{+}e_k, e_k]} \\
    &= z^{e_i}z^{r_k[\epsilon_{ik}]_{+}e_k}\left( 1 + a_{k,1}z^{e_k} + \cdots + a_{k,r_k-1}z^{(r_k-1)e_k} + z^{r_ke_k}\right)^{-[e_i,e_k]} \\
    &= z^{e_i}\left(z^{[\epsilon_{ik}]_{+}e_k} \right)^{r_k}\left( 1 + a_{k,1}z^{e_k} + \cdots + a_{k,r_k-1}z^{(r_k-1)e_k} + z^{r_ke_k}\right)^{-\epsilon_{ik}} \\
    &= y_i \left( y_k^{[b_{ik}]_{+}} \right)^{r_k}\left(1 + a_{k,1}y_k + \cdots + a_{k,r_k-1}y_k^{r_k-1} + y_k^{r_k} \right)^{-b_{ik}}
\end{align*}
and
\begin{align*}
    \mu_k^*(x_i') = \mu_k^*\left(z^{f_i'} \right) = \mu\left(z^{f_i} \right) = z^{f_i}\left( 1 + a_{k,1}z^{v_k} + \cdots + z^{r_kv_k} \right)^{-\langle d_ke_k,f_k \rangle} = z^{f_i} = x_i
\end{align*}
\end{remark}

\begin{remark} \label{rk:toclarify}
Let us examine why the $r_i-1$ scalars are included as extra data in fixed data. 
We note that in Definition \ref{def:genMutation}, which is the definition of generalized cluster algebras from \cite[Definition 2.2]{Nakanishi}, Nakanishi \cite[Equation (2.1)]{Nakanishi} emphasised that his $d_i$ (which is our $r_i$) are not the scalars used to make the exchange matrix skew-symmetric.
While equations \eqref{eq:A_pullback_gen} and \eqref{eq:X_pullback_gen} appear to be two different type of mutations, we should always bear in mind that the $y$ variables come from the cluster algebras with principal coefficients, i.e. one can obtain the $\cX$ mutation in equation \eqref{eq:X_pullback_gen} from equation \eqref{eq:A_pullback_gen} as the $\cA$ case with principal coefficients. 
Although the notation is developed in later sections, one can jump ahead to Remark \ref{rk:however} to see how we can obtain the $\cX$ mutations from the $\Aprin$ mutations. 
One can further consult  
Example \ref{eg:whywecare} to see the effect that varying the $r_i$ values can have even when the resulting exchange matrices associated to the companion algebras (Section \ref{sec:companionDiagrams}) are `the same'.
\end{remark}

We are now working over polynomial ring $R = \Bbbk[a_{i,j}]$ instead of a characteristic zero field $\Bbbk$. Proposition 2.4 in \cite{GHK} can be generalized to this setting by working through 
\cite{Gro60}. We will state the version in \cite{B-FM-M-NC} for this setting. 

\begin{prop}\cite[Lemma 3.10]{B-FM-M-NC}
\label{prop:torusGluing}
Let $\{S_i \}$ be a collection of integral, separate schemes of finite type over a locally Noetherian ring $R$, with birational maps $f_{ij} : S_i \rightarrow S_j$ for all $i,j$, with $f_{ii} = \textrm{Id}$ and $f_{jk} \circ f_{ij} = f_{ik}$ as rational maps. Let $U_{ij} \subset S_i$ be the largest open subscheme such that $f_{ij}: U_{ij} \rightarrow f_{ij}(U_{ij})$ is an isomorphism. Then there exists a scheme
\[ S:= \bigcup_{i} S_i \]
which is obtained by gluing the $S_i$ along the open sets $U_{ij}$ via the maps $f_{ij}$.
\end{prop}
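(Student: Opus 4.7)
The plan is to reduce the statement to the standard gluing lemma for schemes over a fixed base, which requires producing, for each pair $(i,j)$, an honest isomorphism of open subschemes $\varphi_{ij}: U_{ij} \to U_{ji}$ satisfying the cocycle condition on triple overlaps, and then verifying that this gluing datum descends to a well-defined scheme over $\Spec(R)$. The standard statement of the gluing lemma for ringed spaces (see e.g.\ the Stacks project tag 01JA) does not require any finiteness or Noetherian hypotheses; the assumptions on the $S_i$ here are in place to ensure that the subsequent isomorphisms $\varphi_{ij}$ are forced to agree wherever they are defined, and that the glued object is again a scheme (as opposed to merely a locally ringed space).

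First I would establish that $f_{ij}$ and $f_{ji}$ restrict to mutually inverse isomorphisms on suitable opens. Define $V_{ij} := f_{ij}(U_{ij}) \subset S_j$; by definition of $U_{ij}$, the restriction $f_{ij}: U_{ij} \to V_{ij}$ is an isomorphism of schemes. The cocycle condition at rational level gives $f_{ji} \circ f_{ij} = f_{ii} = \mathrm{Id}$ as rational maps on $S_i$. Since $S_i$ is integral and $S_j$ is separated over $R$, two morphisms from a (nonempty) open of an integral scheme to a separated $R$-scheme that agree generically agree on their common locus of definition; consequently $f_{ji}$ is defined on $V_{ij}$ and maps it isomorphically onto $U_{ij}$. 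By maximality of $U_{ji}$, this forces $V_{ij} \subseteq U_{ji}$, and by symmetry $V_{ji} \subseteq U_{ij}$; composing yields $V_{ij} = U_{ji}$. Thus $\varphi_{ij} := f_{ij}|_{U_{ij}}: U_{ij} \to U_{ji}$ is an isomorphism with inverse $\varphi_{ji}$.

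Next I would verify triple overlap compatibility and the cocycle condition. Fix indices $i,j,k$, let $W_{ijk} := U_{ij} \cap U_{ik} \subset S_i$, and consider both $\varphi_{ik}$ and $\varphi_{jk} \circ \varphi_{ij}$ as morphisms from $W_{ijk} \cap \varphi_{ij}^{-1}(U_{jk})$ into $S_k$. They agree as rational maps on $S_i$ by the cocycle hypothesis, hence agree on the intersection of their domains by the same integrality/separatedness argument. Again by maximality of $U_{ik}$ and the fact that a composition of isomorphisms is an isomorphism, this implies $\varphi_{ij}(W_{ijk}) \subseteq U_{ji} \cap U_{jk}$, and the cocycle identity $\varphi_{jk} \circ \varphi_{ij} = \varphi_{ik}$ holds on $W_{ijk}$. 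Symmetrizing in $i,j,k$ produces the full gluing datum.

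The main obstacle will be precisely this upgrading of equalities ``as rational maps'' to equalities of morphisms on the correct open loci. Once those are in hand, the gluing lemma yields a ringed space $S$ with an open cover by the images of the $S_i$ and transition maps $\varphi_{ij}$; each piece is a scheme, so $S$ is a scheme. Since every $S_i$ is of finite type over the locally Noetherian ring $R$ and the gluing identifications are $R$-linear (the birational maps $f_{ij}$ are $R$-morphisms), the structure morphisms $S_i \to \Spec R$ glue to a structure morphism $S \to \Spec R$, giving $S$ as an $R$-scheme. The integrality and separatedness of the pieces are not needed for the gluing construction itself, but only to rigidify the rational maps into the unique isomorphisms that appear in the gluing datum.
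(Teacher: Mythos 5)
The paper does not prove this proposition; it cites it verbatim as \cite[Lemma 3.10]{B-FM-M-NC}, remarking only that \cite[Proposition 2.4]{GHK} ``can be generalized to this setting by working through \cite{Gro60}.'' Your overall strategy---reduce to the standard scheme-gluing lemma by promoting the cocycle relations from identities of rational maps to identities of morphisms, using integrality of the source and separatedness of the target---is exactly what those references do, and your identification $f_{ij}(U_{ij}) = U_{ji}$ via composing $f_{ij}$ and $f_{ji}$ is carried out correctly, as is the observation at the end that the $R$-morphism structure glues.

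However, your triple-overlap step has a real gap. To invoke the gluing lemma you must prove $\varphi_{ij}(U_{ij}\cap U_{ik}) = U_{ji}\cap U_{jk}$, i.e.\ that $y := \varphi_{ij}(x)$ lies in $U_{jk}$ whenever $x \in U_{ij}\cap U_{ik}$. You appeal to ``maximality of $U_{ik}$,'' but that can only give the containment $W_{ijk}\cap\varphi_{ij}^{-1}(U_{jk}) \subseteq U_{ik}$, which is already true by definition of $W_{ijk}$ and says nothing about $U_{jk}$. What you actually need is maximality of $U_{jk}$: near $y$, the rational map $f_{jk}$ agrees generically with the morphism $f_{ik}\circ\varphi_{ij}^{-1}$, which is an open immersion in a neighborhood of $y$, so by your rigidity argument $f_{jk}$ extends to an open immersion near $y$; and $f_{kj}$ agrees generically with $f_{ij}\circ f_{ki}$, which is a morphism near $f_{jk}(y) = f_{ik}(x)$ since $x\in U_{ik}$, giving a two-sided local inverse and hence $y\in U_{jk}$. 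Symmetrizing gives $\varphi_{ij}(W_{ijk}) = U_{ji}\cap U_{jk}$, and the cocycle identity on all of $W_{ijk}$ then follows from one more application of rigidity. With this repair your argument is complete.
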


As in the ordinary setting, let $\mathfrak{T}_v$ be a directed infinite rooted tree where each vertex has $|I_{\textrm{uf}}|$ outgoing edges, labeled by the elements of $I_{\textrm{uf}}$.  An edge with label $k \in I_{\textrm{uf}}$ now corresponds to generalized mutation in direction $k$.  Hence, any simple path beginning at vertex $v$ determines a sequence of mutations and an associated generalized torus seed $\bfs_{w}$ for each vertex $w$ of $\mathfrak{T}_v$. We attach copies of $\mathcal{X}_{\bfs_w}$ and $\mathcal{A}_{\bfs_w}$ to each vertex $w$.

\begin{definition}
\label{def:genClusterVarieties}
The generalized \emph{$\mathcal{A}$ cluster variety} is the scheme
\[ \mathcal{A} := \bigcup_{\bfs \in \mathfrak{T}} \mathcal{A}_{\bfs} \]
obtained by using Proposition~\ref{prop:torusGluing} to glue the collection of algebraic tori $\{ \mathcal{A}_{\bfs} \}_{\bfs \in \mathfrak{T}}$ according to the birational maps $\mu_k: \mathcal{A}_{\bfs} \rightarrow \mathcal{A}_{\mu_k(\bfs)}$ specified in Definition~\ref{def:genBiratMutMaps}. Analogously, the generalized \emph{$\mathcal{X}$ cluster variety} is defined to be the scheme
\[ \mathcal{X} := \bigcup_{\bfs \in \mathfrak{T}} \mathcal{X}_{\bfs} \]
obtained by gluing the collection $\{ \mathcal{X}_{\bfs} \}_{\bfs \in \mathfrak{T}}$ according to the birational maps ${\mu_k: \mathcal{X}_{\bfs} \rightarrow \mathcal{X}_{\mu_k(\bfs)}}$. 
\end{definition}
For readability, we will often refer to the these schemes simply as the $\mathcal{A}$-variety and $\mathcal{X}$-variety when it is clear from context that they are generalized cluster varieties rather than ordinary cluster varieties.

The \emph{upper generalized cluster algebra} $up(V)$ associated to a generalized cluster variety $V$ is defined as ${\textrm{up}(V) := \Gamma(V,\mathcal{O}_{V})}.$ Equivalently, it is defined in \cite{GSV-I} as
\[ \textrm{up}\left( \mathcal{A} \right) := \bigcap_{\textrm{clusters} \{ x_1, \dots, x_n \} \textrm{ of } \mathcal{A}} R[x_1^{\pm 1}, \dots, x_n^{\pm 1}] \subset \mathcal{F}\]
The \emph{generalized cluster algebra} $\textrm{gen}(V)$ associated to a generalized cluster variety $V$ is the subalgebra of $\textrm{up}(V)$ generated by the set of global monomials on $V$.

Intuitively, we expect that the construction of the  generalized $\mathcal{A}$-variety and $\mathcal{X}$-variety should not depend on the choice of seed - that is, mutation equivalent seeds $\bfs$ and $\bfs'$ should yield isomorphic schemes. The two smaller commutative diagrams in the following proposition show that structures of the tori given in Definition~\ref{def:seedTori} are compatible with generalized torus seed data mutation. This induces a similar compatibility for $\mathcal{A}$ and $\mathcal{X}$.

\begin{prop}
Let $K = \textrm{ker}(p_2^*)$ and $K^{\circ} = K \cap N^{\circ}$. For a given generalized torus seed $\bfs$ and mutation direction $k \in [n]$, the following diagrams are commutative:
\begin{center}
    \begin{tikzcd}
    T_{K^\circ} \arrow[r] \arrow[d, "="]& \mathcal{A}_\bfs \arrow[r,"p"]  \arrow[d,"\mu_k"] & \mathcal{X}_{\bfs} \arrow[r] \arrow[d,"\mu_k"] & T_{K^*} \arrow[d,"="] \\
    T_{K^\circ} \arrow[r] & \mathcal{A}_{\mu_k}(\bfs) \arrow[r, "p"] & \mathcal{X}_{\mu_k(\bfs)} \arrow[r] & T_{K^*}
    \end{tikzcd}
    
    \vspace{5mm}
    \begin{tikzcd}
    T_{(N/N_{\textrm{uf}})^*} \arrow[r] \arrow[d,"="] & \mathcal{X}_{\bfs} \arrow[d,"\mu_k"] & \mathcal{A}_{\bfs} \arrow[d,"\mu_k"] \arrow[r] & T_{N^\circ / (N_{\textrm{uf}}\cap N^\circ)} \arrow[d,"="] \\
     T_{(N/N_{\textrm{uf}})^*} \arrow[r]& \mathcal{X}_{\mu_k(\bfs)} & \mathcal{A}_{\mu_k(\bfs)} \arrow[r] & T_{N^\circ / (N_{\textrm{uf}}\cap N^\circ)}
    \end{tikzcd}
\end{center}
\end{prop}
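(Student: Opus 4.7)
The plan is to verify each of the six small squares in the two diagrams by an explicit pullback computation, since every map in sight is (birational to) a torus morphism and is therefore determined by its action on characters. I would first tackle the central square of the first diagram, which expresses $p \circ \mu_k = \mu_k \circ p$. For $n \in N$, both $\mu_k^* \circ p^*(z^n)$ and $p^* \circ \mu_k^*(z^n)$ produce $z^{p^*(n)}$ times a power of the common exchange polynomial $1 + a_{k,1}z^{v_k} + \cdots + z^{r_k v_k}$, using $p^*(e_k) = p_1^*(e_k) = v_k$. Equality then reduces to the identity $\langle d_k e_k,\, p^*(n)\rangle = [n, e_k]$, which I would prove by noting that $d_k e_k \in N_{\textrm{uf}} \cap N^\circ$, so $p^*(n)$ evaluated on $d_k e_k$ agrees with $p_2^*(n)$ on that element, giving $\{n, d_k e_k\} = d_k \{n, e_k\} = [n, e_k]$.

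Second, I would handle the two outer squares of the first diagram, which express compatibility of $\mu_k$ with the $T_{K^\circ}$-action on $\mathcal{A}_{\bfs}$ and with the projection $\mathcal{X}_{\bfs} \to T_{K^*}$. The key ingredient in both cases is an orthogonality statement coming from the definition $K = \ker(p_2^*)$. For any $n' \in K^\circ$, choosing $d_k e_k \in N_{\textrm{uf}} \cap N^\circ$ gives $\{n', d_k e_k\} = 0$, hence $\{e_k, n'\} = 0$, so $v_k(n') = 0$; this means the exchange-polynomial factor in $\mu_k^*$ is invariant under translation by $T_{K^\circ}$, so $\mu_k$ is $T_{K^\circ}$-equivariant and the leftmost square commutes. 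For the rightmost square, the same orthogonality shows $[n, e_k] = d_k \{n, e_k\} = 0$ for $n \in K$, so $\mu_k^*(z^n) = z^n$ and $\mu_k$ descends to the identity on $T_{K^*}$.

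Third, the second diagram is handled in parallel. For its right square, a character of $T_{N^\circ / (N_{\textrm{uf}} \cap N^\circ)}$ corresponds to some $m \in (N_{\textrm{uf}} \cap N^\circ)^\perp \subset M^\circ$; then $\langle d_k e_k, m\rangle = 0$ since $d_k e_k \in N_{\textrm{uf}} \cap N^\circ$, so $\mu_k^*(z^m) = z^m$ as required. For its left square, the key fact is that $e_k \in N_{\textrm{uf}}$, so its class in $N/N_{\textrm{uf}}$ is zero; this yields the analogous equivariance statement for the natural action of $T_{(N/N_{\textrm{uf}})^*}$ on $\mathcal{X}_{\bfs}$, because the exchange polynomial involves only powers of $z^{e_k}$.

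I expect the main obstacle to be purely notational: tracking the several lattice quotients, duals, and their associated pairings without confusion, in particular keeping straight the distinction between $N_{\textrm{uf}}$ and $N_{\textrm{uf}} \cap N^\circ$ and between the definitions of $p_1^*$ and $p_2^*$. Once the two orthogonality identities above are in place, every square collapses to a one-line verification paralleling the ordinary-case argument in \cite{GHK}, and the higher-degree exchange polynomial plays no essential role beyond the observation that its support lies entirely in $\mathbb{Z}_{\geq 0}\, v_k$ or $\mathbb{Z}_{\geq 0}\, e_k$, which is precisely where the orthogonality statements apply.
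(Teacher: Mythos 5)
Your proof is correct and is essentially the argument the paper has in mind; the paper's own proof merely identifies the four unlabeled maps and then asserts "it is straightforward to check," whereas you actually carry out the six character-level verifications. The two key identities you isolate — $\langle d_k e_k, p^*(n)\rangle = [n, e_k]$ (via $p^*|_{N_{\mathrm{uf}}\cap N^\circ} = p_2^*|_{N_{\mathrm{uf}}\cap N^\circ}$) for the middle square, and $v_k|_{K^\circ} = 0$, $[n,e_k]|_K = 0$, $\langle d_k e_k, m\rangle = 0$ for $m \in N_{\mathrm{uf}}^\perp$, and $\bar{e}_k = 0$ in $N/N_{\mathrm{uf}}$ for the remaining squares — are exactly the observations that make the verification go through, and none of them use the binomial shape of the exchange factor, only that its support lies in $\mathbb{Z}_{\geq 0}v_k$ (or $\mathbb{Z}_{\geq 0}e_k$). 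One small presentational caveat worth being aware of: for the two \emph{incoming} squares (those with source $T_{K^\circ}$ and $T_{(N/N_{\mathrm{uf}})^*}$), what you actually prove, and what is actually true, is \emph{equivariance} of $\mu_k$ for the corresponding subtorus action; if one reads those squares as literal commutativity of the inclusion $T_{K^\circ}\hookrightarrow \mathcal{A}_\bfs$ with $\mu_k$, there is an extra constant factor $\bigl(2 + \sum_j a_{k,j}\bigr)^{-\langle d_k e_k, m\rangle}$ coming from $\mu_k$ not fixing the identity of the torus. This is an imprecision inherited from the diagram as drawn (already present in the ordinary-case source in \cite{GHK}, where the statement is phrased as compatibility of the $T_{K^\circ}$-\emph{action} with mutation), and you resolve it correctly by proving the equivariance statement, which is what the proposition is really after.
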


\begin{proof}
There are several unlabeled maps in the above commutative diagrams. Those maps come from the following structures, as described in \cite{GHK}:
\begin{enumerate}
    \item The inclusion $K \subseteq N$ induces a map $\mathcal{X}_{\bfs} \rightarrow T_{K^*}$.
    \item The inclusion $K^{\circ} \rightarrow N^{\circ}$ induces a map $T_{K^{\circ}} \rightarrow \mathcal{A}_{\bfs}$.
    \item Let $N_{\textrm{uf}}^{\perp} := \{m \in M^{\circ} : \langle m, n \rangle = 0 \textrm{ for all } n \in N_{\textrm{uf}} \}$. Then the inclusion $N_{\textrm{uf}}^{\perp} \subseteq M^{\circ}$ induces a map $\mathcal{A}_{\bfs} \rightarrow T_{N^{\circ} / (N_{\textrm{uf}} \cap N^{\circ})}$.
    \item The choice of the map $p^*: N \rightarrow M^{\circ}$ defines a map $p: \mathcal{A}_{\bfs} \rightarrow \mathcal{X}_{\bfs}$. The map $p^*:N\rightarrow M^{\circ}$ induces maps $p^*: K \rightarrow N_{\textrm{uf}}^{\perp}$ and $p^*:N/N_{\textrm{uf}} \rightarrow (K^{\circ})^{*}$ which define maps $p:T_{N/(N_{\textrm{uf}}\cap N^{\circ})} \rightarrow T_{K^{*}}$ and $p:T_{K^{\circ}}\rightarrow T_{(N/N_{\textrm{uf}})^{*}}$.
\end{enumerate}
Using the definitions of these maps, $p$, and $\mu_k$, it is straightforward to check the commutativity of each square.
\end{proof}

\subsection{Generalized cluster scattering diagrams}
\label{subsec:genScatDiagConstruction}

As in the ordinary case, we will be interested in a particular scattering diagram which is defined by the generalized fixed and torus seed data. To define this diagram, which we refer to as the \emph{generalized cluster scattering diagram}, we begin by modifying the definition of a wall to reflect the fact that we are now working over the ground ring $R = \Bbbk[a_{i,j}]$ rather than over $\Bbbk$.

As before, we assume that the map $p_1^*: N_{\textrm{uf}} \rightarrow M^{\circ}$ is injective. This \emph{Injectivity Assumption} ensures that we are able to choose a convex top-dimensional cone $\sigma \subset M_{\mathbb{R}}$ with associated monoid $P := \sigma \cap M^{\circ}$ such that $p_1^*(e_i) \in J := P\backslash P^{\times}$ for all $i \in I_{\textrm{uf}}$. If $p_1^*$ is not injective, then $P$ may fail to be full-dimensional. Although injectivity is not guaranteed for an arbitrary set of fixed data, it is guaranteed in the principal coefficient case, which we will discuss in Section~\ref{subsec:genPrincipalCoef}. For this reason, the results in Section~\ref{sec:genThetaBasis} are proved via the principal coefficient case and then extended to arbitrary generalized cluster varieties.

As in Section~\ref{subsec:ordDiagrams}, set
\[N^+ := N_{\bfs}^+ := \left\{ \left. \sum_{i \in I_{\textrm{uf}}} a_ie_i \right| a_i \geq 0, \sum a_i > 0 \right\} \]
and choose a linear function $d: N \rightarrow \mathbb{Z}$ such that $d(n) > 0$ for $n \in N^+$.

\begin{definition}
\label{def:walls}
    A \emph{wall} in $M_\mathbb{R}$ is a pair $(\mathfrak{d},f_\mathfrak{d}) \in (N^+,\widehat{R[P]})$ such that for some primitive $n_0 \in N^{+}$,
    \begin{enumerate}
        \item $f_\mathfrak{d} \in \widehat{R[P]}$ has the form $1 + \sum_{j=1}^{\infty} c_j z^{jp_1^*(n_0)}$ with $c_j \in R$.
        \item $\mathfrak{d} \subset n_0^{\perp} \subset M_\mathbb{R}$ is a $({\mathrm{rank}~M -1})$-dimensional convex rational polyhedral cone.
    \end{enumerate}
    We refer to $\mathfrak{d} \subset M_\mathbb{R}$ as the \emph{support} of the wall $(\mathfrak{d},f_\mathfrak{d})$.
\end{definition}

Let $\mathfrak{m}$ now denote the ideal in $\widehat{R[P]}$ which consists of formal power series with constant term zero. 
The definitions of a scattering diagram, wall-crossing automorphism, and path-ordered product must then either be updated to reflect the change in ground ring or read with the understanding that $\mathfrak{m}$ now denotes an ideal in $\widehat{R[P]}$ rather than in $\widehat{\Bbbk[P]}$. We give the statements of these definitions in the generalized setting below, for the sake of completeness.

\begin{definition}
\label{def:genScatteringDiagram}
A \emph{scattering diagram} $\mathfrak{D}$ for $N^+$ and $\bfs$ is a set of walls $\{ (\mathfrak{d},f_{\mathfrak{d}}) \}$ such that for every degree $k > 0$, there are a finite number of walls $(\mathfrak{d},f_{\mathfrak{d}}) \in \mathfrak{D}$ with $f_{\mathfrak{d}} \neq 1 \mod \mathfrak{m}^{k+1}$.
\end{definition}

\begin{definition}
\label{def:genWallcrossing}
For $n_0 \in N^{+}$, let $m_0 := p_1^*(n_0)$ and $f = 1 + \sum_{k = 1}^{\infty} c_kz^{km_0} \in \widehat{R[P]}$. Then $\mathfrak{p}_{f} \in \widehat{R[P]}$ denotes the automorphism
\[ \mathfrak{p}_f(z^m) = z^mf^{\langle n_0',m \rangle} \]
where $n_0'$ generates the monoid $\mathbb{R}_{\geq 0}n_0 \cap N^{\circ}$.
\end{definition}

\begin{definition} \label{def:genPathProduct}
Let $\gamma: [0,1] \rightarrow M_\mathbb{R} \backslash \textrm{Sing}(\mathfrak{D})$ be a smooth immersion which crosses walls transversely and whose endpoints are not in the support of $\mathfrak{D}$. Let $0 < t_1 \leq t_2 \leq \cdots \leq t_s < 1$ be a sequence such that at time $t_i$ the path $\gamma$ crosses the wall $\mathfrak{d}_i$ such that $f_i \neq 1~\mod \mathfrak{m}^{k+1}$. Definition~\ref{def:1.6-GHKK} ensures that this is a finite sequence. For each $i \in \{ 1, \dots, s \}$, set $\epsilon_i := -\textrm{sgn}(\langle n_i, \gamma'(t_i) \rangle)$ where $n_i \in N^{+}$ is the primitive vector normal to $\mathfrak{d}_i$. For each degree $k > 0$, define
\[ \mathfrak{p}_{\gamma,\mathfrak{D}}^k := \mathfrak{p}_{f_{\mathfrak{d}_{t_s}}}^{\epsilon_s} \circ \cdots \circ \mathfrak{p}_{f_{\mathfrak{d}_{t_1}}}^{\epsilon_1},  \]
where $\mathfrak{p}_{f_{\mathfrak{d}_{t_i}}}$ is defined as in Definition \ref{def:genWallcrossing}. Then,
\[ \mathfrak{p}_{\gamma,\mathfrak{D}} := \lim_{k \rightarrow \infty} \mathfrak{p}_{\gamma,\mathfrak{D}}^k. \]
We refer to $ \mathfrak{p}_{\gamma,\mathfrak{D}}$ as a \emph{path-ordered product}.
\end{definition}

Generalized cluster scattering diagrams have the same notions of equivalence and uniqueness as ordinary scattering diagrams.
To define them, we first define the initial scattering diagrams for generalized cluster algebras.

\begin{definition}
\label{def:initialScatDiagram}
Let $v_i = p_1^*(e_i)$ for $i \in I_{\textrm{uf}}$. Then we define
\[ \mathfrak{D}_{\textrm{in},\textbf{s}} := \{ (e_i^{\perp},1+a_{i,1}z^{v_i} + a_{i,2}z^{2v_i} + \cdots + a_{i,r_i-1}z^{(r_i - 1)v_i} + z^{r_iv_i}) \}_{i \in I_{\textrm{uf}}} \]
\end{definition}

Similar to \cite[Theorem 1.12]{GHKK}, we obtain: 

\begin{theorem} 
\label{thm:gen_consistent} Given a generalized torus seed $\bfs$, there exists a consistent scattering diagram $\mathfrak{D}_{\bfs}$ such that $ \mathfrak{D}_{\textrm{in},\bfs} \subset \mathfrak{D}_{\bfs}$ and   $\mathfrak{D}_{\bfs} \backslash \mathfrak{D}_{\textrm{in},\bfs}$ consists only of walls $\mathfrak{d} \subset n_0^{\perp}$ with $p_1^*(n_0) \not\in \mathfrak{d}$. The scattering diagram $\mathfrak{D}_\bfs$ is unique up to equivalence.
\end{theorem}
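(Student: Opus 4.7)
The plan is to mimic the order-by-order construction used in \cite[Appendix C]{GHKK} (ultimately going back to Kontsevich--Soibelman \cite{KS-chapter} and Gross--Siebert \cite{GS}), noting that the only features of the initial wall functions that actually enter the argument are (i) each $f_{\mathfrak{d}}$ is a power series of the form $1+\sum_{j\ge 1} c_j z^{j p_1^*(n_0)}$ for a single primitive $n_0\in N^+$, and (ii) $p_1^*(n_0)\in J$. Both are satisfied by our generalized initial functions $1+a_{i,1}z^{p_1^*(e_i)}+\cdots+a_{i,r_i-1}z^{(r_i-1)p_1^*(e_i)}+z^{r_i p_1^*(e_i)}$, the only change from the ordinary case being that $R=\Bbbk[a_{i,j}]$ replaces $\Bbbk$ as the coefficient ring. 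Thus the strategy is to transcribe the GHKK construction with coefficients in $\widehat{R[P]}$ rather than $\widehat{\Bbbk[P]}$ and check at each step that the enlarged coefficient ring causes no harm.

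Concretely, I would introduce the pronilpotent graded Lie algebra $\mathfrak{g}=\bigoplus_{n\in N^+}\mathfrak{g}_n$, where $\mathfrak{g}_n\subset\mathrm{Der}_R(\widehat{R[P]})$ is the rank-one $R$-module of derivations of the form $z^{p^*(n)}\partial$ with $\partial$ dual to $n'$ (as in Definition \ref{def:genWallcrossing}); the positive linear function $d\colon N\to\mathbb{Z}$ endows $\mathfrak{g}$ with the filtration needed for the Baker--Campbell--Hausdorff formula to converge. Each wall $(\mathfrak{d},f_{\mathfrak{d}})$ determines a log element $\log\mathfrak{p}_{f_{\mathfrak{d}}}\in\bigoplus_{j\ge 1}\mathfrak{g}_{jn_0}$, and consistency around a joint is the condition that the relevant path-ordered product, viewed via $\log$, lies in the appropriate Lie ideal. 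Starting from $\mathfrak{D}^{(1)}:=\mathfrak{D}_{\mathrm{in},\bfs}$ reduced mod $\mathfrak{m}^2$, I would inductively produce $\mathfrak{D}^{(k)}$, consistent modulo $\mathfrak{m}^{k+1}$, by the standard recipe: at each joint $\mathfrak{j}$ the loop product $\mathfrak{p}_{\gamma,\mathfrak{D}^{(k-1)}}$ is trivial modulo $\mathfrak{m}^k$ by induction, hence equals $\exp(\xi_{\mathfrak{j}})$ modulo $\mathfrak{m}^{k+1}$ for some $\xi_{\mathfrak{j}}$ lying in finitely many graded pieces, and the Kontsevich--Soibelman/Gross--Siebert lemma writes $-\xi_{\mathfrak{j}}$ as a sum of commutators attached to new codimension-one cones through $\mathfrak{j}$. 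These new cones are the outgoing walls we add to pass from $\mathfrak{D}^{(k-1)}$ to $\mathfrak{D}^{(k)}$; taking the inverse limit yields $\mathfrak{D}_{\bfs}$.

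For the second clause, the crucial observation is that the added walls are outgoing: since the only incoming walls are the ones in $\mathfrak{D}_{\mathrm{in},\bfs}$ (where by definition $p_1^*(n_0)\in\mathfrak{d}$), and the K--S/G--S procedure only adjusts codimension-one cones on which $p_1^*(n_0)\notin\mathfrak{d}$, every wall in $\mathfrak{D}_{\bfs}\setminus\mathfrak{D}_{\mathrm{in},\bfs}$ satisfies $p_1^*(n_0)\notin\mathfrak{d}$. Uniqueness up to equivalence is then proved by a standard induction: if $\mathfrak{D},\mathfrak{D}'$ are two consistent diagrams with the same incoming walls whose outgoing walls all avoid $p_1^*(n_0)\in\mathfrak{d}$, then assuming they agree modulo $\mathfrak{m}^k$ their difference at order $k+1$ would produce a nontrivial automorphism around some joint, contradicting consistency of both; this forces the order-$(k+1)$ wall data to agree after passing to equivalent diagrams.

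The main obstacle, beyond careful bookkeeping of which arguments from \cite{GHKK} genuinely use only properties (i)--(ii) above, is verifying that the inductive step preserves the prescribed shape of wall functions. Specifically, I must check that whenever we add a wall to cancel a discrepancy $\xi_{\mathfrak{j}}$, the resulting $f_{\mathfrak{d}}$ really is a power series in $z^{p_1^*(n_0)}$ for some primitive $n_0\in N^+$, with coefficients in $R$, rather than a more general element of $\widehat{R[P]}$. This follows because $\mathfrak{g}$ is $N^+$-graded and each graded piece $\mathfrak{g}_n$ is $R\cdot z^{p_1^*(n)}\partial$, so commutators of Lie elements supported on a single ray $\mathbb{R}_{\ge 0}n_0$ remain supported on multiples of $n_0$; the contribution that must be canceled at each joint, being a Lie-theoretic expression in such elements, decomposes naturally into pieces supported on individual rays, and each piece is exponentiated to give a wall function of the required form. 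Once this shape-preservation is established, the remainder of the argument is a line-by-line adaptation of \cite{GHKK} over the ring $R=\Bbbk[a_{i,j}]$.
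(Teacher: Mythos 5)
Your proposal is correct and takes essentially the same route as the paper: the paper's own proof is a one-paragraph citation noting that the Kontsevich--Soibelman/Gross--Siebert order-by-order construction from \cite[Appendix C]{GHKK} applies verbatim because it never uses that wall functions are binomials, only that they are power series in $z^{p_1^*(n_0)}$ congruent to $1$ modulo $\mathfrak{m}$. Your detailed unpacking of the pronilpotent Lie algebra, the inductive joint-by-joint cancellation, the $N^+$-grading argument for shape preservation of wall functions over $R=\Bbbk[a_{i,j}]$, and the uniqueness induction faithfully reproduce the GHKK argument with the appropriate change of coefficient ring.
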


\begin{proof}
The proof given in Section 1.2 and Appendix C of \cite{GHKK} for ordinary cluster scattering diagrams holds in our generalized setting. That proof is a special case of results from \cite{GS} and \cite{KS-chapter} and holds in our setting because it does not require that the wall-crossing automorphisms are strictly binomial.
\end{proof}

The generalized cluster scattering diagram $\mathfrak{D}_{\bfs}$ is the unique (up to equivalence) consistent scattering diagram obtained by adding walls to $\mathfrak{D}_{\textrm{in},\bfs}$.

\begin{example}
The generalized cluster algebra from Example \ref{ex:genFixedData} has birational maps $\mu_1,\mu_2$ defined by the pullbacks 
\begin{align*}
    \mu_{\cX,1}^*z^n &= z^n(1+az^{(1,0)}+az^{(2,0)}+z^{(3,0)})^{-[n,(1,0))]}, \\
    \mu_{\cA,1}^*z^m &= z^m(1+az^{(0,1)}+az^{(0,2)}+z^{(0,3)})^{-\langle (1,0)),m \rangle}, \\
    \mu_{\cX,2}^*z^n &= z^n(1+z^{(0,1)})^{-[n,(0,1)]}, \\ \mu_{\cA,2}^*z^m &= z^m(1+z^{(-1,0)})^{-\langle (0,1),m \rangle}.
\end{align*}
Since the $p^*$ map is injective in this case,  the initial $\cA$ scattering diagram is of the form
\[ \mathfrak{D}_{\textrm{in},\bfs} = \{ ((0,1)^{\perp},1+z^{(-1,0)}),((1,0)^{\perp},1+az^{(0,1)}+az^{(0,2)}+z^{(0,3)}) \}, \]
which can be drawn as
\begin{center}
    \begin{minipage}{0.3\textwidth}
    \begin{tikzpicture}[scale=1.4]
        %draws walls
        \draw (-1.5,0) to (1.5,0);
        \draw (0,-1.5) to (0,1.5);
        
        %labels walls
        \node at (1.75,0) {$\mathfrak{d}_1$};
        \node at (0,1.7) {$\mathfrak{d}_2$};
        
        %draws paths \gamma and \gamma'
        \draw[red,->,out=180,in=90, thick,looseness=1.25] (0.75,0.75) to (-0.75,-0.75);
        \draw[blue,->,out=-90,in=0,thick, looseness=1.25] (0.75,0.75) to (-0.75,-0.75);
        
        %labels paths
        \node[red] at (-0.55,0.55) {$\gamma$};
        \node[blue] at (0.55,-0.55) {$\gamma'$};
        
        %draws points
        \filldraw (0.75,0.75) circle (1.5pt);
        \filldraw (-0.75,-0.75) circle (1.5pt);
    \end{tikzpicture}
    \end{minipage}\qquad
    \begin{minipage}{0.3\textwidth}
        \vspace{20mm}
        \begin{align*}
            f_{\mathfrak{d}_1} &= 1 + z^{(-1,0)} \\
            f_{\mathfrak{d}_2} &= 1 + az^{(0,1)} + az^{(0,2)} + z^{(0,3)}
        \end{align*}
        \vspace{15mm}
    \end{minipage}
\end{center}
Consider the paths $\gamma$ (drawn in red, on the left) and $\gamma'$ (drawn in blue, on the right). We can demonstrate that the diagram $\mathfrak{D}_{\textrm{in},\bfs}$ is not consistent by computing $\mathfrak{p}_{\gamma,\mathfrak{D}_{\textrm{in},\bfs}}$ and $\mathfrak{p}_{\gamma',\mathfrak{D}_{\textrm{in},\bfs}}$. We compute $\mathfrak{p}_{\gamma,\mathfrak{D}_{\textrm{in},\bfs}}$ as
\begin{align*}
    z^{(1,1)} &\xmapsto{\mathfrak{d}_2} z^{(1,1)}\left(1 + az^{(0,1)} + az^{(0,2)} + z^{(0,3)} \right)^{\langle (1,1),(1,0) \rangle} \\
    &= z^{(1,1)}\left(1 + az^{(0,1)} + az^{(0,2)} + z^{(0,3)} \right) \\
    &\xmapsto{\mathfrak{d}_1} z^{(1,1)} \left(1 + z^{(-1,0)}\right)^{\langle (1,1),(0,1) \rangle}
    \cdot \\
    &\qquad \left(
    \begin{array}{c}
    1 + az^{(0,1)}\left(1 + z^{(-1,0)}\right)^{\langle (0,1),(0,1) \rangle} + az^{(0,2)}\left(1 + z^{(-1,0)}\right)^{\langle (0,2),(0,1) \rangle} \\
    + z^{(0,3)}\left(1 + z^{(-1,0)}\right)^{\langle (0,3),(0,1) \rangle}
    \end{array} \right) \\
    &= z^{(1,1)}\left(1 + z^{(-1,0)}\right) \left( 1 + az^{(0,1)}\left(1 + z^{(-1,0)}\right) + az^{(0,2)}\left(1 + z^{(-1,0)}\right)^2 + z^{(0,3)}\left(1 + z^{(-1,0)}\right)^3\right)
\end{align*}
Similarly, we compute $\mathfrak{p}_{\gamma',\mathfrak{D}_{\textrm{in},\bfs}}$ as
\begin{align*}
    z^{(1,1)} &\xmapsto{\mathfrak{d}_1} z^{(1,1)}\left(1+z^{(-1,0)} \right)^{\langle (1,1),(0,1) \rangle} \\
    &= z^{(1,1)}\left(1+z^{(-1,0)}\right) \\
    &\xmapsto{\mathfrak{d}_2} z^{(1,1)}\left( 1 + az^{(0,1)} + az^{(0,2)} + z^{(0,3)}\right)^{\langle (1,1),(1,0) \rangle}\left(1 + z^{(-1,0)}\left( 1 + az^{(0,1)} + az^{(0,2)} + z^{(0,3)}\right)^{\langle (-1,0),(1,0) \rangle} \right) \\
    &= z^{(1,1)}\left( 1 + az^{(0,1)} + az^{(0,2)} + z^{(0,3)}\right) \left(1 + \frac{z^{(-1,0)}}{\left( 1 + az^{(0,1)} + az^{(0,2)} + z^{(0,3)}\right)} \right) \\
    &= z^{(1,1)}\left( 1 + az^{(0,1)} + az^{(0,2)} + z^{(0,3)} + z^{(-1,0)}\right)
\end{align*}
Observe that $\mathfrak{p}_{\gamma,\mathfrak{D}_{\textrm{in},\bfs}} \neq \mathfrak{p}_{\gamma',\mathfrak{D}_{\textrm{in},\bfs}}$. Hence, $\mathfrak{D}_{\textrm{in},\bfs}$ is by definition not consistent. Making the diagram consistent requires adding four walls:
\begin{align*}
    \mathfrak{d}_3 &= \left(\mathbb{R}_{\geq 0}(1,-3), 1 + z^{(-1,3)} \right), \\
    \mathfrak{d}_4 &= \left( \mathbb{R}_{\geq 0}(1,-2), 1 + az^{(-1,2)} + az^{(-2,4)} + z^{(-3,6)} \right), \\
    \mathfrak{d}_5 &= \left( \mathbb{R}_{\geq 0}(2,-3), 1 + z^{(-2,3)} \right), \\
    \mathfrak{d}_6 &= \left( \mathbb{R}_{\geq 0}(1,-1), 1 + az^{(-1,1)} + az^{(-2,2)} + z^{(-3,3)} \right).
\end{align*}
The consistent diagram is shown in Example~\ref{ex:gen_T_k}.
\end{example}

\begin{example} \label{eg:whywecare}
Consider the family of generalized cluster algebras
\[ \mathcal{A}_{\mathbf{b},\mathbf{r}} = \left( (x_1,x_2),(y_1,y_2), \begin{bmatrix} 0 & b_2 \\ -b_1 & 0 \end{bmatrix}, \begin{bmatrix} r_1 & 0 \\ 0 & r_2 \end{bmatrix}, (\textbf{a}_1,\textbf{a}_2) \right), \]
where $r_1b_1 = r_2b_2 = 2$. Each generalized cluster algebra has $I = I_{\textrm{uf}} = \{ 1, 2 \}$ and skew-symmetric bilinear form $\{ \cdot, \cdot \} : N \times N \rightarrow \mathbb{Q}$ given by $\{ e_i, e_j \} = \delta_{ij}$. The rest of the generalized fixed data associated to each such generalized cluster algebra is summarized in the following table:

\begin{center}
\begin{tabular}{|c|c|c|c|c|}
     \hline & $\mathcal{A}_{(2,2),(1,1)}$ & $\mathcal{A}_{(2,1),(1,2)}$ & $\mathcal{A}_{(1,2),(2,1)}$ & $\mathcal{A}_{(1,1),(2,2)}$  \\ \hline \hline
     $\mathbf{b}$ & $(2,2)$ & $(2,1)$ & $(1,2)$ & $(1,1)$ \\ \hline
     $\mathbf{r}$ & $(1,1)$ & $(1,2)$ & $(2,1)$ & $(2,2)$ \\ \hline
     $N$ & $\textrm{span}\{ e_1,e_2 \}$ & $\textrm{span}\{ e_1,e_2 \}$  & $\textrm{span}\{ e_1,e_2 \}$  & $\textrm{span}\{ e_1,e_2 \}$ \\ \hline
     $N^{\circ}$ & $\textrm{span}\{ 2e_1,2e_2 \}$ & $\textrm{span}\{ 2e_1,e_2 \}$ & $\textrm{span}\{ e_1,2e_2 \}$ & $\textrm{span}\{ e_1,e_2 \}$ \\ \hline
     $M$ & $\textrm{span}\{ e_1^*,e_2^* \}$ & $\textrm{span}\{ e_1^*,e_2^* \}$ & $\textrm{span}\{ e_1^*,e_2^* \}$ & $\textrm{span}\{ e_1^*,e_2^* \}$ \\ \hline
     $M^{\circ}$ & $\textrm{span}\left\{ \frac{1}{2}e_1^*,\frac{1}{2}e_2^* \right\}$ & $\textrm{span}\left\{ \frac{1}{2}e_1^*,e_2^* \right\}$ & $\textrm{span}\left\{ e_1^*,\frac{1}{2}e_2^* \right\}$ & $\textrm{span}\left\{ e_1^*,e_2^* \right\}$ \\ \hline
     $\{ a_{i,j} \}$ & $\emptyset$ & $\{ a_{2,1} = a \}$ & $\{ a_{1,1} =  a \}$ & $\{ a_{1,1} = a, a_{2,1} = b \}$ \\ \hline
\end{tabular}
\end{center}
Let $e_1 = (1,0)$ and $e_2 = (0,1)$. One natural choice of a set of generalized torus seeds for this family of generalized cluster algebras is:
\begin{align*}
    \mathbf{s}_{\mathcal{A}_{(2,2),(1,1)}} &:= \left( (e_1,(1,1)),(e_2,(1,1))  \right), \\
    \mathbf{s}_{\mathcal{A}_{(2,1),(1,2)}} &:= \left( (e_1,(1,1)),(e_2,(1,a,1))  \right), \\
    \mathbf{s}_{\mathcal{A}_{(1,2),(2,1)}} &:= \left( (e_1,(1,a,1)),(e_2,(1,1))  \right), \\
    \mathbf{s}_{\mathcal{A}_{(1,1),(2,2)}} &:= \left( (e_1,(1,a,1)),(e_2,(1,b,1))  \right).
\end{align*}
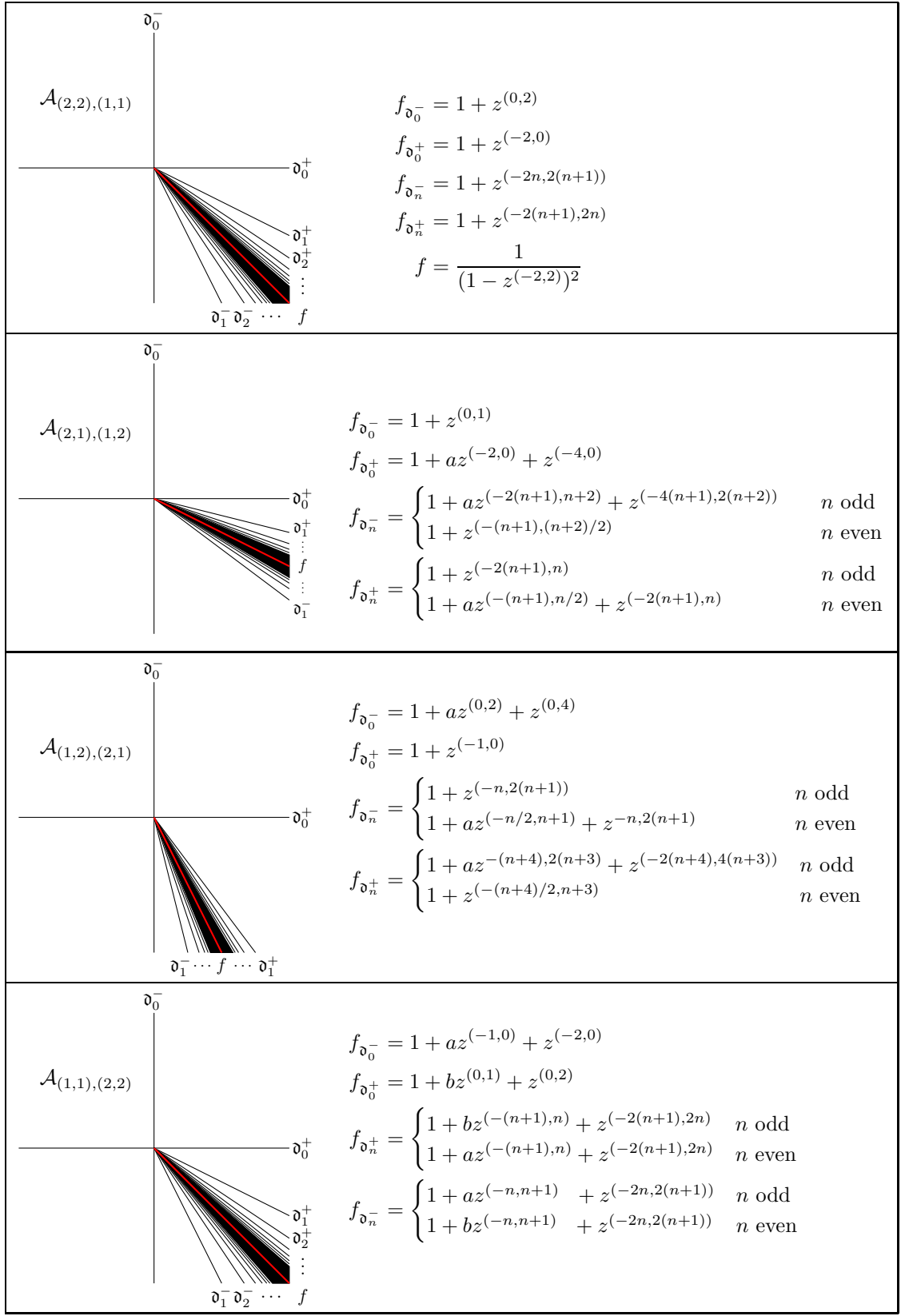
\begin{figure}
    \begin{tabularx}{0.9\textwidth}[t]{|XX|}
         \hline 
         \begin{minipage}{\textwidth}
        \begin{tikzpicture}[scale=0.75]
        %labels diagram with algebra
        \node at (-1.5,1.5) {$\mathcal{A}_{(2,2),(1,1)}$};
        
        % draws initial walls
        \draw (0,-3) to (0,3);
        \draw (-3,0) to (3,0);
        
        % draws walls above limiting ray
        \draw (0,0) to (3,-1.5);
        \draw (0,0) to (3,-2);
        \draw (0,0) to (3,-2.25);
        \draw (0,0) to (3,-2.4);
        \draw (0,0) to (3,-2.5);
        \draw (0,0) to (3,-2.57);
        \draw (0,0) to (3,-2.63);
        
        %draws walls below limiting ray
        \draw (0,0) to (1.5,-3);
        \draw (0,0) to (2,-3);
        \draw (0,0) to (2.25,-3);
        \draw (0,0) to (2.4,-3);
        \draw (0,0) to (2.5,-3);
        \draw (0,0) to (2.57,-3);
        \draw (0,0) to (2.63,-3);
        
        %draws shaded region
        \filldraw (0,0) to (3,-2.63) to (3,-3) to (2.63,-3) to (0,0);
        
        %draws limiting ray
        \draw[thick,red] (0,0) to (3,-3);
        
        %places labels on initial walls
        \node[scale=0.85] at (0,3.3) {$\mathfrak{d}_{0}^{-}$};
        \node[scale=0.85] at (3.3,0) {$\mathfrak{d}_{0}^{+}$};
        
        %places labels on walls above limiting ray
        \node[scale=0.85] at (3.3,-1.5) {$\mathfrak{d}_{1}^{+}$};
        \node[scale=0.85] at (3.3,-2) {$\mathfrak{d}_{2}^{+}$};
        \node[scale=0.85] at (3.3,-2.5) {$\vdots$};
        
        %places labels on walls below limiting ray
        \node[scale=0.85] at (1.5,-3.3) {$\mathfrak{d}_{1}^{-}$};
        \node[scale=0.85] at (2,-3.3) {$\mathfrak{d}_{2}^{-}$};
        \node[scale=0.85] at (2.6,-3.3) {$\hdots$};
        
        %places label on the limiting wall
        \node[scale=0.85] at (3.3,-3.3) {$f$};
    \end{tikzpicture}\end{minipage} &
    \vspace{-6em}
    {\begin{align*}
        f_{\mathfrak{d}_0^{-}} &= 1 + z^{(0,2)} \\
        f_{\mathfrak{d}_0^{+}} &= 1 + z^{(-2,0)} \\
        f_{\mathfrak{d}_n^{-}} &= 1 + z^{(-2n,2(n+1))} \\
        f_{\mathfrak{d}_n^{+}} &= 1 + z^{(-2(n+1),2n)} \\
        f &= \frac{1}{(1-z^{(-2,2)})^2} 
    \end{align*}}  \\ \hline
        \begin{minipage}{\textwidth}
        \begin{tikzpicture}[scale=0.75]
        %labels diagram with algebra
        \node at (-1.5,1.5) {$\mathcal{A}_{(2,1),(1,2)}$};
        
        % draws initial walls
        \draw (0,-3) to (0,3);
        \draw (-3,0) to (3,0);
        
        % draws walls above limiting ray
        \draw (0,0) to (3,-0.75);
        \draw (0,0) to (3,-1);
        \draw (0,0) to (3,-1.125);
        \draw (0,0) to (3,-1.2);
        
        %draws walls below limiting ray
        \draw (0,0) to (3,-2.25);
        \draw (0,0) to (3,-2);
        \draw (0,0) to (3,-1.875);
        \draw (0,0) to (3,-1.8);
        
        %draws shaded region
        \filldraw (0,0) to (3,-1.25) to (3,-1.75) to (0,0);
        
        %draws limiting ray
        \draw[thick,red] (0,0) to (3,-1.5);
        
        %places labels on initial walls
        \node[scale=0.85] at (0,3.3) {$\mathfrak{d}_{0}^{-}$};
        \node[scale=0.85] at (3.3,0) {$\mathfrak{d}_{0}^{+}$};
        
        %places labels on walls above limiting ray
        \node[scale=0.75] at (3.3,-0.65) {$\mathfrak{d}_1^{+}$};
        \node[scale=0.55] at (3.3,-1) {$\vdots$};
        
        %places labels on walls below limiting ray
        \node[scale=0.55] at (3.3,-1.9) {$\vdots$};
        \node[scale=0.75] at (3.3,-2.45) {$\mathfrak{d}_1^{-}$};
        
        %places label on the limiting wall
        \node[scale=0.75] at (3.3,-1.5) {$f$};
    \end{tikzpicture}\end{minipage} &
    \vspace{-6em}    
    {\begin{align*}
        f_{\mathfrak{d}_0^{-}} &= 1 + z^{(0,1)} \\
        f_{\mathfrak{d}_{0}^{+}} &= 1 + az^{(-2,0)} + z^{(-4,0)} \\
        f_{\mathfrak{d}_n^{-}} &= \begin{cases} 1 + az^{(-2(n+1),n+2)} + z^{(-4(n+1),2(n+2))} & ~~~ n \textrm{ odd} \\ 1 + z^{(-(n+1),(n+2)/2)} & ~~~ n  
        \textrm{ even} \end{cases} \\
        f_{\mathfrak{d}_n^{+}} &= \begin{cases} 1 + z^{(-2(n+1),n)} & \qquad~~~~~n \textrm{ odd} \\ 1 + az^{(-(n+1),n/2)} + z^{(-2(n+1),n)} & \qquad~~~~~n \textrm{ even} \end{cases}
    \end{align*}}  \\ \hline
    \begin{minipage}{\textwidth}
        \begin{tikzpicture}[scale=0.75]
        %labels diagram with algebra
        \node at (-1.5,1.5) {$\mathcal{A}_{(1,2),(2,1)}$};
        
        % draws initial walls
        \draw (0,-3) to (0,3);
        \draw (-3,0) to (3,0);
        
        % draws walls above limiting ray
        \draw (0,0) to (2.25,-3);
        \draw (0,0) to (2,-3);
        \draw (0,0) to (1.875,-3);
        \draw (0,0) to (1.8,-3);
        
        %draws walls below limiting ray
        \draw (0,0) to (0.75,-3);
        \draw (0,0) to (1,-3);
        \draw (0,0) to (1.125,-3);
        \draw (0,0) to (1.2,-3);
        
        %draws shaded region
        \filldraw (0,0) to (1.75,-3) to (1.25,-3) to (0,0);

        %draws limiting ray
        \draw[thick,red] (0,0) to (1.5,-3);
        
        %places labels on initial walls
        \node[scale=0.85] at (0,3.3) {$\mathfrak{d}_{0}^{-}$};
        \node[scale=0.85] at (3.3,0) {$\mathfrak{d}_{0}^{+}$};
        
        %places labels on walls above limiting ray
        \node[scale=0.85] at (2.55,-3.3) {$\mathfrak{d}_1^{+}$};
        \node[scale=0.75] at (2,-3.3) {$\dots$};
        
        %places labels on walls below limiting ray
        \node[scale=0.85] at (0.6,-3.3) {$\mathfrak{d}_1^{-}$};
        \node[scale=0.75] at (1.1,-3.3) {$\dots$};
        
        %places label on the limiting wall
        \node[scale=0.85] at (1.5,-3.3) {$f$};
        
    \end{tikzpicture}\end{minipage} &
    \vspace{-8em}    
    {\begin{align*}
        f_{\mathfrak{d}_0^{-}} &= 1 + az^{(0,2)} + z^{(0,4)} \\
        f_{\mathfrak{d}_0^{+}} &= 1 + z^{(-1,0)} \\
        f_{\mathfrak{d}_n^{-}} &= \begin{cases} 1 + z^{(-n,2(n+1))} & \qquad~~~~~ n \textrm{ odd} \\ 1 + az^{(-n/2,n+1)} + z^{-n,2(n+1)} & \qquad~~~~~ n \textrm{ even} \end{cases} \\
        f_{\mathfrak{d}_n^{+}} &= \begin{cases} 1 + az^{-(n+4),2(n+3)} + z^{(-2(n+4),4(n+3))} & n \textrm{ odd} \\ 1 + z^{(-(n+4)/2,n+3)} & n \textrm{ even} \end{cases}
    \end{align*}} \\ \hline
             \begin{minipage}{\textwidth}
        \begin{tikzpicture}[scale=0.75]
        %labels diagram with algebra
        \node at (-1.5,1.5) {$\mathcal{A}_{(1,1),(2,2)}$};
        
        % draws initial walls
        \draw (0,-3) to (0,3);
        \draw (-3,0) to (3,0);
        
        % draws walls above limiting ray
        \draw (0,0) to (3,-1.5);
        \draw (0,0) to (3,-2);
        \draw (0,0) to (3,-2.25);
        \draw (0,0) to (3,-2.4);
        \draw (0,0) to (3,-2.5);
        \draw (0,0) to (3,-2.57);
        \draw (0,0) to (3,-2.63);
        
        %draws walls below limiting ray
        \draw (0,0) to (1.5,-3);
        \draw (0,0) to (2,-3);
        \draw (0,0) to (2.25,-3);
        \draw (0,0) to (2.4,-3);
        \draw (0,0) to (2.5,-3);
        \draw (0,0) to (2.57,-3);
        \draw (0,0) to (2.63,-3);
        
        %draws shaded region
        \filldraw (0,0) to (3,-2.63) to (3,-3) to (2.63,-3) to (0,0);
        
        %draws limiting ray
        \draw[thick,red] (0,0) to (3,-3);
        
        %places labels on initial walls
        \node[scale=0.85] at (0,3.3) {$\mathfrak{d}_{0}^{-}$};
        \node[scale=0.85] at (3.3,0) {$\mathfrak{d}_{0}^{+}$};
        
        %places labels on walls above limiting ray
        \node[scale=0.85] at (3.3,-1.5) {$\mathfrak{d}_{1}^{+}$};
        \node[scale=0.85] at (3.3,-2) {$\mathfrak{d}_{2}^{+}$};
        \node[scale=0.85] at (3.3,-2.5) {$\vdots$};
        
        %places labels on walls below limiting ray
        \node[scale=0.85] at (1.5,-3.3) {$\mathfrak{d}_{1}^{-}$};
        \node[scale=0.85] at (2,-3.3) {$\mathfrak{d}_{2}^{-}$};
        \node[scale=0.85] at (2.6,-3.3) {$\hdots$};
        
        %places label on the limiting wall
        \node[scale=0.85] at (3.3,-3.3) {$f$};
    \end{tikzpicture}\end{minipage} &
    \vspace{-8em}    
    {\begin{align*}
        f_{\mathfrak{d}_{0}^{-}} &= 1 + az^{(-1,0)} + z^{(-2,0)} \\
    f_{\mathfrak{d}_{0}^{+}} &= 1 + bz^{(0,1)} + z^{(0,2)} \\
    f_{\mathfrak{d}_{n}^{+}} &= \begin{cases} 1 + bz^{(-(n+1),n)} + z^{(-2(n+1),2n)} & n \textrm{ odd} \\ 1 + az^{(-(n+1),n)} + z^{(-2(n+1),2n)} & n \textrm{ even} \end{cases} \\
    f_{\mathfrak{d}_{n}^{-}} &= \begin{cases} 1 + az^{(-n,n+1)} ~~ + z^{(-2n,2(n+1))} & n \textrm{ odd} \\ 
    1 + bz^{(-n,n+1)} ~~ + z^{(-2n,2(n+1))} & n \textrm{ even } \end{cases}
    \end{align*}}\\ \hline
    \end{tabularx}
    \caption{The known ordinary cluster scattering diagram for $\mathcal{A}_{(2,2),(1,1)}$ and partially computed generalized cluster scattering diagrams for $\cA_{(1,2),(2,1)}$, $\cA_{(2,1),(1,2)}$, and $\cA_{(1,1),(2,2)}$ discussed in Example~\ref{eg:whywecare}.}
    \label{fig:table_r_vs_d_example}
\end{figure}
For this set of generalized torus seeds, we have
\begin{align*}
    \mathfrak{D}_{\textrm{in},\mathcal{A}_{(2,2),(1,1)}} &= \left\{ \left((1,0)^{\perp}, 1 + z^{(0,2)}\right), \left((0,1)^{\perp}, 1 + z^{(-2,0)}\right) \right\}, \\
    \mathfrak{D}_{\textrm{in},\mathcal{A}_{(2,1),(1,2)}} &= \left\{ \left((1,0)^{\perp}, 1 + z^{(0,1)}\right), \left((0,1)^{\perp}, 1 + az^{(-2,0)} + z^{(-4,0)}\right)  \right\}, \\
    \mathfrak{D}_{\textrm{in},\mathcal{A}_{(1,2),(2,1)}} &= \left\{ \left((1,0)^{\perp}, 1 + az^{(0,2)} + z^{(0,4)}\right), \left((0,1)^{\perp}, 1 + z^{(-1,0)}\right) \right\}, \\
    \mathfrak{D}_{\textrm{in},\mathcal{A}_{(1,1),(2,2)}} &= \left\{ \left((1,0)^{\perp}, 1 + az^{(0,1)} + z^{(0,2)}\right), \left((0,1)^{\perp}, 1 + bz^{(-1,0)} + z^{(-2,0)}\right) \right\}.
\end{align*}
The table in Figure~\ref{fig:table_r_vs_d_example} shows the known cluster scattering diagram for $\mathcal{A}_{(2,2),(1,1)}$ and partially computed cluster scattering diagrams for $\mathcal{A}_{(1,2),(2,1)}$, $\mathcal{A}_{(2,1),(1,2)}$, and  $\mathcal{A}_{(1,1),(2,2)}$. 
While Theorem~\ref{thm:gen_consistent} guarantees the existence of consistent generalized cluster scattering diagrams, the wall structures of the diagrams are not explicitly laid out. 

Observe that when $a = b = 0$, the generalized cluster scattering diagram for $\mathcal{A}_{(1,1),(2,2)}$ reduces to the known cluster scattering diagram for $\mathcal{A}_{(2,2),(1,1)}$, where the wall-crossing automorphisms are known. See \cite{reineke2010poisson} for the $\cX$ setting, and \cite[Figure 2]{cheung2019theta} for the $\cA$ setting. For  $\mathcal{A}_{(1,2),(2,1)}$, $\mathcal{A}_{(2,1),(1,2)}$, and $\mathcal{A}_{(1,1),(2,2)}$, the incoming walls can be computed by definition and the outgoing walls in the cluster complex follow from the generalized mutation rule. It is difficult, however, to determine what wall-crossing automorphism $f$ should be attached to the limiting ray to make the diagram consistent.
\end{example}

\subsection{Mutation invariance of generalized cluster scattering diagrams}
\label{subsec:genMutInvariance}

We must slightly tweak the mutation of ordinary scattering diagrams. We use the same definitions of $\mathcal{H}_{k,+}$ and $\mathcal{H}_{k,-}$, but modify the definition of $T_k$ and the procedure for applying $T_k$ as follows:

\begin{definition}
\label{def:T_k}
We define the piecewise linear transformation $T_k: M^\circ \rightarrow M^\circ$ as
\[ T_k(m) := \begin{cases}
                m + r_kv_k\langle d_ke_k,m \rangle & m \in \mathcal{H}_{k,+} \\
                m & m \in \mathcal{H}_{k,-}
            \end{cases}\]
As before, we sometimes use the shorthand $T_{k,-}$ and $T_{k,+}$ to refer to $T_k$ in, respectively, the regions $\mathcal{H}_{k,+}$ and $\mathcal{H}_{k,-}$.
\end{definition}

\begin{definition}
\label{def:diagramMutProcedure}
The scattering diagram $T_k(\mathfrak{D}_\bfs)$ is obtained from $\mathfrak{D}_{\bfs}$ via the following procedure:
\begin{enumerate}
    \item For each wall $(\mathfrak{d},f_{\mathfrak{d}})$ in $\mathfrak{D}_{\bfs}$ other than $\mathfrak{d}_k := (e_k^{\perp},1 + a_1z^{v_k} + \cdots + a_{r_k-1}z^{(r_k-1)v_k} + z^{r_kv_k})$, there are either one or two corresponding walls in $T_k(\mathfrak{D}_\bfs)$. If $\textrm{dim}(\mathfrak{d} \cap \mathcal{H}_{k,-}) \geq \textrm{rank}(M) - 1$, then add to $T_k(\mathfrak{D}_\textbf{s})$ the wall $(T_k(\mathfrak{d} \cap \mathcal{H}_{k,-}),T_{k,-}(f_{\mathfrak{d}}))$ where the notation $T_{k,\pm}(f_{\mathfrak{d}})$ indicates the formal power series obtained by applying $T_{k,\pm}$ to the exponent of each term of $f_{\mathfrak{d}}$. If $\textrm{dim}(\mathfrak{d} \cap \mathcal{H}_{k,+}) \geq \textrm{rank}(M) - 1$, add the wall $(T_k(\mathfrak{d} \cap \mathcal{H}_{k,+}),T_{k,+}(f_{\mathfrak{d}}))$.
    \item The wall $\mathfrak{d}_k$ in $\mathfrak{D}_\bfs$ becomes the wall $\mathfrak{d}_k' = (e_k^{\perp},1 + a_{1}z^{-v_k} + \cdots + a_{r_k-1}z^{-(r_k-1)v_k} + z^{-r_kv_k})$ in $T_k(\mathfrak{D}_\bfs)$.
\end{enumerate}
\end{definition}

\begin{example}
\label{ex:gen_T_k}
\noindent Consider the generalized cluster algebra
\[ \mathcal{A}\left(\mathbf{x},\mathbf{y},\begin{bmatrix} 0 & 1 \\ -1 & 0 \end{bmatrix},\begin{bmatrix} 3 & 0 \\ 0 & 1 \end{bmatrix}, ((1,a,a,1),(1,1)) \right)\]
with seed data $\bfs = (((1,0),(1,a,a,1)),((0,1),(1,1)))$. 
Note that the injectivity assumption is satisfied in this example. 
For this algebra, we have $r_1 = 3$, $r_2 = 1$, and $d_1 = d_2 = 1$. By definition, this means that
\begin{align*}
    \epsilon_{12} &= \{ e_1, e_2 \}d_2 = 1 \\
    \epsilon_{21} &= \{ e_2, e_1 \}d_1 = -1 \\
    v_1 &= p_1^*((1,0)) = (0,1) \\
    v_2 &= p_1^*((0,1)) = (-1,0)
\end{align*}
and the initial cluster scattering diagram is
\begin{align*}
    \mathfrak{D}_{\textrm{in},\bfs} &= \left\{ ((1,0),1+z^{(-1,0)}),((0,1),1+az^{(0,1)}+az^{(0,2)}+z^{(0,3)}) \right\}.
\end{align*}
Adding walls to make $\mathfrak{D}_{\textrm{in},\bfs}$ consistent, we obtain the following:
\vspace{2mm}
\begin{center}
    \begin{minipage}{0.44\textwidth}
	\begin{tikzpicture}[scale=0.9]
	    \draw [fill=blue, opacity=0.15] (-3,0) to (3,0) to (3,3) to (-3,3) to (-3,0);
	    \draw [fill=red, opacity=0.15] (-3,0) to (3,0) to (3,-3.45) to (-3,-3.4) to (-3,0);
	
		\draw (-3,0) to (3,0);
		\draw (0,-3.4) to (0,3);
		\draw (0,0) to (2.5,-2.5);
		\draw (0,0) to (2,-3);
		\draw (0,0) to (1.5,-3);
		\draw (0,0) to (1,-3);
				
		\node at (3.3,0) {$\mathfrak{d}_1$};
		\node at (0,3.25) {$\mathfrak{d}_2$};
		\node at (2.85,-2.75) {$\mathfrak{d}_6$};
		\node at (2.25,-3.25) {$\mathfrak{d}_5$};
		\node at (1.65,-3.25) {$\mathfrak{d}_4$};
		\node at (1,-3.25) {$\mathfrak{d}_3$};
		
		\node at (2,1) {$\mathcal{C}^+$};
	\end{tikzpicture}
	\end{minipage}
	\qquad
	\begin{minipage}{0.44\textwidth}
	\begin{align*}
        f_{\mathfrak{d}_1} &= 1+z^{(-1,0)} \\
        f_{\mathfrak{d}_2} &= 1+az^{(0,1)}+az^{(0,2)}+z^{(0,3)} \\
        f_{\mathfrak{d}_3} &= 1+z^{(-1,3)} \\
        f_{\mathfrak{d}_4} &= 1 + az^{(-1,2)}+az^{(-2,4)}+z^{(-3,6)} \\
        f_{\mathfrak{d}_5} &= 1 + z^{(-2,3)} \\
        f_{\mathfrak{d}_6} &= 1 + az^{(-1,1)}+az^{(-2,2)}+z^{(-3,3)}
    \end{align*}
	\end{minipage}
\end{center}
\noindent By definition, we have the half-planes
\begin{align*}
    \mathcal{H}_{2,+} &= \left\{ (0,y): y > 0 \right\} \\
    \mathcal{H}_{2,-} &= \left\{ (0,y): y < 0 \right\}
\end{align*}
which are shown on $\mathfrak{D}_{\bfs}$ in blue, for $\mathcal{H}_{2,+}$, and red, for $\mathcal{H}_{2,-}$. To mutate in direction $k=2$, we will use the linear transformation
\begin{align*}
T_2(m) &= \begin{cases}
			m + (-1,0)\langle (0,1),m\rangle & m \in \mathcal{H}_{2,+} \\
			m & m \in \mathcal{H}_{2,-}
		\end{cases}
\end{align*}
Because $T_2$ fixes the walls in $\mathcal{H}_{2,-}$, the only walls that change under $T_2$ are $\mathfrak{d}_1$ and $\mathfrak{d}_2 \cap \mathbb{R}_{\geq 0}(0,1)$. Because the support of $\mathfrak{d}_1$ is $e_2^\perp = (1,0)$, it is  transformed via the procedure outlined in (2) of Definition \ref{def:diagramMutProcedure} and becomes
\[ \mathfrak{d}_1' = (e_2^\perp, 1 + z^{(1,0)}) \]
To determine the image of $\mathfrak{d}_2 \cap \mathbb{R}_{\geq 0}(0,1)$, we compute
\begin{align*}
    T_2((0,1)) &= (0,1) + (-1,0)\langle (0,1),(0,1) \rangle = (-1,1)
\end{align*}
Because $T_2$ is a linear transformation, we know that $T_2((0,2)) = (-2,2)$ and $T_2((0,3)) = (-3,3)$. As such,
\[ T_2(\mathfrak{d}_2 \cap \mathbb{R}_{\geq 0}(0,1)) = (\mathbb{R}_{\geq 0}(-1,1),1+a^{(-1,1)}+az^{(-2,2)}+z^{(-3,3)}) \]
and we draw $T_2(\mathfrak{D}_{\bfs})$ as
\begin{center}
	\begin{tikzpicture}
		\draw (-1.25,0) to (3,0);
		\draw (0,-3) to (0,0);
		\draw (-1.25,1.25) to (2.5,-2.5);
		\draw (0,0) to (2,-3);
		\draw (0,0) to (1.5,-3);
		\draw (0,0) to (1,-3);
				
		\node at (4.4,0) {$(\mathfrak{d}_1$, $1+z^{(1,0)})$};
		\node at (0,-3.2) {$\mathfrak{d}_2$};
		\node at (2.85,-2.75) {$\mathfrak{d}_6$};
		\node at (2.25,-3.25) {$\mathfrak{d}_5$};
		\node at (1.65,-3.25) {$\mathfrak{d}_4$};
		\node at (1,-3.25) {$\mathfrak{d}_3$};
		
		\node at (2,-0.85) {$\mathcal{C}^+$};
	\end{tikzpicture}
\end{center}
where $f_{\mathfrak{d}_2}, f_{\mathfrak{d}_3}, f_{\mathfrak{d}_4}, f_{\mathfrak{d}_5}$, and $f_{\mathfrak{d}_6}$ are the same automorphisms as in $\mathfrak{D}_{\bfs}$. We can also compute the new basis vectors $e_1'$ and $e_2'$ using Definition \ref{def:basisMutGen}:
\begin{align*}
    e_1' &= e_1 + r_2[\epsilon_{12}]_+e_2 \\
         &= (1,0) + (0,1) \\
         &= (1,1) \\
    e_2' &= -e_2 = (0,-1)
\end{align*}
Because $\mathcal{A}$ has exchange polynomials with reciprocal coefficients, the exchange polynomial coefficients are fixed under mutation. So we have \[ \mu_2(\bfs) = (((1,1),(1,a,a,1)),((0,-1),(1,1))) \]
Recall that by definition, the basis vectors $e_1 = (1,0)$ and $e_2 = (0,1)$ of the original torus seed $\bfs$ form a basis for the lattice $N$. Observe that the vectors $e_1' = (1,1)$ and $e_2' =  (0,-1)$ obtained via torus seed mutation are another choice of basis for $N$.
\end{example}
Each cluster mutation $\mu_k$ can be defined by a triple $(n,m,r) \in N \times M \times \mathbb{Z}_{\geq 0}$ with $\langle n,m \rangle = 0$. Emulating the notation of \cite{GHK}, we denote this mutation as $\mu_{(n,m,r)}$. It is defined by the pullback 
\[ \mu^*_{(n,m,r)}\left(z^{m'} \right) = z^{m'} \cdot \left( 1 + a_{1}z^{m} + \cdots + a_{r-1}z^{(r-1)m} + z^{rm} \right)^{\langle n, m' \rangle}, \]
where $a_1, \dots, a_{r-1}$ are scalars and $r \in \mathbb{Z}_{\geq 0}$. 

Recall from Section~\ref{subsec:ordDiagrams} that we refer to the max-plus tropicalization of $\mathbb{Z}$ as the \emph{Fock-Gonacharov tropicalization} and denote it as $\mathbb{Z}^T$.
Let $\mu: T_N \rightarrow T_N$ be a positive birational map. Then $\mu^T: N \rightarrow N$
denotes the induced map
$T_N(\mathbb{Z}^T) \rightarrow T_N(\mathbb{Z}^T)$.

\begin{prop}[Analogue of Proposition 2.4 of \cite{GHKK}]
\label{prop:T_k_tropicalization}
The map ${T_k: M^\circ \rightarrow M^\circ}$ given in Definition~\ref{def:T_k} is the Fock-Goncharov tropicalization of the  map ${\mu_{(v_k,d_ke_k,r_k)}}$.
\end{prop}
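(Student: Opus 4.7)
The approach is to simply unfold the definition of Fock--Goncharov tropicalization for the specific positive birational map $\mu_{(v_k, d_k e_k, r_k)}$ and check that the resulting piecewise-linear self-map of $M^\circ$ agrees term-by-term with $T_k$ in Definition~\ref{def:T_k}. Since $v_k \in M^\circ$ and $d_k e_k \in N^\circ$, the triple $(v_k, d_k e_k, r_k)$ defines a positive birational automorphism of the torus $T_{M^\circ}$, whose character lattice is $N^\circ$; hence the tropicalization $\mu^T$ is a piecewise-linear self-map of $M^\circ = T_{M^\circ}(\mathbb{Z}^T)$, characterized by
\begin{equation*}
\langle \mu^T(p),\, m' \rangle \;=\; \mathrm{trop}\bigl(\mu^*(z^{m'})\bigr)(p) \qquad \text{for all } p \in M^\circ,\; m' \in N^\circ.
\end{equation*}

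The key step is to tropicalize the defining pullback
\begin{equation*}
\mu^*(z^{m'}) \;=\; z^{m'} \cdot \bigl(1 + a_{k,1} z^{d_k e_k} + \cdots + a_{k,r_k - 1} z^{(r_k - 1) d_k e_k} + z^{r_k d_k e_k}\bigr)^{\langle v_k,\, m' \rangle},
\end{equation*}
where the formal scalars $a_{k,j}$ are treated as positive constants and thus tropicalize to the multiplicative unit $0$ of the max-plus semiring, exactly as the implicit $1$'s do in the ordinary-coefficient case. Applying the usual rules (products become sums, sums become $\max$, powers become scalar multiplication) gives
\begin{equation*}
\mathrm{trop}\bigl(\mu^*(z^{m'})\bigr)(p) \;=\; \langle p, m' \rangle + \langle v_k, m' \rangle \cdot \max_{0 \le j \le r_k} \bigl( j \langle d_k e_k, p \rangle \bigr).
\end{equation*}

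The final step is a two-case evaluation of the maximum governed by the sign of $\langle e_k, p \rangle$, which, since $d_k > 0$, coincides with the sign of $\langle d_k e_k, p \rangle$ and so determines whether $p \in \mathcal{H}_{k,+}$ or $p \in \mathcal{H}_{k,-}$. For $p \in \mathcal{H}_{k,+}$ the maximum is realized at $j = r_k$ and equals $r_k \langle d_k e_k, p \rangle$, producing
$\langle \mu^T(p), m' \rangle = \langle p + r_k \langle d_k e_k, p \rangle\, v_k,\, m' \rangle$; for $p \in \mathcal{H}_{k,-}$ the maximum is attained at $j = 0$ and equals $0$, producing $\langle \mu^T(p), m' \rangle = \langle p, m' \rangle$. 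Since these identities hold for all $m' \in N^\circ$, they read off $\mu^T$ on each half-space and recover exactly the two cases of Definition~\ref{def:T_k}.

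Because the argument is essentially a mechanical unwinding of definitions, there is no serious technical obstacle; the one point that merits care is the tropicalization convention for the formal exchange coefficients $a_{k,j}$, which must be handled as positive constants contributing $0$ in max-plus. With that convention in place, the case split on the sign of $\langle e_k, p \rangle$ is forced by the piecewise-linear structure of $\max_{j} j\langle d_k e_k, p \rangle$ and matches the case split in the definition of $T_k$ on the nose.
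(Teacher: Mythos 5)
Your proposal is correct and follows essentially the same route as the paper's own argument: unwind the Fock--Goncharov tropicalization of $\mu_{(v_k, d_k e_k, r_k)}$ and observe that the resulting piecewise-linear map, $p \mapsto p + r_k[\langle d_k e_k, p\rangle]_+ v_k$, coincides case-by-case with $T_k$. You are, if anything, more careful with conventions than the paper's own proof, which swaps the roles of $v_k$ and $d_k e_k$ in the displayed pullback and writes the tropicalization as a self-map of $N$; your reading — that $\mu_{(v_k, d_k e_k, r_k)}$ acts on $T_{M^\circ}$ (character lattice $N^\circ$), so that its tropicalization lives on $M^\circ$ as $T_k$ requires — is the one consistent with the paper's own definition of $\mu_{(n,m,r)}$ and with the stated domain of $T_k$.
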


\begin{proof}
The map $\mu_{(d_ke_k,v_k,r_k)}: T_{M^{\circ}} \rightarrow T_{M^{\circ}}$ is defined by the pullback
\begin{align*} \mu_{(v_k,d_ke_k,r_k)}^*\left(z^{m} \right) &= z^m \left( 1+ a_{1}z^{v_k} + \cdots + a_{r_k-1}z^{(r_k-1)v_k} + z^{r_kv_k} \right)^{\langle d_ke_k, m \rangle}
\end{align*}
By definition, $\mu_{\langle d_ke_k,v_k,r_k \rangle}$ has Fock-Goncharov tropicalization
\begin{align*}
    \mu_{d_ke_k,v_k,r_k}^T : N &\rightarrow N \\
    x &\mapsto x + r_k[\langle d_ke_k, x \rangle]_{+}v_k
\end{align*}
Observe that when $x \in \mathcal{H}_{k,-}$, then $\langle d_ke_k,x \rangle \leq 0$ and the above map reduces to $x \mapsto x$. When $x \in \mathcal{H}_{k,+}$, then $\langle d_ke_k, \rangle \geq 0$ and the map reduces to $x \mapsto x + r_kv_k\langle d_ke_k,x \rangle$. Hence, the tropicalization agrees exactly with our definition of $T_k$, as desired.
\end{proof}

\begin{theorem}\cite[Theorem 1.24]{GHKK}
\label{theorem:mutDiagConsistent}
If the injectivity assumption holds, then $T_k(\mathfrak{D}_{\bfs})$ is a consistent scattering diagram for $N^+_{\mu_k(\bfs)}$. Moreover, the diagrams $\mathfrak{D}_{\mu_k(\bfs)}$ and $T_k(\mathfrak{D}_{\bfs})$ are equivalent.
\end{theorem}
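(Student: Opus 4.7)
The plan is to follow the template of Gross--Hacking--Keel--Kontsevich's proof of Theorem 1.24, adapted to incorporate the generalized exchange polynomials. The key new ingredient, already proved as Proposition~\ref{prop:T_k_tropicalization}, is that $T_k$ is the Fock--Goncharov tropicalization of the generalized mutation $\mu_{(v_k, d_k e_k, r_k)}$; this ensures that the piecewise linear machinery is exactly what is needed to transport scattering diagram data between $\bfs$ and $\mu_k(\bfs)$. The argument has three stages followed by an appeal to uniqueness.

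First I would verify that $T_k(\mathfrak{D}_{\bfs})$, as constructed in Definition~\ref{def:diagramMutProcedure}, is actually a scattering diagram for $N^+_{\mu_k(\bfs)}$ in the sense of Definition~\ref{def:genScatteringDiagram}. For each wall $(\mathfrak{d}, f_{\mathfrak{d}})$ with $\mathfrak{d} \subset n_0^\perp$, one checks that its image is perpendicular to the primitive vector $n_0'$ obtained from the dual of the basis mutation in Definition~\ref{def:basisMutGen}, and that $T_{k, \pm}(f_{\mathfrak{d}})$ has the correct functional form in $p_1^*(n_0')$. Next, I would establish that $T_k(\mathfrak{D}_{\bfs})$ is consistent. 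The crux is an algebraic identity of the form
\[
\mathfrak{p}_{f_{\mathfrak{d}_k}} \circ (T_k)_{*} = (\mu_k)_{*}
\]
on $\widehat{R[P]}$, where $(T_k)_{*}$ denotes substitution on exponents. In the ordinary case this reduces to the binomial mutation formula; here the factor $(1+z^{v_k})^{-\langle d_k e_k, m\rangle}$ is replaced by the reciprocal polynomial $f_{\mathfrak{d}_k} = 1 + a_{k,1} z^{v_k} + \cdots + z^{r_k v_k}$ raised to the same power, and the compensating factor of $r_k$ appears in $T_k$. Granted this compatibility, any loop $\gamma$ in $T_k(\mathfrak{D}_{\bfs})$ can be pulled back to a loop in $\mathfrak{D}_{\bfs}$, and consistency of $T_k(\mathfrak{D}_{\bfs})$ follows from consistency of $\mathfrak{D}_{\bfs}$ (Theorem~\ref{thm:gen_consistent}).

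In the third stage I would match initial walls. For $i=k$, step~(2) of Definition~\ref{def:diagramMutProcedure} directly produces the wall $(e_k^\perp, 1 + a_{k,1} z^{-v_k} + \cdots + z^{-r_k v_k})$; combined with the reciprocity $a_{k,s} = a_{k, r_k - s}$ and the basis mutation $e_k \mapsto -e_k$ (so $v_k \mapsto -v_k$), this is exactly the initial wall in direction $k$ of $\mathfrak{D}_{\mathrm{in}, \mu_k(\bfs)}$. For $i \neq k$, I would verify that the $T_k$-image of $(e_i^\perp, 1 + a_{i,1} z^{v_i} + \cdots + z^{r_i v_i})$ meets the appropriate half-space in the wall indexed by $e_i' = e_i + r_k [\epsilon_{ik}]_+ e_k$, again using Proposition~\ref{prop:T_k_tropicalization} to identify $T_k(v_i) = v_i'$; any extra wall arising from splitting across $e_k^\perp$ is automatically outgoing. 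Uniqueness (Theorem~\ref{thm:gen_consistent}) then forces $T_k(\mathfrak{D}_{\bfs}) \equiv \mathfrak{D}_{\mu_k(\bfs)}$.

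The main obstacle is the consistency identity in stage two, specifically verifying that $T_k$ is able to ``absorb'' the wall-crossing at $\mathfrak{d}_k$ symmetrically across both half-spaces $\mathcal{H}_{k, \pm}$ when $f_{\mathfrak{d}_k}$ is a degree $r_k$ polynomial rather than a binomial. In the ordinary case this compatibility is almost tautological; in our setting the higher degree introduces an $r_k$ into $T_k$ and an entire tuple of exchange polynomial coefficients into $\mathfrak{p}_{f_{\mathfrak{d}_k}}$. The reciprocity condition $a_{k,s} = a_{k,r_k - s}$ is what makes the two sides of the compatibility identity agree, since it ensures that the polynomial $f_{\mathfrak{d}_k}$ is invariant (up to a monomial prefactor) under $v_k \mapsto -v_k$, matching the behavior of $T_{k,+}$ versus $T_{k,-}$ on exponents. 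Once this identity is in hand, the rest of the proof runs parallel to the ordinary setting.
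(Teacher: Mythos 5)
Your proposal follows the same route as the paper: reduce consistency to a local check at the slab $\mathfrak{d}_k$, use the reciprocity condition $a_{k,s} = a_{k,r_k-s}$ to show that $T_{k,+}$ transports the slab-crossing on $\mathcal{H}_{k,+}$ to match the slab-crossing in $\mathfrak{D}_{\mu_k(\bfs)}$, then compare incoming walls and invoke uniqueness (Theorem~\ref{theorem:scatDiagramUniqueness}). One caveat: the identity you write as $\mathfrak{p}_{f_{\mathfrak{d}_k}}\circ(T_k)_* = (\mu_k)_*$ is not quite the form needed; the precise relation the paper establishes is $\alpha^{-1}\circ\mathfrak{p}_{\mathfrak{d}_k'} = \mathfrak{p}_{\mathfrak{d}_k}$, where $\alpha(z^m)=z^{m+r_kv_k\langle d_ke_k,m\rangle}$ is the substitution induced by $T_{k,+}$ and $\mathfrak{d}_k'$ is the mutated slab --- but you correctly identify that the reciprocity condition, which makes $f_{\mathfrak{d}_k}$ invariant up to the monomial prefactor $z^{r_kv_k}$ under $v_k\mapsto -v_k$, is exactly what makes this work, so the conceptual content is the same.
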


We need to show that $T_k(\mathfrak{D}_\bfs)$ is a scattering diagram for $\mathfrak{g}_{\mu_k(\bfs)}$ and $N^+_{\mu_k(\bfs)}$. As in the ordinary case, the major technical hurdle in doing so is the fact that the wall-crossing automorphisms of $\mathfrak{D}_\bfs$ and $\mathfrak{D}_{\mu_k(\bfs)}$ live in different completed monoid rings. Those in $\mathfrak{D}_\bfs$ live in $\widehat{R[P]}$, where $P$ is the monoid generated by $\{ v_i \}_{i \in I_{\textrm{uf}}}$. Those in $\mathfrak{D}_{\mu_k(\bfs)}$ live, instead, in $\widehat{R[P']}$, where $P'$ is the monoid generated by $\{ v_i' \}_{i \in I_{\textrm{uf}}}$.

To overcome this difficulty, we define an additional monoid $\overline{P}$ which contains both $P$ and $P'$. Let $\sigma \subseteq M^\circ$ be a top-dimensional cone which contains the vectors $\{v_i \}_{i \in I_{\textrm{uf}}}$ and $-v_k$, such that $\sigma \cap (-\sigma) = \mathbb{R}v_k$. For a fixed choice of $\sigma$, let $\overline{P} := \sigma \cap M^\circ$ and $J = \overline{P} \backslash (\overline{P} \cap \mathbb{R}v_k) = \overline{P} \backslash \overline{P}^{\times}$.

Even after choosing an appropriate monoid $\overline{P}$, we still have to deal with the fact that the wall-crossing automorphism associated to the wall
\[ \mathfrak{d}_k = \left(e_k^\perp, 1 + a_{k,1}z^{v_k} + \cdots + a_{k,r_k-1}z^{(r_k-1)v_k} + z^{v_k} \right) =: (e_k^\perp, f_k) \]
is an automorphism of the localization $\widehat{R[\overline{P}]}_{f_k}$ rather than the ring $\widehat{R[\overline{P}]}$, where the completions are with respect to the ideal $J$. Let $\mathfrak{p}_{\mathfrak{d}_k} \in \widehat{R[\overline{P}]}_{f_k}$ denote the automorphism associated with crossing $\mathfrak{d}_k$ from $\mathcal{H}_{k,-}$ into $\mathcal{H}_{k,+}$. By definition,
\[ \mathfrak{p}_{\mathfrak{d}_k}(z^m) = z^m(1+a_{k,1}z^{v_k} + \cdots + a_{k,r_k-1}z^{(r_k-1)v_k} + z^{v_k})^{-\langle d_ke_k,m \rangle}. \]
We can then define
\[ N_\bfs^{+,k} := \left\{ \left. \sum_{i \in I_{\textrm{uf}}} a_ie_i ~\right|~ a_i \in \mathbb{Z}_{\geq 0} \textrm{ for } i \neq k, a_k \in \mathbb{Z}, \textrm{ and} \sum_{i \in I_{\textrm{uf}} \backslash \{ k \}} a_i > 0 \right\}. \]
Because $\bfs' = (\bfs \backslash \{ v_k \}) \cup \{ - v_k \}$, the conditions of this definition mean that $N_\bfs^{+,k} = N_{\bfs'}^{+,k}$. As such, we can use the abbreviated notation $N^{+,k}$ without introducing any ambiguity.

To allow us to work in $\overline{P}$, we need to slightly modify the definition of a scattering diagram:

\begin{definition}
\label{def:extendedScatDiag}
Given the monoid $\overline{P}$ and ideal $J$, a wall is a pair $(\mathfrak{d},f_\mathfrak{d})$ such that for some primitive $n_0 \in N^{+,k}$,
    \begin{enumerate}
        \item $f_\mathfrak{d} \in \widehat{R[\overline{P}]}$ has the form $1 + \sum_{k=1}^\infty c_kz^{k p^*(n_0)}$ and is congruent to $1$ mod $J$,
        \item  $\mathfrak{d} \subset n_0^\perp \subset M_\mathbb{R}$ is a $(\textrm{rank}(N)-1)$-dimensional convex rational polyhedral cone.
    \end{enumerate}
For a seed $\bfs$, the slab is ${\mathfrak{d}_k = (e_k^\perp,1+a_{k,1}z^{v_k} + \cdots + a_{k,r_k-1}z^{(r_k-1)v_k} + z^{v_k})}$. Because $v_k \in \overline{P}^\times$, the slab does not qualify as a wall under the above definition. So we extend the definition of a \emph{scattering diagram} $\mathfrak{D}$ such that:
\begin{enumerate}
    \item $\mathfrak{D}$ contains a collection of walls and potentially the slab $\mathfrak{d}_k$, and
    \item for $k > 0$, we have $f_\mathfrak{d} \equiv 1 \textrm{mod}~J^k$ for all but finitely many walls of $\mathfrak{D}$.
\end{enumerate}
\end{definition}

In this modified scattering diagram, crossing a wall or slab $(\mathfrak{d},f_\mathfrak{d})$ induces an automorphism $\mathfrak{p}^{\pm 1}_{f_\mathfrak{d}} \in \widehat{R[\overline{P}]}_{f_k}$. Note that the localization at $f_k$ is only really required when crossing $\mathfrak{d}_k$, as otherwise $f_\mathfrak{d}$ lives in $\widehat{R[\overline{P}]}$.

The proof of Theorem \ref{theorem:mutDiagConsistent} requires the following result:

\begin{theorem}[Analogue of Theorem 1.28 of \cite{GHKK}]
\label{theorem:scatDiagramUniqueness}
There exists a scattering diagram $\overline{\mathfrak{D}}_\bfs$ such that
\begin{itemize}
    \item $\overline{\mathfrak{D}_\bfs} \supseteq \mathfrak{D}_{\textrm{in},\bfs}$,
    \item $\overline{\mathfrak{D}_\bfs} \backslash \mathfrak{D}_{\textrm{in},\bfs}$ consists of only outgoing walls,
    \item and the path-ordered product $\mathfrak{p}_{\gamma,\mathfrak{D}} \in \widehat{R[\overline{P}]_{f_k}}$ depends only on the endpoints of $\gamma$.
\end{itemize}
Such $\overline{\mathfrak{D}}_\bfs$ is unique up to equivalence. Further, because $\overline{\mathfrak{D}}_\bfs$ is also a scattering diagram for $N^+_\bfs$, it is equivalent to $\mathfrak{D}_\bfs$. Moreover, this implies that the only wall contained in $e_k^\perp$ is the slab $\mathfrak{d}_k$
\end{theorem}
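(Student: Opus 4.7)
The plan is to adapt the Kontsevich--Soibelman/Gross--Siebert order-by-order construction to this modified scattering diagram setting, following the template of \cite[Appendix C]{GHKK} but accommodating the slab $\mathfrak{d}_k$ and the non-binomial wall function $f_k = 1 + a_{k,1}z^{v_k} + \cdots + z^{r_k v_k}$. The construction proceeds inductively on the order: beginning with $\mathfrak{D}_{\textrm{in},\bfs}$ (which already contains the slab), at each step $\ell \geq 1$ I would identify the obstruction to consistency modulo $J^{\ell+1}$ inside an appropriate pro-nilpotent Lie algebra $\mathfrak{g}$ built from derivations of $\widehat{R[\overline{P}]}_{f_k}$ along the directions $p^*(n)$ for $n \in N^{+,k}$, and then kill it by adding a finite collection of outgoing walls whose supports lie in the hyperplanes $n_0^\perp$ for various primitive $n_0 \in N^{+,k}$.

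The crucial technical point is that, although $\mathfrak{p}_{\mathfrak{d}_k}$ is only an automorphism of the localization $\widehat{R[\overline{P}]}_{f_k}$ (not of $\widehat{R[\overline{P}]}$ itself), it still acts coherently at each finite order because $v_k \in \overline{P}^\times$ means $f_k$ is congruent to a unit modulo $J$, and hence $\mathfrak{p}_{\mathfrak{d}_k}$ descends to a well-defined automorphism of $\widehat{R[\overline{P}]}_{f_k}/J^{\ell+1}$ at every order $\ell$. First I would verify that the relevant Lie algebra $\mathfrak{g}$ decomposes as a direct sum over primitive rays $n_0 \in N^{+,k}$, so that obstructions around any joint not lying on $e_k^\perp$ can be split into contributions to each ray and annihilated by outgoing walls transverse to that ray; this is exactly where the generalized wall-crossing automorphisms from Definition \ref{def:genWallcrossing}, not just binomial ones, must be allowed.

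Next I would establish uniqueness. Suppose $\overline{\mathfrak{D}}$ and $\overline{\mathfrak{D}}'$ both satisfy the three bulleted conditions. Induct on order: modulo $J^2$ the two diagrams agree with $\mathfrak{D}_{\textrm{in},\bfs}$. If they agree modulo $J^{\ell}$, then their symmetric difference at order $\ell$ consists of outgoing walls, and the consistency requirement forces the obstruction cocycles to match, pinning down the wall functions up to equivalence. Equivalence with $\mathfrak{D}_\bfs$ then follows from Theorem \ref{thm:gen_consistent}: restricting $\overline{\mathfrak{D}}_\bfs$ to walls whose normals lie in $N^+ \subset N^{+,k}$ produces a consistent scattering diagram for $N^+$ extending $\mathfrak{D}_{\textrm{in},\bfs}$ only by outgoing walls, which is precisely the characterization of $\mathfrak{D}_\bfs$ up to equivalence. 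The final assertion that the only wall in $e_k^\perp$ is $\mathfrak{d}_k$ itself follows from the outgoing condition together with uniqueness: any other wall in $e_k^\perp$ would have primitive normal $n_0$ with $p_1^*(n_0) \in \mathbb{R} v_k$, and combined with the outgoing requirement $p_1^*(n_0) \notin \mathfrak{d}$, this forces $n_0$ to be $\pm e_k$; the $+e_k$ case is already realized by $\mathfrak{d}_k$ and any would-be contribution from $-e_k$ is absorbed into $\mathfrak{d}_k$ via the localization at $f_k$.

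The main obstacle will be verifying that the inductive obstruction-killing step terminates at each order with only finitely many new outgoing walls, despite the wall functions no longer being of the simple $(1+z^m)^c$ form available in the ordinary case. The resolution should be that the obstruction at order $\ell$ lies in the $\ell$th graded piece of the $J$-adic filtration of $\mathfrak{g}$, which is a finite-dimensional $R$-module since $P$ is finitely generated, so only finitely many rays $n_0$ can contribute; once the wall functions are allowed to be arbitrary power series of the form $1 + \sum c_j z^{j p^*(n_0)}$, there is enough flexibility in choosing the $c_j$ to cancel the obstruction. This reduces the problem to the formal Lie-theoretic statement that already underlies \cite{KS-chapter,GS}, which is insensitive to whether the initial wall functions are binomial or polynomial.
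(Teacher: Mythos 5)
The paper's own proof is a citation: it invokes the order-by-order Kontsevich--Soibelman/Gross--Siebert construction from \cite[Appendix~C]{GHKK} and observes that nothing there requires the wall functions to be binomial. Your proposal reconstructs the outline of that argument, so the route is the same. However, two of your reasoning steps misfire and should be corrected.

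First, the claim that ``$v_k \in \overline{P}^\times$ means $f_k$ is congruent to a unit modulo $J$'' is incorrect. Since every monomial $z^{jv_k}$ appearing in $f_k = 1 + a_{k,1}z^{v_k} + \cdots + z^{r_k v_k}$ is a unit of $R[\overline{P}]$, none of these terms lie in $J$, so $f_k$ reduces modulo $J$ to itself as an element of the Laurent ring $R[\overline{P}^\times] = R[z^{\pm v_k}]$, and this is in general \emph{not} a unit there. Precisely because of this, the paper passes to the localization $\widehat{R[\overline{P}]}_{f_k}$. Your intended conclusion (that $\mathfrak{p}_{\mathfrak{d}_k}$ induces automorphisms of each finite quotient $\widehat{R[\overline{P}]}_{f_k}/J^{\ell+1}$) is true, but only because $f_k$ is inverted in the localization, not because $f_k \bmod J$ is a unit before localizing.

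Second, your argument for the final assertion, that $\mathfrak{d}_k$ is the only wall in $e_k^\perp$, is confused. You write that the outgoing condition forces $n_0 = \pm e_k$ and that the $-e_k$ case is ``absorbed into $\mathfrak{d}_k$ via the localization.'' Neither step is right. If $\mathfrak{d} \subset e_k^\perp$ is a $(\mathrm{rank}\,M-1)$-dimensional cone, then its normal $n_0$ satisfies $n_0^\perp \supseteq e_k^\perp$ purely for dimension reasons, forcing $n_0 \in \mathbb{R}e_k$; the outgoing condition plays no role here. The reason no such wall can appear among the added walls is simply that every wall of $\overline{\mathfrak{D}}_\bfs$ other than the slab has its normal in $N^{+,k}$, and by definition elements of $N^{+,k}$ satisfy $\sum_{i \in I_{\mathrm{uf}}\setminus\{k\}} a_i > 0$, so scalar multiples of $e_k$ are excluded outright. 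There is no ``absorption'' mechanism; the exclusion is definitional, and the localization at $f_k$ has nothing to do with it.

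With these two corrections the proposal is a reasonable reconstruction of the GHKK argument the paper cites.
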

The proof given in \cite{GHKK} in the ordinary setting also holds in our generalized setting. Because that proof is quite lengthy, we do not reproduce it here.

We will also need the following definition:

\begin{definition}
A codimension two convex rational polyhedral cone $\mathfrak{j}$ is a \emph{joint} of the scattering diagram $\mathfrak{D}$ if either every wall $\mathfrak{d} \subseteq n^\perp$ that contains $\mathfrak{j}$ has direction $-p^*(n) = -\{n, \cdot \}$ tangent to $\mathfrak{j}$ or direction not tangent to $\mathfrak{j}$. In the first case, where every wall is tangent to $\mathfrak{j}$, we call the joint \emph{parallel}. In the second case, we call the joint \emph{perpendicular}.
\end{definition}

We're now prepared to prove Theorem \ref{theorem:mutDiagConsistent}:

\begin{proof}
Let $\bfs = \{ e_i \}_{i \in I}$ be a fixed choice of generalized torus seed and $\bfs' := \mu_k(\bfs) = \{e_i' \}_{i \in I}$. From Theorem \ref{theorem:scatDiagramUniqueness}, we know that the scattering diagrams for $\bfs$ and $\bfs'$ are unique up to equivalence and therefore we can choose representative scattering diagrams $\mathfrak{D}_\bfs$ and $\mathfrak{D}_{\bfs'}$.

Notice that if $z^m \in J^i$ for some $i > 0$, then $z^{T_{k,\pm}(m)} \in J^i$. As such, $T_k(\mathfrak{D}_\bfs)$ is also a scattering diagram for the seed $\bfs'$ in the slightly extended sense of Definition \ref{def:extendedScatDiag}. In order to use Theorem \ref{theorem:scatDiagramUniqueness} to show that $\mathfrak{D}_{\bfs'}$ and $T_k(\mathfrak{D}_\bfs)$ are equivalent, we need to (1) verify that $T_k(\mathfrak{D}_\bfs)$ is consistent and (2) show that both diagrams are equivalent to diagrams with the same set of slabs and incoming walls.

We can begin by showing that $T_k(\mathfrak{D}_\bfs)$ is consistent. To do so, we need to show that $\mathfrak{p}_{\gamma,T_k(\mathfrak{D}_\bfs)} = \textrm{id}$ for any loop $\gamma$ for which the path-ordered product is defined. Because $\mathfrak{D}_\bfs$ is consistent and so by definition $\mathfrak{p}_{\gamma,\mathfrak{D}} = \textrm{id}$ whenever the path-ordered product is defined, one strategy is to show that $\mathfrak{p}_{\gamma,T_k(\mathfrak{D}_\bfs)} =  \mathfrak{p}_{\gamma,\mathfrak{D}_\bfs}$ and therefore $\mathfrak{p}_{\gamma,T_k(\mathfrak{D}_\bfs)} = \textrm{id}$. In areas of $\mathfrak{D}_\bfs$ where $T_k$ is linear, the consistency of a loop in $T_k(\mathfrak{D}_\bfs)$ is an immediate consequence of linearity since each wall is crossed either not at all or in both possible directions.

Therefore, we need only be concerned about when $\gamma$ is a loop around a joint $\mathfrak{j}$ of $\mathfrak{D}_\bfs$ which is contained in the slab $\mathfrak{d}_k$. Given such $\gamma$, we can subdivide it as $\gamma = \gamma_1\gamma_2\gamma_3\gamma_4$ where $\gamma_1$ crosses $\mathfrak{d}_k$, $\gamma_2 \subseteq \mathcal{H}_{k,+}$ contains all the crossings of walls in $\mathfrak{D}_\bfs$ which contain $\mathfrak{j}$ and lie in $\mathcal{H}_{k,+}$, $\gamma_3$ crosses $\mathfrak{d}_4$, and $\gamma_4$ contains all the crossings of walls in $\mathfrak{D}_\bfs$ that contain $\mathfrak{j}$ and lie in $\mathcal{H}_{k,-}$. We can also assume that it has a basepoint $Q$ in $\mathcal{H}_{k,-}$.

One example of a possible subdivision of $\gamma$ is shown below:

\begin{center}
    \begin{tikzpicture}
        \draw (-3,0) to (3,0);
        \draw (0,0) to (-2.5,2);
        \draw (0,0) to (2.5,2);
        \draw (0,0) to (-2,2);
        \draw (0,0) to (2,2);
        \draw (0,0) to (-1.4,2);
        \draw (0,0) to (1.4,2);
        \draw (0,0) to (-2.5,-2);
        \draw (0,0) to (2.5,-2);
        \draw (0,0) to (-2,-2);
        \draw (0,0) to (2,-2);
        \draw (0,0) to (-1.4,-2);
        \draw (0,0) to (1.4,-2);
        
        \node at (0,1) {$\dots$};
        \node at (0,-1) {$\dots$};
        
        \filldraw (-2.15,-0.3) circle (1.5pt);
        \node at (-2.4,-0.4) {$Q$};
        
        \draw[color=blue,thick,out=135,in=-135,->] (-2,-0.5) to (-2,0.5);
        \draw[color=orange,thick,out=45,in=135,->] (-2,0.5) to (2,0.5);
        \draw[color=green,thick,out=-45,in=45,->] (2,0.5) to (2,-0.5);
        \draw[color=red,thick,in=-45,out=-135,->] (2,-0.5) to (-2,-0.5);
        
        \node[orange] at (0,1.5) {$\gamma_2$};
        \node[red] at (0,-1.5) {$\gamma_4$};
        \node[blue] at (-2.3,0.5) {$\gamma_1$};
        \node[green] at (2.3,0.5) {$\gamma_3$};
        
        \node at (2.7,1) {$\mathcal{H}_{k,+}$};
        \node at (2.7,-1) {$\mathcal{H}_{k,-}$};
    \end{tikzpicture}
\end{center}

Let $\mathfrak{p}_{\mathfrak{d}_k}$ denote the wall-crossing automorphism for crossing $\mathfrak{d}_k$ from $\mathcal{H}_{k,-}$ into $\mathcal{H}_{k,+}$. Similarly, let $\mathfrak{p}_{\mathfrak{d}_k}$ denote crossing $\mathfrak{d}_k'$ from $\mathcal{H}_{k,-}$ into $\mathcal{H}_{k,+}$. Explicitly, we have
\begin{align*}
    \mathfrak{p}_{\mathfrak{d}_k}(z^m) &= z^m\left(1 + a_{k,1}z^{v_k} + \cdots + a_{k,r_k-1}z^{(r_k-1)v_k} + z^{r_kv_k} \right)^{-\langle d_ke_k, m \rangle} \\
    \mathfrak{p}_{\mathfrak{d}_k'}(z^m) &= z^m\left(1 + a_{k,1}z^{-v_k} + \cdots + a_{k,r_k-1}z^{-(r_k-1)v_k} + z^{-r_kv_k} \right)^{-\langle d_ke_k, m \rangle}
\end{align*}
Because $\mathfrak{d}_k$ is the only wall contained in $e_k^\perp$, we know that $\mathfrak{p}_{\gamma_1,\mathfrak{D}_\bfs} = \mathfrak{p}_{\mathfrak{d}_k}$ and $\mathfrak{p}_{\gamma_3,\mathfrak{D}_\bfs} = \mathfrak{p}_{\mathfrak{d}_k}^{-1}$. Let $\alpha: \Bbbk[M^\circ] \rightarrow \Bbbk[M^\circ]$ be the automorphism $\alpha(z^m) = z^{m + r_kv_k \langle d_ke_k, m \rangle}$ induced by $T_{k,+}$. Let $v_i' = p^*(e_i')$. Because $e_k' = -e_k$, observe that the slab for $\bfs'$ is
\begin{align*}
    \mathfrak{d}_k' &= \left((e_k')^\perp,1+a_{k,1}z^{v_k'}+ \cdots + a_{k,r_k-1}z^{(r_k-1)v_k'} + z^{r_kv_k'}\right) \\
    &= \left(e_k^\perp,1+a_{k,1}z^{-v_k}+ \cdots + a_{k,r_k-1}z^{-(r_k-1)v_k} + z^{-r_kv_k}\right),
\end{align*}
We can then observe the following relationships:
\begin{align*}
    \mathfrak{p}_{\gamma_1,T_k(\mathfrak{D}_\bfs)} &= \mathfrak{p}_{\mathfrak{d}_k'} \\
    \mathfrak{p}_{\gamma_2,T_k(\mathfrak{D}_\bfs)} &= \alpha \circ \mathfrak{p}_{\gamma_2,\mathfrak{D}_\bfs} \circ \alpha^{-1} \\
    \mathfrak{p}_{\gamma_3,T_k(\mathfrak{D}_\bfs)} &= \mathfrak{p}_{\mathfrak{d}_k'}^{-1} \\
    \mathfrak{p}_{\gamma_4,T_k(\mathfrak{D}_\bfs)} &= \mathfrak{p}_{\gamma_4,\mathfrak{D}_\bfs}
\end{align*}
So we have
\begin{align*}
    \mathfrak{p}_{\gamma,\mathfrak{D}_\bfs} &= 
    \mathfrak{p}_{\gamma_4,\mathfrak{D}_\bfs} \circ \mathfrak{p}_{\gamma_3,\mathfrak{D}_\bfs} \circ \mathfrak{p}_{\gamma_2,\mathfrak{D}_\bfs} \circ \mathfrak{p}_{\gamma_1,\mathfrak{D}_\bfs} \\
    &= \mathfrak{p}_{\gamma_4,\mathfrak{D}_\bfs} \circ \mathfrak{p}_{\mathfrak{d}_k}^{-1} \circ \mathfrak{p}_{\gamma_2,\mathfrak{D}_\bfs} \circ \mathfrak{p}_{\mathfrak{d}_k}, \\
    \mathfrak{p}_{\gamma,T_k(\mathfrak{D}_\bfs)} &= \mathfrak{p}_{\gamma_4,T_k(\mathfrak{D}_\bfs)} \circ \mathfrak{p}_{\gamma_3,T_k(\mathfrak{D}_\bfs)} \circ \mathfrak{p}_{\gamma_2,T_k(\mathfrak{D}_\bfs)} \circ \mathfrak{p}_{\gamma_1,T_k(\mathfrak{D}_\bfs)} \\
    &= \mathfrak{p}_{\gamma_4,\mathfrak{D}_\bfs} \circ \mathfrak{p}_{\mathfrak{d}_k'}^{-1} \circ \alpha \circ p_{\gamma_2,\mathfrak{D}_\bfs} \circ \alpha^{-1} \circ \mathfrak{p}_{\mathfrak{d}_k'},
    \end{align*}
and showing that $\mathfrak{p}_{\gamma,\mathfrak{D}_\bfs} = \mathfrak{p}_{\gamma,T_k(\mathfrak{D}_\bfs)}$ reduces to showing that $\alpha^{-1} \circ \mathfrak{p}_{\mathfrak{d}_k'} = \mathfrak{p}_{\mathfrak{d}_k}$. Using the fact that $a_{k,i} = a_{k,r_k-i}$, observe that
\begin{align*}
    \alpha^{-1}\left(\mathfrak{p}_{\mathfrak{d}_k'}(z^m) \right) &= \alpha^{-1}\left( z^m\left(1 + a_{k,1}z^{-v_k} + \cdots + a_{k,r_k-1}z^{-(r_k-1)v_k} + z^{-r_kv_k} \right)^{-\langle d_ke_k, m \rangle} \right) \\
    &= z^{m-r_kv_k\langle d_ke_k,m \rangle}\left(1 + a_{k,1}z^{-v_k} + \cdots + a_{k,r_k-1}z^{-(r_k-1)v_k} + z^{-r_kv_k} \right)^{-\langle d_ke_k, m \rangle} \\
    &= z^m \left(z^{r_kv_k}\left(1 + a_{k,1}z^{-v_k} + \cdots + a_{k,r_k-1}z^{-(r_k-1)v_k} + z^{-r_kv_k} \right)\right)^{-\langle d_ke_k, m \rangle} \\
    &= z^m \left(z^{r_kv_k} + a_{k,1}z^{(r_k-1)v_k} + \cdots + a_{k,r_k-1}z^{v_k} + 1 \right)^{-\langle d_ke_k, m \rangle} \\
    &= z^m \left(z^{r_kv_k} + a_{k,r_k-1}z^{(r_k-1)v_k} + \cdots + a_{k,1}z^{v_k} + 1 \right)^{-\langle d_ke_k, m \rangle} \\
    &= \mathfrak{p}_{\mathfrak{d}_k}\left(z^m \right),
\end{align*}
as desired. As such, we have that $\mathfrak{p}_{\gamma,\mathfrak{D}_\bfs} = \mathfrak{p}_{\gamma,T_k(\mathfrak{D}_\bfs)}$ and therefore $\mathfrak{p}_{\gamma,T_k(\mathfrak{D}_\bfs)} = \textrm{id}$ and $T_k(\mathfrak{D}_\bfs)$ is consistent.

Next, we want to show that $T_k(\mathfrak{D}_\bfs)$ and $\mathfrak{D}_{\bfs'}$ have, up to equivalence, the same set of slabs and incoming walls. Recall that $\mathfrak{D}_{\textrm{in},\bfs'}$ contains only the slab and incoming walls of $\mathfrak{D}_{\bfs'}$, so it will suffice to show that the incoming walls and slab of $T_k(\mathfrak{D}_{\bfs})$ appear in $\mathfrak{D}_{\textrm{in},\bfs}$.

First, observe that if $\mathfrak{d} \subseteq n^\perp$ is an outgoing wall in $\mathfrak{D}_\bfs$, then it is mapped to an outgoing wall in $T_k(\mathfrak{D}_\bfs)$. This follows from the definition - recall that $\mathfrak{d}$ is outgoing if $p_1^*(n) \not\in \mathfrak{d}$. Because $T_k$ is injective, having $p_1^*(n) \not\in \mathfrak{d}$ implies $T_k(p_1^*(n)) \not\in T_k(\mathfrak{d})$. Hence, we need only consider the slab and incoming walls of $T_k(\mathfrak{D}_{\bfs})$. Equivalently, we consider the walls of $T_k(\mathfrak{D}_{\textrm{in},\bfs})$.

Recall that the slab for $\bfs'$ is $\mathfrak{d}_k' = \left(e_k^\perp,1+a_{k,1}z^{-v_k}+ \cdots + a_{k,r_k-1}z^{-(r_k-1)v_k} + z^{-r_kv_k}\right)$, which appears in both $\mathfrak{D}_{\textrm{in},\bfs'}$ and $T_k(\mathfrak{D}_{\textrm{in},\bfs})$ by definition. Next, we consider the walls $\mathfrak{d}_i = (e_i^\perp,{1+a_{i,1}z^{v_i}+ \cdots + z^{r_iv_i}})$ for $i \neq k$. To do so, we need to divide our argument into three cases based on whether $\langle v_i, e_k \rangle$ is positive, zero, or negative. Because $\mathfrak{d}_i$ is an incoming wall, it will necessarily lie in both $\mathcal{H}_{k,+}$ and $\mathcal{H}_{k,-}$.

\vspace{3mm}
\noindent \textbf{Case 1:} If $\langle e_k,v_i \rangle = 0$, then the two halves of $\mathfrak{d}_i \in \mathfrak{D}_{\textrm{in},\bfs}$ are mapped to the walls 
\begin{align*}
    &((e_i^\perp \cap \mathcal{H}_{k,+}),{1+a_{i,1}z^{T_{i,+}(v_i)}+ \cdots + a_{i,r_i-1}z^{T_{k,+}((r_i-1)v_i)} + z^{T_{k,+}(r_iv_i)}}), \\
    &((e_i^\perp \cap \mathcal{H}_{k,-}),{1+a_{i,1}z^{T_{k,-}(v_i)}+ \cdots + a_{i,r_i-1}z^{T_{k,-}((r_i-1)v_i)} + z^{T_{k,-}(r_iv_i)}})
\end{align*}
whose union is the wall
\[ \left((e_i)^\perp, 1+a_{i,1}z^{v_i}+ \cdots + a_{i,r_i-1}z^{(r_i-1)v_i} + z^{r_iv_i} \right) \]
because having $\langle v_i, e_k \rangle = 0$ means that $v_i' = T_{k,\pm}(v_i) = v_i$. Because $e_i' = e_i$, the above wall in $T_k(\mathfrak{D}_{\textrm{in},\bfs})$ is the same as the wall
\[  \left((e_i')^\perp, 1+a_{i,1}z^{v_i'}+ \cdots + a_{i,r_i-1}z^{(r_i-1)v_i'} + z^{r_iv_i'} \right),  \]
which we know by definition appears in $\mathfrak{D}_{\textrm{in},{\bfs'}}$.

\vspace{3mm}
\noindent \textbf{Case 2:} Suppose $\langle e_k,v_i \rangle > 0$. We must consider where $\mathfrak{d}_i \cap \mathcal{H}_{k,+}$ is mapped by $T_k$. This portion of $\mathfrak{d}_i$ becomes the wall
\[\mathfrak{d}_{i,+}' := \left( T_k(\mathcal{H}_{k,+} \cap e_i^\perp), 1+a_{i,1}z^{T_{i,+}(v_i)}+ \cdots + a_{i,r_i-1}z^{T_{k,+}((r_i-1)v_i)} + z^{T_{k,+}(r_iv_i)} \right) \]
in $T_k(\mathfrak{D}_\bfs)$. To see that $\mathfrak{d}_{i,+}'$ is incoming in $T_k(\mathfrak{D})_{\textrm{in},\bfs}$, observe that ${p_1^*(e_i) = v_i \in (\mathcal{H}_{k,+} \cap e_i^\perp)}$ and therefore $T_k(p_1^*(e_i)) = T_k(v_i) \in \mathfrak{d}_{i,+}'$. To argue that $\mathfrak{d}_{i,+}'$ also appears as an incoming wall in $\mathfrak{D}_{\bfs'}$, we need to show that $T_k(\mathcal{H}_{k,+} \cap e_i^\perp) \subseteq (e_i')^\perp$ and that $T_{k,+}(v_i) = v_i'$. Observe that for $m \in \mathcal{H}_{k,+} \cap e_i^\perp,$
\begin{align*}
    \langle e_i', T_k(m) \rangle &= \langle e_i + r_k[\epsilon_{ik}]_+e_k, m + r_kv_k \langle d_k e_k, m \rangle \rangle \\
    &= \langle e_i, m \rangle + \langle e_i, r_kv_k \langle d_ke_k, m \rangle \rangle + \langle r_k[\epsilon_{ik}]_+e_k, m \rangle + \langle r_k[\epsilon_{ik}]_+e_k, r_kv_k \langle d_ke_k,m \rangle \rangle \\
    &= r_k \langle d_ke_k,m \rangle \langle e_i, v_k \rangle + r_k[\epsilon_{ik}]_+\langle e_k,m \rangle \\
    &= r_k \langle d_ke_k,m \rangle \langle e_i, p_1^*(e_k) \rangle + r_kd_k \{e_i, e_k \}\langle e_k,m \rangle \\
    &= r_kd_k \{e_k,e_i \}\langle e_k,m \rangle + r_kd_k \{e_i, e_k \}\langle e_k,m \rangle \\
    &= r_kd_k \langle e_k, m \rangle \left( \{ e_k, e_i \} + \{ e_i, e_k \} \right) \\
    &= 0
\end{align*}
and therefore $T_k(m) \in (e_i')^\perp$. Next, observe that
\begin{align*}
    T_{k,+}(v_i) &= v_i + r_kv_k \langle d_ke_k, v_i \rangle \\
    &= v_i + r_kd_kv_k \langle e_k, p_1^*(e_i) \rangle \\
    &= v_i + r_kd_kv_k \{e_i, e_k \} \\
    &= v_i + r_k \epsilon_{ik} v_k \\
    &= p_1^*(e_i) + r_k\epsilon_{ik} p_1^*(e_k) \\
    &= \{ e_i, \cdot \} + r_k\epsilon_{ik} \{ e_k, \cdot \} \\
    &= \{e_i + r_k\epsilon_{ik}e_k, \cdot \} \\
    &= \{ e_i', \cdot \} \\
    &= p_1^*(e_i') \\
    &= v_i'
\end{align*}
As such, $\mathfrak{d}_i \cap \mathcal{H}_{k,+} \in \mathfrak{D}_\bfs$ is mapped to the wall
\[ \mathfrak{d}_i' = \left(T_k(\mathcal{H}_{k,+} \cap e_i^\perp), 1 + a_{i,1}z^{v_i'} + \cdots + a_{i,r_i - 1}z^{(r_i-1)v_i'} + z^{v_i'} \right) \in T_k(\mathfrak{D}_{\textrm{in},\bfs)}, \]
which is half of the wall
\[ \left((e_i')^\perp, 1 + a_{i,1}z^{v_i'} + \cdots + a_{i,r_i - 1}z^{(r_i-1)v_i'} + z^{v_i'} \right) \in \mathfrak{D}_{\textrm{in},\bfs'}.\]

\vspace{3mm}
\noindent \textbf{Case 3:} Finally, let $\langle e_k,v_i \rangle < 0$. The half of $\mathfrak{d}_i$ with support $\mathfrak{d}_i \cap \mathcal{H}_{k,-}$ is mapped by $T_k$ to
\[ \mathfrak{d}_{i,-}' := \left(T_k(\mathcal{H}_{k,-} \cap e_i^\perp), 1 + a_{i,1}z^{T_{k,-}(v_i)} + \cdots + a_{i,r_i-1}z^{T_{k,-}((r_i-1)v_i)} + z^{T_{k,-}(r_iv_i)} \right) \]
Since $T_{k,-}(m) = m$ for $m \in \mathcal{H}_{k,-} \cap e_i^{\perp}$ and $T_{k,-}(v_i) = v_i$, we have
\[ \mathfrak{d}_{i,-}' = \left(\mathcal{H}_{k,-} \cap e_i^\perp, 1 + a_{i,1}z^{v_i} + \cdots + a_{i,r_i-1}z^{(r_i-1)v_i} + z^{(r_iv_i} \right). \]
Because $\langle e_k, v_i \rangle = \{ e_i, e_k \} < 0$, we know that $\epsilon_{ik} = d_k\{e_i,e_k \} <0$. Therefore,
\begin{align*}
    e_i' &= e_i + r_k[\epsilon_{ik}]_+e_k = e_i \\
    v_i' &= p_1^*(e_i') = p_1^*(e_i) = v_i
\end{align*}
and so $\mathfrak{d}_{i,-}'$ is simply half of the wall \[ ((e_i')^\perp, 1 + a_{i,1}z^{v_i'} + \cdots + a_{i,r_i-1}z^{(r_i-1)v_i'} + z^{r_iv_i'} ) \in \mathfrak{D}_{\textrm{in},\bfs'}.\]

\vspace{2mm}
Hence, we see that after dividing some of the walls of $\mathfrak{D}_{\textrm{in},\bfs'}$ into two halves, the diagrams $T_k(\mathfrak{D}_{\textrm{in},\bfs})$ and $\mathfrak{D}_{\textrm{in},\bfs'}$ have the same set set of incoming walls. Therefore, up to the same halving of walls, the diagrams $T_k(\mathfrak{D}_\bfs)$ and $\mathfrak{D}_{\bfs'}$ also have the same set of incoming walls.
\end{proof}

\subsection{Chamber Structure}
\label{subsec:genChamberStructure}

Analogously to the ordinary case, the generalized $T_k$ map defined in Section~\ref{subsec:genMutInvariance} determines a chamber structure on the generalized cluster scattering diagram $\mathfrak{D}_{\bfs}$. Because this structure arises in the same manner as in the ordinary case, the exposition in this section will largely focus on stating properties and results that are necessary for proofs in later sections. For more details about the ordinary case, we refer the reader to either Section~\ref{subsec:ordMutInvar} or \cite{GHKK}.

Let $\mathfrak{T}_{v}$ be the infinite rooted tree defined in Section~\ref{subsec:genBasicDefinitions}, $\mathbf{s}_v$ be the generalized torus seed associated to vertex $v$, and $w \neq v$ be an arbitrary vertex in $\mathfrak{T}_{v}$. Then the sequence of edge labels $k_1, \dots, k_{\ell}$ on a simple path between $v$ and $w$ determine a map $T_{w} = T_{k_{\ell}} \circ \cdots \circ T_{k_1} : M_{\mathbb{R}} \rightarrow M_\mathbb{R}$ where each $T_{k_i}$ is defined with respect to the basis vector $e_{k_i}$ in the mutated generalized torus seed $\mu_{k_{i-1}} \circ \cdots \circ \mu_{k_1}(\bfs_v)$ rather than the original generalized torus seed $\bfs_v$. It follows from Theorem~\ref{theorem:mutDiagConsistent} that $T_{w}(\mathfrak{D}_{\bfs}) = \mathfrak{D}_{\bfs_{w}}$, where $\bfs_{w}$ denotes a generalized torus seed associated to the vertex $w$.

Let $\Sigma$ be a set of generalized fixed data that satisfies the injectivity assumption and $\bfs$ be a choice of associated generalized torus seed. By construction, $\mathfrak{D}_{\bfs}$ must include a collection of incoming walls with support $\{ e_k^{\perp} \}_{k \in I_{\textrm{uf}}}$. As in the ordinary case, we define
\begin{align*}
    \mathcal{C}_{\bfs}^{+} &:= \{ m \in M_{\mathbb{R}} : \langle e_i, m \rangle \geq 0 \textrm{ for all } i \in I_{\textrm{uf}} \}, \\
    \mathcal{C}_{\bfs}^{-} &:= \{ m \in M_{\mathbb{R}} : \langle e_i, m \rangle \leq 0 \textrm{ for all } i \in I_{\textrm{uf}} \},
\end{align*}
and refer to $\mathcal{C}_{\bfs}^{+}$ as the \emph{positive chamber}. The chambers $\mathcal{C}_{\bfs}^{\pm}$ are closures of connected components of $M_{\mathbb{R}} \backslash \textrm{Supp}(\mathfrak{D}_{\bfs})$. Let $\mathcal{C}_{\mu_{k}(\bfs)}^{\pm}$ denote the chambers where either all $\langle e_i', m \rangle \geq 0$ or $\langle e_i', m \rangle \leq 0$, respectively. Then $\mathcal{C}_{\mu_k(\bfs)}^{\pm}$ is similarly the closure of a connected component of $M_{\mathbb{R}} \backslash \textrm{Supp}(\mathfrak{D}_{\mu_k(\bfs)})$. Hence, $T_k^{-1}(\mathcal{C}_{\mu_{k}(\bfs)}^{\pm})$ is the closure of a connected component of $M_{\mathbb{R}} \backslash \textrm{Supp}(\mathfrak{D}_{\bfs})$ which shares a codimension one face, given by $e_k^{\perp}$, with $\mathcal{C}_{\bfs}^{\pm}$. This extends to generalized torus seeds related to $\bfs$ by longer mutation sequences. Let $w$ be a vertex of $\mathfrak{T}_{\bfs}$ that is reachable from the root vertex via a simple path of arbitrary length. It follows from Theorem~\ref{theorem:mutDiagConsistent} and the previous paragraph that $\mathcal{C}_{w}^{\pm} := T_w^{-1}\left( \mathcal{C}_{\bfs_w}^{\pm} \right)$ is a closure of a connected component of $M_{\mathbb{R}} \backslash \textrm{Supp}(\mathfrak{D}_{\bfs})$. It is important to note, however, that  the collection of cones $C_{v}^{\pm}$ will not always form a dense subset of $M_{\mathbb{R}}$.

Let $\mathcal{C}_{w}^{\pm}$ denote the chamber of $\textrm{Supp}(\mathfrak{D}_{\bfs})$ which corresponds to the vertex $w \in \mathfrak{T}_{\bfs}$ and $\Delta_{\bfs}^{\pm}$ denote the collection of chambers $\mathcal{C}^{\pm}_{w}$ as $w$ runs over the vertices of $\mathfrak{T}_{\bfs}$. As before, we refer to elements of $\Delta_{\bfs}^{+}$ as \emph{cluster chambers}. This chamber structure coincides with the natural generalization of the Fock-Goncharov cluster complex.

\begin{definition}[Analogue of Definition 2.14 of \cite{FG}]
Fix a set of generalized fixed data $\Sigma$ and an associated generalized torus seed $\bfs$. Then for a generalized torus seed ${\bfs' = \{ (e_{i}', (a_{i,j}')) \}}$ which is reachable via a mutation sequence from $\bfs$, the \emph{Fock-Goncharov cluster chamber associated to $\bfs'$} is the subset $\{ x \in \mathcal{A}^{\vee}(\mathbb{R}^T) : (z^{e_i'})^T(x) \leq 0 \textrm{ for all } i \in I_{\textrm{uf}} \},$
which is identified with the subset $ \{ x \in \mathcal{A}^{\vee}(\mathbb{R}^t) : (z^{e_i'})^t(x) \leq 0 \textrm{ for all } i \in I_{\textrm{uf}} \} \}$
via the canonical sign-change map $i : \mathcal{A}^{\vee}(\mathbb{R}^T) \rightarrow \mathcal{A}^{\vee}(\mathbb{R}^t)$.
\end{definition}

\begin{lemma}[Analogue of Lemma 2.10 of \cite{GHKK}]
\label{lemma:FG_clusterChamber}
Let $\Sigma$ be a set of generalized fixed data which satisfies the injectivity assumption and $\bfs$ be an accompanying choice of generalized torus seed. Let $\bfs' = \{ (e_{i}', (a_{i,j}')) \}$ be a distinct generalized torus seed which is reachable via some mutation sequence from $\bfs$. Then the positive chamber $\mathcal{C}_{\bfs'}^{+} \subset M^{\circ}_{\mathbb{R},\bfs'} = \mathcal{A}^{\vee}(\mathbb{R}^T)$ (which can be identified with $\mathcal{A}^{\vee}(\mathbb{R}^t)$ via the sign-change map $i$) is the Fock-Goncharov cluster chamber associated to $\bfs'$. Therefore, the Fock-Goncharov cluster chambers are the maximal cones of a simplicial fan and $\Delta^{+}$ is identified with $\Delta^{+}_{\bfs}$ for every choice of generalized torus seed $\bfs$ which gives an identification of $\mathcal{A}^{\vee}(\mathbb{R}^T)$ with $M_{\mathbb{R},\bfs}^{\circ}$.
\end{lemma}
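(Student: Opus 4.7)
The plan is to reduce the statement to Proposition \ref{prop:T_k_tropicalization} combined with the iterated mutation invariance from Theorem \ref{theorem:mutDiagConsistent}. Fix a simple path $k_1, \ldots, k_\ell$ in $\mathfrak{T}_\bfs$ from the root to a vertex $w$ with $\bfs_w = \bfs'$, and set $T_w := T_{k_\ell} \circ \cdots \circ T_{k_1}$. The positive chamber $\mathcal{C}_{\bfs'}^+ \subset M^\circ_{\mathbb{R},\bfs'}$ is, in its own chart, tautologically the Fock-Goncharov cluster chamber for $\bfs'$, since both are cut out by the $|I_{\mathrm{uf}}|$ inequalities $\langle e_i', \cdot\rangle \geq 0$ (the sign change being absorbed by the canonical map $i$). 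The substantive content of the lemma is therefore that in the $\bfs$-chart this chamber coincides with $\mathcal{C}_w^+ = T_w^{-1}(\mathcal{C}_{\bfs'}^+)$ from Section \ref{subsec:genChamberStructure}.

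First I would show that $T_w$ is the Fock-Goncharov tropicalization of the composite $\mathcal{A}$-cluster mutation $\mu_{k_\ell} \circ \cdots \circ \mu_{k_1}: \mathcal{A}_\bfs \dashrightarrow \mathcal{A}_{\bfs'}$. This follows by induction on $\ell$ from Proposition \ref{prop:T_k_tropicalization}, since each generalized mutation map in Definition \ref{def:genBiratMutMaps} has a positive pullback formula and tropicalization commutes with composition of positive birational maps. Consequently $T_w$ is exactly the transition map between the two identifications $\mathcal{A}^\vee(\mathbb{R}^T) \cong M^\circ_{\mathbb{R},\bfs}$ and $\mathcal{A}^\vee(\mathbb{R}^T) \cong M^\circ_{\mathbb{R},\bfs'}$ provided by the two torus seeds. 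Taking the $T_w$-preimage of the Fock-Goncharov cluster chamber for $\bfs'$ (computed in the $\bfs'$-chart as in the previous paragraph) then yields the same intrinsic subset of $\mathcal{A}^\vee(\mathbb{R}^T)$ expressed in the $\bfs$-chart, which is precisely $\mathcal{C}_w^+$. This proves the first assertion and the identification $\Delta^+ = \Delta^+_\bfs$.

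The fan statement follows because each Fock-Goncharov cluster chamber is simplicial, being cut out by $|I_{\mathrm{uf}}|$ linearly independent tropical inequalities whose defining linear forms extend the basis dual to $\{f_i'\}$. Since $T_w$ is a piecewise linear bijection that restricts to a linear isomorphism on $\mathcal{C}_{\bfs'}^+$, each $\mathcal{C}_w^+$ is likewise a simplicial cone. Adjacent chambers (differing by a single mutation) share a codimension-one face by construction, and the intrinsic Fock-Goncharov description ensures that different mutation paths in $\mathfrak{T}_\bfs$ reaching the same seed produce the same chamber, so the collection glues into a genuine simplicial fan.

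The main obstacle I anticipate is the bookkeeping around the two tropicalizations $\mathbb{R}^T$ versus $\mathbb{R}^t$ together with the canonical sign-change map $i$, combined with the need to verify that the extra factors of $r_k$ and the exchange coefficients $a_{k,j}$ appearing in the generalized mutation formulas tropicalize cleanly to $T_k$. The latter is exactly the content of Proposition \ref{prop:T_k_tropicalization}, which is where the generalized case departs from the ordinary case; once that single-step tropicalization is in hand, the composition argument and the fan statement follow the ordinary pattern of \cite[Lemma 2.10]{GHKK}. The injectivity assumption enters implicitly to ensure the chambers $\mathcal{C}_w^+$ are top-dimensional and that Theorem \ref{theorem:mutDiagConsistent} applies at each step.
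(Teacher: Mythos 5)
Your proposal is correct and follows essentially the same route as the paper, which likewise reduces the lemma to Proposition \ref{prop:T_k_tropicalization} (that the modified $T_k$ is the Fock--Goncharov tropicalization of the generalized mutation map) and then invokes the argument of \cite[Lemma 2.10]{GHKK} unchanged. You have simply spelled out the GHKK argument explicitly (induction on the mutation path, $T_w$ as a transition map between seed charts, the simplicial-fan verification), whereas the paper defers that bookkeeping to the reference.
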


\begin{proof}
The proof given by \cite{GHKK} holds in the generalized setting, because we showed in Proposition~\ref{prop:T_k_tropicalization} that our modified $T_k$ map is the Fock-Goncharov tropicalization of the generalized mutation map $\mu_{(v_k,d_ke_k,r_k)}$.
\end{proof}

It follows from the previous proposition, when the injectivity assumption holds, that:

\begin{theorem}[Analogue of Theorem 2.13 of \cite{GHKK}]
\label{theorem:GHKK_2-13}
For any set of initial data, the Fock-Goncharov cluster chambers in $\mathcal{A}^{\vee}(\mathbb{R}^T)$ are the maximal cones of a simplicial fan.
\end{theorem}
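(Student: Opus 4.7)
The plan is to deduce this theorem from Lemma~\ref{lemma:FG_clusterChamber} by reducing arbitrary initial data to the principal coefficient case. Given generalized fixed data $\Gamma$ with generalized torus seed $\bfs$, I would first form the accompanying principal coefficient data $\Gamma_{\textrm{prin}}$ and seed $\bfs_{\textrm{prin}}$ (defined in Section~\ref{subsec:genPrincipalCoef}, following the analogue of Definition~\ref{def:principalFixedData}). As noted there, the corresponding map $\widetilde{p}^* : \widetilde{N} \to \widetilde{M}^{\circ}$ is an isomorphism, so $\Gamma_{\textrm{prin}}$ automatically satisfies the injectivity assumption. Therefore, Lemma~\ref{lemma:FG_clusterChamber} applies and the Fock-Goncharov cluster chambers $\Delta^{+}_{\bfs_{\textrm{prin}}}$ form the maximal cones of a simplicial fan in $\Aprin^{\vee}(\mathbb{R}^T) \cong \widetilde{M}^{\circ}_{\mathbb{R}}$.

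Next I would relate the chambers in the general case to those in the principal case via the natural linear projection $\pi : \widetilde{M}^{\circ}_{\mathbb{R}} = M^{\circ}_{\mathbb{R}} \oplus N_{\mathbb{R}} \twoheadrightarrow M^{\circ}_{\mathbb{R}}$ dual to the inclusion $\widetilde{\pi}: N \hookrightarrow \widetilde{M}^{\circ}$ of equation~\eqref{eq:pimap}. Using Proposition~\ref{prop:T_k_tropicalization}, one checks that the tropicalized mutation maps $T_k$ for $\Gamma_{\textrm{prin}}$ restrict coordinate-wise to the maps $T_k$ for $\Gamma$ on the first summand, so $\pi$ intertwines the mutation actions on the two sides. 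Consequently, for any vertex $w$ of $\mathfrak{T}_{\bfs}$, the seed $\bfs_w$ and its principal lift $(\bfs_{\textrm{prin}})_w$ are related by $\pi$, and the inequalities $\langle \widetilde{e}_i', \cdot \rangle \geq 0$ cutting out $\mathcal{C}^{+}_{(\bfs_{\textrm{prin}})_w}$ pull back under $\pi^{-1}$ to the inequalities $\langle e_i', \cdot \rangle \geq 0$ cutting out $\mathcal{C}^{+}_{\bfs_w}$, so that $\mathcal{C}^{+}_{\bfs_w} = \pi(\mathcal{C}^{+}_{(\bfs_{\textrm{prin}})_w})$.

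To conclude, I would observe that since $\{e_i'\}_{i \in I}$ is a basis of $N$ by Definition~\ref{def:basisMutGen} (mutation preserves the property of being a basis), the chamber $\mathcal{C}^{+}_{\bfs_w}$ is automatically a simplicial cone with rays given by the dual basis $\{f_i'\}_{i \in I_{\textrm{uf}}}$. The fan property — that any two chambers intersect along a common face — is what requires work, and is inherited from the principal case: two general chambers $\mathcal{C}^{+}_{\bfs_w}$ and $\mathcal{C}^{+}_{\bfs_{w'}}$ intersect along the image under $\pi$ of $\mathcal{C}^{+}_{(\bfs_{\textrm{prin}})_w} \cap \mathcal{C}^{+}_{(\bfs_{\textrm{prin}})_{w'}}$, which is a face of both by the principal case, and the projection of a face remains a face. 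The main obstacle I anticipate is verifying carefully that $\pi$ sends faces to faces without collapse, which amounts to checking that the kernel of $\pi$, namely $N_{\mathbb{R}}$, intersects the interior of no principal cluster chamber — equivalently, that the $g$-vectors in the principal case are sign-coherent and project to vectors that remain extremal in $M^{\circ}_{\mathbb{R}}$. This follows from the sign-coherence of $g$-vectors for the principal case (which is essentially the content of Theorem~\ref{thm:g-coherence} applied to $\Aprin$).
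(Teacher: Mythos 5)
Your overall approach — reducing to the principal coefficient case, where the injectivity assumption is automatic, and descending the fan structure along the tropicalized projection $\rho^T : \widetilde{M}^{\circ}_{\mathbb{R}} \to M^{\circ}_{\mathbb{R}}$, $(m,n)\mapsto m$ — is the correct one, and it is the reduction the paper leaves implicit (the text simply presents the theorem as following from Lemma~\ref{lemma:FG_clusterChamber}, and the principal reduction is the standard device, following \cite{GHKK}, for removing the injectivity hypothesis). Your second paragraph establishes the crucial fact that makes this work: since the frozen basis vectors $(0,f_\alpha)$ never participate in mutation of the unfrozen $(e_i,0)$'s (the relevant entries $\widetilde{\epsilon}_{i\alpha}$ only affect mutation of the frozen part), the principal seed basis vectors stay of the form $(e_i',0)$ under all mutations, and hence each principal chamber is literally the preimage $\pi^{-1}(\mathcal{C}^{+}_{\bfs_w})$ of the corresponding chamber in $M^{\circ}_{\mathbb{R}}$.

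However, your last paragraph then undersells what you have just proved and takes an unnecessary detour. Because each $\mathcal{C}^{+}_{(\bfs_{\textrm{prin}})_w}$ equals $\pi^{-1}(\mathcal{C}^{+}_{\bfs_w})$, the kernel $N_{\mathbb{R}}$ of $\pi$ is contained in the lineality space of \emph{every} principal chamber, every face of a principal chamber is the preimage of a face of the corresponding general chamber, and the intersection
$\pi^{-1}(\mathcal{C}^{+}_{\bfs_w}) \cap \pi^{-1}(\mathcal{C}^{+}_{\bfs_{w'}}) = \pi^{-1}(\mathcal{C}^{+}_{\bfs_w}\cap \mathcal{C}^{+}_{\bfs_{w'}})$
is a face of each principal chamber if and only if $\mathcal{C}^{+}_{\bfs_w}\cap \mathcal{C}^{+}_{\bfs_{w'}}$ is a face of each general chamber. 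So the simplicial fan property descends immediately and tautologically from the principal case — there is no ``collapse'' to rule out, no appeal to extremality of $g$-vectors, and no need to invoke Theorem~\ref{thm:g-coherence}. (Indeed, pulling in sign-coherence here would be circular in spirit, since in \cite{GHKK} sign-coherence is \emph{deduced} from the chamber structure.) Moreover, ``the projection of a face remains a face'' is not a valid general principle for cones; it is true here only because the chambers are saturated preimages, which is exactly the content of your second paragraph. Replacing the final paragraph with this preimage observation would make the argument both correct in full and shorter.

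One further small terminological caution: when $I_{\textrm{uf}}\subsetneq I$, the cluster chambers are cut out by only $|I_{\textrm{uf}}|$ inequalities and hence have nontrivial lineality, so they are not ``simplicial cones with rays given by the dual basis'' in the pointed sense; ``simplicial'' here should be read modulo the common lineality space, as in \cite{GHKK}.
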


\subsection{Principal coefficients}
\label{subsec:genPrincipalCoef}

The injectivity assumption is not satisfied in most cases. 
We can still handle those cases when we include principal coefficients in the setting. We will now rephrase the construction for generalized cluster algebras with principal coefficients.

\begin{definition}
Given generalized fixed data $\Gamma$, the generalized fixed data $\Gamma_{\textrm{prin}}$ for the principal coefficient case is defined in the same way as for ordinary cluster algebras, with the additional requirement that $\widetilde{r} = (r,r)$, i.e. that $\widetilde{r}$ consists of two copies of $r$ with $\widetilde{r}_i = r_i$ for $i \in I$, and we now include the collection $\{ a_{i,j} \}_{i \in I_{\textrm{uf}},j \in [r_i -1]}$.
\end{definition}

\begin{definition}
Given a generalized torus seed $\bfs$, the generalized torus seed with principal coefficients is defined as
\[ \bfs_{\textrm{prin}} := \widetilde{\bfs} = \{ ((e_i,0),\mathbf{a}_i),((0,f_i),\mathbf{a}_i) \}_{i \in I} \]
\end{definition}

We can then use these updated definitions to define the cluster varieties with principal coefficients. Recall that we work over the ring $R = \Bbbk[a_{i,j}]$.

\begin{definition}
Given a generalized torus seed $\bfs$, we define the associated algebraic tori
\begin{align*}
    \mathcal{X}_{\bfs_{\textrm{prin}}} &:= T_{\widetilde{M}}(R) = \textrm{Spec}~\Bbbk[\widetilde{N}] \times_{\Bbbk} \textrm{Spec}(R), \\
    \mathcal{A}_{\bfs_{\textrm{prin}}} &:= T_{\widetilde{N}^{\circ}}(R) = \textrm{Spec}~\Bbbk[\widetilde{M}^{\circ}] \times_{\Bbbk} \textrm{Spec}(R).
\end{align*}
The generalized $\mathcal{A}$ cluster variety with principal coefficients and generalized $\mathcal{X}$ cluster variety with principal coefficients are then defined as in the ordinary case.
\end{definition}

As before, the generalized $\mathcal{A}$-variety is given by the fiber $\mathcal{A}_e$ and the generalized $\mathcal{X}$-variety is given by the quotient $\mathcal{A}_{\textrm{prin}}/T_{N^{\circ}}$.

\begin{prop}[Analogue of Proposition B.2 of \cite{GHKK}]
\label{prop:GHKK_B2}
Given a set of generalized fixed data $\Gamma$:
\begin{enumerate}
    \item There is the following commutative diagram, where the dotted arrows are only present if there are no frozen variables:
    \begin{center}
    \begin{tikzcd}
        \mathcal{A}_t \arrow[r] \arrow[d] & \Aprin \arrow[d, "\pi"] \arrow[r,"\widetilde{p}"] \arrow[rr, bend left, "p"] & \mathcal{X} \arrow[d, "\lambda"] & \mathcal{X}_{\textrm{prin}} \arrow[l,"\rho"] \arrow[d,"w"] & \mathcal{A} \arrow[l,"\xi"] \arrow[d] \\
        t \arrow[r] & T_{M} \arrow[r, dashed] & T_{K^*} & T_{M} \arrow[l,dashed] & e \arrow[l]
    \end{tikzcd}
    \end{center}
    where $t$ is any point in $T_M$, $e \in T_{M}$ is the identity, and $p$ is an isomorphism which is canonical if there are no frozen variables.
    \item There is a torus action of $T_{N^\circ}$ on $\Aprin$; $T_{K^{\circ}}$ on $\mathcal{A}$; $T_{N_{\textrm{uf}}^{\perp}}$ on $\mathcal{X}$; and $T_{\widetilde{K}^{\circ}}$ on $\Aprin$, where $\widetilde{K}^{\circ}$ is the kernel of the map $N^{\circ} \oplus M \rightarrow N_{\textrm{uf}}^{*}$ given by $(n,m) \mapsto p_2^*(n) - m$. The action of $T_{N^{\circ}}$ and $T_{\widetilde{K}^{\circ}}$ on $T_{M}$ is such that the map $\pi: \Aprin \rightarrow T_{M}$ is $T_{N^{\circ}}$-equivariant and $T_{\widetilde{K}^{\circ}}$-equivariant. The map $\widetilde{p}: \Aprin \rightarrow \mathcal{X} = \Aprin / T_{N^{\circ}}$ is a $T_{N^{\circ}}$-torsor. Furthermore, there is a map $T_{\widetilde{K}^{\circ}} \rightarrow T_{N_{\textrm{uf}}^{\perp}}$ such that $\widetilde{p}$ is compatible with the actions of these tori on, respectively, $\Aprin$ and $\mathcal{X}$. Hence, $\tau: \Aprin \rightarrow \mathcal{X}/T_{N_{\textrm{uf}}^{\perp}}$ is a $T_{\widetilde{K}^{\circ}}$-torsor.
\end{enumerate}
\end{prop}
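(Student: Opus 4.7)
The plan is to follow the strategy of the corresponding Proposition B.2 in \cite{GHKK}, checking throughout that the only modification needed for the generalized setting is to replace the binomial factors $(1+z^{v_k})$ by the polynomial factors
\[ f_k(z) = 1 + a_{k,1}z^{v_k} + \cdots + a_{k,r_k-1}z^{(r_k-1)v_k} + z^{r_k v_k} \]
appearing in (\ref{eq:A_pullback_gen}) and (\ref{eq:X_pullback_gen}). First I would define each of the unlabeled arrows in part (1) seed-locally as a morphism of algebraic tori over $R$ via the lattice maps already assembled in Section~\ref{subsec:ordVarPrincipal}: the projection $\widetilde{N}^\circ = N^\circ \oplus M \twoheadrightarrow M$ gives $\pi : \mathcal{A}_{\bfs_{\textrm{prin}}} \to T_M$; the inclusion $\widetilde{p}^* : N \hookrightarrow \widetilde{M}^\circ$, $n \mapsto (p^*(n), n)$, gives $\widetilde{p} : \mathcal{A}_{\bfs_{\textrm{prin}}} \to \mathcal{X}_{\bfs}$; and the identification $p^* : \widetilde{N} \xrightarrow{\sim} \widetilde{M}^\circ$ forced by the block form of $\widetilde{\epsilon}$ supplies the isomorphism $p : \Aprin \xrightarrow{\sim} \mathcal{X}_{\textrm{prin}}$. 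The remaining maps $\rho$, $\lambda$, $w$, and $\xi$ are defined analogously from natural lattice morphisms. Commutativity of each square is then a direct computation on characters $z^m$ on a fixed seed torus.

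The next step is to verify that these seed-local morphisms are compatible with generalized mutation, so that Proposition~\ref{prop:torusGluing} upgrades each of them to a morphism of the glued schemes. For every seed-level arrow $\phi_{\bfs}$ with underlying lattice map $\widehat{\phi}^*$, what must be checked is that pulling back along $\phi_{\bfs}$ intertwines the mutation pullbacks on source and target. Via (\ref{eq:A_pullback_gen})--(\ref{eq:X_pullback_gen}) this reduces to verifying that $\widehat{\phi}^*$ sends the source $v_k$ to the target $v_k$ and respects the pairings $\langle d_k e_k, \cdot \rangle$ and $[\,\cdot\,, e_k]$ that appear in the exponent of $f_k$. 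Because principal mutation in a direction $k \in I_{\textrm{uf}}$ leaves the $M$-summand of $\widetilde{N}^\circ$ untouched, and because the exchange data $(r_k, a_{k,\bullet})$ is frozen by Definition~\ref{def:gen fixed} together with the reciprocity condition, the polynomial $f_k$ behaves under pullback exactly as the binomial $1+z^{v_k}$ does in the ordinary case, so the verification proceeds identically in structure to \cite[Appendix B]{GHKK}.

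For part (2), each torus action arises from a lattice inclusion: $N^\circ \hookrightarrow \widetilde{N}^\circ$ gives the $T_{N^\circ}$-action on $\Aprin$; $K^\circ \hookrightarrow N^\circ$ gives the $T_{K^\circ}$-action on $\mathcal{A}$; $N_{\textrm{uf}}^\perp \hookrightarrow M^\circ$ gives the $T_{N_{\textrm{uf}}^\perp}$-action on $\mathcal{X}$; and the kernel $\widetilde{K}^\circ = \textrm{ker}(N^\circ \oplus M \to N_{\textrm{uf}}^*)$ gives the $T_{\widetilde{K}^\circ}$-action on $\Aprin$. Mutation-equivariance of each action follows by the same per-seed compatibility check as above. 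Equivariance of $\pi : \Aprin \to T_M$ is then immediate from the short exact sequence $0 \to N^\circ \to \widetilde{N}^\circ \to M \to 0$, and the torsor assertions for $\widetilde{p}$ and $\tau$ follow because the relevant lattice sequences are short exact on each seed torus, exhibiting $\Aprin$ as a principal bundle over $\mathcal{X}$ and over $\mathcal{X}/T_{N_{\textrm{uf}}^\perp}$ respectively.

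I anticipate the principal obstacle to be notational bookkeeping: keeping track of which side of each arrow lives over the base ring $R = \Bbbk[a_{i,j}]$ and verifying that the monomial identities driving the ordinary proof continue to hold after the substitution $(1+z^{v_k}) \rightsquigarrow f_k(z)$. Because the $a_{k,j}$ are frozen under mutation and because the arguments in \cite[Appendix B]{GHKK} never use that $f_k$ is binomial beyond the fact that it is a polynomial whose support lies along $\mathbb{Z}_{\geq 0}\,v_k$, no essentially new argument should be required; the substance of the proof is a careful translation of \emph{loc.\,cit.} into the present setting.
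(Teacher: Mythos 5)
Your proposal is correct and takes essentially the same approach as the paper: the paper's proof is a single sentence observing that the map definitions are unchanged and hence the proof from \cite{GHKK} carries over, and you are simply unpacking why that citation works (the arrows are lattice-morphism-induced, mutation compatibility reduces to the same exponent bookkeeping with $f_k$ in place of $1+z^{v_k}$, and the $a_{k,j}$ are frozen so nothing new happens).
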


\begin{proof}
Because the map definitions remain the same, the proof given in \cite{GHKK} holds in the generalized setting.
\end{proof}

\begin{remark} \label{rk:however}
In Section \ref{subsec:genAprintoAX}, we will define theta functions in both the $\cA$ and $\cX$ cases. Here, we briefly preview the content of Lemma~\ref{lemma:GHKK_7-10}. 
From the discussion above, $\Aprin$ is a $T_{N^{\circ}}$-torsor over $\cX$.
Hence, the global functions on $\cX$ are given by $T_{N^{\circ}}$-invariant global monomials $\vartheta_m$ on $\Aprin$, where $m $ lies in the cluster complex of $\Aprin$ and in the slice $w^{-1}(0)$ for $w: (m,n) \mapsto m-p^*(n)$. 
This leads to the slicing that we will illustrate in Example~\ref{ex:genG2_principalCoeff}. We can further understand the discussion in terms of mutation maps. In the $\Aprin$ case, the $\cA$ mutation in direction $k$ given in Equation~\eqref{eq:A_pullback_gen} is
\begin{align*}
        \mu_k^*z^{(m,n)} &= z^{(m,n)} \left( 1 + a_{k,1}z^{(v_k, e_k)} + \cdots + a_{k,r_k-1}z^{(r_k-1) \cdot (v_k, e_k)} + z^{r_k(v_k, e_k)} \right)^{-\langle (d_ke_k, 0),(m,n) \rangle} ,
\end{align*}
for $(m,n) \in M^{\circ} \oplus N$. 
Having $(m,n) \in w^{-1}(0)$ implies that $(m,n) = (p^*(n),n)$. This yields the mutation transformation
\begin{align*}
        \mu_k^*z^{(p^*(n),n)} &= z^{(p^*(n),n)} \left( 1 + a_{k,1}z^{(v_k, e_k)} + \cdots + a_{k,r_k-1}z^{(r_k-1) \cdot (v_k, e_k)} + z^{r_k(v_k, e_k)} \right)^{-\langle (d_ke_k, 0),(p^*(n),n) \rangle}. 
\end{align*}
By the change of variable $z^{(p^*(n),n)} \mapsto z^n$, we obtain
\begin{align*}
        \mu_k^*z^{n} &= z^{n} \left( 1 + a_{k,1}z^{ e_k} + \cdots + a_{k,r_k-1}z^{(r_k-1)  e_k} + z^{r_k e_k} \right)^{-\langle d_ke_k, p^*(n) \rangle}
        \\
    &= z^n \left( 1 + a_{k,1}z^{e_k} + \cdots + a_{k,r_k-1}z^{(r_k-1)e_k} + z^{r_ke_k} \right)^{-\{n,d_ke_k \}} \\
    &= z^n \left( 1 + a_{k,1}z^{e_k} + \cdots + a_{k,r_k-1}z^{(r_k-1)e_k} + z^{r_ke_k} \right)^{- [n,e_k]},
\end{align*}
which is precisely the $\cX$ mutation from Equation~\eqref{eq:X_pullback_gen}.
We further emphasize that the  change of variables $z^{(p_1^*(n), n)} \mapsto z^n$
exactly matches the treatment in Fomin-Zelevinsky \cite{FZ-IV} of cluster algebras with principal coefficients, see Equation~\eqref{eq:changeofvar}.
\end{remark}

Next, we update the definition of the initial generalized cluster scattering diagram.

\begin{definition} Given a generalized seed $\bfs$, let $\widetilde{v_i} := (v_i,e_i) = (p_1^*(e_i),e_i)$. The corresponding initial $\Aprin$ scattering diagram,  according to Definition \ref{def:initialScatDiagram}, is of the form 
\[ \mathfrak{D}_{\textrm{in},\bfs}^{\mathcal{A}_{\textrm{prin}}} = \left\{ \left( (e_i,0)^\perp, 1 + a_{i,1}z^{\widetilde{v_1}} + \cdots + a_{i,r_i - 1}z^{(r_i-1)\widetilde{v_i}} + z^{r_i\widetilde{v_i}} \right) \right\} \]
\end{definition}

The following example examines the scattering diagram $\mathfrak{D}_{\bfs}^{\mathcal{A}_{\textrm{prin}}}$ for a generalized cluster algebra with principal coefficients.
Beginning with this scattering diagram $\mathfrak{D}_{\bfs}^{\mathcal{A}_{\textrm{prin}}}$ for the case with principal coefficients, we outline how to obtain the scattering diagrams for the $\cA$ and the $\cX$ cases. In this particular 2-dimensional example, the $\cA$ scattering diagram is well-defined; however, this is not true in general as the injectivity assumption fails.  Formally, we can only define the $\cA$ and $\cX$ theta functions in scattering diagrams with principal coefficients, as discussed in Section 
\ref{subsec:genAprintoAX}. 
One can then construct $\cA$ and $\cX$ diagrams which give us the $\cA$ and $\cX$ theta functions. The following example illustrates this idea. 

\begin{example}
\label{ex:genG2_principalCoeff}
Consider the generalized cluster algebra with  $B= \begin{bmatrix} 0 & -1 \\ 1 & 0 \end{bmatrix}$, $d = (1,1)$, and $[r_{ij}] = \begin{bmatrix} 3 & 0 \\ 0 & 1 \end{bmatrix}$, as in Example~\ref{ex:genFixedData}.
 The generalized fixed data $\Gamma_{\textrm{prin}}$ has index set $\widetilde{I} = I \sqcup I$, $\widetilde{I}_{\textrm{uf}} = \{ 1, 2 \}$, $\widetilde{d} = (1,1,1,1)$, $\widetilde{r} = (3,1,3,1)$ and lattices $ \widetilde{N} = N \oplus M^{\circ}$, $\widetilde{N}^{\circ} = N^{\circ} \oplus M$, $\widetilde{M} = M \oplus N^{\circ}$, and $\widetilde{M}^{\circ} = M^{\circ} \oplus N$, all of which have basis $\{ (1,0,0,0),(0,1,0,0),(0,0,1,0),(0,0,0,1) \}$. By definition, we have
 \[ \mathfrak{D}^{\mathcal{A}_{\textrm{prin}}}_{\textrm{in},\bfs} = \left\{
 \begin{array}{c} \widetilde{\mathfrak{d}}_1 = \left((0,1,0,0)^{\perp}, 1 + z^{(-1,0,0,1)} \right), \\ \widetilde{\mathfrak{d}}_2 = \left((1,0,0,0)^{\perp}, 1 + az^{(0,1,1,0)} + az^{(0,2,2,0)} + z^{(0,3,3,0)}\right)
 \end{array}
 \right\} \]
which can be completed, using the definition of consistency, to give a generalized cluster scattering diagram for $\mathcal{A}_{\textrm{prin}}$, denoted $\mathfrak{D}^{\mathcal{A}_{\textrm{prin}}}_{\bfs}$. Because this diagram is four dimensional, the figure below shows the projection of $\mathcal{D}_{\bfs}^{\mathcal{A}_{\textrm{prin}}}$ onto $M^{\circ}$ with labels that indicate the corresponding walls and wall-crossing automorphisms in $\mathcal{A}_{\textrm{prin}}$. To illustrate this projection, consider the wall $\widetilde{\mathfrak{d}}_1 \in \mathfrak{D}_{\mathbf{s}}^{\mathcal{A}_{\textrm{prin}}}$ with support $(0,1,0,0)^{\perp} \subset \mathbb{R}^4$, i.e. the three-dimensional hyperplane $\textrm{span} \{ (1,0,0,0), (0,0,1,0), (0,0,0,1) \}$. When we project from $\widetilde{M}^{\circ}_{\mathbb{R}}$ onto $M^{\circ}_{\mathbb{R}}$, the wall $\widetilde{\mathfrak{d}}_1$ is projected onto $\mathbb{R}(1,0) \subset \mathbb{R}^2$, i.e. the one-dimensional hyperplane $\textrm{span} \{ (1,0) \}$. Similarly, $\widetilde{\mathfrak{d}}_2$ is projected onto $\mathbb{R}(0,1)$, i.e. the one-dimensional hyperplane $\textrm{span} \{ (0,1) \}$.
\begin{center}
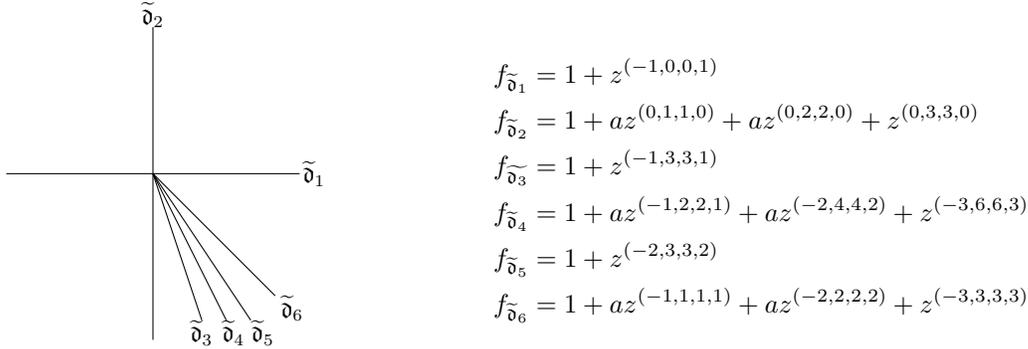

    \begin{minipage}{0.2\textwidth}
 	\begin{tikzpicture}[scale=0.65]
		\draw (-3,0) to (3,0);
		\draw (0,-3.4) to (0,3);
		\draw (0,0) to (2.5,-2.5);
		\draw (0,0) to (2,-3);
		\draw (0,0) to (1.5,-3);
		\draw (0,0) to (1,-3);
				
		\node[scale=0.9] at (3.3,0) {$\widetilde{\mathfrak{d}}_1$};
		\node[scale=0.9] at (0,3.25) {$\widetilde{\mathfrak{d}}_2$};
		\node[scale=0.9] at (2.85,-2.75) {$\widetilde{\mathfrak{d}}_6$};
		\node[scale=0.9] at (2.25,-3.25) {$\widetilde{\mathfrak{d}}_5$};
		\node[scale=0.9] at (1.65,-3.25) {$\widetilde{\mathfrak{d}}_4$};
		\node[scale=0.9] at (1,-3.25) {$\widetilde{\mathfrak{d}}_3$};
	\end{tikzpicture} 
	\end{minipage}
	\qquad\qquad\qquad
	\begin{minipage}{0.54\textwidth}
	\begin{align*}
        f_{\widetilde{\mathfrak{d}}_1} &= 1+z^{(-1,0,0,1)} \\
        f_{\widetilde{\mathfrak{d}}_2} &= 1+az^{(0,1,1,0)}+az^{(0,2,2,0)}+z^{(0,3,3,0)} \\
        f_{\widetilde{\mathfrak{d}_3}} &= 1+z^{(-1,3,3,1)} \\
        f_{\widetilde{\mathfrak{d}}_4} &= 1 + az^{(-1,2,2,1)}+az^{(-2,4,4,2)}+z^{(-3,6,6,3)} \\
        f_{\widetilde{\mathfrak{d}}_5} &= 1 + z^{(-2,3,3,2)} \\
        f_{\widetilde{\mathfrak{d}}_6} &= 1 + az^{(-1,1,1,1)}+az^{(-2,2,2,2)}+z^{(-3,3,3,3)}
    \end{align*}
	\end{minipage}
		\captionof{figure}{2-dimensional projection of the scattering diagram $\mathfrak{D}^{\mathcal{A}_{\textrm{prin}}}_{\bfs}$}
   \label{fig:egdiag}
\end{center}

The $\cA$ scattering diagram can be obtained from the projection $M^{\circ} \oplus N \rightarrow M$, $(m,n) \mapsto m$. 
The walls of the $\cA $ diagram will be the same as in Figure \ref{fig:egdiag} while the wall functions are of the form $1+z^m$ rather than $1+z^{(m,n)}$ listed in Figure \ref{fig:egdiag}. 

Next, we can obtain the scattering diagram for $\mathcal{X}_{\bfs}$ from $\mathfrak{D}^{\mathcal{A}_{\textrm{prin}}}_{\bfs}$. 
A full discussion of this construction is laid out in  \cite[Section 2.2.1]{C-M-NC}. 
As in the earlier remark, the idea is to take the slicing
\[ w^{-1}(0)=
\{ m \in M^{\circ} : m = p^*(n), n \in N \}
\]
of $\mathfrak{D}_{\bfs}^{\mathcal{A}_{\textrm{prin}}}$. As we later point out in Section \ref{subsec:genAprintoAX},  restrictions of path-ordered products to $w^{-1}(0)$ are well defined for paths $\gamma$ that lie entirely in the slice. For example, consider the wall $\widetilde{\mathfrak{d}}_3$ and a path $\widetilde{\gamma}$ in $\mathfrak{D}_{\bfs}^{\mathcal{A}_{\textrm{prin}}}$ as demonstrated in the projection below:
\begin{center}
    \begin{tikzpicture}
		\draw (-1,0) to (3,0);
		\draw (0,-3) to (0,1);
		\draw (0,0) to (2.5,-2.5);
		\draw (0,0) to (2,-3);
		\draw (0,0) to (1.5,-3);
		\draw (0,0) to (1,-3);
        
        \filldraw[orange] (0,-2.25) circle (2pt);
        \draw[orange,out=-70,in=-160,->] (0,-2.25) to (1.1,-2.5);
        \draw[orange] (0.4,-2.6) node[below]{$\gamma$};
    \end{tikzpicture}
\end{center}
By definition, for $(0,-1,-1,0)$ lies in the slice, we have
\begin{align*}
    \mathfrak{p}_{\widetilde{\gamma}}\left( z^{(0,-1,-1,0)} \right) &= z^{(0,-1,-1,0)}\left(1 + z^{(-1,3,3,1)} \right)^{\langle (0,-1,-1,0),(-3,-1,0,0) \rangle} \\
    &= z^{(0,-1,-1,0)}\left(1+z^{(-1,3,3,1)} \right).
\end{align*}
By change of variables $z^{(p^*(n), n)} \mapsto z^n$, 
the path-ordered product becomes $\mathfrak{p}_{\gamma}(z^{(-1,0)}) = z^{(0,-1)}(1+z^{(3,1)})$ 
, from which we can read off the wall-crossing automorphism for $\mathfrak{d}_3$ as $f_{\mathfrak{d}_3} = 1 + z^{(3,1)}$. Similar computations for the remaining walls allow us to draw $\mathfrak{D}_{\bfs}^{\mathcal{X}}$, as follows.
\begin{center}
    	    \begin{minipage}{0.3\textwidth}
	\begin{tikzpicture}[scale=0.75]
		\draw (-3,0) to (3,0);
		\draw (0,-3.4) to (0,3);
		\draw (0,0) to (-2.5,-2.5);
		\draw (0,0) to (-2,-3);
		\draw (0,0) to (-1.5,-3);
		\draw (0,0) to (-1,-3);
				
		\node[scale=0.75] at (3.3,0) {$\mathfrak{d}_2$};
		\node[scale=0.75] at (0,3.25) {$\mathfrak{d}_1$};
		\node[scale=0.75] at (-2.85,-2.75) {$\mathfrak{d}_6$};
		\node[scale=0.75] at (-2.25,-3.25) {$\mathfrak{d}_5$};
		\node[scale=0.75] at (-1.65,-3.25) {$\mathfrak{d}_4$};
		\node[scale=0.75] at (-1,-3.25) {$\mathfrak{d}_3$};
		
	\end{tikzpicture}
	\end{minipage}
	\qquad\qquad
	\begin{minipage}{0.44\textwidth}
	\begin{align*}
        f_{\mathfrak{d}_1} &= 1+z^{(0,1)} \\
        f_{\mathfrak{d}_2} &= 1+az^{(1,0)}+az^{(2,0)}+z^{(3,0)} \\
        f_{\mathfrak{d}_3} &= 1+z^{(3,1)} \\
        f_{\mathfrak{d}_4} &= 1 + az^{(2,1)}+az^{(4,2)}+z^{(6,3)} \\
        f_{\mathfrak{d}_5} &= 1 + z^{(3,2)} \\
        f_{\mathfrak{d}_6} &= 1 + az^{(1,1)}+az^{(2,2)}+z^{(3,3)}
    \end{align*}
	\end{minipage}
\end{center}
Alternately, note that the wall $\widetilde{\mathfrak{d}}_k$ in $\mathfrak{D}_{\bfs}^{\mathcal{A}_{\textrm{prin}}}$ has support containing either $\mathbb{R} \cdot (p^*(n),n)$ or $\mathbb{R}_{\geq 0} \cdot (p^*(n),n)$.
Hence, in this 2-dimensional setting the wall  $\mathfrak{d}_k$ in $\mathfrak{D}^{\mathcal{X}_{\bfs}}$ has support given by either $\mathbb{R} \cdot n$ or $\mathbb{R}_{\geq 0} \cdot n$ for $n \in N$.

\end{example}

\begin{comment}
\begin{remark}
Given $\mathfrak{D}_{\bfs}^{\mathcal{A}_{\textrm{prin}}}$, one can obtain the equation for mutation in direction $k$ by computing the wall-crossing formula when crossing $\mathfrak{d}_k$ from the positive chamber. Recall that 
\[ \mathfrak{d}_k = (e_k^{\perp}, 1 + a_{k,1}z^{(v_k,e_k)} + \cdots + a_{k,r_k-1}z^{(r_k-1)(v_k,e_k)} + z^{r_k(v_k,e_k)}), \]
so for $m = p_1^*(n) \in \mathcal{C}^{+}_{\bfs}$, crossing $\mathfrak{d}_k$ gives
\begin{align*}
    z^{(m,n)} \mapsto z^{(m,n)}\left(1 + a_{k,1}z^{(v_k,e_k)} + \cdots + a_{k,r_k-1}z^{(r_k-1)(v_k,e_k)} + z^{r_k(v_k,e_k)} \right)^{-\langle (d_ke_k,0),(m,n) \rangle}
\end{align*}
Taking the slices $\{ m \in M^{\circ} : m = p^*(n) \}$ and $\{n \in N : p_1^*(n) \in M^{\circ} \}$ yields the  maps from Definition~\ref{def:genBiratMutMaps}:
\begin{align*}
    z^{m} &\mapsto z^{m}\left( 1 + a_{k,1}z^{v_k} + \cdots + a_{k,r_k-1}z^{(r_k-1)v_k} + z^{v_k} \right)^{-\langle d_ke_k,m \rangle} \\
    z^{n} &\mapsto z^n \left( 1 + a_{k,1}z^{e_k} + \cdots + a_{k,r_k-1}z^{(r_k-1)e_k} + z^{r_ke_k} \right)^{-\langle d_ke_k,m \rangle} \\
    &= z^n \left( 1 + a_{k,1}z^{e_k} + \cdots + a_{k,r_k-1}z^{(r_k-1)e_k} + z^{r_ke_k} \right)^{-\{n,d_ke_k \}} \\
    &= z^n \left( 1 + a_{k,1}z^{e_k} + \cdots + a_{k,r_k-1}z^{(r_k-1)e_k} + z^{r_ke_k} \right)^{- [n,e_k]}
\end{align*}
\end{remark}
\end{comment}

\subsection{Building $\Ascat$ from a generalized cluster scattering diagram}
\label{subsec:genAScat}

In this section, we parallel the exposition in Section 4 of \cite{GHKK}, which describes how to build the space $\Ascat$ from an ordinary scattering diagram and then identifies $\Ascat$ with the $\A$ variety. We review relevant portions of their constructions and statements, pointing out where modifications are needed  to extend the results to generalized cluster algebras with reciprocal coefficients. 

Let $\Gamma$ be a set of generalized initial data such that the diagram $\mathfrak{D}_\bfs$ yields a
cluster chamber structure $\Delta_\bfs^+$.  We will often want to discuss multiple copies of the lattices $N, M, N^\circ$, and $M^\circ$ which arise from different choices of seed $\bfs$. To allow us to distinguish between these copies, we index both the seeds and lattices by either the vertices $v$ of $\mathfrak{T}_v$ or chambers $\sigma$ of $\Delta_\bfs^+$.  For example, the seed $\bfs_v$ gives rise to the diagram $\mathfrak{D}_{\bfs_v}$ on the lattice $M^\circ_{\mathbb{R},\bfs_v}$. The chambers in $\mathfrak{D}_{\bfs_v}$ give the Fock-Goncharov cluster complex $\Delta^+$ under the identification $M^\circ_{\mathbb{R},\bfs_v} = \mathcal{A}^\vee(\mathbb{R}^T)$. Because the space $\mathcal{A}^\vee(\mathbb{R}^T)$ is independent of the choice of the initial seed $\bfs$, there is a canonical bijection between the cluster chambers of $\mathfrak{D}_{\bfs_v}$ and $\mathfrak{D}_{\bfs_{v'}}$ as a consequence of this identification.

\begin{definition} \cite[Construction 4.1]{GHKK}
\label{def:Ascat_atlas}
Given a seed $\bfs$, we want to construct a space, $\mathcal{A}_{\textrm{scat},\bfs}$ using the chambers $\sigma \in \Delta_\bfs^+$. For  distinct $\sigma, \sigma' \in \Delta_\bfs^+$, there exists a path $\gamma$ from $\sigma'$ to $\sigma$. This path gives rise to an automorphism $\mathfrak{p}_{\gamma,\mathfrak{D}_\bfs}: \widehat{R[P]} \rightarrow \widehat{R[P]}$ which depends only on the choice of $\sigma$ and $\sigma'$ and is independent of choice of path. 

For each chamber in $\Delta_\bfs^+$, attach a copy of the torus $T_{N^\circ,\sigma} := T_{N^\circ}$. If $\gamma$ is chosen such that it lies in the support of the cluster complex, then the wall-crossing automorphisms attached to walls crossed by $\gamma$ are birational maps of $T_{N^\circ}$. Therefore the path-ordered product $\mathfrak{p}_{\gamma,\mathfrak{D}_\bfs}$ can be viewed as a well-defined map of fields of fractions $\mathfrak{p}_{\gamma,\mathfrak{D}_\bfs}: R(M^\circ) \rightarrow R(M^\circ)$ which induces a positive birational map $\mathfrak{p}_{\sigma,\sigma'}:T_{N^\circ,\sigma} \rightarrow T_{N^\circ,\sigma'}$.

The space $\mathcal{A}_{\textrm{scat},\bfs}$ is constructed by gluing the collection of tori $\{ T_{N^\circ,\sigma} \}_{\sigma \in \Delta_\bfs^+}$ using the birational maps $\{ \mathfrak{p}_{\sigma,\sigma'} \}_{\sigma,\sigma' \in \Delta_{\bfs}^+}$ according to the method described in Proposition 2.4 of \cite{GHK}.
\end{definition}

\begin{prop}[Analog of Proposition 4.3 of \cite{GHKK}]
\label{prop:A_scatIso}
Let $\bfs$ be a seed, $v$ be the root of $\mathfrak{T}_\bfs$, and $v'$ be any other vertex of $\mathfrak{T}_\bfs$. Let $\mu_{v',v}^T : M^\circ_{v'} \rightarrow M_v^\circ$ be the Fock-Goncharov tropicalization of $\mu_{v',v}:T_{M^\circ,v'} \rightarrow T_{M^\circ,v}$. The restriction $\left.\mu_{v',v}^T\right|_{\sigma'} : M^\circ_{\sigma'} \rightarrow M^\circ_{\sigma}$ to each cluster chamber $\sigma'$ of $\Delta^+_{\bfs_{v'}}$ gives a linear isomorphism from $\sigma'$ to the corresponding cluster chamber $\sigma := \mu^T_{v',v}(\sigma')$ in $\Delta^+_\bfs$ and induces an isomorphism
\[ T_{v',\sigma} : T_{N^\circ,\sigma} \rightarrow T_{N^\circ,\sigma'}. \]
When $\sigma$ ranges across all the cluster chambers of $\Delta^+_{\bfs_v}$, the isomorphisms $T_{v',\sigma}$ glue together to yield an isomorphism between $\mathcal{A}_{\textrm{scat},\bfs_v}$ and $\mathcal{A}_{\textrm{scat},\bfs_{v'}}$.
\end{prop}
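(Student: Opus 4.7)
The plan is to reduce to the case where $v$ and $v'$ are adjacent in $\mathfrak{T}$ by induction on their distance. If the simple path between them is labeled $k_1, \dots, k_\ell$, then $\mu_{v',v}^T$ factors as $T_{k_\ell} \circ \cdots \circ T_{k_1}$, each $T_{k_i}$ defined with respect to the appropriate intermediate seed. Thus, once the single mutation case is proved, the isomorphisms compose; consistency of this composition across the tree is supplied by Theorem~\ref{theorem:mutDiagConsistent}, which gives $T_k(\mathfrak{D}_{\bfs_v}) \sim \mathfrak{D}_{\bfs_{v'}}$ for each edge, together with Lemma~\ref{lemma:FG_clusterChamber}, which identifies the chamber structure with the Fock-Goncharov cluster complex $\Delta^+$ independently of the choice of seed.

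For the single mutation case, I would first observe that $T_k$ is piecewise linear with linear domains $\mathcal{H}_{k,+}$ and $\mathcal{H}_{k,-}$. Any cluster chamber $\sigma' \in \Delta^+_{\bfs_{v'}}$ is a full-dimensional simplicial cone in the Fock-Goncharov fan of Theorem~\ref{theorem:GHKK_2-13}, and the slab $(e_k')^\perp$ appears only as a boundary facet of adjacent cluster chambers; no cluster chamber straddles it. Hence $\sigma'$ lies in a single half-space and $T_k|_{\sigma'}$ is the restriction of a $\mathbb{Q}$-linear map. By Theorem~\ref{theorem:mutDiagConsistent} the image is the corresponding chamber $\sigma = \mu_{v',v}^T(\sigma') \in \Delta^+_{\bfs_v}$, and dualizing the linear bijection on lattices yields an algebraic torus isomorphism $T_{v',\sigma}: T_{N^\circ,\sigma} \rightarrow T_{N^\circ,\sigma'}$.

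To upgrade these pointwise isomorphisms to a global isomorphism of schemes, it suffices to check that for every pair of adjacent cluster chambers $\sigma_1, \sigma_2 \in \Delta^+_{\bfs_v}$ (with corresponding $\sigma_1', \sigma_2' \in \Delta^+_{\bfs_{v'}}$), the square
\[
\begin{tikzcd}
T_{N^\circ,\sigma_1} \arrow[r,"\mathfrak{p}_{\sigma_1,\sigma_2}"] \arrow[d,"T_{v',\sigma_1}"'] & T_{N^\circ,\sigma_2} \arrow[d,"T_{v',\sigma_2}"] \\
T_{N^\circ,\sigma_1'} \arrow[r,"\mathfrak{p}_{\sigma_1',\sigma_2'}"'] & T_{N^\circ,\sigma_2'}
\end{tikzcd}
\]
commutes, where the horizontal arrows are the birational gluing maps of Definition~\ref{def:Ascat_atlas}. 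The equivalence of scattering diagrams $T_k(\mathfrak{D}_{\bfs_v}) \sim \mathfrak{D}_{\bfs_{v'}}$ from Theorem~\ref{theorem:mutDiagConsistent} is precisely the assertion that path-ordered products in the two diagrams match after pullback by $T_k$, and by Proposition~\ref{prop:T_k_tropicalization} this $T_k$ is the Fock-Goncharov tropicalization of the generalized mutation $\mu_{(v_k, d_k e_k, r_k)}$. Together these yield the commutativity of the square for any pair of chambers separated by a wall different from the slab.

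The main obstacle I anticipate is the slab crossing itself: adjacent cluster chambers $\sigma_1, \sigma_2$ sharing the facet $e_k^\perp$ correspond to chambers $\sigma_1', \sigma_2'$ lying on the opposite sides of the flipped slab $(e_k')^\perp$, so one must directly reconcile the slab automorphism $\mathfrak{p}_{\mathfrak{d}_k}$ (used in the gluing of $\mathcal{A}_{\textrm{scat},\bfs_v}$) with $\mathfrak{p}_{\mathfrak{d}_k'}$ (used for $\mathcal{A}_{\textrm{scat},\bfs_{v'}}$) after applying $T_k$. This is exactly where the reciprocity hypothesis $a_{k,j} = a_{k,r_k-j}$ is needed: the identity $\alpha^{-1} \circ \mathfrak{p}_{\mathfrak{d}_k'} = \mathfrak{p}_{\mathfrak{d}_k}$ derived in the proof of Theorem~\ref{theorem:mutDiagConsistent}, with $\alpha$ the linear automorphism induced by $T_{k,+}$, supplies the required compatibility and closes the slab square. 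Once this case is in hand, the global isomorphism $\mathcal{A}_{\textrm{scat},\bfs_v} \rightarrow \mathcal{A}_{\textrm{scat},\bfs_{v'}}$ is obtained by gluing via Proposition~\ref{prop:torusGluing}.
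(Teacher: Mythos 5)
Your proposal is correct and takes essentially the same route as the paper: reduce to a single mutation, note that commutativity of the gluing square is immediate for chambers both in $\mathcal{H}_{k,-}$ and follows from Theorem~\ref{theorem:mutDiagConsistent} for chambers both in $\mathcal{H}_{k,+}$, then isolate the slab crossing as the only nontrivial case and close it using the reciprocity hypothesis. The one minor difference is that the paper redoes the slab computation explicitly inside the proof of Proposition~\ref{prop:A_scatIso}, whereas you cite the identity $\alpha^{-1}\circ\mathfrak{p}_{\mathfrak{d}_k'}=\mathfrak{p}_{\mathfrak{d}_k}$ already derived in the proof of Theorem~\ref{theorem:mutDiagConsistent}; this is a fine shortcut, though a fully self-contained write-up would still need to unwind why that identity, with $\alpha$ identified with $T^*_{v',\sigma}$ acting on the $\mathcal{H}_{k,+}$ side and the identity map on the $\mathcal{H}_{k,-}$ side, is exactly the commutativity of the slab square after tracking the flip $\mathcal{H}'_{k,\pm}=\mathcal{H}_{k,\mp}$ caused by $e_k'=-e_k$.
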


\begin{proof}
We follow the structure of the proof in \cite{GHKK}. In general, $v$ and $v'$ are related by a composition of mutations and $\mu_{v',v}$ is the inverse of that composition. Proving this statement for arbitrary $v$ and $v'$, then, essentially consists of proving the statement for the special case where $v$ and $v'$ are related by a single mutation. In this case, $\mu_{v',v}^T = T_k^{-1}$ and the isomorphism $T_{v',\sigma} : T_{N^\circ,\sigma} \rightarrow T_{N^\circ,T_k(\sigma)}$ is induced by the restriction $\left.T_k^{-1}\right|_{T_k(\sigma)}$. 

To show that gluing together these isomorphisms for all $\sigma \in \Delta_{\bfs_v}^+$ gives an isomorphism between $\mathcal{A}_{\textrm{scat},\bfs_v}$ and  $\mathcal{A}_{\textrm{scat},\bfs_{v'}}$, we need to show commutativity of the diagram

\begin{center}
\begin{tikzcd}
    T_{N^\circ,\sigma} \arrow[r,"T_{v',\sigma}"] \arrow[d,"\mathfrak{p}_{\sigma,\tilde{\sigma}}"']& T_{N^\circ,\sigma'} \arrow[d,"\mathfrak{p}_{\sigma',\tilde{\sigma}'}"] \\
    T_{N^\circ,\tilde{\sigma}} \arrow[r,"T_{v',\tilde{\sigma}}"']& T_{N^\circ,\tilde{\sigma}'}
\end{tikzcd}
\end{center}
where $\sigma$ and $\tilde{\sigma}$ are chambers in $\Delta_{\bfs_v}^+$ and $\sigma' = T_k(\sigma)$ and $\tilde{\sigma}' = T_k(\tilde{\sigma})$ are chambers in $\Delta_{\bfs_{v'}}$. Note that the map $\mathfrak{p}_{\sigma,\tilde{\sigma}}$ indicates a wall-crossing in $\mathfrak{D}_\bfs$ and $\mathfrak{p}_{\sigma',\tilde{\sigma}'}$ indicates a wall crossing in $\mathfrak{D}_{\bfs'}$.

If $\sigma$ and $\tilde{\sigma}$ both fall in $\mathcal{H}_{k,-}$, then commutativity is immediate because $T_k$ fixes the wall-crossing automorphism on the wall between $\sigma$ and $\tilde{\sigma}$. If both chambers fall in $\mathcal{H}_{k,+}$, then commutativity follows from Theorem \ref{theorem:mutDiagConsistent} because by definition the path-ordered product between a given pair of points is equal in equivalent diagrams. Hence, the important case to consider is when $\sigma$ and $\tilde{\sigma}$ are on opposite sides of the wall with support $e_k^\perp$.

Without loss of generality, we can assume that $\sigma$ is the chamber in $\mathcal{H}_{k,+}$, where $e_k$ is non-negative. We know that the only wall in $\mathfrak{D}_\bfs$ contained in $e_k^\perp$ is the slab $\mathfrak{d}_k = (e_k^\perp, 1 + a_{k,1}z^{v_k} + \cdots + a_{k,r_k-1}z^{(r_k-1)v_k} + z^{r_kv_k})$. In $\mathfrak{D}_{\bfs'}$, the slab contained in $e_k^\perp$ is  ${\mathfrak{d}_k' = (e_k^\perp, 1 + a_{k,1}z^{-v_k} + \cdots + a_{k,r_k-1}z^{-(r_k-1)v_k} + z^{-r_kv_k})}$. As such, the only way for $\sigma$ and $\tilde{\sigma}$ to be on opposite sides of $e_k^\perp$ is for the wall between them to be $\mathfrak{d}_k$ in $\mathfrak{D}_{\bfs}$ and $\mathfrak{d}_k'$ in $\mathfrak{D}_{\bfs'}$. Pictorially, we can envision:
\begin{center}
    \begin{tikzpicture}
    \draw (0,0) to (3,0);
    \node at (5,0) {$1 + a_{k,1}z^{v_k} + \cdots + z^{r_kv_k}$};

    \filldraw [blue] (1.5,-0.75) circle (1pt);
    \filldraw [blue] (1.5,0.75) circle (1pt);
    \draw[blue,out=120,in=-120,->] (1.5,-0.75) to (1.5,0.75);
    \node[blue] at (1.9,0.3) {$\mathfrak{p}_{\sigma,\tilde{\sigma}}$};
    
    \node at (0,0.5) {$\tilde{\sigma} \in \mathcal{H}_{k,-}$};
    \node at (0,-0.5) {$\sigma \in \mathcal{H}_{k,+}$};
    \end{tikzpicture}
\end{center}
in $\mathfrak{D}_\bfs$ and similarly
\begin{center}
    \begin{tikzpicture}
    \draw (0,0) to (3,0);
    \node at (5,0) {$1 + a_{k,1}z^{-v_k} + \cdots + z^{-r_kv_k}$};

    \filldraw [blue] (1.5,-0.75) circle (1pt);
    \filldraw [blue] (1.5,0.75) circle (1pt);
    \draw[blue,out=120,in=-120,->] (1.5,-0.75) to (1.5,0.75);
    \node[blue] at (1.9,0.3) {$\mathfrak{p}_{\sigma',\tilde{\sigma}}'$};
    
    \node at (0,0.5) {$\tilde{\sigma}' \in \mathcal{H}_{k,+}$};
    \node at (0,-0.5) {$\sigma' \in \mathcal{H}_{k,-}$};
    \end{tikzpicture}
\end{center}
in $\mathfrak{D}_{\bfs'}$. We can then compute that
\begin{align*}
    T^*_{v',\sigma}\left(\mathfrak{p}^*_{\sigma',\tilde{\sigma}'}(z^m) \right) &= T_{v',\sigma}^*\left( z^m (1 + a_{k,1}z^{-v_k} + \cdots + a_{k,r_k-1}z^{-(r_k-1)v_k} + z^{-r_kv_k})^{-\langle d_ke_k,m\rangle} \right) \\
    &= T_{v',\sigma}^*\left(z^m \right) T_{v',\sigma}^* \left( (1 + a_{k,1}z^{-v_k} + \cdots + a_{k,r_k-1}z^{-(r_k-1)v_k} + z^{-r_kv_k})^{-\langle d_ke_k,m\rangle}\right) \\
    &= \mu_{v',v}^T\left(z^m \right) \mu_{v',v}^T \left( (1 + a_{k,1}z^{-v_k} + \cdots + a_{k,r_k-1}z^{-(r_k-1)v_k} + z^{-r_kv_k})^{-\langle d_ke_k,m\rangle}\right) \\
    &= T_k^{-1}\left(z^m \right) T_k^{-1} \left( (1 + a_{k,1}z^{-v_k} + \cdots + a_{k,r_k-1}z^{-(r_k-1)v_k} + z^{-r_kv_k})^{-\langle d_ke_k,m\rangle}\right) \\
    &= z^{m - r_kv_k \langle d_ke_k, m \rangle} (1 + a_{k,1}z^{-v_k} + \cdots + a_{k,r_k-1}z^{-(r_k-1)v_k} + z^{-r_kv_k})^{-\langle d_ke_k,m\rangle} \\
    &= z^m \left( z^{-r_kv_k} \right)^{-\langle d_ke_k,m \rangle} (1 + a_{k,1}z^{-v_k} + \cdots + a_{k,r_k-1}z^{-(r_k-1)v_k} + z^{-r_kv_k})^{-\langle d_ke_k,m\rangle} \\
    &= z^m (z^{r_kv_k} + a_{k,1}z^{(r_k-1)v_k} + \cdots + a_{k,r_k-1}z^{v_k} + 1)^{-\langle d_ke_k,m\rangle} \\
    &= z^m \left(1 + a_{k,1}z^{v_k} + \cdots + a_{k,r_k-1}z^{(r_k-1)v_k} + z^{r_kv_k} \right)^{-\langle d_ke_k,m \rangle} \\
    &= \mathfrak{p}^*_{\sigma,\tilde{\sigma}} \left( z^m \right)
\end{align*}
Because $\mathcal{H}_{k,+}$ and $\mathcal{H}_{k,-}$ are reversed in $\mathfrak{D}_\bfs$ and $\mathfrak{D}_{\bfs'}$, our assumption that ${m \in \mathcal{H}_{k,+}}$ in $\mathfrak{D}_{\bfs}$ means that $m \in \mathcal{H}_{k,-}$ in $\mathfrak{D}_{\bfs'}$. As such, $z^m = T^*_{v,\tilde{\sigma}}(z^m)$ and we have $T^*_{v',\sigma}\left(\mathfrak{p}^*_{\sigma',\tilde{\sigma}'}(z^m) \right) = \mathfrak{p}^*_{\sigma,\tilde{\sigma}} \left( T^*_{v,\tilde{\sigma}}(z^m) \right)$ and the desired commutativity holds. Note that this computation relies on the reciprocity condition $a_{k,i} = a_{k,r_i - i}$.
\end{proof}

\begin{theorem}[Analogue of Theorem 4.4 of \cite{GHKK}]
\label{theorem:AscatCommuteMutation}
For a given generalized torus seed $\bfs$, let $v$ denote the root of $\mathfrak{T}_\bfs$ and $v'$ be 
 another arbitrary
vertex in $\mathfrak{T}_\bfs$. Let $\phi_{v,v'}^*: M_{v'}^\circ \rightarrow M_{v'}^\circ$ be the linear map $\left.\mu_{v,v'}^T\right|_{\mathcal{C}^+_{v' \in \bfs}}$ and $\phi_{v,v'}:T_{N^\circ,v'} \rightarrow T_{N^\circ,v'}$ be the map between the associated tori. Then the collection $\{ \phi_{v,v'} \}_{v'}$ glue to give an isomorphism
\[ \mathcal{A}_\bfs := \bigcup_{v'} T_{N^\circ} \rightarrow \mathcal{A}_{\textrm{scat},\bfs} := \bigcup_{v'} T_{N^\circ,v'}\]
and the diagram
\begin{center}
    \begin{tikzcd}
        \mathcal{A}_\bfs \arrow[r] \arrow[d] & \mathcal{A}_{\textrm{scat},\bfs} \arrow[d] \\
        \mathcal{A}_{\bfs_{v'}} \arrow[r] & \mathcal{A}_{\textrm{scat}, \bfs_{v'}}
    \end{tikzcd}
\end{center}
commutes, where the horizontal maps are the isomorphisms that were just defined, the right-hand vertical map is the isomorphism described in Proposition \ref{prop:A_scatIso} and the left-hand map is the natural open immersion $\mathcal{A}_{\bfs} \hookrightarrow \mathcal{A}_{\bfs_{v'}}$.
\end{theorem}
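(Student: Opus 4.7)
The plan is to mirror the strategy of \cite[Theorem 4.4]{GHKK}, reducing to the single-mutation case and then using Proposition~\ref{prop:A_scatIso} plus an explicit calculation at the slab $\mathfrak{d}_k$. First, I would unpack the definition of $\phi_{v,v'}$: by Lemma~\ref{lemma:FG_clusterChamber}, the restriction of $\mu_{v,v'}^T$ to the positive chamber $\mathcal{C}^+_{v' \in \bfs}$ is a linear isomorphism onto the cluster chamber in $\mathfrak{D}_{\bfs_{v'}}$ corresponding to $v$, so it induces an honest isomorphism of algebraic tori $\phi_{v,v'}: T_{N^\circ,v'} \to T_{N^\circ,v'}$ (the two sides sitting in $\mathcal{A}_{\bfs_{v'}}$ and $\mathcal{A}_{\textrm{scat},\bfs_{v'}}$ respectively, identified with the positive chamber torus).

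Next, to show that the $\phi_{v,v'}$ glue, I would verify the commutativity of
\begin{center}
\begin{tikzcd}
T_{N^\circ,v_1'} \arrow[r,"\phi_{v,v_1'}"] \arrow[d,"\mu_k"'] & T_{N^\circ,v_1'} \arrow[d,"\mathfrak{p}"] \\
T_{N^\circ,v_2'} \arrow[r,"\phi_{v,v_2'}"'] & T_{N^\circ,v_2'}
\end{tikzcd}
\end{center}
for each edge $v_1' \xrightarrow{k} v_2'$ of $\mathfrak{T}_\bfs$, where $\mu_k$ is the birational mutation map of Definition~\ref{def:genBiratMutMaps} and $\mathfrak{p}$ is the path-ordered product in $\mathfrak{D}_{\bfs}$ joining the cluster chambers $\sigma_{v_1'}$ and $\sigma_{v_2'}$. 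By the same argument as in Proposition~\ref{prop:A_scatIso}, I may pass to the seed $\bfs_{v_1'}$ so that it suffices to treat the case where $v_1'$ is the root and $v_2'$ is obtained by a single mutation in direction $k$; then $\sigma_{v_1'} = \mathcal{C}^+_{\bfs}$ and $\sigma_{v_2'}$ is the adjacent cluster chamber across the slab $e_k^\perp$, and the path-ordered product reduces to the single slab-crossing $\mathfrak{p}_{\mathfrak{d}_k}$.

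The main computation, and the one place where the reciprocity hypothesis enters essentially, is showing that this slab crossing matches the mutation pullback. On the one hand,
\[
\mathfrak{p}_{\mathfrak{d}_k}^*(z^m) = z^m \bigl(1 + a_{k,1}z^{v_k} + \cdots + a_{k,r_k-1}z^{(r_k-1)v_k} + z^{r_kv_k}\bigr)^{-\langle d_ke_k,m\rangle}
\]
on the chamber $\sigma_{v_1'}$, which is precisely $\mu_k^*(z^m)$ from Definition~\ref{def:genBiratMutMaps}. On the other hand, to identify this with the action on the $\mathcal{A}_{\bfs_{v_2'}}$-side I would use Proposition~\ref{prop:T_k_tropicalization} to recognize $T_k$ as the Fock--Goncharov tropicalization of $\mu_{(v_k,d_ke_k,r_k)}$ and then, exactly as in the proof of Theorem~\ref{theorem:mutDiagConsistent}, apply the identity $\alpha^{-1}\circ \mathfrak{p}_{\mathfrak{d}_k'} = \mathfrak{p}_{\mathfrak{d}_k}$ with $\alpha$ induced by $T_{k,+}$. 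This is where the reciprocity $a_{k,j} = a_{k,r_k-j}$ is used to rewrite $z^{r_kv_k}(1 + a_{k,1}z^{-v_k}+\cdots+z^{-r_kv_k})$ as $1 + a_{k,1}z^{v_k}+\cdots+z^{r_kv_k}$; I expect this to be the main (but manageable) obstacle.

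Finally, with the square above commuting for every edge of $\mathfrak{T}_\bfs$, the local isomorphisms $\phi_{v,v'}$ glue along the birational maps used in Definition~\ref{def:genClusterVarieties} and Definition~\ref{def:Ascat_atlas} (via Proposition~\ref{prop:torusGluing}) to a well-defined isomorphism $\mathcal{A}_\bfs \xrightarrow{\sim} \mathcal{A}_{\textrm{scat},\bfs}$. The outer commutative diagram in the statement follows by combining the compatibility just established with Proposition~\ref{prop:A_scatIso}: both composites send a torus $T_{N^\circ}$ at the vertex $v'$ of $\mathfrak{T}_{\bfs_{v'}}$ to the torus attached to the cluster chamber of $\Delta^+_{\bfs_{v'}}$ indexed by that vertex, and they agree on characters because $\mu^T_{v,v'}$ restricted to the relevant chamber is the same linear map used in both constructions.
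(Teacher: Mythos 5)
Your proposal is correct and runs along essentially the same lines as the paper, which cites GHKK Theorem 4.4 together with the earlier Proposition~\ref{prop:A_scatIso} to supply the commutativity. You correctly reduce to a single mutation, identify that the slab-crossing automorphism $\mathfrak{p}_{\mathfrak{d}_k}$ is by construction the mutation pullback $\mu_k^*$ of Definition~\ref{def:genBiratMutMaps}, and flag the reciprocity $a_{k,j}=a_{k,r_k-j}$ as the ingredient that makes the computation at the slab close up. One small attribution point: the identity you invoke is most directly the one carried out in the proof of Proposition~\ref{prop:A_scatIso} (the computation $T^*_{v',\sigma}\circ\mathfrak{p}^*_{\sigma',\tilde{\sigma}'} = \mathfrak{p}^*_{\sigma,\tilde{\sigma}}\circ T^*_{v,\tilde{\sigma}}$) rather than Theorem~\ref{theorem:mutDiagConsistent}, though the underlying reciprocity manipulation is the same in both; and after passing to the seed $\bfs_{v_1'}$ the relevant positive chamber is $\mathcal{C}^+_{\bfs_{v_1'}}$ rather than $\mathcal{C}^+_{\bfs}$. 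Neither of these affects the correctness of the argument.
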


\begin{proof}
The proof of this theorem is identical to that of Theorem 4.4 from \cite{GHKK}; previous propositions check that despite the differences in wall-crossing automorphisms, we have the necessary commutativity of diagrams.
\end{proof}

This allows us to identify the rings of regular functions on $\mathcal{A}_{\textrm{scat}}$ and $\mathcal{A}_{\bfs}$.

\begin{definition}\cite[Definition 4.8]{GHKK}
Let $\Gamma$ be a generalized fixed data and $\bfs$ be an associated initial generalized torus seed. Let $\bfs_w = (e_1',\dots,e_n')$ be a generalized torus seed with dual basis $\{ (e_i')^* \}_i$ and $f_i' = d_i^{-1}(e_i')^*$. A \emph{cluster monomial} in $\bfs_w$ is a monomial in $T_{N^\circ,w} \subset \mathcal{A}$ of the form $z^m$ with $m = \sum_{i=1}^n a_if_i'$ for $a_i \in \mathbb{Z}_{\geq 0}$. We refer to any regular function which is a cluster monomial in some seed of $\mathcal{A}$ as a \emph{cluster monomial on $\mathcal{A}$}.
\end{definition}

\section{Broken lines and theta functions}
\label{subsec:genBrokenLines}
Broken lines have a similar  definition in the generalized setting as in the ordinary setting, with the caveat that we now work over the ground ring $R = \Bbbk[a_{i,j}]$ and so the monomials attached to the domains of linearity of a broken line lie in $R[M^{\circ}]$ rather than $\Bbbk[M^{\circ}]$.

\begin{definition}
Let $\mathfrak{D}$ be a generalized cluster scattering diagram, $m_0$ be a point in $M^\circ \backslash \{ 0 \}$, and $Q$ be a point in $M_\mathbb{R} \backslash \textrm{Supp}(\mathfrak{D})$. A \emph{broken line} with endpoint $Q$ and initial slope $m_0$ is a piecewise linear path $\gamma: (-\infty,0] \rightarrow M_\mathbb{R} \backslash \textrm{Sing}(\mathfrak{D})$ with finitely many domains of linearity. Each domain of linearity, $L$, has an associated monomial $c_Lz^{m_L} \in R[M^\circ]$ such that the following conditions are satisfied:
\begin{enumerate}
    \item $\gamma(0) = Q$
    \item If $L$ is the first domain of linearity of $\gamma$, then $c_Lz^{m_L} = z^{m_0}$.
    \item Within the domain of linearity $L$, the broken line has slope $-m_L$. In other words, $\gamma'(t) = -m_L$ on $L$. 
    \item Let $t$ be a point at which $\gamma$ is non-linear and is passing from one domain of linearity, $L$, to another, $L'$, and define
    \[ \mathfrak{D}_t = \{ (\mathfrak{d},f_\mathfrak{d}) \in \mathfrak{D} : \gamma(t) \in \mathfrak{d}. \} \]
    Then the power series $\mathfrak{p}_{\left.\gamma\right|_{(t-\epsilon,t+\epsilon)},\mathfrak{D}_t}(c_Lz^{m_L})$ contains the term $c_{L'}z^{m_{L'}}$. As in Definition \ref{def:genWallcrossing}, here the power series $\mathfrak{p}_{\left.\gamma\right|_{(t-\epsilon,t+\epsilon)},\mathfrak{D}_t}(c_{L}z^{m_L})$ is over the ground ring $R$ and may contain many more terms than in the ordinary case.
\end{enumerate}
\end{definition}
The definition of a theta function in terms of broken lines remains the same, except that we are now working over the ground ring $R$. 

\begin{definition}
\label{def:genThetaFunction}
Suppose $\mathfrak{D}$ is a generalized cluster scattering diagram and consider points $m_0 \in M^\circ \backslash \{ 0 \}$ and $Q \in M_\mathbb{R} \backslash \textrm{Supp}(\mathfrak{D})$. For a broken line $\gamma$ with initial exponent $m_0$ and endpoint $Q$, we define $I(\gamma) = m_0$, $b(\gamma) = Q$, and $\textrm{Mono}(\gamma) = c(\gamma)z^{F(\gamma)}$ where $\textrm{Mono}(\gamma)$ is the monomial attached to the final domain of linearity of $\gamma$. We then define
\[ \vartheta_{Q,m_0} := \sum_\gamma \textrm{Mono}(\gamma) \]
where the summation ranges over all broken lines $\gamma$ with initial exponent $m_0$ and endpoint $Q$. When $m_0 = 0$, then for any endpoint $Q$ we define $\vartheta_{Q,0} = 1$.
\end{definition}

We show that just as in the ordinary case, theta functions defined via generalized cluster scattering diagrams satisfy several crucial properties, such as the following: the collection of them include cluster monomials, agreement of theta functions and path-ordered products for cluster monomials, and Laurentness.

\begin{theorem}\cite[Theorem 3.5]{GHKK}
\label{theorem:genthetafuncomposition}
Let $\mathfrak{D}$ be a consistent scattering diagram, $m_0$ be a point in $M \backslash \{ 0 \}$, and consider a pair of points $Q$ and $Q'$ in $M_\mathbb{R} \backslash \textrm{Supp}(\mathfrak{D})$ such that $Q$ and $Q'$ are linearly independent over $\mathbb{Q}$. Then for any path $\gamma$ with endpoints $Q$ and $Q'$ for which $\mathfrak{p}_{\gamma,\mathfrak{D}}$ is defined, we have
\[ \vartheta_{Q',m_0} = \mathfrak{p}_{\gamma,\mathfrak{D}}(\vartheta_{Q,m_0}) \]
\end{theorem}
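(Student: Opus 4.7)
The plan is to adapt the proof of \cite[Theorem 3.5]{GHKK} to the generalized setting, where the only new feature is that wall-crossing automorphisms $\mathfrak{p}_{f_{\mathfrak{d}}}$ are no longer binomial but come from polynomials of the form $1 + \sum_j c_j z^{j p_1^*(n_0)}$ with potentially many nonzero terms. Because the definition of broken line already allows the outgoing monomial $c_{L'} z^{m_{L'}}$ to be \emph{any} term appearing in the local path-ordered product $\mathfrak{p}_{\gamma|_{(t-\epsilon,t+\epsilon)},\mathfrak{D}_t}$, the bijection between broken lines and expansions of wall-crossing automorphisms continues to work in this more general setting.

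First I would reduce to an order-by-order statement. For each $k > 0$, only finitely many walls of $\mathfrak{D}$ have $f_{\mathfrak{d}} \not\equiv 1 \pmod{\mathfrak{m}^{k+1}}$ by Definition~\ref{def:genScatteringDiagram}, so it suffices to verify the identity $\vartheta_{Q',m_0} \equiv \mathfrak{p}_{\gamma,\mathfrak{D}}(\vartheta_{Q,m_0}) \pmod{\mathfrak{m}^{k+1}}$ for all $k$, at which point we may replace $\mathfrak{D}$ by a finite scattering diagram. Using the genericity hypothesis that $Q$ and $Q'$ are linearly independent over $\mathbb{Q}$, we may deform $\gamma$ so that it crosses walls one at a time away from any joint; by composing the wall-crossings, we reduce further to the case where $\gamma$ crosses a single wall $(\mathfrak{d}, f_{\mathfrak{d}})$.

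Next I would establish the single-wall bijection. Given a broken line $\gamma_Q$ with endpoint $Q$ and initial slope $m_0$, with final monomial $cz^m$, I would extend $\gamma_Q$ along $\gamma$ past the wall in all possible ways: either continuing straight through (producing a single extended broken line with unchanged final monomial $cz^m$ at $Q'$), or bending at the wall (producing one extended broken line for every nonzero monomial term appearing in the expansion of $cz^m \cdot f_{\mathfrak{d}}^{\pm \langle n_0', m\rangle}$, with the sign determined by the direction of crossing). Summing over all $\gamma_Q$ and all permitted extensions yields exactly $\mathfrak{p}_{f_{\mathfrak{d}}}^{\pm}(\vartheta_{Q,m_0})$ by the definition of $\mathfrak{p}_{f_{\mathfrak{d}}}$ in Definition~\ref{def:genWallcrossing}. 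Conversely, any broken line with endpoint $Q'$ and initial slope $m_0$ restricts to a broken line ending at $Q$ before the final wall crossing (or ending at $Q'$ with no final bend), giving the inverse map. This produces a bijection compatible with the monomial labels, proving the identity modulo $\mathfrak{m}^{k+1}$.

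The main obstacle is bookkeeping: in the ordinary cluster case $f_{\mathfrak{d}}^{\pm \langle n_0', m \rangle}$ expands via the binomial theorem with each term corresponding to a distinct multiplicity of bending, but in the reciprocal generalized case one must track terms from both the ``middle'' coefficients $a_{k,j}$ and the higher powers $z^{jr_k v_k}$ in $f_{\mathfrak{d}}$. The technical point to verify is that every monomial in this more complicated expansion still arises from exactly one legitimate extension of a broken line past the wall, and conversely, that every way of bending a broken line at the wall produces a term appearing in the expansion. This is essentially automatic from the definition of broken lines together with the definition of $\mathfrak{p}_{f_{\mathfrak{d}}}$, but it is the only place where the reciprocity/polynomial structure of $f_{\mathfrak{d}}$ visibly enters; once verified, the inductive step on the order $k$ combines the single-wall result with the standard composition $\mathfrak{p}_{\gamma,\mathfrak{D}} = \mathfrak{p}_{f_{\mathfrak{d}_s}}^{\epsilon_s} \circ \cdots \circ \mathfrak{p}_{f_{\mathfrak{d}_1}}^{\epsilon_1}$ from Definition~\ref{def:genPathProduct} to finish the proof.
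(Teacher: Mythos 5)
The paper itself offers no direct argument: it simply cites \cite[Section 4]{CPS}, observing that those results never use the binomial form of the wall functions — only the general shape $1+\sum_j c_j z^{jp_1^*(n_0)}$ — so they carry over verbatim. Your proposal instead sketches a self-contained proof, which is a genuinely different route, and the order-by-order reduction and the reduction to a single generic wall crossing are both fine. But the single-wall bijection as stated has a real gap.

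You claim to obtain the inverse map by restricting a broken line ending at $Q'$ to its portion before the last crossing of $\mathfrak{d}$, asserting this gives a broken line ending at $Q$. It does not: the restriction terminates at the crossing point, which lies on $\mathfrak{d}\subset\mathrm{Supp}(\mathfrak{D})$, not at $Q$. Symmetrically, ``extending'' a broken line ending at $Q$ across the wall continues it in its own final direction $-m$, which varies with the broken line and generically misses $Q'$ entirely. So the two maps do not land where you need them to, and what you have written is not a bijection between the two sets of broken lines. What CPS actually does, and what one must do to fill this in, is a homotopy argument: move the endpoint continuously from $Q$ to $Q'$; every bend point slides along its own wall (the segment directions $-m_L$ are rigid, so each segment just parallel-translates), and the deformed path remains a valid broken line until the endpoint itself crosses $\mathfrak{d}$, at which instant one gains the option of inserting a new bend at the endpoint, reproducing exactly the terms of $cz^m\cdot f_{\mathfrak{d}}^{\pm\langle n_0',m\rangle}$. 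Ruling out other degenerations during the homotopy is where the genericity of $Q,Q'$ is actually used. Your instinct that the polynomial form of $f_{\mathfrak{d}}$ creates no new difficulty is right — that is the entire content of the paper's one-line proof — but the bijection must be phrased as a deformation of the endpoint, not as a static extension or truncation of broken lines.
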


\begin{proof}
As in the ordinary setting, this is a special case of the results of Section 4 of \cite{CPS}. Those results do not assume that the wall-crossing automorphisms are binomials and are therefore also applicable to our setting.
\end{proof}

In Section~\ref{subsec:genMutInvariance}, we discussed the mutation invariance of generalized cluster scattering diagrams. It is also important that the theta functions exhibit this mutation invariance. Recall that the positive chamber of a cluster scattering diagram corresponds to a choice of initial torus seed $\bfs$ for the associated generalized cluster algebra. If the cluster scattering diagrams $\mathfrak{D}_{\bfs}$ and $\mathfrak{D}_{\bfs'}$ are related by a single application of the map $T_k$, then the initial torus seeds are related by a single mutation, i.e. $\bfs' = \mu_k(\bfs)$. The following proposition exhibits a bijection between the sets of broken lines and theta functions defined on  $\mathfrak{D}_{\bfs}$ and $\mathfrak{D}_{\bfs'}$.
\begin{prop}[Analog of Proposition 3.6 of \cite{GHKK}]
\label{prop:brokenLineBijection}
The transformation $T_k$ gives a bijection between broken lines with endpoint $Q$ and initial slope $m_0$ in $\mathfrak{D}_\bfs$ and broken lines with endpoint $T_k(Q)$ and initial slope $T_k(m_0)$ in $\mathfrak{D}_{\mu_k(\bfs)}$. In particular,
\begin{align*}
    \vartheta^{\mu_k(\bfs)}_{T_k(Q),T_k(m_0)} &= \begin{cases}
            T_{k,+}\left( \vartheta^\bfs_{Q,m_0} \right) & Q \in \mathcal{H}_{k,+} \\
            T_{k,-}\left( \vartheta^\bfs_{Q,m_0} \right) & Q \in \mathcal{H}_{k,-}
        \end{cases}
\end{align*}
where $T_{k,\pm}$ acts linearly on the exponents in $\vartheta^\bfs_{Q,m_0}$.
\end{prop}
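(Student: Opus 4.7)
The plan is to define the map on broken lines domain-by-domain using $T_k$, verify the broken line axioms for the image, and then sum over broken lines to obtain the identity on theta functions. Fix a broken line $\gamma$ in $\mathfrak{D}_{\bfs}$ with endpoint $Q$ and initial slope $m_0$. Because $\gamma$ has finitely many domains of linearity, we can refine the domains so that each one lies entirely in $\mathcal{H}_{k,+}$ or entirely in $\mathcal{H}_{k,-}$; the only new breakpoints introduced live on $e_k^{\perp}$, which in $\mathfrak{D}_{\bfs}$ is covered only by the slab $\mathfrak{d}_k$. Define $T_k(\gamma)$ by applying $T_{k,+}$ to portions of $\gamma$ in $\mathcal{H}_{k,+}$ and $T_{k,-}=\mathrm{id}$ to portions in $\mathcal{H}_{k,-}$, and attach to each image domain $L$ the monomial $c_{L}z^{T_{k,\pm}(m_{L})}$. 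Since $T_{k,\pm}$ is linear on each half-space, axioms (1)--(3) of a broken line are immediate: the endpoint is $T_k(Q)$, the initial monomial is $z^{T_k(m_0)}$, and the slope on each domain is the negative of the attached exponent.

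The core of the argument is axiom (4), the bend condition. There are three types of breakpoints. First, at a wall $\mathfrak{d}\neq \mathfrak{d}_{k}$ with $\mathfrak{d}\subseteq \mathcal{H}_{k,\pm}$, the image wall in $T_k(\mathfrak{D}_{\bfs})$ has wall function $T_{k,\pm}(f_{\mathfrak{d}})$ obtained by applying $T_{k,\pm}$ termwise. Since $T_{k,\pm}$ is an automorphism of the exponent lattice restricted to each half-space, applying $\mathfrak{p}_{T_{k,\pm}(f_{\mathfrak{d}})}^{\pm 1}$ to $z^{T_{k,\pm}(m_L)}$ produces exactly the $T_{k,\pm}$-image of what $\mathfrak{p}_{f_\mathfrak{d}}^{\pm 1}(z^{m_L})$ produces; so the bend term $c_{L'}z^{m_{L'}}$ in $\gamma$ becomes the required term $c_{L'}z^{T_{k,\pm}(m_{L'})}$ in $T_k(\gamma)$. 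Second, for a newly introduced breakpoint at $e_k^{\perp}$ that sits in the interior of a domain of linearity of $\gamma$, the bend is at the slab $\mathfrak{d}_k$ of $\mathfrak{D}_{\bfs}$ (equivalently, the slab $\mathfrak{d}_k'$ of $\mathfrak{D}_{\mu_k(\bfs)}$), and we must verify that the change of attached monomial across this breakpoint is compatible with the slab's wall-crossing power series. This is precisely the identity
\[
\alpha^{-1}\circ \mathfrak{p}_{\mathfrak{d}_k'} \;=\; \mathfrak{p}_{\mathfrak{d}_k},
\]
established in the proof of Theorem \ref{theorem:mutDiagConsistent} by invoking the reciprocity condition $a_{k,s}=a_{k,r_k-s}$; here $\alpha$ is the linear automorphism of $R[M^{\circ}]$ induced by $T_{k,+}$. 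This identity is exactly what is needed: the ``apparent bend'' produced by switching between $T_{k,+}$ and $T_{k,-}$ on either side of $e_k^{\perp}$ matches the actual slab wall-crossing term-by-term. Third, if $\gamma$ already bent at $\mathfrak{d}_k$ as a broken line in $\mathfrak{D}_{\bfs}$, the previous two analyses combine to give the correct bend at $\mathfrak{d}_k'$ in $\mathfrak{D}_{\mu_k(\bfs)}$.

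Having checked axiom (4), we conclude that $T_k(\gamma)$ is a broken line in $T_k(\mathfrak{D}_{\bfs})$, which by Theorem \ref{theorem:mutDiagConsistent} is equivalent to $\mathfrak{D}_{\mu_k(\bfs)}$; equivalent scattering diagrams produce the same broken lines and theta functions, so $T_k(\gamma)$ may be regarded as a broken line in $\mathfrak{D}_{\mu_k(\bfs)}$. Bijectivity follows symmetrically: the analogous construction for $\mu_k(\mathbf{s})$ with $T_{k}^{-1}$ (which equals the $T_k$-map defined using the mutated seed, up to the involution $\mu_k\circ \mu_k \sim \mathrm{id}$) is inverse to $T_k$, again using the reciprocity identity. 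Summing the monomial assignment $\mathrm{Mono}(\gamma)\mapsto \mathrm{Mono}(T_k(\gamma)) = T_{k,\pm}(\mathrm{Mono}(\gamma))$ over all broken lines with endpoint $Q$ and initial slope $m_0$ yields
\[
\vartheta_{T_k(Q),T_k(m_0)}^{\mu_k(\bfs)} \;=\; T_{k,\pm}\!\left(\vartheta_{Q,m_0}^{\bfs}\right),
\]
with the sign determined by which half-space contains $Q$.

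The main obstacle is the bend condition at the slab $\mathfrak{d}_k$: unlike the ordinary case, where the slab function is the binomial $1+z^{v_k}$ and the required identity is a direct binomial-expansion check, here the slab function is the reciprocal polynomial $1+a_{k,1}z^{v_k}+\cdots+a_{k,r_k-1}z^{(r_k-1)v_k}+z^{r_k v_k}$ and the term-by-term matching between bend contributions on the two sides of $e_k^{\perp}$ only works because of the palindromic symmetry $a_{k,s}=a_{k,r_k-s}$. This is exactly the point in the argument where the reciprocal hypothesis on the generalized cluster algebra is essential.
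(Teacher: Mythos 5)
Your proposal is correct and follows essentially the same outline as the paper's proof: refine domains of linearity to avoid straddling $e_k^{\perp}$, apply $T_{k,\pm}$ domain-by-domain, check the bend axiom at the slab using the palindromic symmetry $a_{k,s}=a_{k,r_k-s}$, and establish bijectivity via the explicit inverse. The one stylistic difference is that you invoke the identity $\alpha^{-1}\circ\mathfrak{p}_{\mathfrak{d}_k'}=\mathfrak{p}_{\mathfrak{d}_k}$ already established in the proof of Theorem \ref{theorem:mutDiagConsistent}, whereas the paper re-derives the same reciprocity computation inline; this is a reasonable economy rather than a different route.
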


\begin{proof}
We follow the structure of the proof of Proposition 3.6 from \cite{GHKK}. 

Let $\gamma$ be a broken line in a scattering diagram $\mathfrak{D}_\bfs$ and $T_k(\gamma)$ denote the composite map $T_k \circ \gamma: (-\infty,0] \rightarrow M_\mathbb{R}$. If any domain of linearity of $\gamma$ is in both $\mathcal{H}_{k,+}$ and $\mathcal{H}_{k,-}$, we can subdivide that domain of linearity at the point where it crosses between $\mathcal{H}_{k,+}$ and $\mathcal{H}_{k,-}$. As such, we can assume for any domain of linearity $L$ that $\gamma(L)$ falls either entirely inside $\mathcal{H}_{k,+}$ or entirely inside $\mathcal{H}_{k,-}$. For any domain of linearity $L$ that has been subdivided in this way, the associated monomial $c_Lz^{m_L}$ will be sent to either $c_Lz^{T_{k,+}(m_L)}$ or $c_Lz^{T_{k,-}(m_L)}$ depending on the portion of $L$ being considered. We know from Theorem \ref{theorem:mutDiagConsistent} that $\mathfrak{D}_{\mu_k(\bfs)} = T_k(\mathfrak{D}_\bfs)$, so we can think about $T_k(\mathfrak{D}_\bfs)$ when thinking about the broken line in $\mathfrak{D}_{\mu_k(\bfs)}$.

We know that $e_k^\perp$ lies on the boundary between $\mathcal{H}_{k,+}$ and $\mathcal{H}_{k,-}$. So in order to understand what happens to the subdivided domains of linearity, which originally were in both $\mathcal{H}_{k,+}$ and $\mathcal{H}_{k,-}$, we need to analyze what happens when $\gamma$ crosses $e_k^\perp$. First, consider the original broken line $\gamma$ in $\mathfrak{D}_\bfs$. Suppose that one domain of linearity, $L$, has been subdivided into $L_1$ and $L_2$ such that $\gamma$ crosses $e_K^{\perp}$ at the point where it passes from the first domain of linearity, $L_1$, to the second domain of linearity, $L_2$. By definition, we know that when the monomial $c_{L_1}z^{m_{L_1}}$ passes through $e_k^\perp$, it is mapped to
\[c_{L_1}z^{m_{L_1}}\left(1 + a_{k,_1}z^{v_k} + \cdots + a_{k,r_k-1}z^{(r_k-1)v_k} + z^{r_kv_k} \right)^{|\langle d_ke_k,m_{L_1} \rangle|} \]
and that $c_{L_2}z^{m_{L_2}}$ must appear as a term in this polynomial.

We can then consider the image of $\gamma$ in $T_k(\mathfrak{D}_\bfs)$. If ${L_1 \subseteq \mathcal{H}_{k,-}}$ and ${L_2 \subseteq \mathcal{H}_{k,+}}$, then $c_{L_2}z^{T_{k,+}(m_{L_2})}$ must appear as a term in the polynomial
{\footnotesize
\begin{align*}
    c_{L_1}z^{T_{k,+}(m_{L_1})}\left( 1 + \cdots  + z^{r_kv_k} \right)^{-\langle d_ke_k,m_{L_1} \rangle} &= c_{L_1}z^{m_{L_1} + r_kv_k \langle d_ke_k,m_{L_1} \rangle}\left( 1 + \cdots  + z^{r_kv_k} \right)^{-\langle d_ke_k,m_{L_1} \rangle} \\
    &= c_{L_1}z^{m_{L_1}}\left(z^{-r_kv_k}(1+\cdots+z^{r_kv_k}) \right)^{-\langle d_ke_k,m_{L_1} \rangle} \\
    &= c_{L_1}z^{m_{L_1}}\left(z^{-r_kv_k} + a_{k,1}z^{-(r_k-1)v_k} + \cdots + a_{k,r_k-1}z^{-v_k} + 1 \right)^{-\langle d_ke_k,m_{L_1} \rangle} \\
    &= c_{L_1}z^{T_{k,-}(m_{L_1})}\left(z^{-r_kv_k} + a_{k,1}z^{-(r_k-1)v_k} + \cdots + a_{k,r_k-1}z^{-v_k} + 1 \right)^{-\langle d_ke_k,m_{L_1} \rangle} 
\end{align*}
}
Due to the assumption that the exchange polynomials have reciprocal coefficients - i.e., that $a_{k,i} = a_{k,r_k-i}$ - this polynomial is equal to
\[ c_{L_1}z^{T_{k,-}(m_{L_1})}\left(1 + a_{k,1}z^{-v_k} + \cdots + a_{k,r_k-1}z^{-(r_k-1)v_k} + z^{-r_kv_k} \right)^{-\langle d_ke_k,m_{L_1} \rangle} \]
and therefore $T_k(\gamma)$ satisfies the rules for bending as it crosses
\[ \mathfrak{d}_k' = (e_k^\perp,1+a_{k,1}z^{-v_k} + \cdots + a_{k,r_k-1}z^{-(r_k-1)v_k} + z^{-r_kv_k}) \]
in $T_k(\mathfrak{D}_\bfs)$. Similarly, if $L_1 \subseteq \mathcal{H}_{k,+}$ and $L_2 \subseteq \mathcal{H}_{k,-}$, then $c_{L_2}z^{T_{k,-}(m_{L_2})} = c_{L_2}z^{m_{L_2}}$ must appear as a term in
{\footnotesize
\begin{align*}
    &c_{L_1}z^{T_{k,-}(m_{L_1})}\left(1 + \cdots + z^{r_kv_k} \right)^{\langle d_ke_k,m_{L_1} \rangle} 
    \\=& c_{L_1}z^{m_{L_1}} \left(1 + a_{k,1}z^{v_k} + \cdots + a_{k,r_k-1}z^{(r_k-1)v_k} + z^{r_kv_k} \right)^{\langle d_ke_k,m_{L_1} \rangle} \\
    =& c_{L_1}z^{m_{L_1}} \left(1 + a_{k,r_k-1}z^{v_k} + \cdots + a_{k,1}z^{(r_k-1)v_k} + z^{r_kv_k} \right)^{\langle d_ke_k,m_{L_1} \rangle} \\
    =& c_{L_1}z^{m_{L_1}} \left(z^{r_kv_k} (z^{-r_kv_k} + a_{k,r_k-1}z^{-(r_k-1)v_k} \cdots + a_{k,1}z^{-v_k} + 1) \right)^{\langle d_ke_k,m_{L_1} \rangle} \\
    =& c_{L_1}z^{m_{L_1} + r_kv_k \langle d_ke_k,m_{L_1} \rangle}\left( 1 + a_{k,1}z^{-v_k} + \cdots + a_{k,r_k-1}z^{-(r_k-1)v_k} + z^{-r_kv_k} \right)^{\langle d_ke_k,m_{L_1} \rangle} \\
    =& c_{L_1}z^{T_{k,+}(m_{L_1})}\left( 1 + a_{k,1}z^{-v_k} + \cdots + a_{k,r_k-1}z^{-(r_k-1)v_k} + z^{-r_kv_k} \right)^{\langle d_ke_k,m_{L_1} \rangle}
\end{align*}
}
and therefore $T_k(\gamma)$ also satisfies the rules for bending at $\mathfrak{d}_k'$ in this case. As such, we've verified that for any broken line $\gamma$ in $\mathfrak{D}_\bfs$, its image $T_k(\gamma)$ is also a broken line in $\mathfrak{D}_{\mu_k(\bfs)} = T_k(\mathfrak{D}_\bfs)$. To see that $T_k$ is, in fact, a bijection, we must verify that $T_k^{-1}(T_k(\gamma)) = \gamma$. First, we define $T_k^{-1}: \mathfrak{D}_{\mu_k(\bfs)} \rightarrow \mathfrak{D}_\bfs$ as
\begin{align*}
    T_k^{-}(m) &= \begin{cases}
                    m & m \in \mathcal{H}_{k,+}' \\
                    m - r_kv_k\langle d_ke_k,m \rangle & m \in \mathcal{H}_{k,-}'
                \end{cases}
\end{align*}
where $\mathcal{H}_{k,+}'$ and $\mathcal{H}_{k,-}'$ are defined relative to $e_k'$. Notice, however, that because mutation in direction $k$ sends $e_k$ to $e_k' = -e_k$, we have $\mathcal{H}_{k,+}' = \mathcal{H}_{k,-}$ and $\mathcal{H}_{k,-}' = \mathcal{H}_{k,+}$. As such, showing that $T_k^{-1}(T_k(\gamma)) = \gamma$ amounts to showing that $T_{k,+}^{-1} \circ T_{k,-} = \textrm{id}$ and $T_{k,-}^{-1} \circ T_{k,+} = \textrm{id}$. The first equality follows trivially from the definitions and we can verify the second by observing that
\begin{align*}
    T_{k,-}^{-1} \circ T_{k,+}(m) &= T_{k,-}^{-1} \left( m + r_kv_k \langle d_ke_k,m \rangle \right) \\
    &= (m + r_kv_k \langle d_ke_k,m \rangle) - r_kv_k \langle d_ke_k, m + r_kv_k \langle d_ke_k,m \rangle \rangle \\
    &= m + r_kv_k \langle d_ke_k,m \rangle - r_kv_k \langle d_ke_k,m \rangle - r_kv_k \langle d_ke_k,r_kv_k\langle d_ke_k,m \rangle \rangle \\
    &= m - r_kv_k \langle d_ke_k,m \rangle \langle d_ke_k,r_kv_k \rangle
\end{align*}
By definition, we know that $v_k = p_1^*(e_k)$ and so $\langle d_ke_k, r_kv_k \rangle = 0$ and the above expression reduces to $T_{k,-}^{-1} \circ T_{k,+}(m) = m$, as desired.
\end{proof}
In fact, such a bijection exists for any pair of diagrams $\mathfrak{D}_{\bfs}$ and $\mathfrak{D}_{\bfs'}$ where $\bfs$ and $\bfs'$ are mutation equivalent. The explicit bijection can be obtained by simply iterating the previous proposition for each step in the mutation sequence between $\bfs$ and $\bfs'$.

\begin{figure}
    \begin{center}
        \begin{tikzpicture}
		\draw (-4.5,0) to (3,0);
		\draw (0,-1) to (0,4.5);
		\draw (0,0) to (1,-1);
		\draw (0,0) to (0.67,-1);
		\draw (0,0) to (0.5,-1);
		\draw (0,0) to (0.33,-1);
		
		\draw[green!60!black] (1.5,1) to (0,4);
		\draw[green!60!black] (0,4) to (-4,0);
		\draw[green!60!black] (-4,0) to (-4,-1);
		\node[green!60!black,scale=0.8] at (1.2,2.8) {$z^{(-1,2)}$};
		\node[green!60!black,scale=0.8] at (-1.9,2.8) {$z^{(-1,-1)}$};
		\node[green!60!black,scale=0.8] at (-4.5,-0.5) {$z^{(0,-1)}$};
		
		\draw[purple] (1.5,1) to (0,2.5);
		\draw[purple] (0,2.5) to (-2.5,0);
		\draw[purple] (-2.5,0) to (-2.5,-1);
		\node[purple,scale=0.7] at (0.5,1.5) {$az^{(-1,1)}$};
		\node[purple,scale=0.8] at (-1.5,1.7) {$z^{(-1,-1)}$};
		\node[purple,scale=0.8] at (-3,-0.5) {$z^{(0,-1)}$};
		
		\draw[blue] (1.5,1) to (0,1);
		\draw[blue] (0,1) to (-1,0);
		\draw[blue] (-1,0) to (-1,-1);
		\node[blue,scale=0.8] at (0.65,0.8) {$az^{(-1,0)}$};
		\node[blue,scale=0.8] at (-0.9,0.7) {$z^{(-1,-1)}$};
		\node[blue,scale=0.8] at (-1.5,-0.5) {$z^{(0,-1)}$};
		
		\draw[orange] (1.5,1) to (0.5,0);
		\draw[orange] (0.5,0) to (0.5,-1);
		\node[orange,scale=0.6] at (1.1,0.2) {$z^{(-1,-1)}$};
		\node[orange,scale=0.6] at (0.9,-0.3) {$z^{(0,-1)}$};
		
		\draw[red] (1.5,1) to (1.5,-1);
		\node[red,scale=0.8] at (2,0.5) {$z^{(0,-1)}$};
		
		\draw[fill] (1.5,1) circle [radius=2pt] ;
		\node at (1.7,1.2) {$Q$};
	\end{tikzpicture}
	\caption{The broken lines for $\vartheta_{(0,-1), Q}$ in $\mathfrak{D}_{\bfs}$ for the generalized cluster algebra and generalized torus seed from Example~\ref{ex:genFixedData}.
	\label{fig:genBrokenLine}}
    \end{center}
\end{figure}
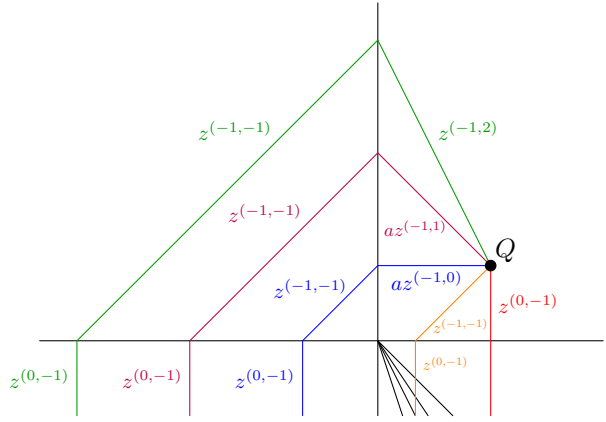

The following proposition is crucial in showing that the generalized cluster variables are, in fact, theta functions.

\begin{prop}\cite[Proposition 3.8]{GHKK}
\label{prop:clusterMonomialPosChamber}
For a point $Q$ in $\textrm{Int}(\mathcal{C}_\bfs^+)$ and a point $m$ in $\mathcal{C}_\bfs^+ \cap M^\circ$, we have
\[ \vartheta_{Q,m} = z^m \]
\end{prop}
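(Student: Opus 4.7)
The plan is to show that (i) the straight ray $\gamma_0: (-\infty, 0] \to M_{\mathbb{R}}$ defined by $\gamma_0(t) = Q - tm$ is an admissible broken line contributing the monomial $z^m$, and (ii) no other broken line with initial slope $m$ and endpoint $Q$ exists. The conclusion $\vartheta_{Q,m} = z^m$ then follows immediately from Definition \ref{def:genThetaFunction}.

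For step (i), since $\langle e_i, m \rangle \geq 0$ for all $i \in I_{\mathrm{uf}}$ (because $m \in \mathcal{C}^+_{\bfs}$) and $\langle e_i, Q \rangle > 0$ (because $Q \in \mathrm{Int}(\mathcal{C}^+_{\bfs})$), one computes $\langle e_i, \gamma_0(t) \rangle = \langle e_i, Q \rangle - t \langle e_i, m \rangle > 0$ for every $t \leq 0$ and $i \in I_{\mathrm{uf}}$. Thus $\gamma_0$ remains inside $\mathrm{Int}(\mathcal{C}^+_{\bfs})$, which by Lemma \ref{lemma:FG_clusterChamber} is the interior of a cluster chamber of $\Delta^+_{\bfs}$ and therefore disjoint from $\mathrm{Supp}(\mathfrak{D}_{\bfs})$. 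Hence $\gamma_0$ has a single domain of linearity with monomial $z^m$ and contributes $z^m$ to $\vartheta_{Q,m}$.

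For step (ii), the plan is to adapt the argument of \cite[Proposition 3.8]{GHKK} to the generalized setting. The essential structural input is that each bend at a wall $\mathfrak{d} \subset n_0^\perp$ (with primitive $n_0 \in N^+$) updates the exponent by $m_{L'} \mapsto m_L = m_{L'} + j\, p_1^*(n_0)$ for some positive integer $j$; this is obtained by combining the direction-of-crossing convention in Definition \ref{def:genPathProduct} with the binomial-like expansion of $f_{\mathfrak{d}}^{\pm \langle n_0', m_{L'} \rangle}$, and still holds in the generalized setting despite $f_{\mathfrak{d}}$ having up to $r_0 + 1$ terms rather than two. Induction on the number of bends then shows that the final slope $m_L$ of any broken line with initial slope $m$ lies in $m + P$, and in $m + J$ as soon as there is at least one bend.

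The main obstacle will be ruling out the bent broken lines. The plan is to trace a hypothetical bent $\gamma$ backward from $Q \in \mathrm{Int}(\mathcal{C}^+_{\bfs})$: since the walls of $\mathfrak{D}_{\bfs}$ adjacent to $\mathcal{C}^+_{\bfs}$ are precisely the initial walls $\{\mathfrak{d}_i\}_{i \in I_{\mathrm{uf}}}$, the last bend must occur on some $e_k^\perp$ with the crossing going from $\mathcal{H}_{k,-}$ into $\mathcal{H}_{k,+}$, which in turn forces $\langle e_k, m_L \rangle < 0$. Combining this with the update formula $m_L = m + \sum_i j_i\, p_1^*(n_i)$ and the initial-slope constraint $\langle e_k, m \rangle \geq 0$, and iterating backward through the earlier bends, one reaches a contradiction exactly as in \cite{GHKK}; the hard part is the sign bookkeeping of $\langle e_k, p_1^*(n_i) \rangle = \{n_i, e_k\}$, which can be of either sign in general and must be handled using the chamber geometry of $\Delta^+_{\bfs}$. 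This forces $\gamma = \gamma_0$ and hence $\vartheta_{Q,m} = z^m$.
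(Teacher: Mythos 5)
Your approach is the same as the paper's: step (i) is precisely the opening observation in \cite[Proposition 3.8]{GHKK}, and for step (ii) you, like the paper, ultimately defer the closing sign analysis to the GHKK argument, noting (as the paper does) that having more than two terms in $f_{\mathfrak{d}}$ does not disturb the bend bookkeeping.

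However, one intermediate claim in your sketch is incorrect as stated: it is not true that ``the last bend must occur on some $e_k^{\perp}$.'' A broken line may cross any wall---including the walls $e_k^{\perp}$ bounding $\mathcal{C}^+_{\bfs}$---without bending, by always choosing the constant summand $1$ in $f_{\mathfrak{d}}^{\pm}$. So the last bend can occur at a wall arbitrarily far from the positive chamber, with normal $n_k \in N^+$ not equal to any $e_i$. What does hold (and is what the GHKK argument leverages) is the following. The final linear segment gives $\gamma(\tau_k) = Q + |\tau_k|\, m_L$, and $\gamma(\tau_k)$ lies in $\mathrm{Supp}(\mathfrak{D}_{\bfs})$, hence not in $\mathrm{Int}(\mathcal{C}^+_{\bfs})$; since $Q \in \mathrm{Int}(\mathcal{C}^+_{\bfs})$ and $\mathcal{C}^+_{\bfs}$ is a convex cone, this already forces $m_L \notin \mathcal{C}^+_{\bfs}$, i.e., some $\langle e_k, m_L \rangle < 0$---without any claim about where the last bend sits. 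Moreover, pairing against the actual wall normal $n_k$ rather than $e_k$ gives $\langle n_k, m_L \rangle < 0$, which, combined with $\{n_k, n_k\} = 0$ and $\langle n_k, m_0 \rangle \geq 0$, yields an immediate contradiction when there is exactly one bend; the multi-bend case needs the full GHKK induction, which you (like the paper) flag rather than reprove. So the overall route is correct, but the ``last bend on $e_k^\perp$'' step should be replaced by the convexity argument above.
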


\begin{proof}
The proof of this proposition is identical to the proof given for the ordinary version in \cite{GHKK}. The fact that the wall-crossing automorphisms now contain additional terms, which offer more options for scattering, can be accounted for in the choice of the normal vectors $n_i$ in that proof.
\end{proof}

One immediate corollary is that the cluster monomials are also theta functions. As with ordinary cluster algebras, this is a highly desirable property for a basis for generalized cluster algebras.
\begin{cor}\cite[Corollary 3.9]{GHKK}
\label{prop:pathinsidechamber}
Let $\sigma \in \Delta_\bfs^+$ be a cluster chamber. Then for any points $Q \in \textrm{Int}(\sigma)$ and $m \in \sigma \cap M^\circ$, we have $\vartheta_{Q,m} = z^m$
\end{cor}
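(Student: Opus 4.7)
The plan is to reduce the statement to Proposition~\ref{prop:clusterMonomialPosChamber} by transporting the scattering diagram data from $\mathfrak{D}_\bfs$ to a mutation-equivalent diagram in which the given cluster chamber $\sigma$ becomes the positive chamber. Concretely, by the construction of $\Delta_\bfs^+$ in Section~\ref{subsec:genChamberStructure}, every cluster chamber has the form $\sigma = T_w^{-1}(\mathcal{C}_{\bfs_w}^+)$ for some vertex $w$ of $\mathfrak{T}_\bfs$, where $T_w = T_{k_\ell} \circ \cdots \circ T_{k_1}$ corresponds to the mutation sequence from the root to $w$ and where, by Theorem~\ref{theorem:mutDiagConsistent}, $T_w(\mathfrak{D}_\bfs) = \mathfrak{D}_{\bfs_w}$. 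Lemma~\ref{lemma:FG_clusterChamber} further identifies $\mathcal{C}_{\bfs_w}^+$ with the Fock-Goncharov cluster chamber of $\bfs_w$, and each $T_{k_i}$ is linear on the half-space that contains the relevant image at that stage, so the composite $T_w$ restricts to a linear isomorphism $\sigma \xrightarrow{\sim} \mathcal{C}_{\bfs_w}^+$.

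First I would fix $Q \in \textrm{Int}(\sigma)$ and $m \in \sigma \cap M^\circ$, and record that $T_w(Q) \in \textrm{Int}(\mathcal{C}_{\bfs_w}^+)$ and $T_w(m) \in \mathcal{C}_{\bfs_w}^+ \cap M^\circ$ (the lattice condition uses that each $T_{k_i}$ preserves $M^\circ$, which is immediate from Definition~\ref{def:T_k}). Next, I would apply Proposition~\ref{prop:brokenLineBijection} iteratively along the mutation path from $\bfs$ to $\bfs_w$; at each step the proposition supplies a bijection between broken lines before and after applying $T_{k_i}$, together with the identity
\[
\vartheta^{\mu_{k_i}(\bfs')}_{T_{k_i}(Q'),\,T_{k_i}(m')} \;=\; T_{k_i,\pm}\bigl(\vartheta^{\bfs'}_{Q',m'}\bigr),
\]
with the sign dictated by which half-space contains $Q'$. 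Composing these along the path yields
\[
\vartheta^{\bfs_w}_{T_w(Q),\,T_w(m)} \;=\; T_w\bigl(\vartheta^{\bfs}_{Q,m}\bigr),
\]
where $T_w$ acts linearly on each exponent because, at every stage of the iteration, the broken line under consideration has its endpoint in the side of the hyperplane on which $T_{k_i}$ is linear.

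Now I would invoke Proposition~\ref{prop:clusterMonomialPosChamber} in the scattering diagram $\mathfrak{D}_{\bfs_w}$: since $T_w(Q) \in \textrm{Int}(\mathcal{C}_{\bfs_w}^+)$ and $T_w(m) \in \mathcal{C}_{\bfs_w}^+ \cap M^\circ$, that proposition yields $\vartheta^{\bfs_w}_{T_w(Q),\,T_w(m)} = z^{T_w(m)}$. Combining with the display above and applying the linear inverse of $T_w$ on $\mathcal{C}_{\bfs_w}^+$, we obtain $\vartheta^{\bfs}_{Q,m} = z^m$, as required.

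I do not expect a serious obstacle: the technical content already sits in Proposition~\ref{prop:brokenLineBijection} (mutation invariance of broken lines, whose generalized proof crucially uses the reciprocity hypothesis $a_{k,i} = a_{k,r_k-i}$) and in Proposition~\ref{prop:clusterMonomialPosChamber} (the positive-chamber base case). The mildly delicate point to verify carefully is that, at every stage of the iteration, the bending described by Proposition~\ref{prop:brokenLineBijection} produces no new monomials, which is ensured because the endpoint always lies in the image of the chamber on which the relevant $T_{k_i}$ is linear, so the $T_{k_i,\pm}$ operator simply relabels exponents linearly.
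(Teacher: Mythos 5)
Your proof is correct and follows essentially the same route as the paper: the paper's own proof consists precisely of invoking Proposition~\ref{prop:brokenLineBijection} (iterated along a mutation path to move $\sigma$ to a positive chamber) followed by Proposition~\ref{prop:clusterMonomialPosChamber}. You have merely spelled out the intermediate bookkeeping — that $T_w$ carries $\sigma$ linearly onto $\mathcal{C}_{\bfs_w}^+$, preserves $M^\circ$, and is an invertible linear map on the relevant chamber (each $T_{k_i,\pm}$ is a unipotent shear, since $\langle d_{k_i} e_{k_i}, v_{k_i}\rangle = 0$) — which is sound.
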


\begin{proof}
The result follows from Propositions \ref{prop:brokenLineBijection} and  \ref{prop:clusterMonomialPosChamber}.
\end{proof}

As in the ordinary case, Theorem~\ref{theorem:genthetafuncomposition} and Corollary~\ref{prop:pathinsidechamber} give us a way to compute theta functions using path-ordered products within the cluster complex.  This was noted also in \cite[Theorem 5.6]{ Reading_Scattering-Fans}, which relies on \cite[Theorem 3.5]{GHKK} and \cite[Corollary 3.9]{GHKK}.

\begin{prop}
\label{prop:thetaFunctionComputation}
Consider $m_0 \in M^{\circ} \backslash \{ 0 \}$ such that there exists a path $\gamma$ from $m_0$ to some point $Q$ in the positive chamber $\mathcal{C}^+$ which passes through finitely many chambers. Then
\[ \vartheta_{Q,m_0} = \mathfrak{p}_{\gamma,\mathfrak{D}}(z^{m_0})\]
\end{prop}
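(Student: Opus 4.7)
The plan is to combine Theorem \ref{theorem:genthetafuncomposition}, which transports theta functions across the scattering diagram via path-ordered products, with Corollary \ref{prop:pathinsidechamber}, which identifies $\vartheta_{Q,m}=z^m$ whenever $Q$ and $m$ lie in a common cluster chamber. The core idea is to shorten $\gamma$ to a path whose starting point lies in the interior of a cluster chamber containing $m_0$, apply Corollary \ref{prop:pathinsidechamber} to evaluate the theta function there, and then transport to $Q$ via the (unchanged) path-ordered product.

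First, I would identify the first chamber $\sigma_0 \in \Delta_{\bfs}^+$ that $\gamma$ enters upon leaving $m_0$. By hypothesis, $\gamma$ terminates in the cluster chamber $\mathcal{C}^+$ and traverses finitely many chambers en route, so $\sigma_0$ is a cluster chamber and $m_0 \in \overline{\sigma_0} \cap M^{\circ}$. I would then pick a point $Q_0$ along $\gamma$ lying in $\textrm{Int}(\sigma_0)$, chosen close enough to $m_0$ that the initial sub-segment of $\gamma$ from $m_0$ to $Q_0$ crosses no walls of $\mathfrak{D}_{\bfs}$. Corollary \ref{prop:pathinsidechamber} then immediately yields $\vartheta_{Q_0, m_0} = z^{m_0}$.

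Next, let $\gamma'$ denote the sub-path of $\gamma$ from $Q_0$ to $Q$. Since the removed segment from $m_0$ to $Q_0$ lies in $\overline{\sigma_0}$ and crosses no walls, the path-ordered products $\mathfrak{p}_{\gamma, \mathfrak{D}}$ and $\mathfrak{p}_{\gamma', \mathfrak{D}}$ coincide. Applying Theorem \ref{theorem:genthetafuncomposition} to $\gamma'$ (after a small perturbation if needed to ensure that $Q_0$ and $Q$ are linearly independent over $\mathbb{Q}$; consistency of $\mathfrak{D}_{\bfs}$ guaranteed by Theorem \ref{thm:gen_consistent} ensures that such a perturbation does not affect $\mathfrak{p}_{\gamma', \mathfrak{D}}$), we obtain
\[ \vartheta_{Q, m_0} = \mathfrak{p}_{\gamma', \mathfrak{D}}(\vartheta_{Q_0, m_0}) = \mathfrak{p}_{\gamma', \mathfrak{D}}(z^{m_0}) = \mathfrak{p}_{\gamma, \mathfrak{D}}(z^{m_0}). \]

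The main subtlety is the initial identification of $\sigma_0$ as a cluster chamber. If $m_0$ lies in the interior of a single cluster chamber, the choice of $\sigma_0$ is unambiguous; if $m_0$ lies on the shared boundary of several cluster chambers, one must simply take $\sigma_0$ to be the one that $\gamma$ initially enters, and the preceding argument carries through verbatim. The hypothesis that $\gamma$ terminates in $\mathcal{C}^+$ and passes through finitely many chambers ensures that the entire trajectory stays within the region covered by $\Delta_\bfs^+$, so no pathology arises outside the cluster complex.
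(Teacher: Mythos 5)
Your proposal is correct and follows essentially the same route as the paper: both proofs pick a point in the first chamber that $\gamma$ enters, invoke Corollary \ref{prop:pathinsidechamber} (the paper actually cites Proposition \ref{prop:clusterMonomialPosChamber}, though Corollary \ref{prop:pathinsidechamber} is the correct reference for a general cluster chamber, so your citation is the more careful one) to evaluate the theta function there, and then use Theorem \ref{theorem:genthetafuncomposition} to transport to $Q$. The only cosmetic difference is how the linear-independence hypothesis of Theorem \ref{theorem:genthetafuncomposition} is met — you perturb an endpoint and note that this does not change the path-ordered product, while the paper fixes $Q'$ and chooses an auxiliary $Q''\in \mathcal{C}^+$ — and both of you rely on the same implicit assumption (not explicitly justified in either argument) that the first chamber $\gamma$ enters is a cluster chamber, which holds in the intended applications of this proposition.
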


\begin{proof}
By assumption, we know that the path $\gamma$ from $m_0$ to $Q$ passes through finitely many chambers. Let $\sigma_1$ denote the first chamber through which $\gamma$ passes and let $Q'$ be a point in $\sigma_1$ which lies on $\gamma$. By Proposition~\ref{prop:clusterMonomialPosChamber}, we know that $\vartheta_{Q',m_0} = z^{m_0}$. Let $Q''$ be a point in $\mathcal{C}_{\mathbf{s}}^{+}$ such that the coordinates of $Q'$ and $Q''$ are linearly independent over $\mathbb{Q}$ and let $\gamma'$ denote a path between $Q'$ and $Q''$ which follows $\gamma$ until within the interior of the positive chamber $\mathcal{C}_{\mathbf{s}}^+$, at which point it goes to $Q''$ rather than $Q$. By Theorem~\ref{theorem:genthetafuncomposition}, we know that $\vartheta_{Q'',m_0} = \mathfrak{p}_{\gamma'}\left(\vartheta_{Q',m_0} \right) = \mathfrak{p}_{\gamma'}\left(z^{m_0} \right)$.

Because both path-ordered products and theta functions are independent of the exact location of their endpoints within the interior of a chamber, we therefore have
\[ \vartheta_{Q'',m_0} = \vartheta_{Q,m_0} = \mathfrak{p}_{\gamma'}(z^{m_0})= \mathfrak{p}_{\gamma}(z^{m_0}) \]
\end{proof}

We can then establish a weaker version of Theorem 4.9 of \cite{GHKK}, without the guaranteed positivity of Laurent polynomial coefficients:

\begin{theorem} \label{theorem:almost_positive}
For generalized fixed data $\Gamma$, which satisfies the injectivity assumption, and a choice of initial generalized torus seed $\bfs$, consider a point $Q \in \mathcal{C}_\bfs^+$ and a point $m \in \sigma \cap M^\circ$ for some chamber $\sigma \in \Delta_\bfs^+$. Then $\vartheta_{Q,m}$ expresses a cluster monomial of $\mathcal{A}$ in $\bfs$ as a Laurent polynomial. Moreover, all cluster monomials can be expressed as $\vartheta_{Q,m}$ for some choice of $Q$ and $m$.
\end{theorem}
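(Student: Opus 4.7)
The strategy is to transport the proof of \cite[Theorem 4.9]{GHKK} to the reciprocal generalized setting, using the machinery established earlier in the paper. The argument has a forward direction (every such $\vartheta_{Q,m}$ is a Laurent polynomial representing a cluster monomial) and a converse (every cluster monomial is realized this way). The key inputs are Corollary~\ref{prop:pathinsidechamber} (trivialization of theta functions inside their chamber), Theorem~\ref{theorem:genthetafuncomposition} (theta functions in different chambers differ by the path-ordered product), Theorem~\ref{theorem:AscatCommuteMutation} (identification $\mathcal{A}_{\textrm{scat},\bfs} \cong \mathcal{A}_\bfs$), and Lemma~\ref{lemma:FG_clusterChamber} (cluster chambers in $\Delta^+_\bfs$ are in canonical bijection with mutation-equivalent seeds).

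\noindent\textbf{Forward direction.} Fix $Q \in \mathcal{C}^+_\bfs$ and $m \in \sigma \cap M^\circ$ for some $\sigma \in \Delta^+_\bfs$. By Lemma~\ref{lemma:FG_clusterChamber}, there is a vertex $w$ of $\mathfrak{T}_\bfs$ and a mutation-equivalent generalized torus seed $\bfs_w = \{(e_i',(a_{i,j}'))\}_{i \in I}$, with dual basis $\{f_i' = d_i^{-1}(e_i')^*\}$, such that the composite tropicalized mutation $T_w$ sends $\sigma$ bijectively onto $\mathcal{C}^+_{\bfs_w}$. Writing $T_w(m) = \sum_i a_i f_i'$, we have $a_i \in \mathbb{Z}_{\geq 0}$, so $z^{T_w(m)}$ is, by Definition~\ref{def:Ascat_atlas} and \cite[Definition 4.8]{GHKK}, a cluster monomial on the seed $\bfs_w$. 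Choosing any $Q' \in \mathrm{Int}(\sigma)$, Corollary~\ref{prop:pathinsidechamber} gives $\vartheta^\bfs_{Q',m} = z^m$. Theorem~\ref{theorem:genthetafuncomposition} then yields
\[
\vartheta^\bfs_{Q,m} \;=\; \mathfrak{p}_{\gamma,\mathfrak{D}_\bfs}\bigl(z^m\bigr)
\]
for any path $\gamma$ from $Q'$ to $Q$; taking $\gamma$ to lie within the support of the cluster complex (permitted since the cluster chambers, joined along their shared codimension-one faces, are connected), $\gamma$ crosses only finitely many walls, so Proposition~\ref{prop:thetaFunctionComputation} applies.

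\noindent\textbf{Identification and converse.} Under the isomorphism $\mathcal{A}_{\textrm{scat},\bfs} \cong \mathcal{A}_\bfs$ of Theorem~\ref{theorem:AscatCommuteMutation}, the wall-crossing gluings between the tori $T_{N^\circ,\sigma}$ and $T_{N^\circ,\mathcal{C}^+_\bfs}$ encode precisely the iterated generalized cluster mutation $\mu_{k_\ell}\circ\cdots\circ\mu_{k_1}$ transforming $\bfs_w$ to $\bfs$ (via Definition~\ref{def:genBiratMutMaps}). Consequently, $\mathfrak{p}_{\gamma,\mathfrak{D}_\bfs}(z^m)$ is exactly the pullback of the regular function $z^{T_w(m)}$ under this mutation composition, i.e., the Laurent expansion of the cluster monomial $z^{T_w(m)}$ in the initial cluster of $\bfs$. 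Laurentness is then guaranteed by the generalized Laurent phenomenon (Theorem~\ref{theorem:genLaurentPhenom}) applied to this expansion. The converse is immediate: any cluster monomial on $\mathcal{A}$ is of the form $z^{m_w}$ with $m_w \in \mathcal{C}^+_{\bfs_w} \cap M^\circ$, and we recover it by taking $m := T_w^{-1}(m_w) \in \sigma \cap M^\circ$ for $\sigma := T_w^{-1}(\mathcal{C}^+_{\bfs_w}) \in \Delta^+_\bfs$ and any $Q \in \mathcal{C}^+_\bfs$.

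\noindent\textbf{Main obstacle.} The principal technical step is verifying that the dictionary between wall crossings and cluster mutation maps operates correctly in the presence of middle terms $a_{k,j}z^{jv_k}$ in the wall functions. In the ordinary binomial setting this dictionary is essentially tautological; in the reciprocal generalized setting it is precisely the reciprocity condition $a_{k,j} = a_{k,r_k-j}$ that makes the slab automorphism $\mathfrak{p}_{\mathfrak{d}_k}$ agree with $\alpha^{-1}\circ \mathfrak{p}_{\mathfrak{d}_k'}$ after reorienting by $T_k$ — this was already the crucial calculation in the proofs of Theorem~\ref{theorem:mutDiagConsistent} and Proposition~\ref{prop:A_scatIso}. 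Thus, while the high-level strategy ports over directly from \cite{GHKK}, care is needed in checking orientation conventions, sign data on wall crossings, and the basis identifications under $T_w$. Importantly, we do not (and cannot) claim positivity of the resulting Laurent polynomial, since the reciprocal generalized wall functions are not themselves products of binomials with positive coefficients (cf.\ Remark~\ref{rk:wecaretoomuch}).
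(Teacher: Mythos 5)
Your argument is correct and faithfully reconstructs the proof of \cite[Theorem 4.9]{GHKK} that the paper cites, correctly assembling Corollary~\ref{prop:pathinsidechamber}, Theorem~\ref{theorem:genthetafuncomposition}, Proposition~\ref{prop:thetaFunctionComputation}, and the identification $\mathcal{A}_{\textrm{scat},\bfs}\cong\mathcal{A}_\bfs$ from Theorem~\ref{theorem:AscatCommuteMutation}, and correctly noting that positivity must be dropped (Remark~\ref{rk:wecaretoomuch}). The paper's own proof is deliberately terse (``the proof of Theorem 4.9 from GHKK holds in the generalized setting, except for the proof of positivity''), so your write-up is an accurate expansion rather than a different route; your diagnosis of where reciprocity enters (the slab-automorphism compatibility from Theorem~\ref{theorem:mutDiagConsistent} and Proposition~\ref{prop:A_scatIso}) also matches the paper's own emphasis.
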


\begin{proof}
The proof of Theorem 4.9 from \cite{GHKK} holds in the generalized setting, except for the proof of positivity. 
\end{proof}

\begin{remark} \label{rk:wecaretoomuch}
The proof of positivity in \cite{GHKK} uses an earlier result, Theorem 1.13, for which we do not currently have a generalized analogue.  In particular, \cite[Theorem 1.13]{GHKK} states (in the case of ordinary cluster algebras) that the scattering diagram $\mathfrak{D}_{\bfs}$ is equivalent to one such that all walls can be expressed as 
$(\mathfrak{d},f_{\mathfrak{d}})$ where $f_{\mathfrak{d}} = (1+z^m)^c$ with $m = p^*(n)$ for some $n \in N^+$ which is normal to $\mathfrak{d}$ and $c \in \mathbb{Z}_{> 0}$. 
In \cite{langmou}, Mou works under the assumption that walls are indeed expressed in this restricted way, and is able to obtain positivity in that setting.
Since we are allowing polynomial exchanges that are not simply binomials, we allow ourselves to work with scattering diagrams that are not necessarily equivalent to one with walls only of this form.
\end{remark}

\subsection{The $g$-vectors in cluster scattering diagrams}
\label{sec:gVectors}

Recall from Section~\ref{subsec:ordDiagrams} that $g$-vectors can be defined as the tropical points of theta functions. This formulation allows for a definition of $g$-vectors on all types of ordinary cluster varieties ($\mathcal{A}_{\textrm{prin}}$, $\mathcal{A}$, and $\mathcal{X}$). In this section, we give the analogous definition in the context of generalized cluster scattering diagrams and generalized cluster varieties.

There is a $T_{N^{\circ}}$ action on $\Aprin$ that can be specified at the level of cocharacter lattices as
\begin{align*}
    N^{\circ} &\rightarrow N^{\circ} \oplus M, \\
    n &\mapsto (n,p^*(n)).
\end{align*}
Under this $T_{N^{\circ}}$ action, each cluster monomial on $\Aprin$ is a $T_{N^{\circ}}$-eigenfunction as stated in Section \ref{subsec:genAprintoAX}. Via this action, choosing a generalized torus seed $\bfs$ determines a canonical extension of each cluster monomial on $\mathcal{A}$ to a cluster monomial on $\Aprin$. This allows us to define the $g$-vector of a cluster monomial of $\mathcal{A}$.

\begin{definition}[Analogue of Definition 5.6 of \cite{GHKK}]
The \emph{$g$-vector} with respect to the generalized torus seed $\bfs$ associated to a cluster monomial of $\mathcal{A}$ is the $T_{N^{\circ}}$-weight of its lift determined by $\bfs$.
\end{definition}

There is another way to characterize $g$-vectors which is extensible to the other types of generalized cluster varieties.

\begin{definition}[Analogue of Definition 5.8 of \cite{GHKK}]
\label{def:GHKK_5-8}
Consider the generalized cluster variety $\mathcal{A} = \bigcup_{\bfs} T_{N^{\circ},\bfs}$. Let $\overline{x}$ denote a cluster monomial of the form $z^{m}$ on a chart $T_{N^{\circ},\bfs'}$ where $\bfs' = \{ (e_i',\mathbf{a}_i') \}$. Identify $\mathcal{A}^{\vee}(\mathbb{R}^T)$ with $M_{\mathbb{R},\bfs'}^{\circ}$ Because $(z^{e_i'})^{T}(m) \leq 0$ for all $i$, $m$ is identified with a point in the Fock-Goncharov cluster chamber $\mathcal{C}_{\bfs'}^{+} \subseteq \mathcal{A}^{\vee}(\mathbb{R}^T)$. Define $\mathbf{g}(\overline{x})$ to be this point in $\mathcal{C}_{\bfs'}^{+} \subseteq \mathcal{A}^{\vee}(\mathbb{R}^T)$.
\end{definition}

\begin{definition}[Analogue of Definition 5.10 of \cite{GHKK}]
\label{def:GHKK_5-10}
Consider a generalized cluster variety $V = \bigcup_{\bfs} T_{L,\bfs}$. Let $f$ be a global monomial on $V$ and $\bfs$ be a generalized torus seed such that $\left. f \right|_{T_{L,\bfs}} \subset V$ is the character $z^m$ for $m \in \textrm{Hom}(L,\mathbb{Z}) = L^*$. Then the \emph{$g$-vector of $f$}, denoted $\mathbf{g}(f)$, is the image of $m$ under the identifications $V^{\vee}(\mathbb{Z}^T) = T_{L^*,\bfs}(\mathbb{Z}^T) = L^*$.
\end{definition}

From Definitions~\ref{def:GHKK_5-8} and \ref{def:GHKK_5-10}, we obtain the following corollary.

\begin{cor}[Analogue of Corollary 5.9 of \cite{GHKK}]
\label{cor:GHKK_5-9}
Let $\bfs$ be a generalized torus seed and $\overline{x}$ be a cluster monomial on the associated generalized $\mathcal{A}$-variety. The seed $\bfs$ gives an identification $\mathcal{A}^{\vee}(\mathbb{R}^T) = M_{\mathbb{R},\bfs}^{\circ}$ under which $\mathbf{g}(\overline{x})$ is the $g$-vector of the cluster monomial $\overline{x}$ with respect to $\bfs$.
\end{cor}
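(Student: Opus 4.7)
The plan is to show by direct comparison that the two definitions produce the same element of $\mathcal{A}^\vee(\mathbb{R}^T)$ under the identifications coming from the choice of seed $\bfs$. First, I would fix a cluster monomial $\overline{x}$ and pick a seed $\bfs' = \{(e_i', \mathbf{a}_i')\}$ such that $\overline{x}|_{T_{N^\circ,\bfs'}} = z^m$ for some $m \in \mathcal{C}^+_{\bfs'} \cap M^\circ$. By Definition~\ref{def:GHKK_5-8}, $\mathbf{g}(\overline{x})$ is the image of this $m$ under the canonical identification $M^\circ_{\mathbb{R},\bfs'} = \mathcal{A}^\vee(\mathbb{R}^T)$. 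To compare with the $g$-vector of $\overline{x}$ with respect to $\bfs$, I would apply the identification $\mathcal{A}^\vee(\mathbb{R}^T) = M^\circ_{\mathbb{R},\bfs}$ determined by $\bfs$, which is given by the Fock–Goncharov tropicalization $\mu^T_{\bfs',\bfs}$ of the birational coordinate change from the $\bfs'$-chart to the $\bfs$-chart on $\mathcal{A}$. The goal then becomes showing that $\mu^T_{\bfs',\bfs}(m) \in M^\circ_{\mathbb{R},\bfs}$ agrees with the $g$-vector of $\overline{x}$ with respect to $\bfs$ in the sense of Definition~\ref{def:GHKK_5-10}.

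Next I would unwind the $T_{N^\circ}$-weight definition. The $T_{N^\circ}$-action on $\Aprin$ is induced by the cocharacter map $n \mapsto (n, p^*(n))$, and on the $\bfs_{\textrm{prin}}$-chart it acts on a character $z^{(m_1,m_2)}$ with $(m_1,m_2) \in M^\circ \oplus N = \widetilde{M}^\circ$ with weight recovered from the first component. Since $\overline{x}$ is a cluster monomial on $\mathcal{A}$ and the seed $\bfs$ determines a canonical $T_{N^\circ}$-equivariant lift $\widetilde{x}$ to $\Aprin$, reading its weight on the $\bfs_{\textrm{prin}}$-chart gives the classical $g$-vector. The key point is that this weight coincides with the image of $m$ under $\mu^T_{\bfs',\bfs}$: the lift $\widetilde{x}$ restricts to the character $z^{(m,0)}$ on the chart $T_{\widetilde{N}^\circ,\bfs'_{\textrm{prin}}}$ (matching the restriction of $\overline{x}$), and the birational mutation maps on $\Aprin$ from Definition~\ref{def:genBiratMutMaps} with their tropicalizations computed in Proposition~\ref{prop:T_k_tropicalization} act on the $M^\circ$-component of $\widetilde{M}^\circ$ by the same rule as $T_k$ acts on $M^\circ_{\mathbb{R}}$. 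Iterating along the mutation sequence from $\bfs'$ to $\bfs$ thus identifies the first component of $\widetilde{x}|_{T_{\widetilde{N}^\circ,\bfs_{\textrm{prin}}}}$ precisely with $\mu^T_{\bfs',\bfs}(m)$.

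The main obstacle will be verifying that the $T_{N^\circ}$-weight of the canonical $\Aprin$-lift transforms under mutation in the generalized setting in the same piecewise-linear way as the tropicalization $T_k$. This amounts to checking a compatibility between the mutation of $\Aprin$ and the $T_{N^\circ}$-torsor structure $\widetilde{p}: \Aprin \to \mathcal{X}$ of Proposition~\ref{prop:GHKK_B2}, and requires that the reciprocity condition $a_{k,s} = a_{k,r_k-s}$ is used in the same way it enters Proposition~\ref{prop:A_scatIso} and Theorem~\ref{theorem:mutDiagConsistent}, so that no extra coefficient terms pollute the weight calculation. Once this compatibility is secured, the two images — $\mathbf{g}(\overline{x})$ identified to $M^\circ_{\mathbb{R},\bfs}$ via $\mu^T_{\bfs',\bfs}$, and the classical $g$-vector read off as the $T_{N^\circ}$-weight on the $\bfs_{\textrm{prin}}$-chart — coincide, proving the corollary.
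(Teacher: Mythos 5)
Your overall route is essentially the one the paper has in mind (the paper simply defers this corollary to \cite{GHKK}, ``using the appropriate analogues of intermediate results''), and the key mechanism you identify — track the canonical $\Aprin$-lift across charts and use that the tropicalized $\Aprin$-mutation $\widetilde T_k$ acts on the $M^\circ$-summand of $\widetilde M^\circ = M^\circ \oplus N$ exactly as $T_k$ acts on $M^\circ$ — is the right one. That last fact is immediate from Definition~\ref{def:T_k} and Proposition~\ref{prop:T_k_tropicalization} applied to $\Gamma_{\mathrm{prin}}$, because $\widetilde v_k = (v_k, e_k)$ and the halfspace condition $\langle (d_k e_k,0),(m,n)\rangle \geq 0$ only sees $m$; so the ``main obstacle'' you flag at the end is not actually an obstacle and needs no separate compatibility check with the $T_{N^\circ}$-torsor structure or the reciprocity condition beyond what Theorem~\ref{theorem:mutDiagConsistent} already supplies.

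Two points in the write-up do need tightening. First, ``the lift $\widetilde x$ restricts to $z^{(m,0)}$ on $T_{\widetilde N^\circ, \bfs'_{\mathrm{prin}}}$, matching the restriction of $\overline x$'' only pins down the $M^\circ$-coordinate; the vanishing of the $N$-coordinate is a separate fact. It holds because the frozen basis vectors $(0, d_i f_i) = (0, e_i^*)$ of $\widetilde N^\circ_{\bfs_{\mathrm{prin}}}$ are fixed by every mutation ($\widetilde\epsilon_{\alpha j} = -\delta_{\alpha j} \leq 0$ kills the $[\,\cdot\,]_+$ term), so the dual basis vectors $\widetilde f_i'$ indexed by the first copy of $I$ always lie in $M^\circ \oplus 0$; you should say this rather than appeal to the restriction alone. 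Second, ``weight recovered from the first component'' is not true in general: the $T_{N^\circ}$-weight of $z^{(m_1,m_2)}$ is $w(m_1,m_2) = m_1 - p^*(m_2)$ (cf.\ Proposition~\ref{prop:GHKK_7-7}), and it is only because the relevant lifts have $m_2 = 0$ on both charts that the weight equals the $M^\circ$-component. Relatedly, ``the first component of $\widetilde x|_{T_{\widetilde N^\circ,\bfs_{\mathrm{prin}}}}$'' does not parse — that restriction is a Laurent polynomial, not a character — and what you mean is the $M^\circ$-component of the $g$-vector of $\widetilde x$ with respect to $\bfs_{\mathrm{prin}}$, which is $(g,0)$ by the Fomin--Zelevinsky form of the lift. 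Once these are cleaned up the argument closes correctly: the tropicalization identifies that first component with $\mu^T_{\bfs',\bfs}(m)$, and comparing with $(g,0)$ gives $\mu^T_{\bfs',\bfs}(m) = g$, which is the statement.
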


We can then extend the definition of a $g$-vector beyond the generalized $\mathcal{A}$-variety to any type of generalized cluster variety.

\begin{definition}[Analogue of Definition 5.10 of \cite{GHKK}]
Consider a generalized cluster variety $V = \bigcup_{\bfs} T_{L,\bfs}$. Let $f$ be a global monomial on $V$ and $\bfs$ be a generalized torus seed such that the restriction $\left. f \right|_{T_{L,\bfs}} \subset V$ is the character $z^m$ for some $m \in \textrm{Hom}(L,\mathbb{Z}) = L^*$. We then define the \emph{$g$-vector of $f$}, denoted $\mathbf{g}(f)$, as the image of $m$ under the identifications $V^{\vee}(\mathbb{Z}^T) = T_{L^*,\bfs}(\mathbb{Z}^T) = L^*$.
\end{definition}

Although it is not \emph{a priori} clear from this definition, we will see in Lemma~\ref{lemma:GHKK_7-10} that this definition of $g$-vector is actually independent of the choice of generalized torus seed $\bfs$. As in the ordinary case, this formulation of $g$-vectors allows for a very quick and elegant proof that the $g$-vectors are sign-coherent.

\begin{theorem}[Analogue of Theorem 5.11 of \cite{GHKK}]
\label{thm:g-coherence}
Consider an initial generalized torus seed $\bfs = \{ (e_i,(a_{i,j})) \}$, which defines the usual set of dual vectors $\{ f_i = d_i^{-1}e_i^* \}$. If $\bfs'$ is a mutation equivalent generalized torus seed, then the $i$-th coordinates of the $g$-vectors for the cluster variables in $\bfs'$ are either all non-negative or all non-positive when expressed in the basis $\{ f_1, \dots, f_n \}$.
\end{theorem}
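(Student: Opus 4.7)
The plan is to follow the same strategy as \cite[Theorem 5.11]{GHKK}, using the simplicial fan structure of the Fock-Goncharov cluster complex established in Theorem~\ref{theorem:GHKK_2-13} together with our definition of $g$-vectors via tropical points. First, I will use the identification $\mathcal{A}^\vee(\mathbb{R}^T) = M^\circ_{\mathbb{R},\bfs}$ induced by the initial seed $\bfs$. Under this identification, Corollary~\ref{cor:GHKK_5-9} tells us that the $g$-vector with respect to $\bfs$ of any cluster monomial coming from a cluster variable in $\bfs'$ is precisely the image in $M^\circ_{\mathbb{R},\bfs}$ of the corresponding point $\mathbf{g}(\overline{x})$ in the Fock-Goncharov cluster chamber $\mathcal{C}^+_{\bfs'} \subseteq \mathcal{A}^\vee(\mathbb{R}^T)$.

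Next, I would invoke Lemma~\ref{lemma:FG_clusterChamber} and Theorem~\ref{theorem:GHKK_2-13}, which together establish that the cluster chambers $\{\mathcal{C}^+_{\bfs''}\}$ form a simplicial fan inside $M^\circ_{\mathbb{R},\bfs}$, with $\mathcal{C}^+_{\bfs'}$ a single maximal simplicial cone of this fan. The crucial point is that the hyperplane $e_i^\perp$ is the support of the initial slab $\mathfrak{d}_i \in \mathfrak{D}_\bfs$, and by Theorem~\ref{theorem:scatDiagramUniqueness} this slab is the unique wall contained in $e_i^\perp$. Hence $e_i^\perp$ lies inside $\mathrm{Supp}(\mathfrak{D}_\bfs)$, so the closure of any connected component of $M_{\mathbb{R},\bfs}^\circ \setminus \mathrm{Supp}(\mathfrak{D}_\bfs)$ — and in particular $\mathcal{C}^+_{\bfs'}$ — is contained entirely in one of the two closed half-spaces $\mathcal{H}_{i,+}$ or $\mathcal{H}_{i,-}$.

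To finish, I would observe that the $i$-th coordinate functional in the basis $\{f_1,\dots,f_n\}$ is precisely the evaluation pairing $\langle d_ie_i,\,\cdot\,\rangle$, because $\langle d_ie_i, f_j\rangle = d_id_j^{-1}\delta_{ij} = \delta_{ij}$. Thus the sign of the $i$-th coordinate of a vector $m \in M^\circ_{\mathbb{R},\bfs}$ agrees with the sign of $\langle e_i, m\rangle$, which is non-negative on all of $\mathcal{H}_{i,+}$ and non-positive on all of $\mathcal{H}_{i,-}$. Since every cluster variable in $\bfs'$ has $g$-vector lying inside $\mathcal{C}^+_{\bfs'}$, and that simplicial cone lies inside a single $\mathcal{H}_{i,\pm}$, the sign-coherence assertion follows.

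The main obstacle is verifying that the slab $\mathfrak{d}_i$ with support $e_i^\perp$ is genuinely a barrier separating cluster chambers — i.e., that $e_i^\perp$ is part of $\mathrm{Supp}(\mathfrak{D}_\bfs)$ and that no cluster chamber $\mathcal{C}^+_{\bfs''}$ can cross it. This relies on the generalized analogue of \cite[Theorem~1.28]{GHKK} (our Theorem~\ref{theorem:scatDiagramUniqueness}), which guarantees that, up to equivalence, the only wall contained in $e_i^\perp$ is the slab itself, and on the injectivity assumption so that Theorem~\ref{theorem:GHKK_2-13} applies. Once these ingredients are in place, the rest is a direct translation of the ordinary sign-coherence argument to our generalized, reciprocal setting.
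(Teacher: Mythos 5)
Your proof is correct and reproduces essentially the same argument that \cite{GHKK} uses for Theorem 5.11, which the paper explicitly states carries over to the generalized setting. The chain of reasoning — identifying the $g$-vectors with points in the cluster chamber $\mathcal{C}^+_{\bfs'}$ via Corollary~\ref{cor:GHKK_5-9}, using the simplicial fan structure from Theorem~\ref{theorem:GHKK_2-13} to conclude that $\mathcal{C}^+_{\bfs'}$ cannot cross the hyperplane $e_i^\perp$ (since the initial slab with that support is a wall of $\mathfrak{D}_\bfs$ and cluster chambers are closures of connected components of the complement), and the pairing computation $\langle d_ie_i, f_j\rangle = \delta_{ij}$ translating this to a statement about coordinate signs — is exactly the intended proof. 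One small note: the invocation of Theorem~\ref{theorem:scatDiagramUniqueness} for the barrier claim is stronger than necessary; you only need that the slab $\mathfrak{d}_i$ \emph{belongs} to $\mathfrak{D}_\bfs$ (so $e_i^\perp \subseteq \mathrm{Supp}(\mathfrak{D}_\bfs)$), not that it is the unique wall there — the non-crossing of cluster chambers then follows from Section~\ref{subsec:genChamberStructure}, which establishes that each $\mathcal{C}^+_{w}$ is the closure of a connected component of $M_{\mathbb{R}} \setminus \mathrm{Supp}(\mathfrak{D}_\bfs)$.
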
 

For both Corollary~\ref{cor:GHKK_5-9} and Theorem~\ref{thm:g-coherence}, the proof given in \cite{GHKK} holds in the generalized setting as well, using the appropriate analogues of intermediate results.

\section{The product structure
of theta functions}

\label{sec:genThetaBasis}

In this section, we develop a number of properties satisfied by theta functions and show how this allows us to construct the $\mathcal{A}$- and $\mathcal{X}$-generalized cluster varieties from $\mathcal{A}_{\textrm{prin}}$.  
To begin, we recall some useful notation from Section~\ref{subsec:ordThetaBasis}.
For a broken line $\gamma$, let $\textrm{Mono}(\gamma) = c(\gamma)z^{F(\gamma)}$ be the monomial attached to its final domain of linearity. Then $c(\gamma)$ denotes the coefficient and $F(\gamma)$ the exponent in that final domain of linearity. Let $I(\gamma)$ and $b(\gamma)$ denote the initial slope and endpoint, respectively, of $\gamma$.

With this notation, we can then define structure constants for the multiplication of theta functions.

\begin{prop}[Analogue of Definition-Lemma 6.2 in \cite{GHKK}]
\label{prop:multiplication_constants_gen}
Let $p_1,p_2,$ and $q$ be points in $\widetilde{M}_{\bfs}^{\circ}$ and $z$ be a generic point in $\widetilde{M}_{\mathbb{R},\bfs}^{\circ}$. There are at most finitely many pairs of broken lines $\gamma_1,\gamma_2$ such that $\gamma_i$ has initial slope $p_i$, both broken lines have endpoint $z$, and $F(\gamma_1) + F(\gamma_2) = q$. Let
\[ a_{z}(p_1,p_2,q) := \sum_{\substack{ (\gamma_1,\gamma_2) \\ I(\gamma_i) = p_i, b(\gamma_i) = z \\ F(\gamma_1) + F(\gamma_2) = q}} c(\gamma_1)c(\gamma_2) \]
The $\alpha_z(p_1,p_2,q)$ are linear combinations of the formal variables $\{a_{i,j} \}$.
\end{prop}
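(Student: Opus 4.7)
The proof plan is in two steps: establishing finiteness of the admissible pairs of broken lines, then identifying each resulting contribution as a polynomial in the formal variables $\{a_{i,j}\}$.

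For finiteness, I would adapt the strategy of \cite[Definition-Lemma 6.2]{GHKK} to the generalized setting. Let $d: N \to \mathbb{Z}$ be the linear function with $d|_{N^+} > 0$ fixed in Section~\ref{subsec:genScatDiagConstruction}, and note that under the injectivity assumption $d$ descends to a strictly positive function on $p_1^*(N^+) \subset M^{\circ}$. By Definition~\ref{def:walls}, any wall function in $\mathfrak{D}_{\bfs}$ has the form $1 + \sum_{j \geq 1} c_j z^{j p_1^*(n_0)}$. Thus at each bend of a broken line $\gamma$ across a wall $(\mathfrak{d}, f_{\mathfrak{d}}) \subset n_0^{\perp}$, the monomial exponent increases by a strictly positive integer multiple of $p_1^*(n_0)$, so the value of $d$ on the exponent strictly increases across each bend. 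Since both initial slopes $I(\gamma_i) = p_i$ and the sum $F(\gamma_1) + F(\gamma_2) = q$ are fixed, the total number of bends across the two broken lines is bounded in terms of $d(q) - d(p_1) - d(p_2)$ and the minimum positive value of $d$ on $p_1^*(N^+)$. Combined with the condition in Definition~\ref{def:genScatteringDiagram} that only finitely many walls are non-trivial modulo each $\mathfrak{m}^{k+1}$, and the genericity of $z$, one concludes that only finitely many admissible pairs exist.

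For the polynomial structure, I would observe that each coefficient $c(\gamma)$ is a finite product of coefficients gathered at successive bends. At a bend across $(\mathfrak{d}, f_{\mathfrak{d}})$, the new coefficient appears among the coefficients of the expansion $f_{\mathfrak{d}}^{\pm \langle n_0', m \rangle}$. By Theorem~\ref{thm:gen_consistent}, every wall function of $\mathfrak{D}_{\bfs}$ lies in $\widehat{R[P]}$ with $R = \Bbbk[a_{i,j}]$, so its coefficients are polynomials in $\{a_{i,j}\}$, and the same holds for all integer powers of $f_{\mathfrak{d}}$. Therefore $c(\gamma_1) c(\gamma_2) \in R$, and the finite sum $\alpha_z(p_1, p_2, q)$ lies in $R$, i.e.\ is a $\Bbbk$-linear combination of monomials in the $a_{i,j}$.

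The main obstacle is the bookkeeping for finiteness. In the ordinary case a bend produces a single binomial expansion, making the strict $d$-increase transparent. In our generalized setting, each wall crossing introduces multiple terms from the polynomial expansion of $f_{\mathfrak{d}}^{\pm \langle n_0', m \rangle}$, each bending the broken line to a different outgoing exponent. Fortunately, since every non-trivial wall-crossing term still contributes at least one copy of $p_1^*(n_0)$, the uniform positive lower bound on the $d$-increase per bend is preserved, and the argument goes through without further modification.
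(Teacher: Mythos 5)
Your proposal is correct and follows the same high-level route as the paper's own proof, which simply remarks that the GHKK argument for Definition-Lemma 6.2 carries over verbatim once one replaces $\Bbbk[P]$ by $R[P]$. Your version usefully spells out why the $d$-degree still strictly increases across each bend even when the wall function is a polynomial rather than a binomial: every nonzero term beyond the constant contributes at least one factor of $z^{p_1^*(n_0)}$, so the lower bound on the per-bend increase in $d$ is preserved, which is exactly the point the paper leaves implicit. One small remark: you conclude that $\alpha_z(p_1,p_2,q) \in R = \Bbbk[a_{i,j}]$, i.e.\ that each structure constant is a polynomial (not merely linear) in the $a_{i,j}$, since a single broken line may bend multiple times and accumulate a product of these formal variables. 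This is the correct conclusion; the phrasing ``linear combinations of the formal variables'' in the statement should really be read as ``lie in $R$,'' and your derivation makes that precise.
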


\begin{proof}
The proof of this proposition is identical to that given in \cite{GHKK}, except that
in the generalized setting, the broken line monomials lie in $R[P]$ instead of $\Bbbk[P]$.
However, this does not change the essence of the proof.
\end{proof}

We then obtain the following decomposition of products of theta functions:

\begin{lemma}[Analogue of Proposition 6.4(3) of \cite{GHKK}]
\label{lemma:structureConstants}
Let $p_1,p_2,$ and $q$ be points in $\widetilde{M}_{\bfs}^{\circ}$ and $z$ be a generic point in $\widetilde{M}_{\mathbb{R},\bfs}^{\circ}$. Then
\[ \vartheta_{p_1} \cdot \vartheta_{p_2} = \sum_{q \in \widetilde{M}_{\bfs}^{\circ}} \alpha_{z(q)}(p_1,p_2,q)\vartheta_q \]
for $z(q)$ sufficiently close to $q$. When $z$ is sufficiently close to $q$, $a_z(p_1,p_2,q)$ is independent of the choice of $z$ and we can simply write $\alpha(p_1,p_2,q) := a_z(p_1,p_2,q)$.
\end{lemma}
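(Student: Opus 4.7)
The proof adapts Proposition 6.4(3) of \cite{GHKK} to the reciprocal generalized setting, and all of the key ingredients are already in place: Proposition \ref{prop:multiplication_constants_gen} ensures finiteness of the sum defining $a_z(p_1,p_2,q)$, Theorem \ref{theorem:genthetafuncomposition} gives compatibility of theta functions at different endpoints via path-ordered products, and Corollary \ref{prop:pathinsidechamber} gives $\vartheta_{q,z}=z^q$ whenever $z$ lies in the interior of a cluster chamber containing $q$. Further, the reciprocity condition $a_{k,i}=a_{k,r_k-i}$, exploited in Proposition \ref{prop:brokenLineBijection}, is what makes the wall-crossing identities needed below go through.

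The main steps are as follows. First, expand at a common basepoint $z$ using Definition \ref{def:genThetaFunction} and collect by total exponent:
\[
\vartheta_{p_1,z}\cdot\vartheta_{p_2,z}
=\sum_{(\gamma_1,\gamma_2)} c(\gamma_1)c(\gamma_2)\, z^{F(\gamma_1)+F(\gamma_2)}
=\sum_{q} a_z(p_1,p_2,q)\, z^q,
\]
with the outer sum finite by Proposition \ref{prop:multiplication_constants_gen}. Second, establish that for $z$ close enough to a fixed $q$ the constant $a_z(p_1,p_2,q)$ is independent of $z$: if $z$ and $z'$ lie in a common chamber near $q$, the sets of contributing pairs of broken lines are identified by a small homotopy; if they are separated by a wall $\mathfrak{d}$ through $q$, the wall-crossing change in each $\vartheta_{p_i,z}$ matches the wall-crossing change in $\sum_q a_z(p_1,p_2,q)z^q$ at the coefficient of $z^q$, by Theorem \ref{theorem:genthetafuncomposition} applied to each factor. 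This yields a well-defined $\alpha(p_1,p_2,q):=a_{z(q)}(p_1,p_2,q)$. Third, for each $q$ choose $z(q)$ in the same cluster chamber as $q$, so that Corollary \ref{prop:pathinsidechamber} gives $z^q=\vartheta_{q,z(q)}$ and hence $a_{z(q)}(p_1,p_2,q)\,z^q=\alpha(p_1,p_2,q)\,\vartheta_{q,z(q)}$. Transporting each summand from its local basepoint $z(q)$ to any fixed common basepoint via the path-ordered products of Theorem \ref{theorem:genthetafuncomposition} produces the claimed identity of theta functions.

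The main obstacle is the stabilization step in the second part. In \cite{GHKK} the parallel argument is cleanly handled by positivity of every wall-crossing automorphism, which is not available to us because the slab automorphisms are polynomial of degree $r_k$ rather than products of binomials with positive coefficients (see Remark \ref{rk:wecaretoomuch}). To compensate, we rely on signed cancellation coming from the consistency of $\mathfrak{D}_{\bfs}$ guaranteed by Theorem \ref{thm:gen_consistent}, together with the explicit palindromic form of the wall functions coming from reciprocity, which ensures that the slab wall-crossing is an involution up to the linear change of variables $T_k$ (as shown in the proof of Proposition \ref{prop:brokenLineBijection}).
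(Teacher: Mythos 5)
The paper's own proof is a one-line observation: the GHKK argument for Proposition~6.4(3) carries over verbatim because no part of it depends on the wall-crossing automorphisms being binomial. Your high-level structure tracks that argument, but there are two misconceptions worth flagging.

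First, the worry in your last paragraph is a red herring. The GHKK proof of Proposition~6.4(3) does not invoke positivity of wall-crossing automorphisms; positivity enters GHKK later (and enters this paper in the analogue of Proposition~7.1, namely Proposition~\ref{prop:universalLaurentPolynomial}, where the authors \emph{do} have to replace the positivity argument with one based on the $a_{i,j}$ being formal variables). For this lemma the paper explicitly notes that the original argument ``does not require that the wall-crossing automorphisms are strictly binomial,'' so there is no obstacle to compensate for. Invoking reciprocity and $T_k$-involution here muddies what is actually a straightforward transplant.

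Second, and more substantively, step~3 of your argument contains a genuine gap. You invoke Corollary~\ref{prop:pathinsidechamber} to turn each $z^q$ into $\vartheta_{q,z(q)}$, but that corollary only applies when $q$ lies in some cluster chamber $\sigma \in \Delta_\bfs^+$ and $z(q) \in \mathrm{Int}(\sigma)$. The lemma you are proving ranges over \emph{all} $q \in \widetilde{M}_\bfs^\circ$, not just those in the cluster complex, and for general $q$ one does not have $\vartheta_{q,z(q)} = z^q$. The actual mechanism that converts the monomial expansion $\sum_q a_{z}(p_1,p_2,q)\,z^q$ into a theta expansion is the unitriangularity of the monomial-to-theta transition with respect to the ideal filtration -- i.e., $\vartheta_q = z^q + (\text{higher-order terms in } I_\bfs)$ -- which in this paper is the content of Proposition~\ref{prop:vectorSpaceBasis} (and is packaged as a linear-algebraic basis-change argument in the completed ring $\widehat{\textrm{up}(\overline{\mathcal{A}}_{\textrm{prin}}^{\bfs})} \otimes_{R[N_\bfs^+]} R[N]$). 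Your step~3 as written only covers the $q \in \Delta^+(\mathbb{Z})$ case.
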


\begin{proof}
The argument given in \cite{GHKK} for the analogous result for ordinary cluster scattering diagrams holds in our generalized setting. No portion of that argument assumes that the wall-crossing automorphisms are binomials.
\end{proof}

\subsection{The product structure of theta functions for $\mathcal{A}_{\textrm{prin}}$}
\label{subsec:genThetaBasis}

In this section, we will describe the product structure of theta functions on $\Aprin$ and show that the collection of theta functions gives a topological basis for a topological $R$-algebra completion of the upper cluster algebra of $\Aprin$.
Later, in Section~\ref{subsec:genAprintoAX}, we will descend to the $\mathcal{A}$ and $\mathcal{X}$ cases by using the fact that the $\mathcal{A}$-variety appears as a fiber of $\mathcal{A}_{\textrm{prin}} \rightarrow T_{M}$ and the $\mathcal{X}$-variety appears as the quotient $\mathcal{A}_{\textrm{prin}}/T_{N^{\circ}}$.

We wish to associate a formal summation $\sum_{q \in \Aprin^{\vee}(\mathbb{Z}^T)}\alpha(g)(q)\vartheta_q$, with coefficients $\alpha(g)(q) \in \Bbbk[a_{i,j}]$, to each universal Laurent polynomial $g$ on $\Aprin$. In doing so, we follow the structure of Section 6 of \cite{GHKK} for the ordinary case, with modifications when necessary to accommodate our generalized setting. We begin by giving such a summation for a fixed choice of generalized torus seed $\bfs$, then show that the coefficients $\alpha(g)(q)$ are, in fact, independent of the choice of generalized torus seed.

Fix a choice of generalized torus seed $\bfs = \{ (e_i,\{ a_{i,j} \}) \}$. Recall that in the generalized setting, we are working over the ground ring $R = \Bbbk [a_{i,j}]$ rather than over $\Bbbk$. Let $y_i := z^{e_i}$ and $I_{\bfs} = (y_1,\dots,y_n) \subset R[y_1,\dots,y_n]$. 
When principal coefficients are taken as the frozen variables, there is a partial compactification
$ \mathcal{A}_{\textrm{prin}}^{\bfs} \subseteq \overline{\mathcal{A}}_{\textrm{prin}}^{\bfs}$ constructed in the same manner as in \cite[Construction B.9]{GHKK}.

Then, set
\begin{align*}
    \mathbb{A}^{n}_{(y_1,\dots,y_n),R,k} &:= \textrm{Spec}~R[y_1,\dots,y_n]/I_{\bfs}^{k+1}, \\
    \overline{\mathcal{A}}_{\textrm{prin},k}^{\bfs} &:= \overline{\mathcal{A}}_{\textrm{prin}}^{\bfs} \times_{\mathbb{A}^{n}_{y_1,\dots,y_n}} \mathbb{A}^{n}_{(y_1,\dots,y_n),k}.
\end{align*}

The map $\pi: \mathcal{A}_{\textrm{prin}} \rightarrow T_M$ induces a map $\pi:\overline{\mathcal{A}}_{\textrm{prin}}^{\bfs} \rightarrow \mathbb{A}^{n}_{y_1,\dots,y_n}$, which then subsequently induces a map $\pi: \overline{\mathcal{A}}_{\textrm{prin},k}^{\bfs} \rightarrow \mathbb{A}_{(y_1,\dots,y_n),R,k}^{n}$. Let
\[ \widehat{\textrm{up}\left(\overline{\mathcal{A}}_{\textrm{prin}}^{\bfs} \right)} := \lim_{\longleftarrow} \textrm{up}\left( \overline{\mathcal{A}}^{\bfs}_{\textrm{prin},k} \right) \]

For any $g \in \textrm{up}(\Aprin)$, we have $z^ng \in \textrm{up}(\overline{\mathcal{A}}_{\textrm{prin}^{\bfs}})$ where $z^n$ is some monomial in the $y_i$. This fact induces the inclusion
\begin{equation}
    \label{eq:upInclusion}
    \textrm{up}(\Aprin) \subset \widehat{\textrm{up}(\overline{\mathcal{A}}_{\textrm{prin}}^{\bfs})} \otimes_{R[N^{+}_{\bfs}]} R[N]
\end{equation}
where $N_{\bfs}^{+} \subset N$ denotes the monoid generated by $e_1, \dots, e_n$. Let $\pi_N : \widetilde{M}^{\circ} = M^{\circ} \oplus N \rightarrow N$ be the projection map and define $\widetilde{M}_{\bfs}^{\circ,+} := \pi_N^{-1}(N_{\bfs}^{+})$. Let $P_{\bfs} \subset \widetilde{M}_{\bfs}^{\circ}$ be the monoid generated by $(v_1,e_1), \dots, (v_n,e_n)$.

We begin by establishing the following proposition, which defines canonical functions $\vartheta_{q}$ on $\textrm{up}(\overline{\mathcal{A}}_{\textrm{prin}}^{\bfs}) \otimes_{R[N^{+}]} R[N]$ for $q \in \widetilde{M}_{\bfs}^{\circ,+}$ and then shows that two particular collections of such $\vartheta_{q}$ form bases for $\textrm{up}(\overline{\mathcal{A}}_{\textrm{prin},k}^{\bfs})$. For each $\sigma \in \Delta^{+}_{\mathbf{s}}$, let $Q_{\sigma}$ denote a generic point in $\sigma$.

\begin{prop}[Analogue of Proposition 6.4(1,2) of \cite{GHKK}]
\label{prop:vectorSpaceBasis}
\
\begin{enumerate}
    \item Given a point $q \in \widetilde{M_{\bfs}}^{\circ,+}$, the function $\vartheta_{Q_{\sigma},q}$ is a regular function on $V_{\mathbf{s},\sigma,k}$. As $\sigma$ varies, the $\vartheta_{Q_{\sigma},q}$ glue to yield a canonically defined function $\vartheta_{q,k} \in \textrm{up}\left(\overline{\mathcal{A}}_{\textrm{prin},k}^{\bfs} \right)$.
    \item For $q \in \mathcal{A}_{\textrm{prin}}^{\vee}$ and $k' \geq k$, $\left.\vartheta_{q,k'}\right|_{\mathcal{A}_{\textrm{prin},k}^{\bfs}} = \vartheta_{q,k}$. Hence, the collection $\{ \vartheta_{q,k}\}_{k \geq 0}$ canonically defines a function
    \[ \vartheta_q \in \widehat{\textrm{up}\left(\overline{\mathcal{A}}_{\textrm{prin}}^{\bfs} \right)} \otimes_{R[N_{\bfs}^{+}]} R[N]. \]
    Let $\textrm{can}(\mathcal{A}_{\textrm{prin}})$ denote the $R$-vector space 
    \[ \bigoplus_{q \in \mathcal{A}_{\textrm{prin}}^{\vee}(\mathbb{Z}^T)} R \cdot \vartheta_q. \]
    The $\vartheta_q$ are linearly independent, so there is a canonical inclusion of $R$-vector spaces
    \[ \textrm{can}(\mathcal{A}_{\textrm{prin}}) \subset \widehat{\textrm{up}\left(\overline{\mathcal{A}}_{\textrm{prin}}^{\bfs} \right)} \otimes_{R[N_{\bfs}^{+}]} R[N]. \]
\end{enumerate}
\end{prop}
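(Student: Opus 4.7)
The plan is to adapt the proof of GHKK Proposition 6.4(1,2) to our generalized setting, using in particular the generalized analogues already developed in the excerpt: Theorem \ref{theorem:genthetafuncomposition}, Proposition \ref{prop:brokenLineBijection}, Corollary \ref{prop:pathinsidechamber}, and the wall structure of $\mathfrak{D}_{\bfs}^{\mathcal{A}_{\textrm{prin}}}$. Throughout, we work with the partial compactification $\overline{\mathcal{A}}_{\textrm{prin}}^{\bfs}$ over the ground ring $R = \Bbbk[a_{i,j}]$, and track the additional formal variables $a_{i,j}$ in broken line monomials.

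First I would verify part (1) for a fixed chamber $\sigma$. Given $q \in \widetilde{M}_{\bfs}^{\circ,+}$ and a generic $Q_{\sigma} \in \sigma$, I want to show that $\vartheta_{Q_{\sigma}, q}$ truncates to a finite sum of monomials modulo $I_{\bfs}^{k+1}$, and that this truncation is regular on $V_{\bfs,\sigma,k}$. Since each wall $(\mathfrak{d}, f_{\mathfrak{d}})$ in $\mathfrak{D}_{\bfs}^{\mathcal{A}_{\textrm{prin}}}$ has $f_{\mathfrak{d}} \equiv 1 \bmod \mathfrak{m}$ and each bend of a broken line alters the exponent by an element of $p^*(N^+)$, the projection $\pi_N$ of the exponent of the current monomial strictly increases in the $N^+$-order with each bend (possibly by a variable amount, because of the polynomial wall-crossing automorphisms — but always by at least one primitive vector in $N^+$). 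Since $q \in \widetilde{M}_{\bfs}^{\circ,+}$ has fixed $\pi_N(q)$, and $q$ is the sum of the initial slope and all bending contributions, only finitely many broken lines can contribute modulo $I_{\bfs}^{k+1}$. This finiteness plus the fact that each contributing monomial lies in $R[P_{\bfs}]$ gives regularity on the chart attached to $\sigma$ in $V_{\bfs,\sigma,k}$.

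Next I would glue the local sections $\vartheta_{Q_{\sigma},q}$ into a global regular function. For adjacent chambers $\sigma, \sigma'$ sharing a codimension-one face, pick a short path $\gamma$ from $Q_{\sigma}$ to $Q_{\sigma'}$ crossing only the shared wall. By Theorem \ref{theorem:genthetafuncomposition} we have $\vartheta_{Q_{\sigma'}, q} = \mathfrak{p}_{\gamma, \mathfrak{D}_{\bfs}^{\mathcal{A}_{\textrm{prin}}}}(\vartheta_{Q_{\sigma}, q})$, and this path-ordered product is precisely the transition function used in the gluing of Definition \ref{def:Ascat_atlas} and Theorem \ref{theorem:AscatCommuteMutation}. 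Since $\mathfrak{D}_{\bfs}^{\mathcal{A}_{\textrm{prin}}}$ is consistent (Theorem \ref{thm:gen_consistent}), this gluing is independent of path, so the $\vartheta_{Q_{\sigma}, q}$ assemble into a canonical element $\vartheta_{q,k} \in \textrm{up}(\overline{\mathcal{A}}_{\textrm{prin},k}^{\bfs})$, proving (1). The main technical obstacle here is ensuring that the broken-line bend contributions, which in our setting involve the formal variables $a_{i,j}$ in a nontrivial polynomial way, still live in $R[P_{\bfs}]$ and satisfy the correct regularity bound on the partial compactification; this is where the hypothesis $q \in \widetilde{M}_{\bfs}^{\circ,+}$ is essential, since it forces $\pi_N(F(\gamma)) \in N_{\bfs}^{+}$ for every contributing broken line.

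For part (2), compatibility under the truncation maps $V_{\bfs, \sigma, k'} \to V_{\bfs, \sigma, k}$ follows because the same broken lines contribute modulo both $I_{\bfs}^{k+1}$ and $I_{\bfs}^{k'+1}$, with the latter just keeping more terms; the restriction map simply kills the higher-order broken-line monomials. Passing to the inverse limit yields $\vartheta_q \in \widehat{\textrm{up}(\overline{\mathcal{A}}_{\textrm{prin}}^{\bfs})}$, and tensoring over $R[N_{\bfs}^{+}]$ with $R[N]$ incorporates the inversion of the $y_i$ to recover functions on $\mathcal{A}_{\textrm{prin}}^{\bfs}$ rather than on the compactification. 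Finally, for linear independence, I would use the fact that among all broken lines contributing to $\vartheta_{q,k}$, there is a unique one whose final exponent equals $q$ itself (the straight broken line with initial slope $q$ going directly to $Q_{\sigma}$, contributing $z^q$ with coefficient $1$), while all other contributions have strictly larger $\pi_N$-content. Hence the leading terms distinguish the $\vartheta_q$, giving an $R$-linear injection $\textrm{can}(\mathcal{A}_{\textrm{prin}}) \hookrightarrow \widehat{\textrm{up}(\overline{\mathcal{A}}_{\textrm{prin}}^{\bfs})} \otimes_{R[N_{\bfs}^{+}]} R[N]$. The hardest part of the whole proof is the finiteness-and-regularity check described in paragraph two; once that is in hand, the gluing, compatibility, and independence arguments follow the GHKK template with minimal modification.
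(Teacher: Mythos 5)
Your proposal is correct and follows the same route as the paper, which simply notes that the GHKK argument carries over; you have essentially unpacked that citation, checking that the finiteness-of-broken-lines, gluing via consistency, compatibility under truncation, and leading-term linear-independence steps go through over the ground ring $R = \Bbbk[a_{i,j}]$ with polynomial wall-crossing functions. The key observation you correctly isolate — that each bend still strictly increases the $\pi_N$-degree by at least one primitive element of $N^+$, so the GHKK truncation argument survives the passage from binomial to polynomial wall functions — is precisely why the paper's one-line appeal to GHKK is justified.
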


\begin{proof}
The proof given in \cite{GHKK} for the ordinary case holds in our setting.
\end{proof}

In order to associate a formal summation $\sum \alpha(g)(q)\vartheta_q$ to each universal Laurent polynomial $g$ on $\Aprin$, we will first associate a formal summation $\sum \alpha_{\bfs}(g)(q)\vartheta_q$ which depends on the choice of generalized torus seed $\bfs$. To do so, we must first define the function $\alpha_s$.

\begin{prop}[Analogue of Proposition 6.5 of \cite{GHKK}]
\label{prop:unique_alpha_s}
There is a unique inclusion
\[\alpha_{\bfs} : \widehat{\textrm{up}\left(\overline{\A}_{\textrm{prin}}^{\bfs} \right)} \otimes_{R[N_{\bfs}^{+}]} R[N] \hookrightarrow \textrm{Hom}_{\textrm{sets}}\left(\Aprin^{\vee}(\mathbb{Z}^T) = \widetilde{M}_{\bfs}^{\circ}, R \right) \]
given by the map $g \mapsto (q \mapsto \alpha_{\bfs}(g)(q))$. For all $n \in N$, $\alpha_{\bfs}(z^n \cdot g)(q+n) = \alpha_{\bfs}(g)(q)$.
\end{prop}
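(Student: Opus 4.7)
The plan is to construct $\alpha_{\bfs}$ in two stages: first on the completion $\widehat{\textrm{up}(\overline{\A}^{\bfs}_{\textrm{prin}})}$ by reading off theta coefficients, and then to extend to the localization by forcing the stated translation compatibility. Proposition \ref{prop:vectorSpaceBasis} gives us canonical theta functions $\vartheta_{q,k} \in \textrm{up}(\overline{\A}^{\bfs}_{\textrm{prin},k})$, and the argument in \cite{GHKK} for the ordinary case shows that at each finite level $k$ these $\vartheta_{q,k}$ (for $q$ in a finite subset of $\widetilde{M}_{\bfs}^{\circ,+}$) form an $R$-module basis of $\textrm{up}(\overline{\A}^{\bfs}_{\textrm{prin},k})$. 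Passing to the inverse limit, every $g \in \widehat{\textrm{up}(\overline{\A}^{\bfs}_{\textrm{prin}})}$ admits a unique (topologically convergent) expansion $g = \sum_{q \in \widetilde{M}_{\bfs}^{\circ,+}} c_q(g)\, \vartheta_q$ with $c_q(g) \in R$. We define $\alpha_{\bfs}(g)(q) := c_q(g)$ for $q \in \widetilde{M}_{\bfs}^{\circ,+}$ and $\alpha_{\bfs}(g)(q) := 0$ otherwise.

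To extend $\alpha_{\bfs}$ across the localization by $R[N_{\bfs}^+] \hookrightarrow R[N]$, I would use that every element of $\widehat{\textrm{up}(\overline{\A}^{\bfs}_{\textrm{prin}})} \otimes_{R[N_{\bfs}^+]} R[N]$ can be written as $z^{-n} \cdot h$ for some $n \in N_{\bfs}^+$ and $h \in \widehat{\textrm{up}(\overline{\A}^{\bfs}_{\textrm{prin}})}$, and declare $\alpha_{\bfs}(z^{-n} h)(q) := \alpha_{\bfs}(h)(q+n)$. The key fact needed for well-definedness is the translation identity
\[ z^n \cdot \vartheta_q = \vartheta_{q+n} \qquad \text{for all } n \in N,\ q \in \widetilde{M}_{\bfs}^{\circ}, \]
under the inclusion $N \hookrightarrow \widetilde{M}^{\circ}$ via $n \mapsto (0,n)$. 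This identity follows from Lemma \ref{lemma:structureConstants} and the broken line definition: since $(0,n)$ lies in a frozen direction and all walls of $\mathfrak{D}^{\Aprin}_{\bfs}$ are supported on hyperplanes normal to unfrozen $e_i$, the only broken line with initial slope $(0,n)$ is the straight segment, and combining a straight broken line $\gamma_1$ with any broken line $\gamma_2$ of initial slope $q$ yields exactly one pair contributing to the structure constant $\alpha((0,n), q, q+n) = 1$. Granting this, if $z^{-n}h = z^{-n'}h'$ then $z^{n'}h = z^n h'$ in the completion, and applying the translation identity twice gives $\alpha_{\bfs}(h)(q+n) = \alpha_{\bfs}(h')(q+n')$, so the extension is unambiguous.

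Injectivity follows because $\alpha_{\bfs}(z^{-n}h) \equiv 0$ forces $\alpha_{\bfs}(h)(q) = 0$ for all $q$, hence $h = 0$ by the basis property on $\widehat{\textrm{up}(\overline{\A}^{\bfs}_{\textrm{prin}})}$. For uniqueness, observe that any $R$-linear map satisfying $\alpha_{\bfs}(z^n g)(q+n) = \alpha_{\bfs}(g)(q)$ and factoring through the canonical inclusion $\textrm{can}(\Aprin) \hookrightarrow \widehat{\textrm{up}(\overline{\A}^{\bfs}_{\textrm{prin}})} \otimes R[N]$ must satisfy $\alpha_{\bfs}(\vartheta_q)(q') = \delta_{q,q'}$; the compatibility condition then pins down its values on every element of the localization.

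The main obstacle is the translation identity $z^n \cdot \vartheta_q = \vartheta_{q+n}$, which is the only place the argument is sensitive to the generalized setting: although the walls of $\mathfrak{D}^{\Aprin}_{\bfs}$ now carry polynomial wall functions with coefficients in $R = \Bbbk[a_{i,j}]$, the support of each wall still lies in a hyperplane normal to an unfrozen generator, so the broken line count reduces to the ordinary case and the identity survives verbatim. Once this step is in place, the remainder of the construction proceeds exactly as in \cite[Proposition 6.5]{GHKK}.
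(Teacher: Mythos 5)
Your proposal is correct and follows essentially the same route as the paper: the paper's proof simply invokes Proposition~\ref{prop:vectorSpaceBasis} and Lemma~\ref{lemma:structureConstants} to conclude that every $g \in \widehat{\textrm{up}(\overline{\A}_{\textrm{prin}}^{\bfs})}$ has a unique convergent theta-expansion and declares that this "immediately implies" the inclusion, implicitly deferring the extension across the localization and the translation compatibility to the standard argument in \cite[Proposition 6.5]{GHKK}. You fill in precisely those deferred steps. One small imprecision: the way you phrase the structure-constant computation — asserting $\alpha\bigl((0,n),q,q+n\bigr)=1$ with other terms vanishing — presupposes the conclusion $z^n\vartheta_q=\vartheta_{q+n}$ rather than deriving it from Lemma~\ref{lemma:structureConstants}; the cleaner way to establish that identity (and the one underlying \cite{GHKK}) is the direct bijection on broken lines that multiplies every attached monomial by $z^{(0,n)}$, which leaves the bending rule unchanged precisely because $(0,n)$ is orthogonal to each wall normal $(e_i,0)$. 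Your closing observation — that this orthogonality is insensitive to the polynomial wall functions of the generalized setting — is the correct reason the argument transfers verbatim, and it is the point the paper's terse proof is implicitly relying on.
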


\begin{proof}
One consequence of Proposition~\ref{prop:vectorSpaceBasis} and Lemma~\ref{lemma:structureConstants} is that every ${g \in \widehat{\textrm{up}(\overline{\A}_{\textrm{prin}}^{\bfs})}}$ can be uniquely expressed as a convergent formal sum $\sum_{q \in \widetilde{M}_{\bfs}^{\circ},+} \alpha_{\bfs}(g)(q)\vartheta_{q}$ where the coefficients $\alpha_{\bfs}(g)(q)$ lie in $R$. This immediately implies the desired unique inclusion.
\end{proof}

\begin{definition}[Analogue of Definition 6.6 of \cite{GHKK}]
\label{def:GHKK_6-6}
Let $g$ be a universal Laurent polynomial on $\textrm{up}(\Aprin)$. On the torus chart $T_{\widetilde{N}^{\circ},\bfs}$ of $\Aprin$, we can write $g = \sum_{q \in \widetilde{M}_{\bfs}^{\circ}} \beta_{\bfs}(g)(q)z^q$. Because $z^mg \in \widehat{\textrm{up}(\overline{\mathcal{A}}_{\textrm{prin}}^{\bfs})}$ for some $m \in \widetilde{M}_{\bfs}^{\circ}$, we can also write a formal expansion $g = \sum_{q \in \widetilde{M}_{\bfs}^{\circ}} \alpha_{\bfs}(g)(q)\vartheta_q$. Let
\begin{align*}
    \overline{S}_{g,\bfs} &:= \{q \in \widetilde{M}_{\bfs}^{\circ} : \beta_{\bfs}(g)(q) \neq 0 \},
    &S_{g,\bfs} := \{ q \in \widetilde{M}_{\bfs}^{\circ} : \alpha_{\bfs}(g)(q) \neq 0 \},
\end{align*}
and $P_{\bfs}$ be the monoid generated by $\{ (v_i,e_i) \}_{i \in I_{\textrm{uf}}}$.
\end{definition}

It follows from the construction of the theta functions that $S_{g,\bfs} \subseteq \overline{S_{g,\bfs}} + P_{\bfs}$.

We are then ready to prove that on $\textrm{up}(\mathcal{A}_{\textrm{prin}})$, the function $\alpha_{\bfs}$ is actually independent of the choice of generalized torus seed $\bfs$.

\begin{theorem}[Analogue of Theorem 6.8 of \cite{GHKK}]
\label{theorem:alpha_properties}
There is a unique function 
\[ \alpha: \textrm{up}\left( \Aprin \right) \rightarrow \textrm{Hom}_{\textrm{sets}}\left(\Aprin^{\vee}\left( \mathbb{Z}^T \right), R \right)\]
such that:
\begin{enumerate}
    \item The function $\alpha$ is compatible with the $R[N]$-module structure on $\textrm{up}\left( \Aprin \right)$ and the $N$-translation action on $\Aprin^{\vee}\left( \mathbb{Z}^T \right)$, i.e.
    \[ \alpha\left( z^n \cdot g \right)(x + n) = \alpha(g)(x) \]
    for all $g \in \textrm{up}\left( \Aprin \right)$, $n \in N$, and $x \in \Aprin^{\vee}\left( \mathbb{Z}^T \right)$.
    \item For any $\bfs$, the formal sum $\sum_{q \in \Aprin^{\vee}(\mathbb{Z}^T)} \alpha(g)(q)\vartheta_q$ converges to $g$ in $\widehat{\textrm{up}\left( \overline{\mathcal{A}}_{\textrm{prin}}^{\bfs} \right)} \otimes_{R[N_{\bfs}^{+}]} R[N]$. 
    \item If $z^n \cdot g$ lies in $\textrm{up}\left( \overline{\mathcal{A}}_{\textrm{prin}}^{\bfs}\right)$, then $\alpha\left(z^n \cdot g \right)(q) = 0$ unless $\pi_N(q) \in N_{\bfs}^{+}$. Moreover,
    \[ z^n \cdot g = \sum_{\pi_{N,\bfs}(q) \in N_{\bfs}^{+} \backslash (N_{\bfs}^{+})_{k+1}} \alpha(z^n \cdot g)(q)\vartheta_q~~\textrm{mod}\left( I_{\bfs}^{k+1} \right) \]
    and the coefficients $\alpha(z^n \cdot g)(q)$ are the coefficients for the expansion of $z^n \cdot g$ when it is viewed as an element of $\textrm{up}\left(\overline{\mathcal{A}}_{\textrm{prin}}^{\bfs} \right)$ and written in terms of the collection $\{ \vartheta_q : q \in \widetilde{M}_{\bfs}^{\circ,+} \backslash M_{\bfs,k+1}^{\circ,+} \}$.
    \item For any generalized torus seed $\bfs'$ reachable from $\bfs$ via a sequence of mutations, the map $\alpha$ is the composition of inclusions
    \[ \textrm{up}\left(\Aprin \right) \subset \widehat{\textrm{up}\left(\overline{\A}_{\textrm{prin}}^{\bfs'} \right)} \otimes_{R[N_{\bfs'}^{+}]} R[N]] \subset \textrm{Hom}_{\textrm{sets}}\left( \Aprin^{\vee}(\mathbb{Z}^T) = \widetilde{M}_{\bfs'}^{\circ}, \Bbbk \right) \]
    from Proposition~\ref{prop:unique_alpha_s} and Equation~\eqref{eq:upInclusion}. This maps the cluster monomial $A \in \textrm{up}\left( \Aprin \right)$ to the delta function $\delta_{\mathbf{g}(A)}$ where $\mathbf{g}(A) \in \Aprin^{\vee}(\mathbb{Z}^{T})$ is its $g$-vector.
\end{enumerate}
Moreover, $\alpha(g)(m) = \alpha_{\bfs'}(g)(m)$ for any generalized torus seed $\bfs'$.
\end{theorem}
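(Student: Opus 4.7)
The plan is to promote the seed-dependent maps $\alpha_{\bfs}$ from Proposition~\ref{prop:unique_alpha_s} to a single seed-independent map $\alpha$ on $\textrm{up}(\Aprin)$, then verify properties (1)--(4) directly. The candidate is forced: for any $g \in \textrm{up}(\Aprin)$, the inclusion in Equation~\eqref{eq:upInclusion} combined with Proposition~\ref{prop:unique_alpha_s} yields a unique formal expansion $g = \sum_q \alpha_{\bfs}(g)(q)\vartheta_q^{\bfs}$ inside $\widehat{\textrm{up}(\overline{\A}_{\textrm{prin}}^{\bfs})} \otimes_{R[N_{\bfs}^{+}]} R[N]$. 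So I define $\alpha := \alpha_{\bfs}$ for a fixed seed and must then show this agrees with $\alpha_{\bfs'}$ for every mutation-equivalent seed $\bfs'$.

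The heart of the argument is seed-invariance, and by induction along the mutation tree it suffices to treat $\bfs' = \mu_k(\bfs)$. Here I would use Proposition~\ref{prop:brokenLineBijection}, which under the Fock--Goncharov tropicalization $T_k$ (Proposition~\ref{prop:T_k_tropicalization}) gives a bijection of broken lines intertwining $\vartheta^{\mu_k(\bfs)}_{T_k(Q), T_k(m_0)} = T_{k,\pm}(\vartheta^{\bfs}_{Q,m_0})$. Under the canonical identification $\Aprin^{\vee}(\mathbb{Z}^T) = \widetilde{M}_{\bfs}^{\circ} = \widetilde{M}_{\mu_k(\bfs)}^{\circ}$ (mediated by $T_k$), this bijection converts the formal expansion of $g$ in $\mathfrak{D}_{\bfs}$ into its expansion in $\mathfrak{D}_{\mu_k(\bfs)}$. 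The uniqueness clause of Proposition~\ref{prop:unique_alpha_s} then forces $\alpha_{\bfs}(g)(q) = \alpha_{\mu_k(\bfs)}(g)(q)$ for every $q$. One technical point to monitor carefully is that the monoids $P_{\bfs}$ and $P_{\mu_k(\bfs)}$ differ, so one must work in the enlarged monoid $\overline{P}$ from Definition~\ref{def:extendedScatDiag} (exactly as in the proof of Theorem~\ref{theorem:mutDiagConsistent}), and the reciprocity condition $a_{k,j} = a_{k,r_k-j}$ must feed in through Proposition~\ref{prop:brokenLineBijection}.

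With seed-invariance in hand, properties (1)--(4) are nearly formal. Property (1) is the $R[N]$-equivariance already packaged into Proposition~\ref{prop:unique_alpha_s}. Property (2) is just the defining convergent expansion in $\widehat{\textrm{up}(\overline{\A}_{\textrm{prin}}^{\bfs})} \otimes_{R[N_{\bfs}^{+}]} R[N]$. For property (3), if $z^n g \in \textrm{up}(\overline{\A}_{\textrm{prin}}^{\bfs})$, then Proposition~\ref{prop:vectorSpaceBasis}(1) shows that modulo $I_{\bfs}^{k+1}$ the element $z^n g$ expands in the $R$-basis $\{\vartheta_{q,k} : q \in \widetilde{M}_{\bfs}^{\circ,+} \setminus M_{\bfs,k+1}^{\circ,+}\}$, so the support of $\alpha(z^n g)$ lies in $\pi_N^{-1}(N_{\bfs}^{+})$ with the claimed coefficients; compatibility with (1) recovers the statement for $g$. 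Property (4) is the geometric content: for a cluster monomial $A$ in the chart $T_{\widetilde{N}^{\circ},\bfs'}$ the restriction is $z^{\mathbf{g}(A)}$, and Corollary~\ref{prop:pathinsidechamber} identifies $z^{\mathbf{g}(A)} = \vartheta_{Q,\mathbf{g}(A)}$ for $Q$ in the positive chamber of $\mathfrak{D}_{\bfs'}$; uniqueness of the expansion in Proposition~\ref{prop:vectorSpaceBasis}(2) then forces $\alpha(A) = \delta_{\mathbf{g}(A)}$.

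The main obstacle will be the seed-invariance step, specifically tracking how the two distinct completions relate under $T_k$. The bijection of broken lines gives an isomorphism at the level of individual theta functions, but one must upgrade this to an isomorphism of the ambient hom-sets and verify that the map $g \mapsto (q \mapsto \alpha(g)(q))$ descends well; the reciprocity hypothesis is exactly what makes this possible, as it was in the proof of Theorem~\ref{theorem:mutDiagConsistent}. The uniqueness assertion $\alpha(g)(m) = \alpha_{\bfs'}(g)(m)$ then follows because any function satisfying (1) and (2) agrees with $\alpha_{\bfs'}$ by Proposition~\ref{prop:unique_alpha_s}, and the seed-invariance argument shows this is independent of $\bfs'$.
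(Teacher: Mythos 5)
Your proposal captures the essential structure of the GHKK argument that the paper itself defers to: define $\alpha := \alpha_{\bfs}$ for a fixed seed, establish seed-invariance via the adjacent-mutation case, and then verify (1)--(4) from the defining properties of $\alpha_{\bfs}$. This is the same route the paper takes, since its proof simply cites \cite{GHKK} (and the thesis of Kelley) and notes that the argument carries over with polynomial mutation maps.

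One place where you could make the seed-invariance step cleaner, and where your account is slightly indirect: rather than working purely at the level of the broken-line bijection (Proposition~\ref{prop:brokenLineBijection}), you can invoke Proposition~\ref{prop:A_scatIso} and Theorem~\ref{theorem:AscatCommuteMutation} directly. These results establish that the gluing of the chamber tori --- and hence the functions $\vartheta_q$ themselves, viewed as regular or formal functions on $\mathcal{A}_{\textrm{scat}}\cong\mathcal{A}_{\bfs}$ --- are canonically identified across mutation-equivalent seeds. Once $\vartheta_q^{\bfs} = \vartheta_q^{\mu_k(\bfs)}$ as elements of the ambient ring (after the $T_k$-relabeling of indices), the linear independence in Proposition~\ref{prop:vectorSpaceBasis}(2) forces the expansion coefficients to agree, which is exactly your uniqueness argument. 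The broken-line bijection is the engine underneath Theorem~\ref{theorem:AscatCommuteMutation}, so your route is not wrong, just one level lower in the dependency chain; citing the gluing isomorphism directly avoids having to track the two completions $\widehat{R[P_{\bfs}]}$ and $\widehat{R[P_{\mu_k(\bfs)}]}$ by hand (the concern you flag in your final paragraph). You correctly identify that reciprocity $a_{k,j}=a_{k,r_k-j}$ is the hypothesis that makes all of this go through, exactly as in Theorem~\ref{theorem:mutDiagConsistent}. Your verifications of (1)--(4) are sound.
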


\begin{proof}
The proof given in \cite{GHKK} for the ordinary case holds in our generalized setting, with minor modifications due to the polynomial mutation maps of the generalized setting. For details, see Theorem 4.10.5 of \cite{Kelley}.
\end{proof}

This theorem implies, as a corollary, that the theta functions form a topological basis for a natural topological $R$-algebra completion of $\textrm{up}(\Aprin)$.

\begin{cor}[Analogue of Corollary 6.11 of \cite{GHKK}]
\label{cor:6-11}
For $n \in N$, let the map 
\[ n^*: \textrm{Hom}_{\textrm{sets}}(\Aprin^{\vee}(\mathbb{Z}^T),\Bbbk) \rightarrow \textrm{Hom}_{\textrm{sets}}(\Aprin^{\vee}(\mathbb{Z}^T),\Bbbk) \]
denote precomposition by translation by $n$ on $\Aprin^{\vee}(\mathbb{Z}^T)$. Let $\overline{\textrm{up}(\Aprin)} \subset \textrm{Hom}_{\textrm{sets}}(\Aprin^{\vee}(\mathbb{Z}^T)$ be the vector subspace of functions $f$ such that for each generalized torus seed $\mathbf{s}$, there exists $n \in N$ such that the restriction of $n^*(f)$ to $\Aprin^{\vee}(\mathbb{Z}^T) \backslash \pi^{-1}_{N,\mathbf{s}}((N_{\mathbf{s}^+})_k)$ has finite support for all $k > 0$. Then, we have the inclusions
\[ \textrm{up}(\Aprin) \subset \overline{\textrm{up}(\Aprin)} = \bigcap_{\mathbf{s}} \widehat{\textrm{up}(\overline{\cA}_{\textrm{prin}}^{\mathbf{s}})} \otimes_{R[N_{\mathbf{s}}^{+}]} R[N]  \subset \textrm{Hom}_{\textrm{sets}}(\Aprin^{\vee}(\mathbb{Z}^T),\Bbbk) \]
and $\overline{\textrm{up}(\Aprin)}$ is a complete topological vector space under the weakest topology so that each inclusion $\overline{\textrm{up}(\Aprin)} = \bigcap_{\mathbf{s}} \widehat{\textrm{up}(\overline{\cA}_{\textrm{prin}}^{\mathbf{s}})} \otimes_{R[N_{\mathbf{s}}^{+}]} R[N]$ is continuous.

Let $\vartheta_{q} = \delta_{q} \in \overline{\textrm{up}(\Aprin)}$ be the delta function associated to $q \in \Aprin^{\vee}(\mathbb{Z}^T)$. Then $\{ \vartheta_{q} \}_{q \in \Aprin^{\vee}(\mathbb{Z}^T)}$ is a topological basis for $\overline{\textrm{up}(\Aprin)}$ and there is a unique topological $R$-algebra structure on $\overline{\textrm{up}(\Aprin)}$ such that $\vartheta_{p} \cdot \vartheta_q = \sum_r \alpha(p,q,r)\vartheta_r$ with structure constants $\alpha(p,q,r)$ as in Proposition~\ref{prop:multiplication_constants_gen}.
\end{cor}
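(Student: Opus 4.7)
The plan is to leverage Theorem~\ref{theorem:alpha_properties} so that nearly everything reduces to repackaging $\alpha$ and $\alpha_{\bfs}$. First I would establish the middle equality $\overline{\textrm{up}(\Aprin)} = \bigcap_{\bfs} \widehat{\textrm{up}(\overline{\cA}_{\textrm{prin}}^{\bfs})} \otimes_{R[N_{\bfs}^{+}]} R[N]$. Proposition~\ref{prop:unique_alpha_s} embeds each completed localization into $\textrm{Hom}_{\textrm{sets}}(\Aprin^{\vee}(\mathbb{Z}^T),R)$, and unwinding Definition~\ref{def:GHKK_6-6} shows that a function $f$ lies in $\widehat{\textrm{up}(\overline{\cA}_{\textrm{prin}}^{\bfs})} \otimes_{R[N_{\bfs}^{+}]} R[N]$ precisely when there exists $n \in N$ with $z^n \cdot f \in \widehat{\textrm{up}(\overline{\cA}_{\textrm{prin}}^{\bfs})}$, which under the $N$-equivariance of Proposition~\ref{prop:unique_alpha_s}(final clause) is equivalent to the requirement in the definition of $\overline{\textrm{up}(\Aprin)}$: that $n^*(f)$ is supported on $\pi^{-1}_{N,\bfs}(N_{\bfs}^+)$ with only finitely many values outside each $\pi^{-1}_{N,\bfs}((N_{\bfs}^+)_k)$. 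The inclusion $\textrm{up}(\Aprin) \subset \overline{\textrm{up}(\Aprin)}$ is then immediate from Theorem~\ref{theorem:alpha_properties}(4), applied seed by seed.

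Next I would address the topology. Each factor $\widehat{\textrm{up}(\overline{\cA}_{\textrm{prin}}^{\bfs})} \otimes_{R[N_{\bfs}^{+}]} R[N]$ carries the $I_{\bfs}$-adic topology, and the completion is Hausdorff and complete there. The weakest topology making every inclusion $\overline{\textrm{up}(\Aprin)} \hookrightarrow \widehat{\textrm{up}(\overline{\cA}_{\textrm{prin}}^{\bfs})} \otimes_{R[N_{\bfs}^{+}]} R[N]$ continuous is the initial topology induced by this family; since the target spaces are Hausdorff and complete and the intersection of complete Hausdorff subspaces is complete, $\overline{\textrm{up}(\Aprin)}$ is a complete topological $R$-module. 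The delta functions $\{\vartheta_q = \delta_q\}$ are linearly independent by Proposition~\ref{prop:vectorSpaceBasis}(2), and for any $f \in \overline{\textrm{up}(\Aprin)}$, the formal sum $\sum_q \alpha(f)(q)\vartheta_q$ converges to $f$ in every chart by Theorem~\ref{theorem:alpha_properties}(2), hence in the initial topology; this gives the topological basis property.

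For the $R$-algebra structure, I would define the product by extending $\vartheta_p \cdot \vartheta_q = \sum_r \alpha(p,q,r)\vartheta_r$, where the structure constants come from Lemma~\ref{lemma:structureConstants} and Proposition~\ref{prop:multiplication_constants_gen}. The key consistency check is that this product, defined a priori only in the topological basis, coincides with the ring multiplication on each $\widehat{\textrm{up}(\overline{\cA}_{\textrm{prin}}^{\bfs})} \otimes_{R[N_{\bfs}^{+}]} R[N]$ inherited as a localization/completion. This follows from the seed-independence in Theorem~\ref{theorem:alpha_properties} combined with the identity $z^p \cdot z^q = z^{p+q}$ for theta functions of monomial form in the positive chamber (Corollary~\ref{prop:pathinsidechamber}) and analytic continuation via path-ordered products (Theorem~\ref{theorem:genthetafuncomposition}). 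Uniqueness is automatic from the topological basis property.

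The main obstacle, as in the ordinary case of \cite{GHKK}, is verifying that the formal sum $\sum_r \alpha(p,q,r)\vartheta_r$ actually lands in $\overline{\textrm{up}(\Aprin)}$, i.e.\ that the finiteness-modulo-$\pi^{-1}_{N,\bfs}((N_{\bfs}^+)_k)$ condition holds \emph{simultaneously for every seed $\bfs$}. For a single fixed seed this is the content of Proposition~\ref{prop:multiplication_constants_gen}, but globally one must chain together these finiteness statements across seeds via the mutation-invariance of broken lines (Proposition~\ref{prop:brokenLineBijection}) and the seed-independence of $\alpha$ in Theorem~\ref{theorem:alpha_properties}; continuity of multiplication then follows because multiplication is continuous in each chart, and the initial topology makes multiplication jointly continuous.
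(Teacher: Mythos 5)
The proposal is essentially correct and follows the same route the paper takes implicitly by deferring to the argument of Corollary 6.11 of \cite{GHKK}: first identify $\overline{\textrm{up}(\Aprin)}$ with the intersection $\bigcap_{\bfs} \widehat{\textrm{up}(\overline{\cA}_{\textrm{prin}}^{\bfs})} \otimes_{R[N_{\bfs}^{+}]} R[N]$ by unwinding the support condition via Proposition~\ref{prop:unique_alpha_s} and the $N$-translation compatibility, then note $\textrm{up}(\Aprin) \subset \overline{\textrm{up}(\Aprin)}$ from Theorem~\ref{theorem:alpha_properties}, endow the intersection with the initial topology (completeness and the topological basis property of $\{\vartheta_q\}$ come from Proposition~\ref{prop:vectorSpaceBasis} and Theorem~\ref{theorem:alpha_properties}(2)), and finally use Lemma~\ref{lemma:structureConstants} to read off the product in the $\vartheta$-basis.

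One remark: your last paragraph makes the closure-under-multiplication step sound harder than it is. You frame it as needing to ``chain together'' finiteness statements across seeds via the broken-line bijection of Proposition~\ref{prop:brokenLineBijection}, but that is not required. Each $\widehat{\textrm{up}(\overline{\cA}_{\textrm{prin}}^{\bfs})} \otimes_{R[N_{\bfs}^{+}]} R[N]$ is already an $R$-algebra, and the intersection of subalgebras of a common ambient $R$-module (here $\textrm{Hom}_{\textrm{sets}}(\Aprin^{\vee}(\mathbb{Z}^T), R)$, into which every factor embeds compatibly via the seed-independent $\alpha$ of Theorem~\ref{theorem:alpha_properties}) is automatically a subalgebra; so $f \cdot g \in \overline{\textrm{up}(\Aprin)}$ for free once you know both lie in every chart. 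The role of Lemma~\ref{lemma:structureConstants} and Proposition~\ref{prop:multiplication_constants_gen} is then not to establish well-definedness of the product but to \emph{describe} it: they show the inherited ring multiplication expands in the topological basis with the broken-line structure constants $\alpha(p,q,r)$. Uniqueness of this $R$-algebra structure is, as you say, immediate from the topological basis property. Similarly, for continuity you can simply observe that multiplication in each chart respects the ideal filtration (multiplication is continuous for the $I_{\bfs}$-adic topology), and the initial topology makes a map into a product of topological spaces continuous iff each coordinate is; no separate argument for joint continuity in the initial topology is needed beyond this.
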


Next, we show that the theta function $\vartheta_{Q,m_0}$, for $Q \in \sigma \in \Delta^{+}$ and $m_0 \in \mathcal{A}_{\textrm{prin}}^{\vee}(\mathbb{Z}^T)$, is a universal Laurent polynomial on $R[\widetilde{M}^{\circ}]$.

\begin{prop}[Analogue of Proposition 7.1 of \cite{GHKK}]
\label{prop:universalLaurentPolynomial}
Let $\mathbf{s} = \{ (e_i,(a_{i,j})) \}$ be a generalized torus seed. Fix some $m_0 \in \mathcal{A}_{\textrm{prin}}^{\vee}(\mathbb{Z}^T)$. If for some generic choice of $Q \in \sigma \in \Delta^{+}$ there are finitely many broken lines $\gamma$ in $\mathfrak{D}_{\bfs}$ with $I(\gamma) = m_0$ and $b(\gamma) = Q$, then this holds for any generic $Q' \in \sigma' \in \Delta^{+}$. Hence, $\vartheta_{Q,m_0} \in R[\widetilde{M}^{\circ}]$ is a universal Laurent polynomial.
\end{prop}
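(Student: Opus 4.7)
The plan is to propagate the hypothesized finiteness across all cluster chambers, and then deduce the universal Laurent polynomial property seed-by-seed. The two essential tools are Proposition~\ref{prop:brokenLineBijection}, which uses the piecewise linear map $T_k$ to biject broken lines in $\mathfrak{D}_{\bfs}$ with those in $\mathfrak{D}_{\mu_k(\bfs)}$, and Lemma~\ref{lemma:FG_clusterChamber}, which identifies the cluster complexes $\Delta^+_{\bfs}$ across the identifications $M^{\circ}_{\mathbb{R},\bfs} \cong \mathcal{A}^{\vee}(\mathbb{R}^T) \cong M^{\circ}_{\mathbb{R},\bfs'}$.

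First I would observe that within a single cluster chamber $\sigma \in \Delta^+$, the set of broken lines with initial slope $m_0$ and endpoint $Q$ depends only on $\sigma$ and not on the specific generic $Q$: since $\sigma$ is in the complement of $\textrm{Supp}(\mathfrak{D}_{\bfs})$, a small deformation of $Q$ within $\textrm{Int}(\sigma)$ deforms each broken line continuously without changing its combinatorial type or its attached monomial, so the finiteness condition is a property of $\sigma$ alone. Next, for an arbitrary $\sigma' \in \Delta^+$, choose a mutation-equivalent generalized torus seed $\bfs'$ so that $\sigma'$ corresponds to the positive chamber $\mathcal{C}^+_{\bfs'}$ of $\mathfrak{D}_{\bfs'}$ under the natural identification. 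Iterating Proposition~\ref{prop:brokenLineBijection} along a finite sequence of mutations connecting $\bfs$ to $\bfs'$ produces a bijection
\[ \bigl\{\gamma \text{ in } \mathfrak{D}_{\bfs} : I(\gamma) = m_0,\, b(\gamma) = Q \bigr\} \;\longleftrightarrow\; \bigl\{\gamma' \text{ in } \mathfrak{D}_{\bfs'} : I(\gamma') = T_{\bfs \to \bfs'}(m_0),\, b(\gamma') = T_{\bfs \to \bfs'}(Q)\bigr\}, \]
and by the first observation this propagates finiteness from any one generic $Q \in \sigma$ to every generic $Q' \in \sigma'$, proving the first assertion.

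For the universal Laurent polynomial statement, for each mutation-equivalent generalized torus seed $\bfs'$ we may take $\sigma' = \mathcal{C}^+_{\bfs'}$ and $Q' = T_{\bfs \to \bfs'}(Q)$. By finiteness (in $\mathfrak{D}_{\bfs'}$) and the definition of the theta function, $\vartheta^{\bfs'}_{Q', T_{\bfs\to\bfs'}(m_0)}$ is a finite $R$-linear combination of characters and hence lies in $R[\widetilde{M}^{\circ}_{\bfs'}]$. By the compatibility statement in Proposition~\ref{prop:brokenLineBijection}, combined with Proposition~\ref{prop:T_k_tropicalization}, which identifies $T_k$ with the Fock--Goncharov tropicalization of the generalized cluster mutation, this expression is precisely the pullback of $\vartheta_{Q,m_0}$ along the birational mutation map $\mu_{\bfs \to \bfs'}^{\ast}: R(\widetilde{M}^{\circ}_{\bfs'}) \to R(\widetilde{M}^{\circ}_{\bfs})$. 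Hence $\vartheta_{Q,m_0}$ restricts to an element of $R[\widetilde{M}^{\circ}_{\bfs'}]$ on the torus chart $T_{\widetilde{N}^{\circ},\bfs'}$ of $\Aprin$ for every $\bfs'$, so by the intersection characterization of $\textrm{up}(\Aprin)$ it is a universal Laurent polynomial.

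The main obstacle is verifying that the single-step bijection of Proposition~\ref{prop:brokenLineBijection} composes cleanly along a mutation sequence, in particular that the signs $\pm$ in the formula $\vartheta^{\mu_k(\bfs)}_{T_k(Q),T_k(m_0)} = T_{k,\pm}(\vartheta^{\bfs}_{Q,m_0})$ are coherently picked out by the correct half-space at each step. Since in the intended application $Q$ lies in the interior of a fixed cluster chamber and each $T_k$ in the sequence acts linearly on whichever half-space contains the corresponding intermediate chamber, this compatibility reduces to tracking which $\mathcal{H}_{k_i,\pm}$ the successive images of $Q$ lie in, and invoking the reciprocity of the exchange coefficients already used in the proofs of Propositions~\ref{prop:brokenLineBijection} and \ref{prop:A_scatIso} to guarantee that the pulled-back monomials remain consistent.
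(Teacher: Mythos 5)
Your approach to the first assertion has a genuine gap. Proposition~\ref{prop:brokenLineBijection} gives a bijection between broken lines $(Q,m_0)$ in $\mathfrak{D}_{\bfs}$ and broken lines $(T(Q),T(m_0))$ in $\mathfrak{D}_{\bfs'}$ --- it simultaneously changes the diagram, the basepoint, \emph{and} the initial slope. What the proposition requires is a comparison of the sets $\{\gamma : I(\gamma)=m_0,\ b(\gamma)=Q\}$ and $\{\gamma : I(\gamma)=m_0,\ b(\gamma)=Q'\}$ in the \emph{same} diagram $\mathfrak{D}_{\bfs}$ with the \emph{same} slope $m_0$. The bijection does not connect these two sets. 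Concretely, if you choose $\bfs'$ so that $\sigma'$ maps to $\mathcal{C}^+_{\bfs'}$, then in $\mathfrak{D}_{\bfs'}$ the images $T(Q)$ and $T(Q')$ still land in \emph{different} chambers (the images of $\sigma$ and $\sigma'$ respectively), and you face exactly the same cross-chamber propagation problem you started with. Iterating $T_k$ preserves each count individually, but it never converts the hypothesized finiteness at $Q$ into finiteness at $Q'$.

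The paper's proof instead works locally across a single wall using Theorem~\ref{theorem:genthetafuncomposition}: reduce to $\sigma,\sigma'$ adjacent along a wall $\mathfrak{d}\subset n_0^\perp$ inside the cluster complex, where the wall function is $1+a_1z^q+\cdots+z^{rq}$ with \emph{formal} $a_i$, and examine terms $cz^m$ of $\vartheta_{Q',m_0}$ by the sign of $\langle n_0,m\rangle$. The cases $\langle n_0,m\rangle\geq 0$ are routine; the critical case $\langle n_0,m\rangle<0$ is handled by contradiction: extending such broken lines back across the wall to $Q$ yields terms in $\vartheta_{Q,m_0}$, and the formality of the $a_{i,j}$ guarantees there is no cancellation, so infinitely many terms at $Q'$ would force infinitely many at $Q$. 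This use of formal coefficients is the substitute for GHKK's use of positivity, which is not available here, and it is the part of the argument your proposal does not supply. The mutation bijection is the right tool for mutation-\emph{invariance} statements (and appears correctly in Theorem~\ref{theorem:alpha_properties}), but the adjacent-chamber wall-crossing analysis is what actually propagates finiteness.

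Your sketch of the ``Hence'' part is plausible once the first assertion is in hand, and tracking the half-spaces $\mathcal{H}_{k_i,\pm}$ along a mutation sequence is indeed the routine bookkeeping you describe; but that bookkeeping does not rescue the main argument, since the issue is not whether the bijection composes coherently, but that it is the wrong kind of comparison.
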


Even though our statement is similar to the one in \cite[Proposition 7.1]{GHKK}, there are several significant differences in the proof. Because the positivity of theta functions in the generalized setting has not yet been proven, we cannot make use of this positivity as in the argument given in \cite{GHKK} for the ordinary setting. Because we only consider $\vartheta_Q$ for $Q , Q' \in \Delta^+$, however, we make use of the fact that the chambers in the cluster complex can be associated to generalized torus seed data for which the wall functions are simply the mutation maps. Hence, we can assert the positivity of the wall functions within the cluster complex.  For this proof, we need $a_{i,j}$ to be formal variables rather than simply arbitrary elements in $\mathbb{P}$. 
Hence, we consider the generalized cluster algebras over the ring $R$ when defining the fixed data.

\begin{proof}
By Theorem~\ref{theorem:genthetafuncomposition}, we know that for basepoints $Q$ and $Q'$ in different chambers, the theta functions $\vartheta_{Q,m_0}$ and $\vartheta_{Q',m_0}$ are related by a composition of wall-crossings. When the basepoint varies within a chamber, the corresponding theta function does not change. Hence, it is sufficient to check that if $Q \in \sigma$ and $Q' \in \sigma'$ are in adjacent chambers with $Q'$ close to the wall $\sigma \cap \sigma'$, then $\vartheta_{Q,m_0}$ having finitely many terms implies that $\vartheta_{Q',m_0}$ also has finitely many terms.

Fix some generalized torus seed $\mathbf{s}$. Let the wall $\sigma \cap \sigma'$ be in $n_0^{\perp}$ for $n_0 \in \widetilde{N}^{\circ}$ with $\langle n_0, Q \rangle > 0$ and denote the wall-crossing automorphism when moving from $Q$ to $Q'$ by $\mathfrak{p}$. Recall that a chamber of $\mathfrak{D}_{\bfs}$ is called \emph{reachable} if there exists a finite, transverse path between that chamber and the positive chamber $\mathcal{C}_{\bfs}^{+} \subset \mathfrak{D}_{\bfs}$. By Lemma 2.10 of \cite{GHKK}, there exists a bijection between torus seeds that are mutation equivalent to the initial torus seed $\bfs$ and reachable chambers of $\mathfrak{D}_{\mathbf{s}}$. A consequence of this bijection, as described in \cite{Muller}, is that there exists a sequence of mutations $\mu_{k_1}, \dots, \mu_{k_{\ell}}$ and corresponding piecewise linear maps $T_{k_{\ell}}, \dots, T_{k_1}$ such that $\mathbf{s}' = \mu_{k_{\ell}} \circ \cdots \circ \mu_{k_1}(\mathbf{s})$, $\mathfrak{D}_{\bfs'} = T_{k_{\ell}} \circ \cdots T_{k_{1}}(\mathfrak{D}_{\bfs})$, and $T_{k_{i-1}} \circ \cdots \circ T_{k_1}(\sigma') \subset H_{k_{i},-}$ for each $i \in \{ 1, \dots, \ell \}$.

Recall that $\mathfrak{p}(z^m) = z^mf^{\langle n_0, m \rangle}$. When both chambers, $\sigma$ and $\sigma'$ are reachable, $f$ has the form $1 + a_1z^{q} + \cdots + a_{r-1}z^{(r-1)q} + z^{rq}$ for some $q \in n_0^{\perp} \subset \widetilde{M}^{\circ}$ and $r \in \mathbb{Z}_{>0}$. In particular, note that $f$ is a Laurent polynomial where $a_{1}, \dots, a_{r-1}$ are formal variables.  One can verify that $f$ has this form by recalling that the wall-crossing automorphisms associated to the positive chamber $\mathcal{C}_{\bfs'}^{+} \subset \mathfrak{D}_{\bfs'}$ have the form $1 + a_{k,1}z^{v_k} + \cdots + z^{r_kv_k}$ for some $k \in I$ and then applying the appropriate sequence of piecewise linear maps $T_{k_1}, \dots, T_{k_{\ell}}$ to obtain the wall-crossing automorphism $\mathfrak{p}$.

Monomials $z^m$ can be classified into three groups, based on the sign of $\langle n_0, m \rangle$. The arguments for $\langle n_0, m \rangle = 0$ or $\langle n_0, m \rangle > 0$ given in \cite{GHKK} will work here also. Briefly, when $\langle n_0, m \rangle = 0$, the monomial is fixed by $\mathfrak{p}$ and so these terms coincide in $\vartheta_{Q,m_0}$ and $\vartheta_{Q',m_0}$. When $\langle n_0, m \rangle > 0,$ the monomial $z^m$ is sent to $z^mf^{\langle n_0, m \rangle}$, which is by definition a polynomial. So each such $z^m$ in $\vartheta_{Q,m_0}$ corresponds to finitely many terms in $\vartheta_{Q',m_0}$.

The last case is when $\langle n_0, m \rangle < 0$. Consider a broken line in $\mathfrak{D}_{\bfs}$ with endpoint $Q' \in \sigma' \subset \mathfrak{D}_{\bfs}$ and a monomial of the form $cz^m$ with $\langle n_0, m \rangle < 0$ attached to its final domain of linearity. To complete the proof, it remains to show that there are finitely many such broken lines. By way of contradiction, assume that there actually infinitely many.

The direction vector of such a broken line must be towards the wall $\sigma \cap \sigma'$, so its final domain of linearity can be extended to some point $Q'' \in \sigma$. When crossing $\sigma \cap \sigma'$ from $\sigma'$ into $\sigma$, we have
\begin{align*}
    cz^{m} &\mapsto cz^{m}\left(1 + a_1z^{q} + \cdots + a_{r-1}z^{(r-1)q} + z^{rq} \right)^{\langle -n_0,m \rangle}
\end{align*}
Note that the primitive normal vector $-n_0$ appears in this wall-crossing computation rather than $n_0$ because by assumption $\langle n_0, Q \rangle > 0$, so $n_0$ is directed into the chamber $\sigma$ rather than into $\sigma'$. The fact that $a_1, \dots, a_{r-1}$ are all formal variables means that there are no cancellations. Because $\vartheta_{Q,m_0}$ is independent of the location of $Q$ within the chamber $\sigma$, this means there are infinitely many broken lines with initial slope $m_0$ and endpoint $Q$, a contradiction.
\end{proof}

\begin{definition} \cite[Definition 7.2]{GHKK} \label{def:thetaset}
Let $\Theta \subset \Aprin^{\vee}(\mathbb{Z}^T)$ be the collection of $m_0$ such that for any generic point $Q \in \sigma \in \Delta^{+}$, there exist finitely many broken lines with initial slope $m_0$ and endpoint $Q$.
\end{definition}

\begin{theorem}[Analogue of Theorem 7.5 of \cite{GHKK}]
\label{thm:7-5}
Let
\[ \Delta^{+}(\mathbb{Z}) := \bigcup_{\sigma \in \Delta^{+}} \sigma \cap \Aprin^{\vee}(\mathbb{Z}^T) \]
be the set of integral points in the chambers of the cluster complex. Then
\begin{enumerate}
    \item $\Delta^{+}(\mathbb{Z}) \subset \Theta$.
    \item There is an inclusion of $R[N]$-algebras
\begin{align*}
   \textrm{gen}(\mathcal{A}_{\textrm{prin}}) \subset \widehat{\textrm{up}(\mathcal{A}_{\textrm{prin},\bfs})} \otimes_{R[N_{\bfs}^{+}]} R[N]
\end{align*}
where each cluster monomial $Z \in \textrm{gen}(\mathcal{A}_{\textrm{prin}})$ is identified with $\vartheta_{\mathbf{g}(Z)} \in \widehat{\textrm{up}(\mathcal{A}_{\textrm{prin},\bfs})} \otimes_{R[N_{\bfs}^{+}]} R[N]$, where $\mathbf{g}(Z) \in \Delta^{+}(\mathbb{Z})$ denotes the $g$-vector of $Z$.
\end{enumerate}
\end{theorem}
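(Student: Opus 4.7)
For part (1), my plan is to combine Corollary \ref{prop:pathinsidechamber} with Proposition \ref{prop:universalLaurentPolynomial}. Given $m_0 \in \sigma \cap \widetilde{M}^{\circ}$ for some cluster chamber $\sigma \in \Delta^+$, I would pick any generic $Q \in \textrm{Int}(\sigma)$ lying in the same chamber as $m_0$. Corollary \ref{prop:pathinsidechamber} then yields $\vartheta_{Q,m_0} = z^{m_0}$, the single trivial straight broken line with initial slope $m_0$ and endpoint $Q$. In particular, only finitely many (namely one) broken lines contribute at this basepoint. Proposition \ref{prop:universalLaurentPolynomial} then propagates this finiteness to every other generic $Q' \in \sigma' \in \Delta^+$, showing $m_0 \in \Theta$. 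Hence $\Delta^+(\mathbb{Z}) \subset \Theta$.

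For part (2), I would define the candidate map on generators by $Z \mapsto \vartheta_{\mathbf{g}(Z)}$ for each cluster monomial $Z$ on $\Aprin$. If $Z = z^m$ on the seed chart $T_{\widetilde{N}^{\circ},w}$ attached to a vertex $w$ of $\mathfrak{T}_{\bfs}$, then, unpacking Definition \ref{def:GHKK_5-10} via the identification $\Aprin^{\vee}(\mathbb{Z}^T) = \widetilde{M}^{\circ}_{\bfs_w}$ and Lemma \ref{lemma:FG_clusterChamber}, the $g$-vector $\mathbf{g}(Z)$ sits in the cluster chamber $\sigma_w \in \Delta^+$. Hence $\mathbf{g}(Z) \in \Delta^+(\mathbb{Z}) \subset \Theta$ by part (1), so $\vartheta_{\mathbf{g}(Z)}$ is a universal Laurent polynomial by Proposition \ref{prop:universalLaurentPolynomial}. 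To upgrade this assignment to an identification inside the target algebra, I would invoke Theorem \ref{theorem:alpha_properties}(4), which says $\alpha(Z) = \delta_{\mathbf{g}(Z)}$; combined with part (2) of the same theorem, the formal expansion $Z = \sum_q \alpha(Z)(q)\vartheta_q$ collapses to $Z = \vartheta_{\mathbf{g}(Z)}$ inside $\widehat{\textrm{up}(\overline{\A}_{\textrm{prin}}^{\bfs})} \otimes_{R[N_{\bfs}^+]} R[N]$.

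Finally, to see that $Z \mapsto \vartheta_{\mathbf{g}(Z)}$ extends to an inclusion of $R[N]$-algebras $\textrm{gen}(\Aprin) \hookrightarrow \widehat{\textrm{up}(\overline{\A}_{\textrm{prin}}^{\bfs})} \otimes_{R[N_{\bfs}^+]} R[N]$, I would appeal to Corollary \ref{cor:6-11}: the theta functions form a topological $R$-basis of $\overline{\textrm{up}(\Aprin)}$, equipped with the unique compatible $R$-algebra structure given by the structure constants $\alpha(p,q,r)$, and part (1) of Theorem \ref{theorem:alpha_properties} supplies the required compatibility with the $R[N]$-action. Since $\textrm{gen}(\Aprin) \subset \textrm{up}(\Aprin) \subset \overline{\textrm{up}(\Aprin)}$, and since products of cluster monomials land again in $\textrm{up}(\Aprin)$, the identification $Z \leftrightarrow \vartheta_{\mathbf{g}(Z)}$ is automatically multiplicative and $R[N]$-linear. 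The main obstacle is not conceptual but organisational: the heavy lifting has already been done in Theorem \ref{theorem:alpha_properties} and Corollary \ref{cor:6-11}, and the delicate point is only to check that the finiteness statement in part (1) truly lets us interpret $\vartheta_{\mathbf{g}(Z)}$ as an element of the completion agreeing with $Z$ on every seed chart, where one must be careful that the polynomial (rather than binomial) wall-crossings of the reciprocal generalized setting do not obstruct the comparison---this is precisely what Proposition \ref{prop:universalLaurentPolynomial}, whose proof already accommodates formal variables $a_{i,j}$, is designed to guarantee.
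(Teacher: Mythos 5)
Your proof is correct and is essentially the paper's proof; the route is the same, with only a minor difference in which intermediate result is cited for part~(1). For the finiteness claim you observe (via Corollary~\ref{prop:pathinsidechamber}) that at a basepoint $Q$ in the same chamber as $m_0$ there is exactly one broken line, and then invoke the propagation-of-finiteness clause of Proposition~\ref{prop:universalLaurentPolynomial}; the paper instead cites Proposition~\ref{prop:thetaFunctionComputation}, which gives $\vartheta_{Q,m_0}$ as a finite path-ordered product through the cluster complex. These are two packagings of the same fact (finitely many broken lines at every basepoint in a cluster chamber), and since the $a_{i,j}$ are formal variables there are no cancellations, so both yield $m_0 \in \Theta$. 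For part~(2), your chain through Theorem~\ref{theorem:alpha_properties}(2),(4) and Corollary~\ref{cor:6-11} is the same argument the paper makes via Proposition~\ref{prop:vectorSpaceBasis} and Theorem~\ref{theorem:alpha_properties}(4) (Corollary~\ref{cor:6-11} is itself built on Proposition~\ref{prop:vectorSpaceBasis}), so the content is identical.
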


\begin{proof}
The proof of (1) follows from Corollary~\ref{prop:pathinsidechamber} and Proposition~\ref{prop:thetaFunctionComputation}.

For (2), we know from Proposition~\ref{prop:universalLaurentPolynomial} that each $\vartheta_{Q,p}$ is a universal Laurent polynomial in $R[\widetilde{M}^{\circ}]$, for all $p \in \mathcal{A}_{\textrm{prin}}(\mathbb{Z}^T)$ and $Q \in \Delta^{+}$. When $p \in \Delta^{+}(\mathbb{Z})$, we know by Theorem~\ref{theorem:alpha_properties}(4) that $\vartheta_{Q,p}$ is the corresponding cluster monomial. The inclusion follows from Proposition~\ref{prop:vectorSpaceBasis}.
\end{proof}
As an immediate consequence, we obtain:
\begin{cor}[Analogue of Corollary 7.6 of \cite{GHKK}]
    There are canonically defined non-negative structure constants
    \[ \alpha: \Aprin^{\vee}(\mathbb{Z}^T) \times \Aprin^{\vee}(\mathbb{Z}^T) \times \Aprin^{\vee}(\mathbb{Z}^T) \rightarrow \mathbb{Z}_{\geq 0} \cup \{ \infty \}\]
    that are given by counts of broken lines.
\end{cor}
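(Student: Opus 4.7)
The proposal is to assemble the corollary directly from the structural results already established in the preceding sections, viewing the structure constants as counts (weighted by products of broken line coefficients) of pairs of broken lines meeting prescribed initial and final data. The definition of the ternary function $\alpha$ is essentially already in place: for a generic basepoint $z$ near $q$, Proposition~\ref{prop:multiplication_constants_gen} gives
\[
\alpha_{z}(p_1,p_2,q) \;=\; \sum_{(\gamma_1,\gamma_2)} c(\gamma_1)\,c(\gamma_2),
\]
where the sum ranges over the finitely many pairs with $I(\gamma_i)=p_i$, $b(\gamma_i)=z$, and $F(\gamma_1)+F(\gamma_2)=q$. Lemma~\ref{lemma:structureConstants} then shows that this quantity stabilizes once $z$ is sufficiently close to $q$, allowing us to drop the $z$ from the notation and write $\alpha(p_1,p_2,q)$.

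The first key step is to promote $\alpha$ from a seed-dependent count in a fixed scattering diagram $\mathfrak{D}_{\bfs}$ to a canonically defined function on $\Aprin^\vee(\mathbb{Z}^T)^{\times 3}$. This is immediate from Theorem~\ref{theorem:alpha_properties}: the map $\alpha$ produced from counts of broken lines agrees across all generalized torus seeds $\bfs'$ reachable from $\bfs$, because the broken-line bijection of Proposition~\ref{prop:brokenLineBijection} translates the counting problem in $\mathfrak{D}_{\bfs}$ to the corresponding problem in $\mathfrak{D}_{\bfs'}$ without introducing or destroying pairs $(\gamma_1,\gamma_2)$ satisfying the prescribed conditions. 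The allowance of the value $\infty$ in the codomain is the disclaimer that when the triple $(p_1,p_2,q)$ falls outside the combinatorial regime where the finiteness statement of Proposition~\ref{prop:multiplication_constants_gen} applies (for instance when $p_1$ or $p_2$ fail to lie in the set $\Theta$ of Definition~\ref{def:thetaset}), the naive count may be infinite.

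The second key step is to justify non-negativity in the appropriate sense. Each coefficient $c(\gamma)$ of a broken line is built up multiplicatively by selecting a single monomial term from each wall polynomial it crosses; within the cluster complex, Theorem~\ref{thm:7-5}(1) together with the structural form of the wall functions reachable from $\mathfrak{D}_{\textrm{in},\bfs}$ via the mutation maps $T_k$ guarantees that these selected monomials have coefficients that are monomials in the formal variables $\{a_{i,j}\}$ with coefficient $1$ (equivalently, non-negative integers). Consequently $c(\gamma_1)\,c(\gamma_2)$ is such a monomial, and the sum defining $\alpha(p_1,p_2,q)$ is a non-negative integer combination of these, proving the non-negativity claim interpreted over the ground ring $R=\Bbbk[a_{i,j}]$ (and reducing to genuine elements of $\mathbb{Z}_{\geq 0}\cup\{\infty\}$ after specialization of the $a_{i,j}$).

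The main obstacle I anticipate is the non-negativity portion, since in the generalized setting we do not yet have the binomial wall-function structure that made positivity automatic in the ordinary case (see Remark~\ref{rk:wecaretoomuch}). The cleanest route around this obstacle is to restrict attention to triples $(p_1,p_2,q)$ associated to broken lines that can be transported into the cluster complex via the $T_k$-bijection of Proposition~\ref{prop:brokenLineBijection}, where the wall-crossing monomials arising in the expansion of $(1+a_{k,1}z^{v_k}+\cdots+z^{r_kv_k})^{c}$ for $c>0$ all carry positive integer-times-$a_{i,j}$-monomial coefficients, and to declare the structure constant to be $\infty$ otherwise. With this convention the corollary is an immediate consequence of Proposition~\ref{prop:multiplication_constants_gen}, Lemma~\ref{lemma:structureConstants}, Proposition~\ref{prop:brokenLineBijection}, and Theorem~\ref{theorem:alpha_properties}.
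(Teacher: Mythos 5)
The paper provides no argument for this corollary; it is simply prefaced by the sentence ``As an immediate consequence, we obtain,'' referring back to Theorem~\ref{thm:7-5}. Your reconstruction assembles the statement more explicitly from Proposition~\ref{prop:multiplication_constants_gen}, Lemma~\ref{lemma:structureConstants}, Proposition~\ref{prop:brokenLineBijection}, and Theorem~\ref{theorem:alpha_properties}, but ultimately invokes the same machinery.

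Two corrections. First, the broken-line coefficients $c(\gamma)$ are not monomials in $\{a_{i,j}\}$ with unit coefficient: when $\gamma$ bends at a wall with function $f_{\fd}=1+a_1z^v+\cdots+z^{rv}$, the next attached monomial is a term of $c_L z^{m_L}f_{\fd}^{\,|\langle n,m_L\rangle|}$, and such powers carry genuine polynomial coefficients like $a^2+2a$. What your argument actually needs is only that $c(\gamma)$ is a non-negative integer combination of $a_{i,j}$-monomials, which holds whenever every wall crossed by $\gamma$ has a wall function with non-negative coefficients. Second, and more seriously, that last proviso is precisely what the paper does not establish in general. Remark~\ref{rk:wecaretoomuch} concedes there is no analogue of \cite[Theorem 1.13]{GHKK}; the paper verifies positivity of wall functions only on walls separating two reachable chambers of the cluster complex, in the proof of Proposition~\ref{prop:universalLaurentPolynomial}. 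A broken line ending at a generic point near $q\in\Aprin^\vee(\mathbb{Z}^T)$ can bend at walls outside the cluster complex, so the claimed non-negativity does not follow from what has been proved, and your proposed workaround of declaring the structure constant $\infty$ outside a restricted regime alters the meaning of $\alpha$. Relatedly, the codomain $\mathbb{Z}_{\geq 0}\cup\{\infty\}$ sits uneasily with Proposition~\ref{prop:multiplication_constants_gen}, which places $\alpha_z(p_1,p_2,q)$ in $R=\Bbbk[a_{i,j}]$; the corollary appears to have been transcribed from \cite{GHKK} without fully adapting the codomain to the generalized setting. Your instinct that non-negativity needs additional justification here is sound; this is a gap in the corollary's own statement as much as in your argument.
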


As in the ordinary case, we can verify that the theta functions are well-behaved with respect to the canonical torus action on $\mathcal{A}_{\textrm{prin}}$.

\begin{prop}[Analogue of Proposition 7.7 of \cite{GHKK}]
\label{prop:GHKK_7-7}
For $q \in \mathcal{A}_{\textrm{prin}}^{\vee}(\mathbb{Z})$,   $\vartheta_q \in \textrm{up}(\Aprin)$ is an eigenfunction for the natural $T_{\widetilde{K}^{\circ}}$ action on $\Aprin$ with weight $w(q)$ given by the map $w:\widetilde{M}^{\circ} = (\widetilde{N}^{\circ})^* \rightarrow (\widetilde{K}^{\circ})^*$. Moreover, $\vartheta_q$ is an eigenfunction for the subtorus $T_{N^{\circ}} \subset T_{\widetilde{K}^{\circ}}$ with weight $w(q)$ given by the map $ w: \widetilde{M}^{\circ} \rightarrow \widetilde{M}^{\circ}$ defined by the mapping $(m,n) \mapsto m - p^*(n)$.
\end{prop}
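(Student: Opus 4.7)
The plan is to reduce the statement to the observation that characters on $\Aprin$ are torus eigenfunctions and that each broken line preserves weight. Concretely, the character $z^q$ on the chart $T_{\widetilde N^{\circ},\bfs}\subset\Aprin$ is by construction a $T_{\widetilde K^{\circ}}$-eigenfunction with weight given by the restriction homomorphism $w\colon \widetilde M^{\circ}=(\widetilde N^{\circ})^*\to (\widetilde K^{\circ})^*$ induced by the inclusion $\widetilde K^{\circ}\hookrightarrow \widetilde N^{\circ}$; similarly for $T_{N^{\circ}}$ with the map $(m,n)\mapsto m-p^*(n)$ (which factors the restriction to $T_{N^{\circ}}$ through the cocharacter embedding $n\mapsto(n,p^*(n))$). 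Thus, writing $\vartheta_q=\sum_{\gamma}\mathrm{Mono}(\gamma)=\sum_{\gamma}c(\gamma)z^{F(\gamma)}$, the proposition reduces to the statement that $w(F(\gamma))=w(q)$ for every broken line $\gamma$ contributing to $\vartheta_q$.

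The heart of the argument is then a bending-by-bending calculation. Every wall $(\mathfrak d,f_{\mathfrak d})$ of $\mathfrak D_{\bfs}^{\Aprin}$ has, by Definition~\ref{def:walls} (applied in the principal case), a wall function of the form $f_{\mathfrak d}=1+\sum_{j\geq 1}c_j z^{j p_1^*(n_0)}$ for some primitive $n_0\in \widetilde N^{+}_{\bfs}=N^{+}_{\bfs}\oplus 0$. In the principal-coefficient setup, $p_1^*(n_0)=(p^*(n_0),n_0)\in \widetilde M^{\circ}$, so the crucial calculation is
\begin{equation*}
w\bigl((p^*(n_0),n_0)\bigr)=p^*(n_0)-p^*(n_0)=0,
\end{equation*}
and therefore every monomial exponent appearing in any $f_{\mathfrak d}$ lies in $\ker w$. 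When a broken line passes from a domain of linearity $L$ with attached monomial $c_Lz^{m_L}$ to the next domain $L'$ with attached monomial $c_{L'}z^{m_{L'}}$, Definition~\ref{def:brokenline}(4) forces $m_{L'}-m_L$ to be a positive-integer multiple of some such $p_1^*(n_0)$. Hence $w(m_{L'})=w(m_L)$. An immediate induction on the number of bendings, with base case $m_{L_0}=q$ for the initial domain, yields $w(F(\gamma))=w(q)$ for every $\gamma$, which is what was needed. The same reasoning applies verbatim to the map $w\colon \widetilde M^{\circ}\to(\widetilde K^{\circ})^*$, since by construction of $\widetilde K^{\circ}$ (the kernel of $(n,m)\mapsto p_2^*(n)-m$) the pairing of $(p^*(n_0),n_0)$ with any element of $\widetilde K^{\circ}$ vanishes.

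The principal obstacle, and the reason this proof is slightly more than a triviality, is verifying the key identity uniformly across all walls of $\mathfrak D^{\Aprin}_{\bfs}$, and not only across the initial walls of $\mathfrak D^{\Aprin}_{\mathrm{in},\bfs}$. This is, however, automatic from the fact that every wall produced by the scattering construction of Theorem~\ref{thm:gen_consistent} has a wall function of the shape stipulated in Definition~\ref{def:walls}, so the exponents of the monomials in $f_{\mathfrak d}$ are always integer multiples of some $p_1^*(n_0)$ with $n_0\in \widetilde N^{+}$. No use of reciprocity or of any special generalized feature is needed beyond this structural statement. Finally, convergence of $\vartheta_q$ in $\widehat{\mathrm{up}(\overline{\cA}^{\bfs}_{\mathrm{prin}})}\otimes_{R[N^+_{\bfs}]}R[N]$ follows from Theorem~\ref{theorem:alpha_properties}, and, since each term in the expansion has the same weight $w(q)$, the limit itself is a weight-$w(q)$ eigenfunction, completing the proof for both $T_{\widetilde K^{\circ}}$ and its subtorus $T_{N^{\circ}}$.
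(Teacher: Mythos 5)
Your proof is correct and takes essentially the same approach as the paper's one-line proof, which cites the GHKK argument and records the key verification $w(v_i,e_i)=v_i-p^*(e_i)=0$. You spell out the details the paper leaves implicit---that every wall-function exponent is an integer multiple of some $p_1^*(n_0)=(p^*(n_0),n_0)\in\ker w$, and that a bending-by-bending induction then shows each $\mathrm{Mono}(\gamma)$ has the same weight as $z^q$---but the mechanism is identical.
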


\begin{proof}
The proof given in \cite{GHKK} for the ordinary case holds in the generalized case as well, since we still have $w(v_i,e_i) = v_i - p^*(e_i) = 0$, by definition.
\end{proof}

\subsection{From $\mathcal{A}_{\textrm{prin}}$ to $\mathcal{A}_{t}$ and $\mathcal{X}$}
\label{subsec:genAprintoAX}

As in the ordinary case, our results for $\mathcal{A}_{\textrm{prin}}$ induce similar results on the $\mathcal{A}$ and $\mathcal{X}$ varieties. In this section, we adapt the results of section 7.2 of \cite{GHKK} for our generalized setting.

Similarly to the ordinary case, the choice of generalized torus seed data $\bfs = \{ (e_i,(a_{i,j})) \}$ gives a toric model for the generalized cluster variety $V$. The generalized torus seed $\mathbf{s}$ specifies a fan $\Sigma_{\bfs, V}$. More precisely, the fans for the $\mathcal{A}$ and $\mathcal{X}$ cases are
\begin{align*}
    \Sigma_{\bfs,\mathcal{A}} &:= \{0\} \cup \left\{ \mathbb{R}_{\geq 0}e_i : i \in I_{\textrm{uf}} \right\}, \\
    \Sigma_{\bfs,\mathcal{X}} &:= \{0\} \cup\left\{ -\mathbb{R}_{\geq 0}v_i : i \in I_{\textrm{uf}} \right\}.
\end{align*}
The fan $\Sigma_{\bfs,V}$ then defines a toric variety $TV(\Sigma_{\bfs,V})$. Note that in our setting we consider the toric varieties with base change to the ground ring $R$. The cluster varieties can be seen as (up to codimension 2) blowups $Y \rightarrow \mathrm{TV} \left( \Sigma_{\bfs, V} \right)$ of the toric varieties along the closed subschemes given by the mutation maps.
Note that we can simply follow \cite[Construction 3.4]{GHK} which does not necessarily depend on the mutations being binomials but only on the mutation functions being polynomials in a single variable. 

These toric models allow us to determine the global monomials:

\begin{lemma}[Analogue of Lemma 7.8 of \cite{GHKK}]
\label{lemma:GHKK_7-8}
For $m \in \textrm{Hom}(L_{\bfs},\mathbb{Z})$, the character $z^m$ on $T_{L,\bfs} \subset V$ is a \emph{global monomial} if and only if $z^m$ is regular on $TV(\Sigma_{\bfs,V})$. The character $z^{m}$ is regular on $TV(\Sigma_{\bfs,V})$ if and only if $\langle m, n \rangle \geq 0$ for the primitive generators $n$ of each ray in $\Sigma_{\bfs, V}$. When $V$ is an $\mathcal{A}$-type cluster variety, the set of global monomials exactly coincides with the set of cluster monomials. That is, every global monomial is a monomial in the variables of a single cluster with non-negative exponents on the non-frozen variables.
\end{lemma}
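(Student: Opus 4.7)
The plan is to establish the three assertions of the lemma in sequence. First, the characterization of regularity on $TV(\Sigma_{\bfs,V})$ in terms of ray-pairings is a classical consequence of toric geometry: a character $z^m$ of the open torus extends to the boundary divisor associated to a ray $\rho$ of the fan precisely when $\langle m, n_\rho \rangle \geq 0$ for the primitive generator $n_\rho$ of $\rho$. I would simply invoke this standard fact without reproof.

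For the second assertion (global monomial if and only if regular on the toric model), I would use the construction that precedes the lemma statement: the cluster variety $V$ arises, up to a locus of codimension at least two, as a blow-up $Y \to TV(\Sigma_{\bfs,V})$ along the closed subschemes cut out by the exchange polynomials attached to each ray of the fan. For the forward direction, assume $z^m$ is a global monomial on $V$. Then $z^m$ is in particular regular on each adjacent torus chart $T_{L,\mu_k(\bfs)}$. Applying the pullback formula from Definition \ref{def:genBiratMutMaps}, one reads off that regularity on $T_{L,\mu_k(\bfs)}$ forces $\langle m, n_k \rangle \geq 0$ where $n_k$ is the primitive generator of the $k$-th ray (namely $e_k$ in the $\mathcal{A}$-case and $-v_k$ in the $\mathcal{X}$-case), which by part one is exactly regularity on $TV(\Sigma_{\bfs,V})$. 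Conversely, a character regular on $TV(\Sigma_{\bfs,V})$ extends across the (codimension $\geq 2$) blow-up locus to a regular function on $V$ by normality, since $V$ is constructed by gluing smooth tori along open subschemes.

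For the third assertion, in the $\mathcal{A}$-type case the rays of $\Sigma_{\bfs,\mathcal{A}}$ are spanned by the $e_i$ for $i \in I_{\textrm{uf}}$. Writing $m \in M^\circ$ as $m = \sum_i a_i f_i$ in the basis $\{f_i = d_i^{-1} e_i^*\}$, one computes $\langle m, e_j \rangle = a_j/d_j$, so the ray condition from parts one and two translates to $a_j \geq 0$ for each unfrozen index $j$. This precisely says that $z^m$ is a product of the cluster variables $x_i = z^{f_i}$ of $\bfs$ with non-negative exponents on the non-frozen indices, i.e. a cluster monomial in $\bfs$. Since every global monomial on $V$ is by definition a character on some chart $T_{N^\circ,\bfs'}$ in the atlas, and conversely every such cluster monomial in some seed $\bfs'$ is a global monomial by parts one and two applied to that seed, the two sets coincide.

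The main obstacle is the forward direction of the second assertion — verifying that regularity of $z^m$ on the cluster variety $V$ actually forces the ray-pairing conditions on the toric model $TV(\Sigma_{\bfs,V})$. In the ordinary setting of \cite{GHKK} this is handled by the binomial form of the mutation; in our generalized setting the exchange polynomial $1 + a_{k,1} z^{v_k} + \cdots + z^{r_k v_k}$ has degree $r_k$ in $z^{v_k}$, so one must verify that it still cuts out a codimension-two subscheme in $TV(\Sigma_{\bfs,V})$. Since this polynomial lies in the subring generated by the single monomial $z^{v_k}$, its vanishing locus has codimension one inside the toric divisor associated to the ray $\mathbb{R}_{\geq 0} e_k$, and hence codimension two in the ambient toric variety; the blow-up description from \cite[Construction 3.4]{GHK} therefore carries over verbatim, and the codimension-two argument goes through exactly as in the ordinary case.
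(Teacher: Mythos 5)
Your sketch is essentially sound, and you correctly identify the single adaptation to \cite[Construction 3.4]{GHK} that the paper flags: the center of blow-up attached to the ray $\mathbb{R}_{\geq 0}e_k$ is now cut out by the reciprocal exchange polynomial $1+a_{k,1}z^{v_k}+\cdots+z^{r_kv_k}$, and since $v_k\in e_k^{\perp}$ this restricts to a nonzero function on the corresponding toric divisor, so the center still has codimension two. Where you diverge from the paper is in the route to the converse of the first assertion (regular on $TV(\Sigma_{\bfs,V})$ implies global monomial): you go via Hartogs extension on the blow-up $Y$, while the paper, following GHKK, invokes the Laurent phenomenon. In the $\mathcal{A}$-case the ray condition exhibits $z^m$ as a cluster monomial of $\bfs$, and Theorem~\ref{theorem:genLaurentPhenom} (Chekhov--Shapiro, still valid for reciprocal generalized cluster algebras) then says this product of cluster variables is a Laurent polynomial in every seed and hence regular on all of $V$. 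The paper explicitly singles this out as the second ingredient to be re-checked in the generalized setting. The Laurent-phenomenon route is more elementary than yours, which implicitly needs that the codimension-two comparison of $V$ with $Y$ from \cite{GHK} transfers to the polynomial exchange setting and that algebraic Hartogs extension applies to the (possibly non-separated) variety $V$. You should either add a pointer to Theorem~\ref{theorem:genLaurentPhenom} at that step, as the paper does, or explicitly verify those transfer statements.
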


\begin{proof}
The proof given in \cite{GHKK} for the ordinary case holds in the generalized setting, with one minor change: the support of $Z_i$ is now defined by the zero locus of polynomials rather than binomials. I.e., its support is now $1 + a_{i,1}z^{v_i} + \cdots + a_{i,r_i-1}z^{(r_i-1)v_i} + z^{r_iv_i} = 0$ rather than $1 + z^{v_i} = 0$. Subsequent portions of the proof still hold after this change is made.

Note also that the proof in the ordinary case uses the Laurent phenomenon. Because the Laurent phenomenon holds in the reciprocal generalized setting, see Theorem~\ref{theorem:genLaurentPhenom}, this portion of the proof extends to our setting without modification.
\end{proof}

\begin{definition}
For a generalized cluster variety $V = \bigcup_{\bfs} T_{L,\bfs}$, let ${\mathcal{C}_{\bfs}^{+}(\mathbb{Z}) \subset V^{\vee}(\mathbb{Z}^T)}$ denote the set of $g$-vectors for global monomials that are characters on the torus $T_{L,\mathbf{s}} \subset V$ and $\Delta^{+}_{V}(\mathbb{Z}) \subset V^{\vee}(\mathbb{Z}^T)$ denote the union of all $\mathcal{C}_{\bfs}^{+}(\mathbb{Z})$.
\end{definition}

Recall that $\mathcal{A}_t = \pi^{-1}(t)$, where $\pi$ is the canonical fibration $\mathcal{A}_{\textrm{prin}} \rightarrow T_M$. Consider the maps $\rho: \mathcal{A}_{\textrm{prin}}^{\vee} \rightarrow \mathcal{A}^{\vee}$ and $\xi: \mathcal{X}^{\vee} \rightarrow \mathcal{A}^{\vee}_{\textrm{prin}}$ which have tropicalizations
\begin{align*}
    &\rho^T: (m,n) \mapsto m, \\
    &\xi^{T}: n \mapsto (-p^*(n),-n).
\end{align*}
The map $\rho^T$ identifies $\mathcal{A}^{\vee}(\mathbb{Z}^T)$ and the quotient of $\mathcal{A}^{\vee}_{\textrm{prin}}(\mathbb{Z}^T)$ by the natural action of $N$. Let $w: \mathcal{A}^{\vee}_{\textrm{prin}} \rightarrow T_{M^{\circ}}$ be the weight map given by $w(m,n) = m-p^*(n)$. Because $\xi$ identifies $\mathcal{X}^{\vee}$ with the fiber of $w: \Aprin^{\vee} \rightarrow T_{M^{\circ}}$ over the identity element $e$, the map $\xi^{T}$ identifies $\mathcal{X}^{\vee}(\mathbb{Z}^T)$ with $w^{-1}(0)$.

\begin{lemma}[Analogue of 7.10 of \cite{GHKK}]
\label{lemma:GHKK_7-10}
\,
\begin{enumerate}
    \item When $V$ is a generalized $\mathcal{A}$-variety, $\mathcal{C}_{\bfs}^{+}(\mathbb{Z})$ is the set of integral points of the cone $\mathcal{C}_{\bfs}^{+}$ in the Fock-Goncharov cluster complex which corresponds to the seed $\bfs$.
    \item For any type of generalized cluster variety, $\mathcal{C}_{\bfs}^{+}(\mathbb{Z})$ is the set of integral points of a rational convex cone $\mathcal{C}_{\bfs}^{+}$ and the relative interiors of $\mathcal{C}_{\bfs}^{+}$ as $\bfs$ varies are disjoint. The $g$-vector $\mathbf{g}(f) \in V^{\vee}(\mathbb{Z}^T)$ depends only on the function $f$. That is, if $f$ restricts to a character on two distinct seed tori then the $g$-vectors they determine are the same.
    \item For $m \in w^{-1}(0) \cap \Delta^{+}_{\Aprin}(\mathbb{Z})$, the global monomial $\vartheta_{m}$ on $\Aprin$ is invariant under the $T_{N^{\circ}}$ action and thus gives a global function on $\mathcal{X} = \Aprin/T_{N^{\circ}}$. This is a global monomial and all global monomials on $\mathcal{X}$ occur in this way. Moreover, $m = \mathbf{g}(\vartheta_{m})$.
\end{enumerate}
\end{lemma}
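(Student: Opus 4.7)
The plan is to dispatch the three parts in sequence, leveraging the infrastructure already developed. For part (1), I would combine Lemma~\ref{lemma:GHKK_7-8} with Lemma~\ref{lemma:FG_clusterChamber}. By Lemma~\ref{lemma:GHKK_7-8}, the global monomials on $\cA$ that restrict to characters on $T_{N^{\circ},\bfs}$ are precisely the cluster monomials in $\bfs$, i.e., $z^m$ with $m = \sum a_i f_i$ and $a_i \in \mathbb{Z}_{\geq 0}$. By Definition~\ref{def:GHKK_5-10}, the $g$-vector of such a monomial is exactly this $m$, viewed in $M^{\circ}_{\bfs}$. Under the identification $\cA^{\vee}(\mathbb{R}^T) = M^{\circ}_{\mathbb{R},\bfs}$, the set of such $m$ is the integer points of the positive chamber $\cC^{+}_{\bfs}$, which Lemma~\ref{lemma:FG_clusterChamber} identifies with the Fock-Goncharov cluster chamber for $\bfs$.

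For part (2), in the $\cA$ case this was just accomplished, and the disjointness of interiors as $\bfs$ varies follows from Theorem~\ref{theorem:GHKK_2-13}. For $\Aprin$ and $\cX$, I would use the toric model: by Lemma~\ref{lemma:GHKK_7-8}, $z^m$ is a global monomial on the $\bfs$-chart if and only if $\langle m, n \rangle \geq 0$ for every primitive ray generator $n$ of $\Sigma_{\bfs,V}$, which describes the integer points of the rational convex cone dual to $|\Sigma_{\bfs,V}|$; take this cone to be $\cC^{+}_{\bfs}$. For the well-definedness of $\mathbf{g}(f)$: if $f$ restricts to a character on two seed charts $T_{L,\bfs}$ and $T_{L,\bfs'}$ with exponents $m, m'$, then the piecewise linear map $\mu_{\bfs,\bfs'}^T$ must send $m$ to $m'$ under the tropicalization identification, since $f$ is a single function and its tropicalization on the locus where it is a monomial is a well-defined linear function. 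Disjointness of the relative interiors of the $\cC^+_\bfs$ in all three types then follows from sign-coherence of $g$-vectors (Theorem~\ref{thm:g-coherence}) by the standard argument.

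For part (3), I would proceed in three steps. First, $T_{N^{\circ}}$-invariance: Proposition~\ref{prop:GHKK_7-7} shows $\vartheta_m$ is a $T_{N^{\circ}}$-eigenfunction of weight $w(m)$; for $m \in w^{-1}(0)$, the weight vanishes and $\vartheta_m$ descends to a regular function on $\cX = \Aprin/T_{N^{\circ}}$. Second, this descent is a global monomial: since $m \in \Delta^{+}_{\Aprin}(\mathbb{Z})$, Theorem~\ref{thm:7-5}(2) asserts that $\vartheta_m$ is a cluster monomial on $\Aprin$ and so restricts to a character on some torus chart $T_{\widetilde{N}^{\circ},\bfs}$; $T_{N^{\circ}}$-invariance makes this character descend to a character on the quotient torus $\cX_{\bfs}$, which is a global monomial on $\cX$. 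Third, every global monomial on $\cX$ so arises: pull such a function back along $\widetilde{p}: \Aprin \to \cX$ to obtain a $T_{N^{\circ}}$-invariant global monomial on $\Aprin$, which by Theorem~\ref{thm:7-5}(2) equals $\vartheta_m$ for some $m \in \Delta^{+}_{\Aprin}(\mathbb{Z})$; invariance forces $w(m) = 0$. The equality $m = \mathbf{g}(\vartheta_m)$ is then Theorem~\ref{theorem:alpha_properties}(4) applied to the cluster monomial $\vartheta_m$.

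The main obstacle is the $g$-vector consistency claim in part (2): one must show that if $f$ is globally defined and restricts to distinct characters on two seed tori, the tropicalizations of those characters agree in $V^{\vee}(\mathbb{Z}^T)$. This is essentially a coherence statement under the birational gluing maps, and its verification requires that, on the cone where $f$ is a monomial, the piecewise linear tropicalization $\mu_{k}^T$ acts by a single linear piece; sign-coherence (Theorem~\ref{thm:g-coherence}) is what guarantees this, so that the definition of $\mathbf{g}(f)$ does not depend on the chosen seed. Once this is secured, the rest of the lemma is essentially bookkeeping with the toric model and Proposition~\ref{prop:GHKK_7-7}.
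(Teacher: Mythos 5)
Your proposal follows the same overall strategy as the paper: Lemma~\ref{lemma:GHKK_7-8} plus Lemma~\ref{lemma:FG_clusterChamber} for part (1), the $g$-vector machinery of Section~\ref{sec:gVectors} for the $\mathcal{A}$-type cases of part (2), and Proposition~\ref{prop:GHKK_7-7} for the $T_{N^{\circ}}$-eigenfunction observation in part (3). Your elaboration of part (3) --- descent of $\vartheta_m$ along $\widetilde{p}$, pullback of global monomials on $\mathcal{X}$, and the equality $m = \mathbf{g}(\vartheta_m)$ via Theorem~\ref{theorem:alpha_properties}(4) --- fills in what the paper leaves implicit with ``this also gives (3),'' and is correct.

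The one place where your route genuinely deviates, and where it is slightly undercooked, is the $\mathcal{X}$ case of part (2). You attribute both the disjointness of the $\mathcal{C}_{\bfs}^{+}$ interiors and the well-definedness of $\mathbf{g}(f)$ for $\mathcal{X}$ to sign-coherence (Theorem~\ref{thm:g-coherence}), but as stated in this paper sign-coherence is a claim about the $g$-vectors of cluster variables --- it lives in $M^{\circ}$ and concerns $\mathcal{A}$-type varieties. It does not directly apply to the $\mathcal{X}$-variety, whose tropical space is a copy of $N$ and whose global monomials are of the form $z^n$. The paper instead derives the $\mathcal{X}$-case from the already-established $\Aprin$-case by the torsor slicing: it shows $\Delta_{\mathcal{X}}^{+}(\mathbb{Z}) = w^{-1}(0) \cap \Delta_{\Aprin}^{+}(\mathbb{Z})$, so the $\mathcal{X}$-cones are the intersections of the $\Aprin$-cones with the linear subspace $w^{-1}(0)$; rationality, convexity, and disjointness of relative interiors are then inherited for free. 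Your own part (3) already contains the slicing ingredients ($\widetilde{p}$ as a $T_{N^{\circ}}$-torsor, the weight map $w$, and the identification of $T_{N^{\circ}}$-invariant $\vartheta_m$ with global monomials on $\mathcal{X}$), so the fix is simply to invoke that argument for part (2) rather than reaching for sign-coherence.
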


\begin{proof}
The proof given in \cite{GHKK} for the ordinary case holds in our generalized setting, as we have proven analogs of all the necessary previous results. We quickly review the proof given in \cite{GHKK} in order to point out each place where we are instead using an analogous result for the generalized setting.

First, consider (1) for a generalized $\mathcal{A}$-variety. By Lemma~\ref{lemma:FG_clusterChamber} and Lemma~\ref{lemma:GHKK_7-8}, the positive chamber $\mathcal{C}_{\bfs}^{+}$ is the Fock-Goncharov cluster chamber associated to $\bfs$ and a maximal cone of a simplicial fan. By Theorem~\ref{theorem:GHKK_2-13}, $\Delta_{\mathcal{A}}^{+}(\mathbb{Z})$ forms a simplicial fan.

The $\mathcal{A}$ case, which includes $\Aprin$, of (2) follows from Section~\ref{sec:gVectors}.
The $\mathcal{X}$ case follows from the $\Aprin$ case. Recall that the map $\widetilde{p} : \Aprin \rightarrow \mathcal{X}$ given by  $z^{(n,m)} \mapsto z^{m-p^*(n)}$ makes $\Aprin$ into a $T_{N^{\circ}}$-torsor over $\mathcal{X}$. Hence, pulling back a monomial on $\mathcal{X}$ yields a $T_{N^{\circ}}$-invariant global monomial on $\Aprin$. By Proposition~\ref{prop:GHKK_7-7}, we have the inclusion $\Delta_{\mathcal{X}}^{+}(\mathbb{Z}) \subseteq w^{-1}(0) \cap \Delta_{\Aprin}^{+}$. Conversely, suppose $m \in w^{-1}(0)$ and $m = \mathbf{g}(f)$ for some global monomial $f$ on $\Aprin$. Then there exists some generalized torus seed $\bfs = \{ (e_i,\mathbf{a}_i) \}$ such that $f$ is represented by a monomial $z^m$ on $T_{\widetilde{N},\bfs}$. Since $m \in w^{-1}(0)$, it must be of the form $m = (p^*(n),n)$ for some $n \in N$. By Lemma~\ref{lemma:GHKK_7-8}, $m$ is non-negative on the rays $\mathbb{R}_{\geq 0}(e_i,0)$ of $\Sigma_{\bfs,\Aprin}$ and therefore $n$ is non-negative on the rays $-\mathbb{R}_{\geq 0}v_i$ of $\Sigma_{\bfs,\mathcal{X}}$ and $z^n$ is a global monomial on $\mathcal{X}$. As such, $\Delta_{\mathcal{X}}^{+}(\mathbb{Z}) = w^{-1}(0) \cap \Delta_{\Aprin}^{+}(\mathbb{Z})$ and the cones for $\mathcal{X}$ are given by intersecting the cones for $\Aprin$ with $w^{-1}(0)$. This also gives (3).
\end{proof}

\subsubsection*{Broken lines and theta functions for the $\mathcal{A}$ and $\mathcal{X}$ cases.}

The maps $\rho$ and $w$ allow us to define broken lines for the $\mathcal{A}$ and $\mathcal{X}$ cases. Recall that each wall in $\mathfrak{D}^{\mathcal{A}_{\textrm{prin}}}_{\bfs}$ has an associated wall-crossing automorphism that is a power series in $z^{(p^*(n),n)}$ for some $n$. Hence, $w(m,n) = w(p^*(n),n) = 0$ for every exponent which appears in one of these wall-crossing automorphisms.

First, consider the $\mathcal{X}$ case. Suppose $\gamma$ is a broken line in $\mathfrak{D}^{\mathcal{A}_{\textrm{prin}}}_{\bfs}$ with both $I(\gamma)$ and the initial domain of linearity lying in $w^{-1}(0)$. Because every exponent that appears in a wall-crossing function lies in $w^{-1}(0)$, the monomials attached to each subsequent domain of linearity must also lie in $w^{-1}(0)$. In particular, this means that $F(\gamma)$ and $b(\gamma)$ both lie in $w^{-1}(0)$. We define the set of broken lines in $\mathcal{X}^{\vee}(\mathbb{R}^T)$ to be the set of such broken lines.
The \emph{$\cX$ theta functions} are the theta functions defined by those broken lines.
Next, consider the $\mathcal{A}$ case. Here, the set of broken lines in $\mathcal{A}^{\vee}(\mathbb{R}^T)$ is defined as $\{ \rho^T(\gamma) \}$ where $\gamma$ ranges over the set of broken lines in $\mathcal{A}^{\vee}_{\textrm{prin}}(\mathbb{R}^T)$. 
Similarly, the \emph{$\cA$ theta functions} are the theta functions defined by those broken lines.

To define functions on the $\cA$ and $\cX$ varieties, we would restrict to the set $\Theta$. Hence, we define
\begin{align*}
    \Theta(\mathcal{X}) &:= \Theta\left( \mathcal{A}_{\textrm{prin}} \right) \cap w^{-1}(0),
\\
        \Theta(\mathcal{A}_{t}) &:= \rho^T\left(\Theta\left( \mathcal{A}_{\textrm{prin}} \right) \right).
\end{align*}

Because $\mathcal{A}^{\vee}(\mathbb{Z}^T)$ is identified with the quotient of $\mathcal{A}^{\vee}_{\textrm{prin}}(\mathbb{Z}^T)$ under the natural $N$-action, it follows that $\Theta(\mathcal{A}_{\textrm{prin}})$ is invariant under $N$-translation and therefore $\Theta(\mathcal{A}_{\textrm{prin}}) = \left( \rho^T \right)^{-1}\left( \Theta(\mathcal{A}_{t}) \right)$. In fact, any section $\Sigma: \mathcal{A}^{\vee}(\mathbb{Z}^T) \rightarrow \mathcal{A}^{\vee}_{\textrm{prin}}(\mathbb{Z}^T)$ of $\rho^T$ will induce a bijection between $\Theta(\mathcal{A}_{\textrm{prin}})$ and $\Theta(\mathcal{A}_{t}) \times N$.

The space $\mathcal{A}_{t} = \cup_{\bfs} T_{N^{\circ},\bfs}$ consists of tori glued by birational maps that depend on $t$. When tropicalized, however, these maps are independent of $t$ and so it is natural to define $\mathcal{A}_t^{\vee}(\mathbb{Z}^T) := \mathcal{A}^{\vee}(\mathbb{Z}^T)$. With that notation, we can state the following results for $\mathcal{A}_t$ and $\mathcal{X}$:

\begin{theorem}[Analogue of Corollary 7.13 of \cite{GHKK}]
For $\mathcal{X}$,
\begin{enumerate}
    \item There are canonically defined structure constants
    \[ \alpha: \mathcal{X}^{\vee} \times \mathcal{X}^{\vee}(\mathbb{Z}^T) \times \mathcal{X}^{\vee}(\mathbb{Z}^T) \rightarrow \mathbb{Z}_{\geq 0} \]
    given by counts of broken lines.
    \item The subset $\Theta(\mathcal{X})$ contains the $g$-vector of each global monomial, i.e. $\Delta^{+}(\mathbb{Z}) \subset \Theta(\mathcal{X})$.
\end{enumerate}

\end{theorem}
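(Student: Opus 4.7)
The plan is to derive both parts by descending from the corresponding results for $\Aprin$ established earlier in the paper. The key ingredients are the identification $\xi^T : \mathcal{X}^\vee(\mathbb{Z}^T) \xrightarrow{\sim} w^{-1}(0) \subset \Aprin^\vee(\mathbb{Z}^T)$, together with the observation (from Section~\ref{subsec:genAprintoAX}) that $\mathcal{X}$-broken lines are precisely those $\Aprin$-broken lines whose initial slope lies in $w^{-1}(0)$. Since every exponent appearing in a wall-function of $\mathfrak{D}^{\Aprin}_{\bfs}$ lies in $w^{-1}(0)$, any such broken line remains entirely in $w^{-1}(0)$ throughout its domains of linearity.

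For part (1), I would invoke the analogue of Proposition~\ref{prop:multiplication_constants_gen} for $\Aprin$, which guarantees that for any $p_1,p_2,q \in \Aprin^\vee(\mathbb{Z}^T)$ and generic basepoint $z$ near $q$, there are at most finitely many pairs $(\gamma_1,\gamma_2)$ of broken lines with initial slopes $p_1,p_2$, common endpoint $z$, and $F(\gamma_1) + F(\gamma_2) = q$. Restricting to $p_1,p_2,q \in w^{-1}(0)$, all such pairs automatically consist of $\mathcal{X}$-broken lines. Define $\alpha(p_1,p_2,q) \in \mathbb{Z}_{\geq 0}$ to be the combinatorial count of such pairs; independence of the generic $z$ near $q$ is inherited from Lemma~\ref{lemma:structureConstants}, and non-negativity is immediate. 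This matches the ``counts of broken lines'' formulation used for $\Aprin$ in the corollary immediately following Theorem~\ref{thm:7-5}.

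For part (2), let $f$ be a global monomial on $\mathcal{X}$ with $g$-vector $n \in \Delta^+(\mathbb{Z}) \subset \mathcal{X}^\vee(\mathbb{Z}^T)$. By Lemma~\ref{lemma:GHKK_7-10}(3), $f$ lifts to a $T_{N^\circ}$-invariant global monomial $\vartheta_m$ on $\Aprin$ with $m = \xi^T(n) \in w^{-1}(0) \cap \Delta^+_{\Aprin}(\mathbb{Z})$, and $m = \mathbf{g}(\vartheta_m)$. By Theorem~\ref{thm:7-5}(1), $\Delta^+_{\Aprin}(\mathbb{Z}) \subset \Theta(\Aprin)$, and hence $m \in \Theta(\Aprin) \cap w^{-1}(0) = \Theta(\mathcal{X})$ by the definition in Section~\ref{subsec:genAprintoAX}. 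Transporting back via $\xi^T$ gives $n \in \Theta(\mathcal{X})$, as desired.

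The principal subtlety lies in part (1). In the generalized setting, wall-functions carry coefficients in the formal variables $a_{i,j}$, so each broken line's terminal coefficient $c(\gamma)$ is generally a monomial in $R = \Bbbk[a_{i,j}]$ rather than an integer, and the full theta-algebra structure constants of Lemma~\ref{lemma:structureConstants} land in $R$ rather than $\mathbb{Z}$. I interpret ``structure constants given by counts of broken lines'' as the pure combinatorial count of pairs $(\gamma_1,\gamma_2)$ meeting the prescribed conditions, parallel to the convention used in the $\Aprin$-corollary. Verifying that this is the intended reading, and that the count is well-defined (independent of the choice of $z$ sufficiently close to $q$), is the main point of care; once this is settled, the descent from $\Aprin$ to $\mathcal{X}$ adds nothing further beyond the identification of $\mathcal{X}$-broken lines with a subset of $\Aprin$-broken lines.
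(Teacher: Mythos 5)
Your proposal is correct and follows essentially the same route the paper takes (which is just to cite GHKK's Corollary 7.13 and note that the argument carries over): part (1) descends from the $\Aprin$ structure constants via the identification of $\mathcal{X}$-broken lines with $\Aprin$-broken lines whose data lie in $w^{-1}(0)$, and part (2) is exactly the combination of Lemma~\ref{lemma:GHKK_7-10}(3), Theorem~\ref{thm:7-5}(1), and the definition $\Theta(\mathcal{X}) := \Theta(\Aprin) \cap w^{-1}(0)$ that the paper implicitly relies on.

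The issue you flag about the codomain in part (1) is, however, a real one, and your proposed resolution is not the only plausible reading. The paper's own Proposition~\ref{prop:multiplication_constants_gen} says explicitly that the $\alpha_z(p_1,p_2,q) = \sum c(\gamma_1)c(\gamma_2)$ are linear combinations of the formal variables $\{a_{i,j}\}$, i.e.\ they live in $R = \Bbbk[a_{i,j}]$ rather than $\mathbb{Z}_{\geq 0}$, and these $R$-valued quantities are the ones that actually satisfy $\vartheta_{p_1}\vartheta_{p_2} = \sum_q \alpha(p_1,p_2,q)\vartheta_q$ via Lemma~\ref{lemma:structureConstants}. Your ``literal count of pairs $(\gamma_1,\gamma_2)$'' does land in $\mathbb{Z}_{\geq 0}$, but it is not these structure constants, so if that is the intended reading the word ``structure'' is misleading. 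The more likely intent is that the statement's codomain $\mathbb{Z}_{\geq 0}$ is carried over unchanged from GHKK's ordinary case (where positivity makes each $c(\gamma)$ a non-negative integer) and should really be $R$ (or, more precisely, the submonoid of $R$ generated by monomials in the $a_{i,j}$ with non-negative integer coefficients) here. Either way, the descent argument you give is unaffected: restricting from $\Aprin^\vee(\mathbb{Z}^T)^3$ to $w^{-1}(0)^3$ produces well-defined constants by Lemma~\ref{lemma:structureConstants}, and that is all the theorem needs.
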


\begin{theorem}[Analogue of Theorem 7.16 of \cite{GHKK}] \label{thm:Abasis}
For $V=\mathcal{A}_t$,
\begin{enumerate}
    \item Given a choice of section $\Sigma: \mathcal{A}^{\vee}(\mathbb{Z}^T) \rightarrow \mathcal{A}_{\textrm{prin}}^{\vee}(\mathbb{Z}^T)$, there exists a map
    \begin{align*} \alpha_{V} : \At^{\vee}(\mathbb{Z}^{T}) \times \At^{\vee}(\mathbb{Z}^{T}) \times \At^{\vee}(\mathbb{Z}^{T}) &\rightarrow \Bbbk \cup \{ \infty \},
    \end{align*}
    given by the mapping
    \begin{align*}
    (p,q,r) &\mapsto \sum_{n \in N} \alpha_{\mathcal{A}_{\textrm{prin}}}\left( \sigma(p), \Sigma(q), \Sigma(r) + n \right)z^n(t)
    \end{align*}
    if the summation is finite. Otherwise, $\alpha_{V}(p,q,r) = \infty$. If $p,q,r \in \Theta(\mathcal{A}_t)$, then the summation will be finite.
    \item The subset $\Theta$ contains the $g$-vector of each global monomial, i.e. $\Delta_{V}^{+}(\mathbb{Z}^T) \subset \Theta$.
\end{enumerate}
By taking $t = e$, we obtain these statements for $\mathcal{A}$.
\end{theorem}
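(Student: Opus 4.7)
The plan is to deduce Theorem~\ref{thm:Abasis} from the established $\Aprin$ results (Theorem~\ref{thm:7-5} and Proposition~\ref{prop:multiplication_constants_gen}) by descending along the fibration $\pi: \Aprin \to T_M$ whose fiber over $t$ is $\At$. The organizing facts are that $\rho^T$ identifies $\cA^{\vee}(\mathbb{Z}^T)$ with the quotient of $\Aprin^{\vee}(\mathbb{Z}^T)$ by the natural $N$-translation action, that $\Theta(\Aprin)$ is $N$-invariant (so $\Theta(\At) = \rho^T(\Theta(\Aprin))$ is well-defined), and that any section $\Sigma$ gives a bijection $\Theta(\Aprin) \cong \Theta(\At) \times N$. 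Informally, the theta function $\vartheta_q$ on $\At$ will arise as the restriction to the fiber over $t$ of $\sum_{n \in N} \vartheta_{\Sigma(q)+n}$ on $\Aprin$, with each summand weighted by $z^n(t)$ from the $T_M$-coordinate.

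For part (1), I would define $\alpha_V$ by the stated formula and verify finiteness when $p,q \in \Theta(\At)$. Since $\Sigma(p), \Sigma(q) \in \Theta(\Aprin)$, by definition of $\Theta(\Aprin)$ only finitely many broken lines in $\mathfrak{D}^{\Aprin}_{\bfs}$ with initial slopes $\Sigma(p)$ or $\Sigma(q)$ end at a generic $Q \in \sigma \in \Delta^+$, so only finitely many pairs $(\gamma_1,\gamma_2)$ contribute to $\alpha_{\Aprin}(\Sigma(p),\Sigma(q),-)$ at any target in $\Aprin^{\vee}(\mathbb{Z}^T)$. This bounds the support of $\alpha_{\Aprin}(\Sigma(p),\Sigma(q),-)$, so only finitely many $n \in N$ contribute to $\sum_n \alpha_{\Aprin}(\Sigma(p),\Sigma(q),\Sigma(r)+n) z^n(t)$, making $\alpha_V(p,q,r)$ a finite $\Bbbk$-valued expression after evaluating at $t$.

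For part (2), I would combine Theorem~\ref{thm:7-5}(1) with Lemma~\ref{lemma:GHKK_7-10}. The lemma identifies the cluster chambers $\mathcal{C}^+_{\bfs}(\mathbb{Z})$ for $\At$ with integral points of Fock--Goncharov cluster chambers, which are $\rho^T$-images of cluster chambers of $\Aprin$. Hence any $g$-vector $m \in \Delta^+_{\At}(\mathbb{Z})$ lifts under a suitable section to a point in $\Delta^+_{\Aprin}(\mathbb{Z}) \subset \Theta(\Aprin)$, from which $m = \rho^T(\Sigma(m)) \in \Theta(\At)$ follows. Specializing to $t = e$ then recovers the statement for the generalized $\cA$-variety.

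The main obstacle will be verifying that this descent is compatible with the multiplicative structure: one must show the displayed formula genuinely computes the structure constants of theta-function multiplication on $\At$, and that broken lines in $\mathfrak{D}^{\Aprin}_{\bfs}$ push forward under $\rho^T$ so that summing over the $N$-action on pairs $(\gamma_1,\gamma_2)$ recovers the $\At$-side broken-line count with the correct coefficient $z^n(t)$. The reciprocity condition $a_{k,s} = a_{k,r_k - s}$ enters here exactly as in Proposition~\ref{prop:brokenLineBijection}, ensuring the wall-crossings survive fiberwise restriction intact and independently of the choice of section $\Sigma$.
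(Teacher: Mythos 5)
Your proposal matches the paper's approach: the paper establishes Theorem~\ref{thm:Abasis} by descent from $\Aprin$ along the fibration $\pi:\Aprin\to T_M$, using the $N$-equivariance of $\Theta(\Aprin)$ and a section $\Sigma$ to reduce to Theorem~\ref{thm:7-5} and Proposition~\ref{prop:multiplication_constants_gen}, exactly as you outline; the paper's own proof is simply a citation to \cite{GHKK}, so your sketch is spelling out what that citation elides. One correction is warranted to your closing paragraph: the reciprocity condition $a_{k,s}=a_{k,r_k-s}$ does not enter the fiberwise restriction ``as in Proposition~\ref{prop:brokenLineBijection}.'' That proposition concerns the piecewise-linear mutation map $T_k$; the passage from $\Aprin$ to $\At$ is a restriction along $\pi$ together with the projection $\rho^T:(m,n)\mapsto m$ on the dual side, and involves no mutation at all, so no appeal to reciprocity is made at that step. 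Reciprocity is used upstream, in Theorems~\ref{theorem:mutDiagConsistent} and~\ref{theorem:scatDiagramUniqueness}, to secure the mutation-invariance of $\mathfrak{D}_{\bfs}^{\Aprin}$ and the chamber structure $\Delta^+$; once those are in place, the descent from $\Aprin$ to $\At$ is identical to the ordinary GHKK argument and needs no further use of the reciprocal hypothesis.
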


For both, the arguments given in \cite{GHKK} for the ordinary setting still hold in our generalized setting.

\section{Companion Cluster Algebras}
\label{sec:companionDiagrams}

Given a generalized cluster algebra, 
Nakanishi and Rupel define the notion of \emph{companion algebras}, which are a pair of   ordinary cluster algebras associated to the generalized cluster algebra \cite{Nakanishi-Rupel}.

Recall that $(\mathbb{P},\oplus)$ is an arbitrary semifield. Let $\mathbb{QP}$ denote the field of fractions of this semifield and $\mathbb{QP}(\mathbf{x}) = \mathbb{QP}(x_1,\dots,x_n)$ denote the field of rational functions in the algebraically independent variables $x_1, \dots, x_n$. In earlier sections we referred to $\mathbb{QP}(\mathbf{x})$ as $\mathcal{F}$, but here it will be convenient to write $\mathbb{QP}(\mathbf{x})$ since we will also want to discuss fields of rational functions in other sets of algebraically independent variables.

Fix a generalized cluster seed $\Sigma = (\mathbf{x},\mathbf{y},B,[r_{ij}],\mathbf{a})$.
The corresponding fixed data $\Gamma$ has lattices $N$, $M$, $M^{\circ}$, $N^{\circ}$ and collections of scalars $\{d_i\}$ and $\{r_i\}$, as specified in Definition \ref{def:gen fixed}.  Consider the initial generalized torus seed data be $\bfs = \{ (e_i, (a_{i,s})) \}$, where $\{e_i\}$ forms a basis for $N$,  $\{d_i e_i\}$  for $N^{\circ}$, and $\{ f_i = \frac{1}{d_i} e_i^* \}$  for $M^{\circ}$. 
As usual, we let $v_i = p_1^* (e_i) \in M^{\circ}$ for $i \in I_{\mathrm{uf}}$. 

Let $\mathcal{A}$ denote the generalized cluster algebra defined by $\Sigma$. We can then state the definitions of the companion algebras as:

\begin{definition}[\cite{Nakanishi-Rupel}]
Let ${}^L\mathbf{x} = \mathbf{x}^{1/\mathbf{r}} = ({}^Lx_1, \dots, {}^Lx_n) := (x_1^{1/r_1},\dots,x_n^{1/r_n})$ in $\mathbb{QP}(\mathbf{x}^{1/\mathbf{r}})$
The \emph{left companion algebra} of $\mathcal{A}$ is the ordinary cluster algebra ${}^L\mathcal{A} \subset \mathbb{QP}(\mathbf{x}^{1/\mathbf{r}})$ with seed $({}^L\mathbf{x},{}^L\mathbf{y}, B[r_{ij}])$. 
Let ${}^L\mathbf{c}_j$, ${}^L\mathbf{g}_j$, and ${}^LF_j$ denote the $c$-vectors, $g$-vectors, and $F$-polynomials of ${}^L\mathcal{A}$.
\end{definition}

\begin{definition} \cite{Nakanishi-Rupel}
Let ${}^R\mathbf{x} = \mathbf{x}$ and ${}^R\mathbf{y} = \mathbf{y}^{\mathbf{r}} = ({}^Ry_1, \dots, {}^Ry_n) = (y_1^{r_1}, \dots, y_n^{r_n})$. The \emph{right companion algebra} of $\mathcal{A}$ is the ordinary cluster algebra ${}^R\mathcal{A} \subset \mathbb{QP}(\mathbf{x})$ with seed $(\mathbf{x},\mathbf{y}^{\mathbf{r}},[r_{ij}]B)$. Let ${}^R\mathbf{c}_j$, ${}^R\mathbf{g}_j$, and ${}^RF_j$ denote the $c$-vectors, $g$-vectors, and $F$-polynomials of ${}^R\mathcal{A}$.
\end{definition}

\begin{remark}
Note that because the mutation formulas used in this article are the transpose of those used by Fomin and Zelevinsky \cite{FZ-I}, our definitions of the companion algebras are not identical to the definitions given in \cite{Nakanishi-Rupel}. As compared to Nakanishi and Rupel, we switch the roles of the exchange matrices $[r_{ij}]B$ and $B[r_{ij}]$ in the definitions of the left and right companion algebras. 
\end{remark}

\subsection{Langlands duality and tropical duality}
\label{sec:duality}

In this subsection, we largely restrict our attention to ordinary cluster algebras and quickly review background material about two types of duality. We also briefly state the definition of the Langlands dual in the generalized setting. In the next subsection, we will discuss one way that these notions appear in the context of generalized cluster algebras.

\subsubsection*{Langlands duality}

In \cite{FG}, Fock and Goncharov give the following definitions for the \emph{Langlands dual} of a set of fixed data and a torus seed:

\begin{definition}[]
Given fixed data $\Gamma$ and torus seed $\bfs$, let $D := \textrm{lcm}(d_1,\dots,d_n)$. The \emph{Langlands dual} of $\Gamma$ is the fixed data $\Gamma^\vee$ defined by $N^\vee := N^\circ$, $I^\vee := I$, $I^\vee_{\textrm{uf}} := I_{\textrm{uf}}$, and $d_i^\vee := d_i^{-1}D$ with $\mathbb{Q}$-valued skew symmetric form $\{ \cdot, \cdot \}^{\vee} := D^{-1} \{ \cdot, \cdot \}$. The \emph{Langlands dual} of $\bfs$ is the torus seed $\bfs^\vee := (d_1e_1,\dots, d_ne_n)$.
\end{definition}
In the generalized setting, this definition is updated as follows.
\begin{definition}
Given generalized fixed data $\Gamma$ and generalized torus seed $\bfs$, the \emph{Langlands dual} of $\Gamma$ is the generalized fixed data $\Gamma^{\vee}$ defined as in the ordinary case, with the addition that $r_i^{\vee} = \textrm{lcm}(r_1,\dots,r_n)/r_i$ and the collection of formal variables $\{ a_{i,j} \}$ remains the same. The \emph{Langlands dual} of $\mathbf{s}$ is then the generalized torus seed $\mathbf{s}^{\vee} := \{ (e_i,(a_{i,j}')) \}_{i \in I_{\textrm{uf}},j \in [r_i^{\vee}-1]}$.
\end{definition}

From these definitions, we can observe the following relationship between the bilinear forms of $\Gamma$ and $\Gamma^{\vee}$.
    \begin{align*}
        \{ e_i, e_j\} d_j &= \frac{1}{d_i} \{ d_ie_i , d_j e_j \} \\
        &= -D^{-1}\{ d_j e_j , d_i e_i \} \left( \frac{D}{d_i} \right) \\
        &= -\{ e_j^{\vee}, e_i^{\vee} \}^{\vee}( d_i^{\vee})
    \end{align*}
It follows that $\epsilon_{ij} = -\epsilon_{ji}^{\vee}$ and that, on the matrix level, $\epsilon = -(\epsilon^{\vee})^T$.
Recall from Definition \ref{def:ordseed} that the exchange matrix $B$ (in the Fomin-Zelevinsky sense) can be represented as
\[
b_{ij} = \epsilon_{ij} = \{ e_i, e_j \} d_j
\]
for a given choice of torus seed $\bfs$. Hence, the Langlands dual of an ordinary cluster algebra defined by the exchange matrix $B = [\epsilon_{ij}]$ is simply the ordinary cluster algebra defined by $-B^T = [-\epsilon_{ij}^T]$. 

\subsubsection*{Tropical duality}

The study of cluster algebras often involves a different type of duality, called \emph{tropical duality}. In \cite{FZ-IV}, Fomin and Zelevinsky define $c$-vectors and $g$-vectors as the tropicalizations of the $A$ and $X$ cluster variables. As noted in earlier sections, Gross, Hacking, Keel, and Kontsevich showed in Lemma 2.10 of \cite{GHKK} that given a choice of seed data $\bfs$, $M^{\circ}_{\mathbb{R}, \bfs} \cong \cA^{\vee }(\mathbb{R}^T)$, where $ \cA^{\vee }(\mathbb{R}^T)$ denotes the tropicalization of the Fock-Goncharov dual variety. By explaining the connection between the lattices used in \cite{GHKK} and the $c$-vectors and $g$ vectors, we can connect these seemingly disparate statements.

Consider the fixed data $\Gamma$ and choice of initial torus seed data $\bfs_{\textrm{in}} = ( e_{\mathrm{in}, i})$. By definition, the basis vectors of $N_{\bfs_{\textrm{in}}}$ are $\{ e_{\textrm{in},i} \}_{i \in I}$ and the basis vectors of $M_{\bfs_{\textrm{in}}}^{\circ}$ are $\{ f_{\mathrm{in},i} \}_{i \in I}$, where $f_{\mathrm{in}, i} = \frac{1}{d_i} e^*_{\mathrm{in},i}$. Recall that $\cA$ cluster variables have the form $z^{m}$ with $m \in M_{\bfs_{\textrm{in}}}^{\circ}$ and that the $\cX$ cluster variables have the form $z^n$ with $n \in N_{\bfs_{\textrm{in}}}$. Because the $c$-vectors and $g$-vectors are the tropicalizations of these cluster variables, we can express them as $\textbf{c}_{\bfs_{\textrm{in}},i} = \sum_k c_{ik} e_{\init, k}$ and $\textbf{g}_{\bfs_{\textrm{in}},i} = \sum_k g_{ik} f_{\init, k}$.

Now, consider an arbitrary seed $\bfs = (e_{\bfs,i})$. Now, the lattice $N_{\bfs}$ has basis vectors $\{ e_{\bfs,i} \}_{i \in I}$ and the lattice $M_{\bfs}$ has basis vectors $\{ f_{\bfs,i} \}_{i \in I}$. The corresponding $\cA$ and $\cX$ cluster variables have $g$-vectors and $c$-vectors which we can write as $g_{\seed, i}= f_{\seed, i} =  \sum_k g_{ik} f_{\init, k}$ and $c_{\seed, i} = e_{\seed, i} = \sum_k c_{ik} e_{\init, k}$. 
We denote by $C_{\seed}^{\epsilon}$ (respectively, $G_{\seed}^{\epsilon}$) the integer matrix with columns $\textbf{c}_{1;\seed}, \dots, \textbf{c}_{n;\seed}$ (respectively, with columns $\textbf{g}_{1;\seed}, \dots, \textbf{g}_{n;\seed}$), where $\epsilon = [\epsilon_{ij}]$ is the matrix defined by $\seed_{\init}$. 

Nakanishi and Zelevinsky proved the following identity, referred to as a \emph{tropical duality}, between the $c$-vectors and $g$-vectors.

\begin{theorem}\cite[Theorem 1.2]{NZ}
For any torus seed $\bfs$ and the associated matrix $\epsilon = [\epsilon_{ij}] = [\{ e_i,e_j \}d_j]$, take $\epsilon_{\init} = \epsilon$ and let $\epsilon_{\bfs}$ denote the matrix associated to the torus seed $\bfs$. Then, 
\begin{align} \label{eq:cgduality}
    (G_{\seed}^{\epsilon} )^T= (C_{\seed}^{-\epsilon^T})^{-1}, 
\end{align}
\end{theorem}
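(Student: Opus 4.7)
The plan is to prove the identity $(G_{\seed}^{\epsilon})^T = (C_{\seed}^{-\epsilon^T})^{-1}$ by induction on the length of a mutation sequence from $\seed_{\init}$ to $\seed$. The base case is immediate: at $\seed_{\init}$, both matrices are the $n \times n$ identity, since $\mathbf{c}_{i;\seed_{\init}} = e_{\init,i}$ and $\mathbf{g}_{i;\seed_{\init}} = f_{\init,i}$ by the definition of $c$- and $g$-vectors as tropicalized cluster variables expressed in the initial seed.

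For the inductive step, I would assume the identity holds at $\seed$ and consider $\seed' = \mu_k(\seed)$. The standard mutation formulas give explicit recursions for the columns of $G$ and $C$: for $G$, one has $\mathbf{g}_{j;\seed'} = \mathbf{g}_{j;\seed}$ when $j \neq k$, together with an exchange expression for $\mathbf{g}_{k;\seed'}$ built from the entries of $C_{\seed}^{\epsilon}$ and $\epsilon_{\seed}$; for $C$, one has a piecewise transformation whose two branches are dictated by the sign of the column $\mathbf{c}_{k;\seed}$. The crucial input is the \emph{sign-coherence of $c$-vectors} for the ordinary (skew-symmetrizable) cluster algebra with initial exchange matrix $-\epsilon^T$. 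This sign-coherence can be taken from \cite{GHKK}, or seen as the Langlands-dual counterpart of the $g$-vector sign-coherence established in Theorem~\ref{thm:g-coherence} applied to the companion algebra.

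Once sign-coherence is invoked, both mutations become linear and can be written in matrix form as $G_{\seed'}^{\epsilon} = G_{\seed}^{\epsilon} \cdot M_k^G$ and $C_{\seed'}^{-\epsilon^T} = M_k^C \cdot C_{\seed}^{-\epsilon^T}$, where $M_k^G$ and $M_k^C$ are elementary matrices whose entries depend on $\epsilon_{\seed}$, on $k$, and on the sign datum of $\mathbf{c}_{k;\seed}$. Under the inductive hypothesis, the desired identity at $\seed'$ reduces to the purely matrix-theoretic equality $(M_k^G)^T = (M_k^C)^{-1}$. The main obstacle is this last identity: it must be verified by direct computation in each of the two sign cases supplied by sign-coherence, using the skew-symmetrizability of $\epsilon_{\seed}$ and the relation $\epsilon_{\seed}^{\vee} = -(\epsilon_{\seed})^T$ between the exchange matrices on the two sides. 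The verification is elementary but delicate, and requires keeping the wide-convention bookkeeping and the two sign branches carefully aligned; the conceptual content is entirely carried by sign-coherence together with the combinatorics of Fock-Goncharov tropicalization already reviewed above.
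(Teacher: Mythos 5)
Your proof is mathematically sound and is essentially the original Nakanishi--Zelevinsky argument: induction on the mutation sequence, with the sign-coherence of $c$-vectors supplying the piecewise-linearity needed to turn the mutation recursions into honest matrix identities, and the whole thing reducing to an elementary-matrix check $(M_k^G)^T = (M_k^C)^{-1}$ in each sign branch. The paper, however, does not reprove the theorem this way; after citing \cite{NZ}, it gives a conceptual re-derivation in the scattering-diagram framework. Under the GHKK identification of $M^{\circ}_{\mathbb{R},\bfs_{\init}}$ with $\cA^{\vee}(\mathbb{R}^T)$ and the chamber/seed correspondence, the $g$-vectors of a seed $\bfs$ for $\Gamma$ are precisely the basis vectors $\{f_{\bfs,i}\}$ of $M^{\circ}$ expressed in the initial basis, while the $c$-vectors of the Langlands-dual seed $\bfs^{\vee}$ for $\Gamma^{\vee}$ (whose exchange matrix is $-\epsilon^T$) are $\{e_{\bfs,i}^{\vee} = d_i e_{\bfs,i}\}$. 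Since $\langle e_{\bfs,i}^{\vee}, f_{\bfs,j}\rangle = \delta_{ij}$, the two bases are mutually dual, and $(G^{\epsilon}_{\bfs})^T = (C^{-\epsilon^T}_{\bfs})^{-1}$ falls out immediately with no case analysis. The trade-off is clear: the paper's route is a one-shot conceptual argument but leans on the substantial GHKK machinery (the seed/chamber bijection and mutation-invariance of $\mathfrak{D}_{\bfs}$), which already absorbs the sign-coherence content; your route is more elementary and self-contained once sign-coherence is granted, but carries a real bookkeeping burden (the elementary matrices act on opposite sides of $G$ and $C$, the two sign branches must stay synchronized, and the wide vs.\ tall convention must be tracked), which your sketch acknowledges but does not carry out.
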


We can also understand this tropical duality in the language of cluster scattering diagrams. In the previous subsection, we saw that replacing $\epsilon$ with $-\epsilon^T$ in the fixed data $\Gamma$ is equivalent to considering its Langlands dual, $\Gamma^{\vee}$. Consider some choice of torus seed $\bfs = \{ e_{\bfs,i} \}_{i \in I}$ associated to $\Gamma$. As explained above, the associated collections of $g$-vectors and $c$-vectors are, respectively, $\{ f_{\bfs,i} \}$ and $\{ e_{\bfs,i} \}$. The associated collections of $g$-vectors and $c$-vectors of the Langlands dual torus seed data $\bfs^{\vee}$ are therefore $\{ f_{\bfs,i}^{\vee} = d_iD^{-1}f_{\bfs,i} \}$ and $\{ e_{\bfs,i}^{\vee} =  d_ie_{s,i} \}$, respectively. One can immediately see that the bases $\{ f_{\bfs,i} \}$ and $\{ e_{\bfs, i}^{\vee} = d_ie_{\bfs,i} \}$ are dual, because
\begin{align*}
    \langle e_{\bfs,i}^{\vee},f_{\bfs,i} \rangle = \langle d_ie_{\bfs,i}, d_i^{-1}e_{\bfs,i}^* \rangle = \langle e_{\bfs,i}, e_{\bfs,i}^* \rangle = 1,
\end{align*}
which implies the tropical duality in the language of seed basis vectors. 

\subsection{Fixed data for the companion algebras}

Fix a generalized cluster seed $\Sigma = (\mathbf{x},\mathbf{y},B,[r_{ij}],\mathbf{a})$, which defines the generalized cluster algebra $\mathcal{A}$. We will assume that $\mathcal{A}$ is a reciprocal generalized cluster algebra, so we can use our construction of cluster scattering diagrams for reciprocal generalized cluster algebras.
The corresponding generalized fixed data $\Gamma$ has lattices $N$, $M$, $M^{\circ}$, $N^{\circ}$; index sets $I$ and $I_{\textrm{uf}}$; collections of scalars $\{d_i\}$ and $\{r_i\}$; and the collection of formal variables $\{ a_{i,j} \}_{i \in I, j \in [r_i -1]}$, as specified in Definition \ref{def:gen fixed}.  Consider the initial generalized torus seed data $\bfs = \{ (e_i, (a_{i,j})) \}_{i \in I}$, where $\{e_i\}_{i \in I}$ forms a basis for $N$,  $\{d_i e_i\}_{i \in I}$  for $N^{\circ}$, $\{ e_i^* \}_{i \in I}$ for $M$, and $\{ f_i = \frac{1}{d_i} e_i^* \}_{i \in I}$  for $M^{\circ}$. 
As usual, we let $v_i = p_1^* (e_i) \in M^{\circ}$ for $i \in I_{\mathrm{uf}}$. 

As explained in earlier section, the generalized cluster algebra $\mathcal{A}$ has an associated pair of companion algebras, $\lL\mathcal{A}$ and $\rR\mathcal{A}$. We can explicitly described the fixed data associated to the left and right companion algebras. In doing so, we will use the superscript ${}^{\mathrm{C}}$ to indicate when we're considering an object in a generic companion algebra and the superscripts $\lL $ and $\rR$, respectively, to denote the corresponding notions in the left and right companion algebras.
The data associated to the left and right companion algebras in the following table:

\begin{center}
\begingroup
\setlength{\tabcolsep}{10pt} 
\renewcommand{\arraystretch}{1.5} 
\begin{tabular}{| c ||c |c |}
\hline
    & Left & Right \\ \hline \hline
    $\{ \cdot , \cdot \}$ &
        $\{ \cdot , \cdot \}$ &  
        $\{ \cdot , \cdot \}$   \\
        \hline
    $\upC d_i$ 
        &$\ld_i = d_ir_i$ 
        & $\rd_i =  \frac{d_i}{r_i}$  \\
        \hline
    $\upC e_i$ 
        & $\lle_i = e_i$  
        & $\re_i = r_ie_i$ \\
        \hline
    $\upC f_i$
        & $\lf_i = \frac{1}{\ld_i} \lle_i^* =  \frac{1}{d_ir_i} e_i^* =\frac{1}{r_i} f_i$
        & $\rf_i =  \frac{1}{\rd_i} \re_i^* =  \frac{r_i}{d_i} \frac{1}{r_i}e_i^* =  f_i$ \\
        \hline
    $\upC N$ &
        $\lN=N$ &
        $\rN= \mathrm{span}\left\{  r_ie_i \right\}$ \\ 
        \hline
    $\upC N^{\circ}$
        & $\lN^{\circ} =$ span $\{r_i( d_ie_i) \} $ & $\rN^{\circ} =$ span $\left\{ d_ie_i \right\}$ \\
        \hline
    $\upC M$
        & $\lL M = $ span $\{ e_i^* \} = M$
        & $\rR M = $ span $\{ \frac{1}{r_i}e_i^* \}$\\
        \hline
    $\upC M^{\circ}$
        & $\lM^{\circ} =$ span $\{ \lf_i \}$
        & $\rM^{\circ} =$ span $ \{ \rf_i \}$  \\
        \hline
    $\upC x_i$
        & $\lL x_i = z^{\lf_i} = x_i^{1/r_i}$
        & $\rR x_i  =z^{\rf_i} = x_i$ \\
        \hline
    $\upC y_i$
        & $\lX_i = z^{\lle_i} = z^{e_i} = y_i$
        & $\rX = z^{\re_i} = z^{r_i e_i} = y_i^{r_i}$ \\
        \hline
    $\upC\hat{y} $ 
        & $\lL \hat{y}_i= z^{(v_i, e_i)} = \hat{y}$
        & $\rR\hat{y}_i = z^{(r_iv_i, r_i e_i)} = \hat{y}^{r_i}$\\
        \hline
    %$\upC p^*$ map
     %   & ${}^\mathrm{L}p^*: \lN \rightarrow \lM^{\circ}$
     %   & $\rR p^*: \rN \rightarrow \rM^{\circ}$ \\
     %   \hline
    $\upC v_i$
        & $\lv_i = \{ \lle_i, \cdot \} = \{e_i , \cdot \} = v_i $
        & $\rv_i = \{ \re_i, \cdot \}  =\{r_ie_i, \cdot \} = r_i v_i $ \\
       % & $ = \{e_i , \cdot \} = v_i $
       % & $=\{r_ie_i, \cdot \} = r_i v_i $\\
        \hline
    $\upC\langle d_i e_i, f_i \rangle$
        & $\langle \lL d_i \lL e_i, \lL f_i \rangle = \langle d_ir_i e_i, \frac{1}{r_i} f_i \rangle = 1$ 
        & $\langle \rR d_i \rR e_i, \rR f_i \rangle = \langle d_ir_i^{-1} (r_i e_i) , f_i \rangle = 1$ \\
        \hline
\end{tabular}
\endgroup
\end{center}
Observe that if $d_i =1$ for all $i$, then the left and right companion algebras are Langlands dual (up to the scalar factor $\mathrm{lcm} (\{ r_i \}_{i \in I})$) to each other.
The term $\frac{d_i}{r_i}$ being rational may raise some questions. 
Note that it is possible for a right companion algebra to have fixed data such that some $\rR d_i$ are non-integral.  In fact, $\rR d_i$ will be non-integral whenever $r_i \neq 1$. To ensure that all $\rR d_i$ are integral, one could scale $\rR d = (\rR d_i)$ by a factor of $\textrm{lcm}(r_i)$. Any computations done using a cluster scattering diagram generated by the scaled fixed data would then require that we account for this scaling. Further, this rational lattice is consistent with the fact that \cite{Nakanishi-Rupel} defines this companion algebra as a subalgebra of $\mathbb{QP}(x^{1/\mathbf{r}})$.

In fact, the unscaled fixed data $\rR \Gamma$ can still be used to define a sensible cluster scattering diagram that allows for easy computation of cluster variables, etc. To understand the impact of non-integral values of $\rR d_i$ on the construction of a cluster scattering diagram, we can consider the impact on the associated lattices and initial cluster scattering diagram. It's clear that $\rR N$ will always be an integral lattice, since $\rR N = \textrm{span} \{ r_ie_i \}$. The scaling of $\rR e_i$ guarantees that $\rR N^{\circ}$ is always an integral lattice, since
\[ \rR N^{\circ} = \textrm{span}\left\{ \rR d_i \rR e_i \right\} = \textrm{span}\left\{ \frac{d_i}{r_i} r_ie_i \right\} = \textrm{span}\left\{ d_ie_i \right\}.\]
Note that this scaling also means that 
\[ \rR M^{\circ} = \textrm{span}\{ \rR f_i\} = \textrm{span}\left\{ \frac{1}{\rR d_i}\left(\rR e_i\right)^* \right\} = \textrm{span}\left\{ \frac{r_i}{d_i}\frac{1}{r_i}e_i^* \right\}= \textrm{span}\left\{ \frac{1}{d_i}e_i^* \right\} =  \textrm{span} \{ f_i \}.\]
By definition, $\rR \mathcal{A}$ has initial scattering diagram
\begin{align*}
    \rR \mathfrak{D}_{\init} = \left\{ ((\rR e_i)^{\perp}, 1 + z^{\rR v_i}) \right\}_{i \in I_{\textrm{uf}}} = \left\{ ((r_ie_i)^{\perp}, 1 + z^{r_iv_i}) \right\}_{i \in I_{\textrm{uf}}},
\end{align*}
where all the wall-crossing automorphisms have integral exponents. As such, the algorithm for producing a consistent scattering diagram that was introduced by Kontsevich and Soibelman in two-dimensions and then extended to higher dimensions by Gross and Siebert can be applied to $\rR \mathfrak{D}_{\init}$. Likewise, other major results of Gross, Hacking, Keel, and Kontsevich \cite{GHKK} which rely on the wall-crossing automorphisms having integer exponents still hold for the resulting consistent scattering diagram.

\begin{example}
\label{example:companionAlgebras}
Consider the generalized cluster algebra from Example~\ref{ex:gen_T_k}, \[\mathcal{A}\left(\mathbf{x},\mathbf{y},\begin{bmatrix} 0 & 1 \\ -1 & 0 \end{bmatrix},\begin{bmatrix} 3 & 0 \\ 0 & 1 \end{bmatrix},((1,a,a,1),(1,1)) \right).\]
This generalized cluster algebra has companion algebras:
\begin{align*}
    {}^L\mathcal{A} &= \left( (x_1^{1/3},x_2), (y_1,y_2), \begin{bmatrix} 0 & 1 \\ -3 & 0 \end{bmatrix} \right) \\
    {}^R\mathcal{A} &= \left( (x_1,x_2), (y_1^3,y_2), \begin{bmatrix} 0 & 3 \\ -1 & 0 \end{bmatrix} \right)
\end{align*}
Recall from Example~\ref{ex:genFixedData} that $\mathcal{A}$ has fixed data $d = (1,1)$, $r = (3,1)$, $I = I_{\textrm{uf}} = \{ 1, 2 \}$, $N = N^{\circ} =  \langle e_1, e_2 \rangle$, $M = M^{\circ} = \langle e_1^*, e_2^* \rangle$, and  skew-symmetric form $\{ \cdot, \cdot \} : N^{\circ} \times N^{\circ} \rightarrow \mathbb{Z}$ specified by the exchange matrix.
The fixed and seed data of its associated companion algebras, in terms of the fixed data of the generalized cluster algebra, are summarized in the following table.

\begingroup
\def\arraystretch{1.25}
\begin{center}
    \begin{tabular}{|c||c|c|}
        \hline
         & ${}^L\mathcal{A}$ & ${}^R\mathcal{A}$  \\ \hline \hline
         $\upC d$ & $(3,1)$ & $\left( \frac{1}{3}, 1 \right)$ \\ \hline
         $ \upC e_i $ & $ e_1, e_2 $ & $3e_1, e_2$ \\ \hline
         $ \upC f_i $ & $ \frac{1}{3}f_1, f_2 $ & $f_1, f_2$\\ \hline
         $\upC N$ & span $\{ e_1, e_2 \}$ & span $\{ 3e_1, e_2 \} $\\ \hline
         $\upC N^{\circ}$ & span $\{ 3e_1, e_2 \}$ & span $\{  e_1, e_2 \}$\\ \hline
         $\upC M$ & span $\{ e_1^*, e_2^* \}$ & span $\left\{ \frac{1}{3}e_1^*, e_2^* \right\}$ \\ \hline
         $\upC M^{\circ}$ & span $ \left\{ \frac{1}{3}f_1, f_2 \right\} $ & span $ \{ f_1, f_2 \}$ \\ \hline
         $\upC v_i$ & $v_1, v_2$ & $3v_1, v_2$ \\ \hline 
    \end{tabular}
\end{center}
The cluster scattering diagrams for $\lL\cA$ and $\rR\cA$ are shown below (on the top and bottom, respectively).
Although the data in the above table is stated in terms of the fixed data of the generalized cluster algebra, the following ordinary cluster scattering diagrams are drawn from the fixed data of the companion algebras and the wall functions are stated in terms of the $\lL x_i$ and $\rR x_i$. In this example, we note that the generalized and companion fixed data mostly coincide, but notably 
\[{}^L f_1 = \frac{1}{3}f_1, \  {}^Re_1 = 3e_1, \textrm{ and } \ {}^R v_1 = 3v_1.\]
Because of this, we need to be careful to avoid confusion about which variables are being used when writing the wall functions. The wall functions in the following diagrams use the variables of the companion algebra, but could also be written using the variables of the generalized algebra. For instance, observe that \[{}^Lf_{\mathfrak{d}_1} = 1 + {}^Lz^{({}^Lv_2)} =  1 + \lL z^{-3({}^Lf_1)} 
= 1 + \lL z^{(-3,0)} =
1+ (\lL x_1)^{-3}  = 1+ (x_1^{1/3})^{-3} = 1+x_1^{-1}= 1+z^{v_2} .\]

\begin{center}
    \begin{minipage}{0.3\textwidth}
        \begin{tikzpicture}[scale=0.65]
		\draw (-3,0) to (3,0);
		\draw (0,-3.4) to (0,3);
		\draw (0,0) to (2.5,-2.5);
		\draw (0,0) to (3,-2);
		\draw (0,0) to (3,-1.5);
		\draw (0,0) to (3,-1);
				
		\node[scale=0.9] at (3.3,0) {$\mathfrak{d}_1$};
		\node[scale=0.9] at (0,3.25) {$\mathfrak{d}_2$};
		\node[scale=0.9] at (2.75,-2.85) {$\mathfrak{d}_6$};
		\node[scale=0.9] at (3.25, -2.25) {$\mathfrak{d}_5$};
		\node[scale=0.9] at (3.25, -1.65) {$\mathfrak{d}_4$};
		\node[scale=0.9] at (3.25,-1) {$\mathfrak{d}_3$};
	\end{tikzpicture}
    \end{minipage}
    \begin{minipage}{0.44\textwidth}
        \begin{align*}
            {}^Lf_{\fd_1} &= 1 + {}^Lz^{(-3,0)}  \\
            {}^Lf_{\fd_2} &= 1 + {}^Lz^{(0,1)} \\
            {}^Lf_{\fd_3} &= 1 + {}^Lz^{(-3,3)} \\
            {}^Lf_{\fd_4} &= 1 + {}^Lz^{(-3,2)} \\
            {}^Lf_{\fd_5} &= 1 + {}^Lz^{(-6,3)} \\
            {}^Lf_{\fd_6} &= 1 + {}^Lz^{(-3,1)}
        \end{align*}
    \end{minipage}
    
    \vspace{5mm}
    \begin{minipage}{0.3\textwidth}
        \begin{tikzpicture}[scale=0.65]
		\draw (-3,0) to (3,0);
		\draw (0,-3.4) to (0,3);
		\draw (0,0) to (2.5,-2.5);
		\draw (0,0) to (2,-3);
		\draw (0,0) to (1.5,-3);
		\draw (0,0) to (1,-3);
				
		\node[scale=0.9] at (3.3,0) {$\mathfrak{d}_1$};
		\node[scale=0.9] at (0,3.25) {$\mathfrak{d}_2$};
		\node[scale=0.9] at (2.85,-2.75) {$\mathfrak{d}_6$};
		\node[scale=0.9] at (2.25,-3.25) {$\mathfrak{d}_5$};
		\node[scale=0.9] at (1.65,-3.25) {$\mathfrak{d}_4$};
		\node[scale=0.9] at (1,-3.25) {$\mathfrak{d}_3$};
	\end{tikzpicture}
    \end{minipage}
    \begin{minipage}{0.44\textwidth}
        \begin{align*}
            {}^Rf_{\fd_1} &= 1 + {}^Rz^{(-1,0)} \\
            {}^Rf_{\fd_2} &= 1 + {}^Rz^{(0,3)} \\
            {}^Rf_{\fd_3} &= 1 + {}^Rz^{(-1,3)} \\
            {}^Rf_{\fd_4} &= 1 + {}^Rz^{(-3,6)} \\
            {}^Rf_{\fd_5} &= 1 + {}^Rz^{(-2,3)} \\
            {}^Rf_{\fd_6} &= 1 + {}^Rz^{(-3,2)}
        \end{align*}
    \end{minipage}
\end{center}
\end{example}
\endgroup

\begin{remark}
Note that under the hypothesis 
$d_i = 1$, the left and right companion algebras are, up to isomorphism and the scalar factor $\mathrm{lcm}(\{r_i\}_{i \in I})$, Langlands dual according to the definition given in Section~\ref{sec:duality}.
In particular, let $\mathcal{A}$ be a reciprocal generalized cluster algebra where all $d_i = 1$ with companion algebras $\lL \mathcal{A}$ and $\rR \mathcal{A}$.  Let $\lL \Gamma$ and $\rR \Gamma$ denote the fixed data of $\lL \mathcal{A}$ and $\rR \mathcal{A}$, respectively, then
\begin{align*}
    \left(\lL \Gamma \right)^{\vee} \simeq \rR \Gamma \qquad \textrm{ and } \qquad \left(\rR \Gamma \right)^{\vee} \simeq \lL \Gamma, \quad \text{up to the scalar factor $\mathrm{lcm}(\{r_i\}_{i \in I})$.}
\end{align*}

\end{remark}

Based on the explicit fixed data of the companion algebras, we can make several other useful observations. First, the $c$-vectors of the left companion algebras and the generalized algebras coincide because the torus seed basis vectors are the same. 
On the other hand, for the $g$-vectors we have
\begin{align*}
    \lL  \textbf{g}_{\seed, j} &= \frac{1}{r_j} g_{\seed, j} \\
    &= \frac{1}{r_j} \sum_i g_{ji} \textbf{f}_{\init, i} \\
    &= \frac{1}{r_j} \sum_i \frac{r_i}{r_j} g_{ji} \lL \textbf{f}_{\init, i}.
\end{align*}

Hence, we have $\lL  \textbf{g}_{\seed, j} = [\frac{r_i}{r_j} g_{ji}]_{i \in I}$ as in Corollary 4.2 of  \cite{Nakanishi-Rupel}. 
We can similarly deduce that the $g$-vectors of the right companion algebras and the generalized algebra are the same, since $f_i = {}^Rf_i$. 
The $c$-vectors are related as
\begin{align*}
    \rR \textbf{c}_{\seed, j} & = r_j \textbf{c}_{\seed, j} \\
    &= r_j \sum_i c_{ji} \textbf{c}_{\init, i} \\
    &= \sum_i \frac{r_j }{r_i} c_{ji} \rR\textbf{c}_{\init, i}, 
\end{align*}
which also agrees with Corollary 4.1 of \cite{Nakanishi-Rupel}. 

We can also explore the relationship between mutation of the generalized cluster algebra and mutation of its associated companion algebras.  Consider a reciprocal generalized cluster algebra $\mathcal{A}$ with fixed data $\Gamma$. Again, let us consider a fixed generalized torus seed $\bfs$ and the associated scattering diagram with principal coefficients, $\fD_{\bfs, \prin}$. The fixed data for the left and right companion algebras of $\mathcal{A}$ are as specified earlier in this section.
By definition, the initial scattering diagram for ${}^L\mathcal{A}$
is of the form
\[
\lL\fD_{\seed, \prin}^{\init} = \left\{ \left( \lL\fd_k = (r_k d_k e_k , 0)^{\perp}, 
\lL f_{\fd_k}=1+z^{(v_k,e_k)}\right) , \text{ for } k \in I_{\mathrm{uf}} \right\}.
\]
Note that the dual lattice of $\lM^{\circ}$ is ${\lN^{\circ}} = \textrm{span}\{ r_id_ie_i \}_{i \in I}$. Hence, the primitive vectors normal to the wall $\mathfrak{d}_k$ in $\lL\fD_{\seed, \prin}^{\init}$ have the form $(\pm r_kd_ke_k,0)$. By convention, we will choose to use the primitive normal vector $(r_kd_ke_k,0)$ when calculating path-ordered products. Now, consider the cluster variables $\lL \hat{y}_i= z^{(v_i, e_i)}$ and let
\[
v_i = \sum_k \lL \nu_{ik} \lL f_k =  \sum_k \lL \nu_{ik} \cdot ( r_k f_k).
\]
Mutating $\lL \hat{y}_i$ in direction $k$ is equivalent to applying the wall-crossing automorphism associated to $\lL \fd_k$ to the monomial $z^{(v_i,e_i)}$. Hence, we can calculate $\mu_k(\lL \hat{y}_i)$ by computing
\begin{align*}
    \lL \hat{y}_i = z^{(\lL v_i,e_i)} &\xmapsto{\lL\fd_k} z^{(v_i,e_i)}\left( \lL f_{\fd_k} \right)^{\langle (r_kd_ke_k,0), (v_i,e_i) \rangle} \\
    &= z^{(\lL v_i,e_i)}\left( \lL f_{\fd_k} \right)^{\langle r_kd_ke_k, \sum_{j \in I} \lL v_{ij} \lL f_j \rangle} \\
    &= z^{(\lL v_i,e_i)}\left( \lL f_{\fd_k} \right)^{\sum_{j \in I} \langle r_kd_ke_k,\lL v_{ij} \lL f_j \rangle}
\end{align*}
Observe that
\begin{align*}
    \langle r_kd_ke_k, \lL v_{ij} \lL f_j\rangle = r_kd_k \lL v_{ij} \langle e_k, \lL f_j \rangle = r_k d_k \lL v_{ij} \langle e_k, \frac{1}{r_jd_j} e_j^* \rangle = \begin{cases} \lL v_{ij} & j = k \\ 0 & j \neq k \end{cases}
\end{align*}
Hence, we have
\begin{align*}
    \lL \hat{y}_i = z^{(\lL v_i,e_i)} &\xmapsto{\lL\fd_k} z^{(\lL v_i,e_i)} \left( \lL f_{\fd_k} \right)^{\lL v_{ij}}
\end{align*}
Recall that the variables of the generalized cluster algebra and the left companion algebra are related by $\lL x_i = x_i^{1/r_i}$ and $\lL y_i = y_i$. Recall also that $\lL v_i = v_i$ and therefore $\lL \hat{y_i} = z^{(\lL v_i,e_i)} = z^{(v_i,e_i)} = \hat{y_i}$. Hence, we can rewrite the above map as
\begin{align*}
    \hat{y_i} &\xmapsto{\lL \fd_k} z^{(v_i,e_i)}\left(\lL f_{\fd_k} \right)^{\lL v_{ik}r_k},
\end{align*}
which agrees with the $F$-polynomial transformation given in Proposition 4.3 of \cite{Nakanishi-Rupel}. The analogous computation for mutation in the right companion algebra also agrees with Proposition 4.6 of \cite{Nakanishi-Rupel}.

\vspace{5mm}
\noindent \textbf{Declarations:} M. Cheung was partially supported by NSF grant DMS-1854512. G. Musiker was partially supported by NSF grant DMS-1745638. E. Kelley was partially supported by NSF grants DMS-1745638 and DMS-1937241. The authors are grateful for the hospitality of RIMS in 2019 in Kyoto, Japan during the ``Cluster Algebras 2019'' workshop, where this collaboration began, and to the anonymous reviewers whose careful reading and insightful comments improved the exposition of this paper. On behalf of all authors, the corresponding author states that the authors have no conflicts of interest.

\printbibliography

\end{document}